\def\cleardoublepage{\clearpage\if@twoside \ifodd\c@page\else
    \thispagestyle{plain}\hbox{}\newpage\if@twocolumn\hbox{}\newpage\fi\fi\fi}
\def\ps@headings{\let\@mkboth\markboth
  \def\@oddfoot{}%
  \def\@evenfoot{}%
  \def\@evenhead{\small \sc\thepage\hfil\leftmark}%      Left heading.
  \def\@oddhead{\small \sc \rightmark\hfil\thepage}%Right heading
  \def\chaptermark##1{{%  Expand the \ifnum, not the ##1
    \edef\@tempa{\ifnum \c@secnumdepth >\m@ne \@chapapp\ \thechapter. \fi}%
    \expandafter \markboth \expandafter{\@tempa ##1}{}}}%
  \def\schaptermark##1{\markboth {##1}{##1}}%
  \def\sectionmark##1{{%  Expand the \ifnum, not the ##1
    \edef\@tempa{\ifnum \c@secnumdepth >\z@ \thesection. \fi}%
    \expandafter \markright \expandafter{\@tempa ##1}}}}
\def\thebibliography#1{\section*{References\@mkboth
 {References}{References}}\list
 {[\arabic{enumi}]}{\settowidth\labelwidth{[#1]}\leftmargin\labelwidth
 \advance\leftmargin\labelsep
 \usecounter{enumi}}
 \def\newblock{\hskip .11em plus .33em minus .07em}
 \sloppy\clubpenalty4000\widowpenalty4000
 \sfcode`\.=1000\relax}
\newif\if@restonecol
\def\theindex{\@restonecoltrue\if@twocolumn\@restonecolfalse\fi
\columnseprule \z@
\columnsep 35pt\twocolumn[\@makeschapterhead{Index}]
 \@mkboth{Index}{Index}\thispagestyle{plain}\parindent\z@
 \parskip\z@ plus .3pt\relax\let\item\@idxitem}
\def\@idxitem{\par\hangindent 40pt}
\def\endtheindex{\if@restonecol\onecolumn\else\clearpage\fi}
\def\footnoterule{\kern-3\p@ 
 \hrule width .4\columnwidth 
 \kern 2.6\p@} 
\long\def\@makefntext#1{\parindent 1em\noindent 
 \hbox to 1.8em{\hss$^{\@thefnmark}$}#1}
\renewcommand{\l@section}{\@dottedtocline{0}{1.5em}{2.3em}}
\renewcommand{\l@subsection}{\@dottedtocline{1}{3.8em}{3.2em}}
\renewcommand{\l@subsubsection}{\@dottedtocline{2}{7.0em}{4.1em}}
\newcolumntype{C}[1]{>{\centering\arraybackslash}m{#1}}
\declaretheorem[numberwithin=section]{theorem}
\declaretheorem[sibling=theorem]{conjecture}
\declaretheorem[sibling=theorem]{proposition}
\declaretheorem[sibling=theorem]{lemma}
\declaretheorem[sibling=theorem]{corollary}
\declaretheorem[sibling=theorem, style=remark]{remark}
\newtheorem*{paritycor*}{Corollary of the parity conjecture}
\newtheorem*{theorem*}{Theorem}
\newtheorem*{question*}{Question}
\newtheorem*{conjecture*}{Conjecture}
\newcommand{\rmnum}[1]{\romannumeral #1}
\newcommand{\Rmnum}[1]{\expandafter\@slowromancap\romannumeral #1@}
\definecolor{adr}        {cmyk}{0.99,0.,0.,0.1}
\definecolor{ultramarine}{rgb}{0.07, 0.04, 0.56}
\newcommand{\N}{\mathbb N}
\newcommand{\Z}{\mathbb Z}
\newcommand{\Q}{\mathbb Q}
\newcommand{\R}{\mathbb R}
\newcommand{\C}{\mathbb C}
\newcommand{\Hyp}{\mathbb H}
\renewcommand{\P}{\mathbb P}
\newcommand{\M}{\mathcal M}
\newcommand{\A}{\mathcal A}
\newcommand{\OO}{\mathcal O}
\newcommand{\QQ}{\mathcal Q}
\newcommand{\CC}{\mathcal{C}}
\newcommand{\LL}{\mathcal{L}}
\newcommand{\EE}{\mathcal{E}}
\newcommand{\bfP}{\mathbf P}
\newcommand{\cM}{\overline{\mathcal M}}
\newcommand{\Jac}{\operatorname{Jac}}
\newcommand{\Pic}{\operatorname{Pic}}
\newcommand{\Aut}{\operatorname{Aut}}
\newcommand{\Stab}{\operatorname{Stab}}
\renewcommand{\mod}{\operatorname{mod}}
\newcommand{\Per}{\operatorname{Per}}
\newcommand{\RPer}{\operatorname{RPer}}
\newcommand{\Sym}{\operatorname{Sym}}
\newcommand{\Real}{\operatorname{Re}}
\newcommand{\Imag}{\operatorname{Im}}
\newcommand{\Isom}{\operatorname{Isom}}
\newcommand{\PSL}{\operatorname{PSL}}
\newcommand{\PSLZ}{\operatorname{PSL_2\Z}}
\newcommand{\SL}{\operatorname{SL}}
\newcommand{\SLZ}{\operatorname{SL_2\Z}}
\newcommand{\SLR}{\operatorname{SL_2\R}}
\newcommand{\GL}{\operatorname{GL^+_2\R}}
\renewcommand{\Gamma}{\Upgamma}
\DeclareMathOperator{\lcm}{lcm}
\newcommand\ZZ[1]{\Z/#1\Z}
\newcommand{\IWP}{\operatorname{IWP}}
\newcommand{\ST}{\operatorname{ST}}
\newcommand{\Mod}{\mathrm{Mod}}
\newcommand{\Aff}{\operatorname{Aff}}
\newcommand{\spans}{\operatorname{span}}
\title{Teichm\"uller curves in genus two: Square-tiled surfaces and modular curves}
\date{}
\author{Eduard Duryev}
\begin{document}

\maketitle

\vspace{-20pt}
\begin{abstract}
This work is a contribution to the classification of Teichm\"uller curves in the moduli space $\M_2$ of Riemann surfaces of genus 2. While the classification of primitive Teichm\"uller curves in $\M_2$ is complete, the classification of the imprimitive curves, which is related to branched torus covers and square-tiled surfaces, remains open. 

Conjecturally, the classification is completed as follows. Let $W_{d^2}[n] \subset \M_2$ be the 1-dimensional subvariety consisting of those $X \in \M_2$ that admit a primitive degree $d$ holomorphic map $\pi: X \to E$ to an elliptic curve $E$, branched over torsion points of order $n$. It is known that every imprimitive Teichm\"uller curve in $\M_2$ is a component of some $W_{d^2}[n]$. The {\em parity conjecture} states that (with minor exceptions) $W_{d^2}[n]$ has two components when $n$ is odd, and one when $n$ is even. In particular, the number of components of $W_{d^2}[n]$ does not depend on $d$.

In this work we establish the parity conjecture in the following three cases: (1) for all $n$ when $d=2,3,4,5$; (2) when $d$ and $n$ are prime and $n > (d^3-d)/4$; and (3) when $d$ is prime and $n > C_d$, where $C_d$ is an implicit constant that depends on $d$.

In the course of the proof we will see that the modular curve $X(d) = \overline{\Hyp \big/ \Gamma(d)}$ is itself a square-tiled surface equipped with a natural action of $\SLZ$. The parity conjecture is equivalent to the classification of the finite orbits of this action. It is also closely related to the following {\em illumination conjecture}: light sources at the cusps of the modular curve illuminate all of $X(d)$, except possibly some vertices of the square-tiling. Our results show that the illumination conjecture is true for $d \le 5$.
\end{abstract}

\newpage

\tableofcontents

\section{Introduction} \label{secintro}

This work is a contribution to the classification of Teichm\"uller curves in the moduli space $\M_2$ of Riemann surfaces of genus $2$. 

It is known (see \cite{McM06}) that a primitive Teichm\"uller curve in $\M_2$ is uniquely determined by two invariants: its discriminant $D$ and, when $D \equiv 1 \mod 8$, a spin invariant~$\epsilon \in \ZZ{2}$. Conjecturally, an imprimitive Teichm\"uller curve in $\M_2$ is uniquely determined by three integers: its degree $d$, its torsion $n$ and, when $n$ is odd, a spin invariant $\epsilon$. Our main result (Theorem~\ref{main}) establishes this conjecture in infinitely many cases.

In this introduction we will make this discussion more precise by introducing the {\em parity conjecture} (Conjecture~\ref{parity}). This conjecture can be expressed in several different ways:

\begin{enumerate}
\item[(\Rmnum{1})] in terms of algebraic curves $W_{d^2}[n] \subset \M_2$;

\item[(\Rmnum{2})] in terms of a natural square-tiling on the modular curve $X(d)$;

\item[(\Rmnum{3})] in terms of illumination of finite subsets $\A_{d^2}[n] \subset X(d)$ by the cusps of $X(d)$;

\item[(\Rmnum{4})] in terms of combinatorics of tilings of a surface of genus $2$ by squares; and

\item[(\Rmnum{5})] in terms of topological covers of a torus branched over a single point.
\end{enumerate}

Each of these perspectives will be discussed in turn below. While the parity conjecture and our main result are most easily stated using perspective (\Rmnum{1}), our proofs, to be sketched below, use perspectives (\Rmnum{2}) and (\Rmnum{3}). Perspective (\Rmnum{4}) relates the conjecture to combinatorics and translation surfaces, while perspective (\Rmnum{5}) allows to formulate it using only basic notions of topology.

We conclude with a survey of the previous research on the topic and pictures of natural square-tilings of the modular curves $X(d)$ for $d=2,3,4$ and $5$.

%%%
%%%
%%%
%%%
%%%

\noindent
{\bf \Rmnum{1}. Elliptic covers.} The first perspective gives the most succinct way of formulating the parity conjecture and our main result. 

Let $X$ be a Riemann surface of genus 2 and $E$ an elliptic curve. Elliptic cover is a ramified cover $\pi:X\to E$, where $X \in \M_2$ and $E$ is an elliptic curve. We call an elliptic cover {\em primitive} if the induced map $\pi_*: H_1(X,\Z) \to H_1(E,\Z)$ is a surjection. 

For each pair of integers $(d,n)$, where $d>1$ and $n\ge1$, consider the following~locus~in~$\M_2$:
$$
   W_{d^2}[n]  = \left\{ 
  X \in \M_2 \,\middle\vert\,
  \begin{aligned} 
  &\exists \mbox{ a primitive degree } d \mbox{ elliptic cover } \pi: X \to E,  \mbox{ with critical points}  \\ 
  &  x_1 \ne x_2 \in X \mbox{ such that } \pi(x_1) - \pi(x_2) \mbox{ has order } n\mbox{ in } \Jac(E)
  \end{aligned}
\right\}.
$$
Each $W_{d^2}[n]$ is a possibly reducible algebraic curve immersed in $\M_2$. It is known that the loci $W_{2^2}[1]$, $W_{3^2}[1]$ are empty, and $W_{4^2}[1]$, $W_{5^2}[1]$ are irreducible. The main~conjecture~states~that:

\begin{conjecture}[Parity conjecture] \label{parity}
Provided $(d,n) \ne (2,1), (3,1), (4,1)$ and $(5,1)$:\\ $W_{d^2}[n]$ is irreducible when $n$ is even, and consists of two irreducible components when~$n$~is~odd. 
\end{conjecture}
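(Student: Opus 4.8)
The plan is to pass from the algebro-geometric description of $W_{d^2}[n]$ (perspective \Rmnum{1}) to the combinatorial–dynamical one (perspectives \Rmnum{2} and \Rmnum{3}), where the number of irreducible components becomes the number of orbits of a finite group action that one can analyze directly for small $d$ and asymptotically in general. \emph{First I would reduce to orbit counting.} Each $X \in W_{d^2}[n]$ carries a primitive degree-$d$ elliptic cover $\pi: X \to E$ with two simple branch points $p_1,p_2 = \pi(x_1),\pi(x_2)$ and $p_1 - p_2$ of exact order $n$ in $\Jac(E)$; by Riemann–Hurwitz the pullback $\pi^* \omega_E$ has exactly two simple zeros, so $X$ lies in the stratum $\H(1,1)$ and is (an affine image of) a square-tiled surface. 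Degenerating $E$ and recording the monodromy of $\pi$ over $E \setminus \{p_1,p_2\}$ (a pair of transpositions satisfying the commutator relation), connectedness of the components of $W_{d^2}[n]$ becomes transitivity of the mapping class group $\Mod(T^2) = \SLZ$ on this branch data. I would then identify the branch data together with the torsion condition with the finite point set $\A_{d^2}[n] \subset X(d)$ on the square-tiled modular curve, and the $\Mod(T^2)$-action with the $\SLZ$-action on $X(d)$ described in the paper. After this reduction the statement is purely combinatorial: count the $\SLZ$-orbits on $\A_{d^2}[n]$.

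\emph{Next I would produce the lower bound via a spin invariant.} I would construct a $\ZZ{2}$-valued invariant $\epsilon$ on $\A_{d^2}[n]$ that is constant on $\SLZ$-orbits — an Arf-type invariant of the $\ZZ{2}$-quadratic form built from the $\ZZ{2}$-reduction of the branch/relative-period data, refining the spin structure attached to the translation surface $X$. The key computation is that when $n$ is odd this form is nondegenerate, $\epsilon$ is well defined, and it assumes both values, whereas when $n$ is even the form degenerates and $\epsilon$ carries no information. This yields at least two components when $n$ is odd, and (apart from the listed small-$(d,n)$ exceptions, where $\A_{d^2}[n]$ is too small for $\epsilon$ to separate anything) no obstruction to irreducibility when $n$ is even.

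\emph{The crux, and the hard part, is the matching upper bound:} that each spin class is a single $\SLZ$-orbit, giving exactly two orbits when $n$ is odd and exactly one when $n$ is even. Here I would use the illumination picture. The cusps of $X(d)$ are the fixed directions of the parabolic elements (Dehn twists) of $\SLZ$, and illumination of a point $q \in \A_{d^2}[n]$ by a cusp means $q$ is reached from that cusp by the cylinder/twist dynamics; hence proving that the cusps of $X(d)$ illuminate all of $\A_{d^2}[n]$ (except possibly the excluded vertices of the tiling) forces the orbits to coincide with the spin classes. Establishing this illumination statement is where all the difficulty concentrates, which is exactly why the full conjecture is still open: for $d \le 5$ it can be checked directly from the explicit square-tilings of $X(d)$, while in the asymptotic regimes one must count parabolic directions (cylinders/cusps) and compare with $|\A_{d^2}[n]|$. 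This is where the arithmetic hypotheses enter — the bound $n > (d^3-d)/4$ for $d,n$ prime, and $n > C_d$ for $d$ prime, are precisely what guarantee enough parabolic directions to connect every point of $\A_{d^2}[n]$ to a cusp. For composite $d$ or small $n$ no such uniform connectivity argument is currently available, so the proof establishes the parity conjecture only in those ranges.
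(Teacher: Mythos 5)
The statement you are proving is a conjecture, and neither the paper nor your proposal establishes it in full generality; you correctly flag this at the end, so the real comparison is between your strategy and the paper's proof of its partial results (Theorem~\ref{main}). Your first reduction — components of $W_{d^2}[n]$ correspond to $\SLZ$-orbits on $\A_{d^2}[n]\subset X(d)$ — is exactly the paper's Theorem~\ref{thmequivconj}. Your lower bound differs in construction: you propose an Arf-type invariant of a $\ZZ{2}$-quadratic form, which you do not actually build, whereas the paper defines $\epsilon$ concretely by counting integer Weierstrass points and computes it via the Weierstrass profile $(3,1,1,1)$ or $(0,2,2,2)$ of a primitive cover (Theorem~\ref{thmspin}, Proposition~\ref{WPprofile}). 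Both routes give the same dichotomy (two values for odd $n$, none for even $n$), but yours leaves the nondegeneracy and invariance claims unverified.

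The genuine gap is in your upper bound. You treat ``cusps illuminate $\A_{d^2}[n]$, hence the orbits coincide with the spin classes'' as essentially immediate, but this implication is Theorem~\ref{thmillumination}, whose proof is the technical heart of the paper: one must move an arbitrary point into a lighthouse cylinder (Proposition~\ref{into}), control the unipotent orbits inside lighthouses and eaves via the explicit twist formula $T_k\cdot d\equiv -1 \bmod k(d-k)$ and the gcd counts of Lemma~\ref{lemmaillumination} (including the sporadic cases $(d,k)=(7,3),(19,4)$), and then connect distinct lighthouses through the eaves. Crucially, this implication is only established for \emph{prime} $d$ and $n>1$, so your claim that the cases $d\le 5$ follow by checking illumination on the explicit tilings fails for $d=4$: the paper must instead construct the degree-$3$ square-tiling cover $X(4)\to X(3)$ induced by $S_4\to S_3$ and reduce to the $d=3$ case. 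Likewise, the regime $d,n$ prime with $n>(d^3-d)/4$ is not handled by counting parabolic directions as you suggest; the paper gives a direct argument that the unipotent orbit spacing in every cylinder is less than one square (because every circumference is at most $(d^3-d)/4<n$), so every orbit meets every square, and then analyzes the single lighthouse $\LL_1$. Without these ingredients your outline does not close even in the ranges where the paper succeeds.
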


The main result of this work establishes the parity conjecture in infinitely many cases:

\begin{theorem} \label{main}
The parity conjecture holds for all $(d,n)$ such that:
\begin{enumerate}
\item[(\rmnum{1})]
$d=2,3,4,5$; or
\item[(\rmnum{2})]
$d$ and $n$ are prime and $n > (d^3-d)/4$; or
\item[(\rmnum{3})]
$d$ is prime and $n > C_d$, where $C_d$ is a constant that depends on $d$.
\end{enumerate}
\end{theorem}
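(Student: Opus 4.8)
The plan is to translate the problem into the dynamical language of perspectives (\Rmnum{2})--(\Rmnum{3}) and reduce it to counting orbits. First I would set up the dictionary identifying the irreducible components of $W_{d^2}[n]$ with the orbits of the natural $\SLZ$-action on the finite set $\A_{d^2}[n] \subset X(d)$ of order-$n$ torsion points of the square-tiled surface $X(d)$ (the points whose coordinates have exact denominator $n$ in the flat structure): an elliptic cover $\pi\colon X \to E$ with branch divisor of order $n$ is recorded, after normalizing $E$ together with its level structure, by such a torsion point, and deforming $E$ within the family realizes the $\SLZ$-action. Granting this dictionary, Conjecture~\ref{parity} becomes the assertion that $\SLZ$ acts on $\A_{d^2}[n]$ transitively when $n$ is even and with exactly two orbits when $n$ is odd. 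The proof then separates into a lower bound (at least two orbits for odd $n$) and an upper bound (transitivity on each parity class).

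For the lower bound I would exhibit a spin invariant $\e\colon \A_{d^2}[n] \to \ZZ{2}$ that is constant on $\SLZ$-orbits, modeled on the $\ZZ{2}$ invariant that separates the orbits of primitive square-tiled surfaces. Working in a horizontally periodic representative, one reads off from the cylinder decomposition of $X(d)$ the heights and circumferences of the cylinders together with the location of the order-$n$ torsion point, and combines them into a single residue mod $2$; a direct check shows this residue is unchanged by the horizontal and vertical parabolic generators of $\SLZ$, hence is an orbit invariant. I would then verify that $\e$ is well defined and attains both values exactly when $n$ is odd, whereas for even $n$ the defining expression becomes insensitive to the choices and the two would-be classes coincide. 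This produces the two components for odd $n$ and reduces the theorem to the upper bound.

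The upper bound is the heart of the matter: one must show the cusps of $X(d)$ illuminate every order-$n$ torsion point of a given parity, equivalently that the action is as transitive as possible. The engine is the pair of parabolic subgroups of $\SLZ$ (the horizontal and vertical multitwists), whose orbits on $\A_{d^2}[n]$ are explicitly the intersections of torsion points with the core curves of the two cylinder decompositions of $X(d)$; by chaining horizontal and vertical moves from a cusp one tries to reach an arbitrary point of the correct parity. For $d=2,3,4,5$ the square-tilings are small enough (genus $0$, few cusps) that I would verify transitivity by a direct finite computation on the explicit pictures of $X(d)$, establishing case~(\rmnum{1}). For $d$ and $n$ both prime, every order-$n$ torsion point is primitive and the level structure admits no intermediate quotients, so the illuminated set is a union of full parabolic cylinders; I would prove an effective reachability estimate showing that once $n > (d^3-d)/4$, a definite fraction of the number $d^3-d = [\SLZ\colon\Gamma(d)]$ of squares, the trajectories emanating from the cusps meet every cylinder of each parity class, which together with the transitivity of the Dehn twist on the torsion points of a single core curve forces one orbit per class and establishes case~(\rmnum{2}). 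Case~(\rmnum{3}) would follow from a non-effective version of the same estimate: for $d$ prime the cusp orbit exhausts each parity class for all sufficiently large $n$, though the argument yields only an implicit bound $C_d$.

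The main obstacle is precisely this transitivity/illumination step in the remaining regime --- small $n$ with $d$ large, and composite $d$. There the cylinder decompositions of $X(d)$ carry intermediate structure (from proper divisors of $d$), the reachability count drops below the threshold, and ``dark'' torsion points beyond the unavoidable vertices of the tiling can in principle appear; ruling them out would require controlling the $\SLZ$-dynamics on $X(d)$ far more finely than the coarse density argument available for large $n$, which is exactly why the full parity conjecture remains open.
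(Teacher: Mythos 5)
Your skeleton matches the paper's: the dictionary between components of $W_{d^2}[n]$ and $\SLZ$-orbits in $\A_{d^2}[n]$ (Theorem~\ref{thmintro1}), a $\ZZ2$ spin invariant giving the lower bound for odd $n$ (the paper realizes it via integer Weierstrass points rather than a cylinder-combinatorial residue, but both are legitimate), and an illumination/reachability argument for the upper bound. The gaps are in the upper bound, and they are genuine.

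First, for case~(\rmnum{1}) you propose to ``verify transitivity by a direct finite computation on the explicit pictures of $X(d)$.'' But transitivity must be proved on $\A_{d^2}[n]$ for \emph{every} $n$, so this is not a finite check. The paper's finite check is a different statement: that the cusps of the fixed flat surface $X(d)$ illuminate all of $X(d)$ except possibly some vertices --- a single geometric property of one surface, with no reference to $n$. The bridge from that to transitivity for all $n>1$ is Theorem~\ref{thmillumination}, and it is the technical heart of the paper: one shows (I) every point of $\A_{d^2}[n]$ can be moved by $\SLZ$ into a lighthouse cylinder, using the illuminating segment from a cusp pole; (II) within a lighthouse the unipotent orbits are classified by a parity, and the rotation $R$ carries the lighthouse into an eave where Lemma~\ref{lemmaillumination} (a gcd/prime-counting argument, with exceptional cases $(d,k)=(7,3),(19,4)$ handled by hand) fuses them into at most two $\SLZ$-classes; (III) distinct lighthouses are connected through the eaves. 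You assert the conclusion of this chain but supply none of it, and your case~(\rmnum{3}) also silently relies on it: the finiteness of non-illuminated points from \cite{LMW16} only yields $C_d$ \emph{after} one knows illumination implies parity. Moreover Theorem~\ref{thmillumination} is proved only for prime $d$, so $d=4$ does not follow from illumination at all; the paper handles it by an entirely separate device, the degree-$3$ cover $X(4)\to X(3)$ induced by $S_4\to S_3$, which your proposal does not anticipate.

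Second, in case~(\rmnum{2}) your ``effective reachability estimate'' names the right threshold but not the mechanism. The paper's Step 1 is a concrete computation: in a cylinder of circumference $w_1w_2(w_1+w_2)\le (d^3-d)/4$, the spacing of a unipotent orbit of a point with $y$-coordinate $b/n$ is $\tfrac1n\gcd(b+T_{\CC}(w_1+w_2)n,\,w_1w_2(w_1+w_2))<1$ when $n>(d^3-d)/4$ is prime, so every orbit meets every square; Step 2 then pins down the two orbits inside the lighthouse $\LL_1$ by an explicit gcd analysis of the eave twists ($\nu=\gcd(a-n,d-1)\gcd(a,d)$), choosing $a=n-d$ and $a=2k+1$ to realize one and two unipotent orbits respectively. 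Your appeal to ``transitivity of the Dehn twist on the torsion points of a single core curve'' does not capture this: the unipotent orbits on a core curve of a lighthouse genuinely split by the parity of the numerator, and reconnecting them requires passing through the eave via $R$. So the proposal correctly identifies the architecture but leaves unproved exactly the steps where the theorem's content lies.
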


Proof of Theorem~\ref{main} occupies \S\ref{secproof2}--\S\ref{secproof4}.

\noindent
{\bf Teichm\"uller curves.} The study of the parity conjecture is motivated by the following application. 
Let  $\M_g$ be the moduli space of Riemann surfaces of genus $g$ and define $\Omega\M_g \to \M_g$ to be the bundle of pairs $(X,\omega)$, where $\omega \ne 0$ is a holomorphic 1-form on a Riemann surface $X \in \M_g$. The {\em absolute periods} of $\omega$ will be denoted by $\Per(X,\omega) = \left\{ \int_\gamma \omega \mid \gamma \in H_1(X,\Z)\right\}$. There is a natural $\GL$ action on $\Omega\M_g$ that satisfies:
$$
\Per ( A \cdot (X,\omega) ) = A \cdot \Per (X,\omega).
$$
Let $\SL(X,\omega) \subset \SLR$ denote the stabilizer of $(X,\omega)$ under this action. If the stabilizer is a lattice in $\SLR$, then the image of the projection map $\GL \cdot (X,\omega) \to \M_g$ is an immersed algebraic curve $V \cong \Hyp \big/\SL(X,\omega) \to \M_g$. This immersion is an isometry with respect to the hyperbolic metric on $V$ and the Teichm\"uller metric on $\M_g$ and we refer to its image as the {\em Teichm\"uller curve} in $\M_g$ generated by $(X,\omega)$. 

\noindent
{\bf Classification in genus 2.}
Define a quadratic order $\OO_D \cong \Z[x] / (x^2 + b x + c)$, where $D = b^2 - 4c$.
For any $D\ge5$ with $D \equiv 0,1 \mod 4$, the {\em Weierstrass curve} is the following locus in $\M_2$:
\begin{flalign*}
& W_D =  \left\{ 
  X \in \M_2 \,\middle\vert\,
  \begin{aligned} 
   & \Jac(X) \mbox{ admits a real multiplication by $\OO_D$} \\
   & \mbox{with an eigenform with a double zero}
  \end{aligned}
\right\}.
\end{flalign*}
Every irreducible component of $W_D$ is a Teichm\"uller curve.
It is know that, when $D\ne 9$ and $D \equiv 1 \mod 8$, $W_{D}$ consists of two irreducible components $W^0_{D}$ and $W^1_{D}$ distinguished by the spin invariant $\epsilon$ (see \cite{McM05a}). It is also known that, when $n$ is odd, $W_{d^2}[n]$ has at least 2 components $W^0_{d^2}[n]$ and $W^1_{d^2}[n]$ distinguished by a slight generalization of the spin invariant $\epsilon$ (see \S\ref{secspin}).
As we will see in \S\ref{secbackground}, the parity conjecture suffices to complete the classification of Teichm\"uller curves in $\M_2$:

\begin{theorem} \label{paritycor}
The parity conjecture implies that the Teichm\"uller curves in $\M_2$ are given by:
\begin{itemize}
\item[(1)] $W_{D}$, where $D\ge 5$ and $D \equiv 0, 4$ or $5 \mod 8$ or $D=9$;

\item[(2)] $W_{D}^\epsilon$, where $D\ge 17$, $D \equiv 1 \mod 8$ and $\epsilon = 0$ or $1$;

\item[(3)] $W_{4^2}[1]$, $W_{5^2}[1]$ and $W_{d^2}[n]$, where $n$ is even;

\item[(4)] $W^\epsilon_{d^2}[n]$, where $d\cdot n >5$, $n$ is odd and $\epsilon = 0$ or $1$; and

\item[(5)]  the decagon curve generated by $\frac{dx}y$ on $y^2 = x^6 -x$.
\end{itemize}
\end{theorem}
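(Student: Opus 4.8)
The plan is to assemble the list from three inputs: the stratification of $\Omega\M_2$, McMullen's classification of Teichm\"uller curves stratum by stratum, and the parity conjecture, which is the only ingredient not already established. The backbone of the argument is that any Teichm\"uller curve $V \subset \M_2$ is generated by a pair $(X,\omega)$, and since the $\GL$-action preserves the number and orders of the zeros of $\omega$, the curve $V$ lies entirely in a single stratum, either $\Omega\M_2(2)$ (a double zero) or $\Omega\M_2(1,1)$ (two simple zeros). I would therefore organize the classification according to this dichotomy, so that items (1)--(2) come from the first stratum and items (3)--(5) from the second.

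In the stratum $\Omega\M_2(2)$ I would invoke McMullen's theorem \cite{McM06} that \emph{all} Teichm\"uller curves are the Weierstrass curves $W_D$ with $D \ge 5$ and $D \equiv 0, 1 \mod 4$, together with the spin refinement \cite{McM05a} already quoted in the excerpt: $W_D$ is irreducible unless $D \equiv 1 \mod 8$ with $D \ne 9$, in which case it splits into the two spin components $W_D^0$ and $W_D^1$. Since every discriminant satisfies $D \equiv 0, 1, 4$ or $5 \mod 8$, sorting by this residue and peeling off the single exceptional value $D = 9$ produces items (1) and (2) verbatim. Note that no primitive/imprimitive distinction is needed here: the square discriminants $D = d^2$ are the imprimitive members of this family and are already subsumed.

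In the stratum $\Omega\M_2(1,1)$ I would treat primitive and imprimitive curves separately. For the primitive ones, McMullen's classification of primitive Teichm\"uller curves shows the only one is the decagon curve generated by $\frac{dx}y$ on $y^2 = x^6 - x$, which is item (5). For the imprimitive ones I would combine both directions of the correspondence with torus covers: every imprimitive Teichm\"uller curve in $\Omega\M_2(1,1)$ is a component of some $W_{d^2}[n]$, and conversely every component of $W_{d^2}[n]$ is a Teichm\"uller curve, since $(X, \pi^*\omega_E)$ is a branched cover of a torus and hence a square-tiled surface, which always has an arithmetic, in particular a lattice, Veech group. Feeding in the parity conjecture then counts these components: for $n$ even $W_{d^2}[n]$ is irreducible, which with the two exceptional irreducible loci $W_{4^2}[1]$ and $W_{5^2}[1]$ noted in the excerpt gives item (3); for $n$ odd the conjecture yields two spin components $W^\epsilon_{d^2}[n]$. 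The only $n$-odd pairs left out of the generic count are $(d,n) \in \{(2,1),(3,1),(4,1),(5,1)\}$, where $(2,1),(3,1)$ are empty and $(4,1),(5,1)$ were already placed in item (3); as these are precisely the $n$-odd pairs with $dn \le 5$, the two-component cases are cut out by $dn > 5$, giving item (4).

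I expect the main obstacle to be the imprimitive half of the $(1,1)$ analysis rather than any of the congruence bookkeeping. One must check that each component of $W_{d^2}[n]$ is a single closed Teichm\"uller curve with $d$ and $n$ as complete defining data (and not a lower-dimensional piece or a curve carrying extra, unrecorded cover structure), and that the reduction to a \emph{primitive} cover does not silently merge distinct curves. In particular the $n=1$ case must be handled with care: there $\pi(x_1) = \pi(x_2)$, so both simple zeros lie over one branch point, yet the surface genuinely remains in $\Omega\M_2(1,1)$ and must not be conflated with the double-zero locus $W_{d^2}$ already counted in the stratum $\Omega\M_2(2)$. Once this correspondence and McMullen's stratumwise results are granted, the remainder is the routine sorting of residues and the small-case exceptions recorded above.
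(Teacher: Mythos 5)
Your proposal is correct and follows essentially the same route as the paper: both reduce the statement to McMullen's classification (every Teichm\"uller curve in $\M_2$ is a component of one of the five listed loci), then dispose of (1), (2) and (5) by the known irreducibility results of \cite{McM05a} and \cite{McM06}, and let the parity conjecture supply the irreducibility of the loci in (3) and (4). Your stratum-by-stratum packaging and the explicit warning that $W_{d^2}[1]\subset\Omega\M_2(1,1)$ must not be conflated with $W_{d^2}\subset\Omega\M_2(2)$ are consistent with (and slightly more explicit than) the paper's own remark to the same effect.
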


The contribution of this work is to address the curves $(3)$ and $(4)$. They consist of imprimitive Teichm\"uller curves. We will discuss this in more details in \S\ref{secbackground}.

%
%
%

%%%
%%%
%%%
%%%
%%%

\noindent
{\bf \Rmnum{2}. Modular curves.} The second perspective relates the parity conjecture to a natural square-tiling of the modular curve:
$$X(d) = (\Hyp \cup \Q \cup \infty) \big/ \Gamma(d).$$

\noindent
{\bf Absolute period leaf $\A_{d^2}$.} 
Let $E_0 =  \C/\Z[i]$ be the square torus. The quadratic differential $dz^2$ on $\C$ descends to $E_0$ and the space $(E_0, |dz|^2)$ is isometric to a unit square with opposite sides identified. Let:
\begin{flalign*}
 \A^{\circ}_{d^2} = \left\{   (X,\omega) \in \Omega\M_2 \,\middle\vert\,
   \Per(X,\omega) = \Z[i] \mbox{ and } \displaystyle \int_X  |\omega|^2 = d  
\right\}.
\end{flalign*}
The {\em absolute period leaf} $\A_{d^2}$ is a smooth irreducible algebraic curve obtained as a completion of the locus $\A^\circ_{d^2} \subset \Omega \M_2$.

\noindent
{\bf Isomorphism with $X(d)$.} The modular curve $X(d)$ parametrizes elliptic curves $E$ with a choice of suitable basis for the $d$-torsion points $E[d]$. In \S\ref{secmodular} we will show that there exists a natural isomorphism $i: \A_{d^2} \xlongrightarrow{\sim} X(d)$, such that $\Jac(X)$ is isogenous to $E_0 \times E$, where $E = i(X,\omega)$. In particular, $X$ also admits a degree $d$ map to $E$. The isomorphism $i$ depends on the choice of an isomorphism $(\ZZ{d})^2 \cong E_0[d]$. We fix this choice once and for all and obtain an isomorphism that we denote by $\A_{d^2} \cong X(d)$.

\noindent
{\bf Square-tiling of $X(d)$.} Denote zeroes of $\omega$ by $z_1$ and $z_2$ and let:
$$
\rho = \int_{z_1}^{z_2} \omega
$$
be a (multivalued) holomorphic function on $\A^\circ_{d^2}$. The holomorphic quadratic differential $\tilde q = d\rho^2$ on $\A^\circ_{d^2}$ extends to a meromorphic quadratic differential $q$ on $\A_{d^2}$. For any $ (X,\omega) \in  \A^{\circ}_{d^2}$ there exists a primitive degree $d$ covering map $\pi: X \to E_0$ defined up to translation on $E_0$, such that $\pi^*(dz) = \omega$. The locus $\A^{\circ}_{d^2}$ is preserved by the action of $\SLZ \subset \SLR$ on $\Omega\M_2$ and the $\SLZ$ action on $\A^{\circ}_{d^2}$ extends to the action on $\A_{d^2} \cong X(d)$. Because $\int_{z_1}^{z_2} \omega = \int_{\pi(z_1)}^{\pi(x_2)} dz$, the metric space $(X(d), |q|)$ naturally decomposes as a union of unit squares compatible with this $\SLZ$ action (\S\ref{secabs}). We refer to this decomposition as the {\em square-tiling} of the modular curve $X(d)$.

The square-tilings of the modular curves $X(2), X(3), X(4)$ and $X(5)$ are illustrated in Figures~\ref{figX(2)intro},~\ref{figX(3)intro},~\ref{figX(4)intro} and~\ref{figX(5)intro}. 

In \S\ref{sectiling} we will explain how to generate the square-tilings of the modular curves in general and give some of their geometric properties.

\noindent
{\bf Reduction to $\SLZ$ action on $X(d)$.} Points of $\A_{d^2} \cong X(d)$ whose stabilizer is a lattice fall into one of the following finite subsets: 
\begin{flalign*}
\A_{d^2}[n]  = \left\{ (X,\omega) \in \A_{d^2} \, \middle\vert 
  \begin{aligned} 
  &  \mbox{integration of } \omega  \mbox{ defines } \pi: X \to E_0,  \\ 
  &  \mbox{whose critical points }  x_1 \ne x_2 \in X \mbox{ satisfy:} \\ 
  &  \pi(x_1) - \pi(x_2) \mbox{ has order } n\mbox{ in } \Jac(E_0)
  \end{aligned}
   \right\}.
\end{flalign*}
We will show that $\A_{d^2}[n]$ is the subset of primitive $n$-rational points of the squares in the tiling of $X(d)$ (\S\ref{secabs}). The action of $\SLZ$ preserves $\A_{d^2}[n] \subset X(d)$. The parity conjecture can be reformulated in terms of this action. In \S\ref{secbackground} we will show:

\begin{theorem} \label{thmintro1}
The number of irreducible components of $W_{d^2}[n]$ is equal to the number of $\SLZ$ orbits in $\A_{d^2}[n] \subset X(d)$.
\end{theorem}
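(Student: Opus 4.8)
The plan is to lift the entire picture from $\M_2$ to the bundle $\Omega\M_2$, where the $\GL$ action is defined, and then to read off both sides of the asserted equality as counts of $\GL$-orbits. The structural input I would draw on from the theory of genus-two Teichm\"uller curves is that every irreducible component $V$ of $W_{d^2}[n]$ is itself a Teichm\"uller curve carrying a canonical eigenform, namely the pullback $\pi^*(dz)$ along the elliptic cover, which is unique up to the $\C^*$ of rotations and rescalings acting in each fibre of $\Omega\M_2 \to \M_2$. Hence $V$ lifts to a single $\GL$-orbit $G_V \subset \Omega\M_2$ whose image in $\M_2$ is $V$; distinct components produce distinct orbits, and every $\GL$-orbit of a primitive degree-$d$ eigenform with torsion $n$ arises this way. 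In particular, the number of components of $W_{d^2}[n]$ equals the number of such $\GL$-orbits.

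Next I would intersect each orbit with the slice $\A^\circ_{d^2} = \{(X,\omega) : \Per(X,\omega) = \Z[i]\}$, observing first that the area normalization $\int_X |\omega|^2 = d$ is automatic once $\Per(X,\omega) = \Z[i]$, because $\omega = \pi^*(dz)$ pulls back the unit-area form on $E_0$ through a map of degree $d$. The key point is that the $\GL$-orbit of a point $(X_0,\omega_0) \in \A^\circ_{d^2}$ meets $\A^\circ_{d^2}$ in exactly one $\SLZ$-orbit: if $A \in \GL$ satisfies $\Per(A \cdot (X_0,\omega_0)) = \Z[i]$, then $A \cdot \Z[i] = \Z[i]$ by equivariance, so $A$ preserves the lattice $\Z[i]$ and, having positive determinant, satisfies $\det A = 1$ and lies in $\SLZ$; conversely every $A \in \SLZ$ keeps the orbit inside $\A^\circ_{d^2}$. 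Since the order of $\pi(x_1) - \pi(x_2)$ in $\Jac(E_0)$ is an $\SLZ$-invariant, intersecting $G_V$ with $\A^\circ_{d^2}$ yields precisely one $\SLZ$-orbit contained in $\A_{d^2}[n]$.

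Assembling these steps, I would consider the map $\A_{d^2}[n] \to \{\text{components of } W_{d^2}[n]\}$ sending $(X,\omega)$ to the component of $W_{d^2}[n]$ containing $X$. It is constant on $\SLZ$-orbits, since points of one orbit lie in a common $\GL$-orbit and hence generate the same Teichm\"uller curve; it is surjective, because given a component $V$ and any cover $\pi : X \to E$ with $X \in V$ and torsion $n$, the element $A \in \GL$ carrying the period lattice of $E$ to $\Z[i]$ sends $(X, \pi^*dz)$ into $\A^\circ_{d^2} \cap G_V \subset \A_{d^2}[n]$. Because $\A_{d^2}[n]$ is exactly the set of lattice-stabilizer points, it is the disjoint union of the sets $\A^\circ_{d^2} \cap G_V$ over all components $V$, and by the previous paragraph each such set is a single $\SLZ$-orbit. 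Therefore the components of $W_{d^2}[n]$ are in bijection with the $\SLZ$-orbits of $\A_{d^2}[n]$, and the two counts agree.

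The main obstacle is the structural input of the first paragraph: that each irreducible component of $W_{d^2}[n]$ is a Teichm\"uller curve lifting to a \emph{single} $\GL$-orbit, equivalently that the eigenform $\pi^*(dz)$ is canonical on the component and that two genuinely different normalized covers cannot land a surface on the same component. This rests on the description of these curves through real multiplication by $\OO_{d^2}$ on $\Jac(X) \sim E_0 \times E$, which pins down the relevant eigenform among the two real-multiplication eigenforms, and on the rigidity of the cover $\pi : X \to E_0$ at the torsion points of $\A_{d^2}[n]$, where extra automorphisms or competing covers could in principle merge distinct $\SLZ$-orbits; ruling this out is where the argument will require the most care.
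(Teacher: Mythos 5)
Your proposal is correct and follows essentially the same route as the paper: Theorem~\ref{thmintro1} is established there as the equivalence (a) $\iff$ (e) $\iff$ (b) of Theorem~\ref{thmequivconj}, passing through $\GL$-orbits of type $(d,n)$ elliptic differentials exactly as you do, with the same two key steps (transitivity of $\GL$ on $\Omega\M_1$ for surjectivity, and the observation that a positive-determinant matrix preserving the lattice $\Z[i]$ lies in $\SLZ$ for injectivity). The ``obstacle'' you flag --- that each irreducible component of $W_{d^2}[n]$ lifts to a single connected component of $\Omega W_{d^2}[n]$, i.e.\ a single $\GL$-orbit --- is likewise taken as given in \S\ref{secbackground} of the paper, so your treatment is no less complete than the paper's own.
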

We will use Theorem~\ref{thmintro1} and results of \S\ref{sectiling} to give a proof of the main result for $d=2$ (see \S\ref{secproof2}) and for all $(d,n)$, such that $d$ and $n$ are prime and $n > (d^3-d)/4$ (see \S\ref{secproofgen}). 

In \S\ref{secmodular} we will also see that the quadratic differential $(X(d),q)$ has no translation automorphisms and its $\GL$ orbit projection to $\M_g$, where $g$ is the genus of $X(d)$, is a point.

%%%
%%%
%%%
%%%
%%%

\noindent
{\bf \Rmnum{3}. Illumination.} The third perspective relates the parity conjecture to illumination on the modular curve $X(d)$. 

We say that a point $A \in X(d)$ {\em illuminates} a point $B \in X(d)$ if there is a geodesic segment in metric $|q|$ that connects $A$ to $B$ and does not pass through singularities of the metric. The illumination conjecture states that:

\begin{conjecture}[Illumination conjecture]
Light sources at the cusps of the modular curve illuminate all of $X(d)$ except possibly for some of the vertices of the square-tiling.
\end{conjecture}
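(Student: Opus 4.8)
The plan is to reduce the illumination conjecture to a finiteness statement, then exploit the $\SLZ$-symmetry of the square-tiling to confine the non-illuminated set to a union of finite orbits, and finally rule out every such orbit except the vertices. To get started I would pass from the half-translation surface $(X(d),q)$ to its orientation double cover $p\colon(\widehat X,\widehat\omega)\to(X(d),q)$, on which $\widehat\omega^{2}=p^{*}q$ is an abelian differential and $(\widehat X,|\widehat\omega|)$ is a closed translation surface. Geodesics of $|q|$ lift to geodesics of $|\widehat\omega|$ avoiding the zeros, so a point $B$ is illuminated by a cusp $A$ if and only if some lift $\widehat B$ is illuminated by some lift of $A$. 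On $\widehat X$ the illumination theorem of Leli\`evre--Monteil--Weiss applies: every point illuminates all but finitely many points. Applied to each of the finitely many lifts of each cusp, this shows that the set $F\subset X(d)$ of points illuminated by no cusp is finite.

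Next I would use that the $\SLZ$-action on $\A_{d^2}\cong X(d)$ is by affine automorphisms of the flat structure: it permutes the unit squares of the tiling, the singularities of $|q|$, and the cusps. Since the light sources are precisely the cusps, the unilluminated set $F$ is $\SLZ$-invariant. Being finite and invariant, $F$ is a union of finite $\SLZ$-orbits, so each of its points has finite-index, hence lattice, stabilizer in $\SLZ$. By the classification of lattice-stabilizer points recalled above, this forces
$$
F\ \subseteq\ \{\text{vertices of the tiling}\}\ \cup\ \{\text{cusps}\}\ \cup\ \bigcup_{n\ge 1}\A_{d^2}[n].
$$

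It then remains to show that no non-vertex finite-orbit point lies in $F$, i.e. that every point of $\bigcup_{n}\A_{d^2}[n]$ is illuminated. Such a point $B$ is a primitive $n$-rational point of a square, so it sits on a periodic geodesic in a rational direction; the holonomies of paths from a fixed cusp $A$ to $B$ fill a coset of a rank-two lattice in $\Q^{2}$ and hence point in infinitely many distinct rational directions, whereas the cone points of $|q|$ are finite in number. One expects all but finitely many of these connecting directions to give a straight segment from $A$ to $B$ meeting no singularity, so that $B$ is illuminated; combined with Theorem~\ref{thmintro1} this would also tie the analysis back to the $\SLZ$-orbit count. This reduces the conjecture to $F\subseteq\{\text{vertices}\}$, the cusps being counted among the distinguished vertices of the tiling.

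The hard part is making the last step uniform: I must exclude the degenerate possibility that, for some exceptional $B$, every connecting holonomy that points at $B$ is blocked by a cone point lying on the corresponding segment. Controlling these blocking configurations in general requires precise information about the combinatorics of the tiling, equivalently about the $\SLZ$-orbit structure of $\A_{d^2}[n]$, and this is exactly where the problem becomes genuinely arithmetic and resists a uniform argument. For $d\le 5$, however, the square-tilings are completely explicit, so the finitely many candidate points of $F$ can be examined directly; this is how the illumination conjecture is established in those cases.
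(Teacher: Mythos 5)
You should first note that the statement you are trying to prove is stated in the paper as a \emph{conjecture}: the paper never proves it in general. What the paper establishes is (a) that the parity conjecture implies the illumination conjecture (\S\ref{secillumination}), (b) the converse implication for prime $d$ and $n>1$ (Theorem~\ref{thmillumination}), and (c) the illumination conjecture for $d\le 5$ by direct inspection of the explicit square-tilings (Theorem~\ref{thmillum2345}), where it is moreover shown that on $X(5)$ a vertex genuinely fails to be illuminated. Your first two steps are sound and essentially reproduce the paper's reasoning in \S\ref{secillumination}: passing to the orientation double cover to apply the finiteness theorem of \cite{LMW16}, and using $\SLZ$-invariance of the unilluminated set $F$ to confine it to the finite-orbit points, i.e.\ to the vertices together with $\bigcup_{n}\A_{d^2}[n]$.

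The genuine gap is your third step, and the heuristic you offer for it does not work. The coset of a rank-two lattice you describe parametrizes \emph{holonomies of homotopy classes} of paths from the cusp $A$ to $B$, not holonomies of actual straight segments: for a given vector $v$ in that coset there is in general no geodesic segment from $A$ to $B$ with holonomy $v$ that avoids the cone points, and the set of $v$ for which such a segment exists is exactly what the illumination problem asks about. Comparing the (infinite) number of candidate directions with the (finite) number of cone points proves nothing, because a single cone point blocks an infinite family of homotopy classes; Tokarsky-type examples, and the unilluminated pole of $X(5)$ itself, show that total blocking of a point can occur. In the paper, the only known way to show that every point of $\A_{d^2}[n]$ is illuminated is to invoke the parity conjecture (each $\SLZ$-orbit then has a representative in a square adjacent to a cusp, and illumination is $\SLZ$-invariant), so your reduction is circular relative to the paper's logic: illumination and parity each imply the other (for prime $d$, $n>1$), and neither is known unconditionally. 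Your final remark about checking $d\le 5$ by hand is correct and is exactly what the paper does in \S\ref{secproof35}, but it does not yield the general statement.
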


One can easily verify that all of the $X(2)$, $X(3)$ and $X(4)$ are illuminated by their cusps (red points) by looking at Figures~\ref{figX(2)intro},~\ref{figX(3)intro} and~\ref{figX(4)intro}. However establishing the illumination conjecture for $X(5)$ (Figure~\ref{figX(5)intro}) requires more work (see \S\ref{secproof35}). In fact, we will show that there is a vertex of the square-tiling of $X(5)$ that is not illuminated by the cusps.

It turns out that the parity conjecture is strongly related to the illumination conjecture. In \S\ref{secillumination} we will show that the parity conjecture implies the illumination conjecture, using general results on illumination on translation surfaces (\cite{LMW16}) and the fact that the set of illuminated points is $\SLZ$ invariant. As for the converse, we will prove:

\begin{theorem} \label{thmillumintro}
Let $d$ be prime. Then, if the illumination conjecture holds for $d$, the parity conjecture holds for all $(d,n)$ with $n>1$.
\end{theorem}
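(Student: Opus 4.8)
The plan is to translate both conjectures into the language of $\SLZ$-orbits on the finite set $\A_{d^2}[n] \subset X(d)$ and to show that illumination forces the orbit count to be exactly the value predicted by parity. By Theorem~\ref{thmintro1} the number of components of $W_{d^2}[n]$ equals the number of $\SLZ$-orbits in $\A_{d^2}[n]$, so it suffices to show this orbit count is $2$ when $n$ is odd and $1$ when $n$ is even. The lower bounds are already available: the generalized spin invariant of \S\ref{secspin} produces at least two orbits when $n$ is odd, and the set is nonempty for $n$ even, giving at least one. Thus the entire content is an \emph{upper} bound on the number of orbits, and this is where illumination enters.

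First I would record that, for $n>1$, every point of $\A_{d^2}[n]$ is a \emph{primitive} $n$-rational point of a square and hence is never a vertex of the tiling: in the developing coordinate $\rho = \int_{x_1}^{x_2}\omega$ of the flat metric $|q|$, a vertex corresponds to $\rho \in \Z[i]$, i.e. to the collision $\pi(x_1)=\pi(x_2)$ where $X$ degenerates, which is exactly order $n=1$. The cusps of $X(d)$ therefore lie among the vertices, while $\A_{d^2}[n]$ with $n>1$ lies entirely in the non-vertex locus. Consequently the illumination conjecture applies verbatim to all of $\A_{d^2}[n]$: assuming it, every point of $\A_{d^2}[n]$ is joined to some cusp by a singularity-free geodesic. (This is also why the hypothesis $n>1$ is exactly what is needed, and why the $n=1$ exceptions of the parity conjecture are correctly excluded.)

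The heart of the argument is then a lemma, to be proved independently of the illumination hypothesis: the set of points of $\A_{d^2}[n]$ that are illuminated by a cusp falls into at most two $\SLZ$-orbits when $n$ is odd, and at most one when $n$ is even. Granting this, the illumination conjecture makes all of $\A_{d^2}[n]$ cusp-illuminated, so the total orbit count is at most $2$ (resp. $1$); combined with the lower bounds this forces equality, which is the parity conjecture. To prove the lemma I would use the primality of $d$ decisively. The group $\SL_2(\ZZ{d})$, through which $\SLZ$ acts on $X(d)$, is transitive on the cusps — the $\Gamma(d)$-classes of $\P^1(\ZZ{d})$ — and the holonomy of a saddle connection is rational, so any cusp-to-$B$ geodesic can be carried by $\SLZ$ to a horizontal separatrix issuing from one fixed standard cusp $c_0$. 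The endpoint then sits at an $n$-rational position along a horizontal cylinder adjacent to $c_0$; applying the parabolic Dehn twist fixing $c_0$ together with the residual symmetries of the standard configuration collapses these positions to a single residue class, up to a $\ZZ{2}$ ambiguity that is precisely the spin invariant and is present only when $n$ is odd.

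The main obstacle is this lemma, and specifically its final normalization step: identifying the $\ZZ{2}$ ambiguity among cusp-adjacent $n$-rational endpoints with the spin invariant and checking that it genuinely collapses when $n$ is even. This demands the explicit local geometry of the square-tiling near a cusp of $X(d)$ — the circumferences and heights of the cylinders in the standard direction and the cone angle at $c_0$ — in order to determine exactly which $n$-rational points are reachable without crossing another singularity, and then to compute how the stabilizer of $c_0$ in $\SLZ$ permutes them. I expect the transitivity afforded by prime $d$ to make the cylinder data uniform across cusps, so that \emph{some} cusp may legitimately be replaced by \emph{one} cusp, reducing the matter to a single residue computation; the parity of $n$ should then enter through whether the twist acts with one or two orbits on the relevant $n$-torsion positions, matching the counts $1$ versus $2$ demanded by the spin invariant.
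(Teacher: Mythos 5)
Your reduction is the right one, and the top-level skeleton matches the paper's: count $\SLZ$-orbits in $\A_{d^2}[n]$, get the lower bound from the spin invariant, use illumination only for the upper bound, and normalize the illuminating geodesic to a horizontal segment by an element of $\SLZ$. The observation that $\A_{d^2}[n]$ avoids the vertices for $n>1$ is also correct and is indeed why the hypothesis $n>1$ suffices. But the ``lemma'' you defer to --- that the cusp-illuminated points fall into at most two (resp.\ one) $\SLZ$-orbits --- is the entire content of the theorem, and the one concrete mechanism you offer for it does not work as stated. You invoke transitivity of $\SLZ$ on the cusps ``the $\Gamma(d)$-classes of $\P^1(\ZZ{d})$,'' but that is transitivity of the \emph{modular} action of $\PSL_2(\ZZ{d})=\Aut(X(d))$, which is not the action in play here: the relevant action is the affine action of $\SLZ$ on the flat structure $(X(d),q)$, and the paper shows $\Aut(X(d))\cap\Aff^+(X(d),q)\cong\ZZ{2}$, so the two actions share almost nothing. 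Transitivity of the affine action on the cusp poles is true but has to be \emph{proved}; in the paper it is the content of Step~III (Proposition~\ref{between}), which shows $R(\EE_k)\cap\EE_1\ne\varnothing$ by exhibiting, for each $k$, a cusp pole of $\EE_1$ whose image under $R$ lands in $\EE_k$ --- an argument that itself uses primality of $d$ to solve $rk\equiv d-k-1 \bmod d$.

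The second, larger gap is the ``single residue class up to a $\ZZ{2}$ ambiguity'' step. After straightening, the illuminated point lands on the boundary of an eave or a lighthouse (with a further subcase when $\gcd(b,d)\ne 1$, handled separately in Proposition~\ref{into}), and the parabolic twist fixing the adjacent cusp acts on the $n$-rational points of a horizontal line in the eave $\EE_k$ with a number of orbits $\nu_k(b)=\gcd(b-n,k(d-k))\cdot\gcd(b,d)$ that is typically much larger than $2$. Nothing collapses to one residue class on a single line; the paper has to combine lines at different heights, choosing $b=s_1$ with $\nu_k(s_1)=1$ and an odd $b=s_2$ with $\nu_k(s_2)=2$ (Lemma~\ref{lemmaillumination}), and proving such $s_2$ exists when $n=d+2$ requires prime-counting estimates for $d>800$, a computer check below that, and ad hoc arguments for $(d,k)=(7,3)$ and $(19,4)$. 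Your sketch does not engage with any of this arithmetic, which is where the primality of $d$ and the parity of $n$ actually do their work; as written, the proposal restates the theorem as a lemma and gives a heuristic for why the lemma might be true rather than a proof.
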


We will use Theorem~\ref{thmillumintro} together with general results on illumination to prove the main result for all $(d,n)$, where $d$ is prime and $n> C_d$ (see \S\ref{secillumination}). We will establish the illumination conjecture for $d=3,4$ and $5$ and use Theorem~\ref{thmillumintro} to prove the main result for $d=3, 5$ (see~\S\ref{secproof35}). The proof for $d=4$ (see~\S\ref{secproof4}) is quite different in nature and will use the existence of the branched cover $X(4) \to X(3)$ that respects the square-tilings.

%%%
%%%
%%%
%%%
%%%

\noindent
{\bf \Rmnum{4}. Square-tiled surfaces.} The parity conjecture is also related to the ways of tiling a topological surface of genus 2 with squares, where only 4 or 8 corners of the squares come together at a vertex.

Let $\Sigma_2$ be a topological surface of genus 2. Preimages of the square under a suitable covering map $\pi: X \to E_0$ give a tiling of $\Sigma_2$ by $N = d \cdot n$ squares that we will call a {\em type $(d,n)$ square-tiling}. The $\SLZ$ action on 1-forms obtained by pulling back $dz$ via such covering maps gives an $\SLZ$ action on the square-tilings. In \S\ref{secbackground} we will show:
\begin{theorem}
The number of irreducible components of $W_{d^2}[n]$ is equal to the number of type $(d,n)$ square-tilings of $\Sigma_2$ up to the action of $\SLZ$.
\end{theorem}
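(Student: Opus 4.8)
The plan is to reduce this statement to Theorem~\ref{thmintro1}, which already identifies the number of irreducible components of $W_{d^2}[n]$ with the number of $\SLZ$ orbits in $\A_{d^2}[n] \subset X(d)$. Thus it suffices to establish an $\SLZ$-equivariant bijection between the set of type $(d,n)$ square-tilings of $\Sigma_2$ and the finite set $\A_{d^2}[n]$. First I would make precise what a type $(d,n)$ square-tiling is as a combinatorial datum: a degree $d$ branched cover $\pi: \Sigma_2 \to E_0$ of the square torus, branched over a single point configuration giving exactly two simple zeros of $\omega = \pi^*(dz)$ (so that $X \in \M_2$), with the relative period $\rho = \int_{z_1}^{z_2}\omega$ of order $n$ in $E_0$. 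Concretely such a cover is encoded by a pair of permutations in $S_d$ (the monodromy around the two generating loops of $E_0$) generating a transitive subgroup, subject to the commutator having the correct cycle type to produce genus $2$ via Riemann--Hurwitz, and the normalization $\int_X |\omega|^2 = dn$ recording that the base square is subdivided to resolve the order-$n$ torsion condition.

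Next I would unwind the definitions on the analytic side. By the discussion preceding the theorem, every $(X,\omega) \in \A^\circ_{d^2}$ with $\Per(X,\omega) = \Z[i]$ is exactly the data of a primitive degree $d$ covering $\pi: X \to E_0$ with $\pi^*(dz) = \omega$, defined up to translation on $E_0$; and $\A_{d^2}[n]$ consists of those $(X,\omega)$ whose two critical points $x_1 \ne x_2$ satisfy that $\pi(x_1)-\pi(x_2)$ has order $n$ in $\Jac(E_0)$. This is precisely the defining data of a type $(d,n)$ square-tiling: the preimages of the unit square tile $\Sigma_2$ by $N = dn$ squares, and the order-$n$ condition on the relative period is the same as the $n$-rationality that cuts out $\A_{d^2}[n]$ among the squares of the tiling. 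The map sending a square-tiling to its associated $(X,\omega)$ is therefore a bijection onto $\A_{d^2}[n]$, once one checks that combinatorially distinct tilings give distinct points and conversely; I would verify both directions by noting that the flat structure $(\Sigma_2, |\omega|^2)$ together with the translation cover to $E_0$ reconstructs the tiling up to the chosen marking.

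Finally I would check $\SLZ$-equivariance. The $\SLZ$ action on $\A^\circ_{d^2}$ is the restriction of the standard $\GL$ action on $\Omega\M_2$, which preserves $\Per(X,\omega) = \Z[i]$ precisely because $\SLZ$ stabilizes the lattice $\Z[i]$. On the combinatorial side, $\SLZ$ acts on square-tilings by deforming the squares and recutting into the new fundamental squares, inducing the announced action on 1-forms by pullback of $dz$. Because both actions are defined through the same underlying action on $(X,\omega)$ and the bijection above is precisely the identity on this underlying data, equivariance is automatic. The count of $\SLZ$ orbits of type $(d,n)$ square-tilings therefore equals the count of $\SLZ$ orbits in $\A_{d^2}[n]$, which by Theorem~\ref{thmintro1} equals the number of irreducible components of $W_{d^2}[n]$.

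\emph{The main obstacle} I expect is not equivariance but the careful bookkeeping in the bijection: one must confirm that the single-branch-point and genus-$2$ constraints on the monodromy permutations correspond exactly to the analytic requirement that $\omega$ has two distinct simple zeros, and that the order-$n$ relative period condition is faithfully recorded by the combinatorial tiling rather than being lost when passing to the quotient by translations of $E_0$. In particular, the primitivity of the cover (surjectivity of $\pi_*$ on $H_1$) must be matched with transitivity of the monodromy together with the lattice condition $\Per(X,\omega) = \Z[i]$, and I would want to confirm no extra tilings arise from non-primitive covers that happen to satisfy the square-count.
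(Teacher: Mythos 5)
There is a genuine gap: you have conflated two different objects. A type $(d,n)$ square-tiling of $\Sigma_2$, as defined in the paper, is a tiling by $N=d\cdot n$ unit squares, i.e.\ an element of $\ST(d,n)$: an Abelian differential with $\RPer(\omega)=\Z[i]$, with $\Per(\omega)$ an \emph{index-$n$ sublattice} of $\Z[i]$, of area $d\cdot n$, equivalently a degree $dn$ cover of $E_0$ branched over a \emph{single} point that factors through a degree $n$ isogeny. A point of $\A_{d^2}[n]$ is a different object: it has $\Per(\omega)=\Z[i]$, area $d$, and is a degree $d$ cover of $E_0$ branched over \emph{two} points whose difference has order $n$; its relative period lies in $\frac1n\Z[i]$, so it is not a square-tiled surface at all when $n>1$. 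Your own description betrays the confusion: you posit a degree $d$ cover of the unit-area torus with $\int_X|\omega|^2=dn$, which is impossible. Because the two sets carry different period lattices and different areas, there is no set-level $\SLZ$-equivariant bijection between them that is ``the identity on the underlying data,'' and the equivariance you declare automatic is precisely the point that needs proof. Passing from $(X,\omega)\in\A_{d^2}[n]$ to a square-tiling requires quotienting $E_0$ by the order-$n$ subgroup generated by $\pi(x_1)-\pi(x_2)$ and renormalizing the resulting torus to the square one by a matrix of determinant $n$; that matrix is only well defined up to left multiplication by $\SLZ$, so the construction descends to orbit sets but not to the sets themselves.

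The paper avoids this by routing both sides through the $\GL$-invariant locus $\Omega W_{d^2}[n]$ (item (e) of Theorem~\ref{thmequivconj}): the $\SLZ$ orbits in $\ST(d,n)$ and the $\SLZ$ orbits in $\A_{d^2}[n]$ each biject with the connected components of $\Omega W_{d^2}[n]$, the key injectivity input being that any $A\in\GL$ carrying one reduced square-tiled surface to another must preserve $\RPer=\Z[i]$ and hence lie in $\SLZ$, while on the other side $\SL(X,\omega)\subset\SLZ$ plays the analogous role. If you want to keep your strategy of reducing to Theorem~\ref{thmintro1}, you must insert this intermediate comparison of $\GL$-orbits (or an equivalent argument showing the quotient-and-renormalize map is well defined and bijective on $\SLZ$-orbit sets); as written, the central bijection of your proof does not exist. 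Your closing worries about monodromy, primitivity and genus are legitimate bookkeeping, but they are secondary to this structural issue.
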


The type $(2,2)$ square-tilings of $\Sigma_2$ and their $\SLZ$ orbits are illustrated in Figure~\ref{figtype22}. One can easily verify that exactly $8$ corners of the squares come together at each vertex of these tilings.

%\begin{comment1}
\begin{figure}[H]
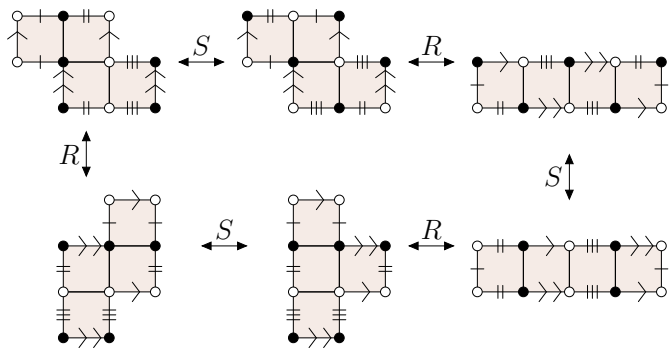

\captionsetup{singlelinecheck=off}
\centering
  \includestandalone[width=0.6\textwidth]{tikztype_2,2}
  \caption{The $\SLZ$ action on type $(2,2)$ square-tilings of a topological surface of genus $2$ presented by its generators $S = \protect\begin{pmatrix} 1 & 1 \\ 0 & 1 \protect\end{pmatrix}$ and $R = \protect\begin{pmatrix} 0 & -1 \\ 1 & 0 \protect\end{pmatrix}$.}
  \vspace{-5pt}
  \label{figtype22}
\end{figure}
%\end{comment1}

The number of all reduced tilings (that do not admit a tiling by bigger squares) of $\Sigma_2$ by $N$ squares is given by (see \cite{EMS03} and \cite{KM17}):
$$
\displaystyle
 \frac{(N-2)(4N-3)}{12} \cdot \big\vert \PSL_2(\ZZ{N}) \big\vert + \sum_{\substack{d \ \mid \ N \\ d\ne N } } \frac{(d-1)d}{3N} \cdot \big\vert \PSL_2(\ZZ{d}) \big\vert
\cdot  \big\vert \SL_2(\ZZ{\tfrac Nd}) \big\vert. $$
The parity conjecture would imply that the formula is significantly simpler if one considers the tilings up to $\SLZ$ action:
\begin{equation*}
\left| \left\{
\begin{aligned}
\mbox{reduced tilings of } \\
\mbox{$\Sigma_2$ by $N$ squares}
\end{aligned}
\right\} \Big/ \SLZ \right| = 
 \sum_{\substack{n \ \mid \ N \\ n \mbox{ \footnotesize is odd} } } 2 \ \ +  \sum_{\substack{n \ \mid \ N \\ n \mbox{ \footnotesize is even} }} 1, \mbox{ for } N>5.
\end{equation*}

When $n$ is odd, let $t_{d,n,\epsilon}$ be the number of square-tiled surfaces of type $(d,n)$ and spin $\epsilon$. The formula for $t_{d,n,\epsilon}$ was proved for odd $d$ and conjectured for even $d$ in \cite{KM17}. In \S\ref{sectiling} we obtain this formula for any $d$ and $n>1$:
\begin{theorem}
For an arbitrary $d$ and $n>1$, the number of square-tiled surfaces of type $(d,n)$ and spin $\epsilon$ is:
$$
t_{d,n,0}  =
\displaystyle
\frac{d-1}{12n} \cdot \left| \PSL_2(\ZZ{d}) \right| \cdot  \left| \SL_2(\ZZ{n}) \right|,
$$
$$
t_{d,n,1} = 
\displaystyle
\frac{d-1}{4n} \cdot \left| \PSL_2(\ZZ{d}) \right| \cdot  \left| \SL_2(\ZZ{n}) \right|.
$$
\end{theorem}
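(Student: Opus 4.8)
The plan is to turn the statement into an $\SLZ$-invariant point count and to evaluate it, with the spin refinement as the decisive step. First I would invoke the correspondence underlying Theorem~\ref{thmintro1} and its companion on square-tilings: type $(d,n)$ square-tilings of $\Sigma_2$ are identified, $\SLZ$-equivariantly, with the points of $\A_{d^2}[n] \subset X(d)$, and under this identification the spin invariant of \S\ref{secspin} (defined since $n$ is odd) matches on the two sides. Because the spin is $\SLZ$-invariant, $t_{d,n,\epsilon}$ is simply the number of $p \in \A_{d^2}[n]$ with $\epsilon(p) = \epsilon$. It therefore suffices to compute
$$
t_{d,n} = \big|\A_{d^2}[n]\big|, \qquad \Delta_{d,n} = \sum_{p \in \A_{d^2}[n]} (-1)^{\epsilon(p)},
$$
after which $t_{d,n,0} = \tfrac12(t_{d,n} + \Delta_{d,n})$ and $t_{d,n,1} = \tfrac12(t_{d,n} - \Delta_{d,n})$. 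The two displayed formulas are equivalent to the pair of claims $t_{d,n} = \frac{d-1}{3n}\,|\PSL_2(\ZZ{d})|\,|\SL_2(\ZZ{n})|$ and $\Delta_{d,n} = -\tfrac12\,t_{d,n}$; the latter says precisely that the spin partitions $\A_{d^2}[n]$ in the ratio $1:3$.

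For the total $t_{d,n}$ I would use that $\A_{d^2}[n]$ is the set of primitive $n$-rational points of the square-tiling of $X(d)$ (\S\ref{secabs}). The cleanest route is to specialize the known count of reduced $N$-square tilings of $\Sigma_2$ (the displayed formula attributed to \cite{EMS03} and \cite{KM17}) to its type-$(d,n)$ summand: taking $N = dn$, so that $N/d = n$, the term $\frac{(d-1)d}{3N}\,|\PSL_2(\ZZ{d})|\,|\SL_2(\ZZ{n})|$ collapses to $\frac{d-1}{3n}\,|\PSL_2(\ZZ{d})|\,|\SL_2(\ZZ{n})|$, as required. Conceptually the same number arises by parametrizing a point of $\A_{d^2}[n]$ by the two pieces of data it records: the combinatorial type of the primitive degree-$d$ cover $\pi : X \to E_0$, which via $\A_{d^2} \cong X(d)$ and the decomposition $\Jac(X) \sim E_0 \times E$ is governed by a level-$d$ structure (the source of the $|\PSL_2(\ZZ{d})|$ factor and of the $\tfrac{d-1}{3}$ determined by the tiling geometry of \S\ref{sectiling}), together with the rel parameter $\rho = \pi(x_1) - \pi(x_2)$ of exact order $n$ in $E_0$ (the source of the factor $|\SL_2(\ZZ{n})|/n$).

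The heart of the proof is the evaluation $\Delta_{d,n} = -\tfrac12\,t_{d,n}$. Here I would compute $\epsilon(p)$ as the Arf invariant of the $\Z/2$-quadratic form attached to the flat structure of the genus-two origami, and express it in terms of the cover data and the order-$n$ parameter isolated above. Guided by $\Jac(X) \sim E_0 \times E$, the expectation is that the spin is controlled by a $2$-torsion (theta-characteristic) datum of the configuration, among whose four classes exactly one is odd and three are even --- paralleling the single odd versus three even theta characteristics of a genus-one curve and producing the $1:3$ ratio. Concretely, using that $n$ is odd so that the $2$-torsion datum is independent of the order-$n$ rel parameter, I would show this datum equidistributes over its four classes as $p$ ranges over $\A_{d^2}[n]$; the signed sum $\Delta_{d,n}$ then reduces to a quadratic character (Gauss) sum over the $n$-torsion whose value forces the universal constant $-\tfrac12$.

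The main obstacle is exactly this last step: producing an Arf-invariant formula that is uniform in $d$ and verifying that the resulting signed count is insensitive to the parity of $d$. This is where \cite{KM17} succeeded only for odd $d$, because the parity of $d$ enters through the $2$-torsion of the cover and through the way the class separating the two zeros interacts with a symplectic basis. I would address it by working with the explicit square-tiling of $X(d)$ from \S\ref{sectiling} and the $\SLZ$-action, reducing the Arf computation to a sum that can be evaluated cell-by-cell on the tiling and checking that the even-$d$ corrections that obstructed \cite{KM17} cancel, so that the ratio $\tfrac14 : \tfrac34$ holds for every $d$.
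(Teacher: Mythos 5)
Your reduction to the two quantities $t_{d,n}$ and $\Delta_{d,n}$, and your evaluation of the total $t_{d,n}=\frac{d-1}{3n}\left|\PSL_2(\ZZ{d})\right|\cdot\left|\SL_2(\ZZ{n})\right|$ (whether by specializing the reduced-tilings count or, as the paper does, by writing $\left|\A_{d^2}[n]\right|=\deg\delta\cdot\left|\bfP[n]\right|$ with $\deg\delta=\frac{d-1}{6}\left|\PSL_2(\ZZ{d})\right|$ and $\left|\bfP[n]\right|=\frac2n\left|\SL_2(\ZZ{n})\right|$), are sound. The genuine gap is in the $1:3$ ratio, which you correctly identify as the heart of the matter but then leave unproved: your plan is to realize $\epsilon$ as the Arf invariant of a $\Z/2$-quadratic form on the flat surface, show a $2$-torsion datum equidistributes over four classes, and evaluate a Gauss sum, with the uniformity in the parity of $d$ explicitly deferred to an unexecuted ``cell-by-cell'' check. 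Two problems: first, the spin of a type $(d,n)$ differential in $\Omega\M_2(1,1)$ is not the classical Arf invariant of the translation structure (that stratum is connected and the Arf invariant cannot separate anything); it is defined by an $n$-torsion condition on the images of the Weierstrass points, so your starting identification is itself unjustified. Second, the step you call the main obstacle is exactly the content of the theorem, so the proposal does not close.

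The paper's route avoids all of this. By the Weierstrass profile computation (Proposition~\ref{WPprofile}), the six Weierstrass points map to four $2$-torsion translates with multiplicities $(3,1,1,1)$ for odd $d$ and $(0,2,2,2)$ for even $d$; in either case there is a unique distinguished point $P_0$, and normalizing the cover so that $P_0$ is the origin, Proposition~\ref{spincompute} shows $\epsilon(X,\omega)=0$ if and only if $nz=0$ for the branch point $z$. Thus the spin is a function of the discriminant $\delta(X,\omega)=\pm 2z\in\bfP[n]$ alone, independent of the cover above it, and uniform in the parity of $d$ for free. Since $\delta$ is unramified over $\bfP[n]$, the ratio on $\A_{d^2}[n]$ equals the ratio on $\bfP[n]$, where it becomes the elementary count of primitive $n$-rational points $\frac{a+ib}{n}$ with $a\equiv b\equiv 0\bmod 2$ versus the rest, giving $1:3$ directly (Proposition~\ref{ratioorbits}). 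If you want to salvage your approach, the missing lemma you need is precisely this statement that the spin factors through the branch locus; once you have it, no Arf-invariant or Gauss-sum machinery is required.
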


%%%
%%%
%%%
%%%
%%%

\noindent
{\bf \Rmnum{5}. Topological torus covers.} The final perspective is related to the Hurwitz theory of branched covers of a torus. It gives a way to formulate the parity conjecture in purely topological terms.

Let $\Sigma_g$ be a closed, oriented topological surface of genus $g$ and $\pi: \Sigma_2 \to \Sigma_1$ a topological cover with two ramification points over a single branch point.
Two covers $\pi_1, \pi_2: \Sigma_2 \to \Sigma_{1}$ are {\em topologically equivalent} if there are orientation-preserving homeomorphisms $f_1: \Sigma_{1} \to \Sigma_{1}$ and $f_2: \Sigma_2 \to \Sigma_2$, such that $\pi_2 \circ f_2 = f_1 \circ \pi_1$. A cover $\pi: \Sigma_2 \to \Sigma_{1}$ is called {\em type $(d,n)$ cover} if it factors through:
$$
\Sigma_g \xrightarrow{d} \Sigma_{1} \xrightarrow{n} \Sigma_{1},
$$
where $\Sigma_{1} \xrightarrow{n} \Sigma_{1}$ is a cover of tori of degree $n$ and $\Sigma_g \xrightarrow{d} \Sigma_{1}$ is a primitive cover of degree $d$ branched over two distinct points unless $n=1$.
In \S\ref{secbackground} we will show:
\begin{theorem}
The number of irreducible components of $W_{d^2}[n]$ is equal to the number of topological classes of type $(d,n)$ covers $\pi: \Sigma_2 \to \Sigma_{1}$.
\end{theorem}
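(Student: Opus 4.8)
The plan is to deduce this statement from the equality, established earlier, between the number of irreducible components of $W_{d^2}[n]$ and the number of type $(d,n)$ square-tilings of $\Sigma_2$ modulo $\SLZ$. It then suffices to produce a natural bijection
\[
\left\{ \text{type } (d,n) \text{ square-tilings of } \Sigma_2 \right\}\big/\SLZ \;\longleftrightarrow\; \left\{ \text{topological classes of type } (d,n) \text{ covers } \pi:\Sigma_2\to\Sigma_1 \right\},
\]
realized by forgetting the flat (square) structure. A type $(d,n)$ square-tiling is, by definition, a branched covering $\pi:\Sigma_2 \to E_0$ of the square torus of degree $N = dn$ that factors as $\Sigma_2 \xrightarrow{d}\Sigma_1\xrightarrow{n} E_0$; discarding the affine structure of $E_0$ and remembering only the underlying topological covering yields a type $(d,n)$ cover in the sense of perspective (\Rmnum{5}). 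Since the factorization, the primitivity of the degree-$d$ part, and the ramification profile (two simple ramification points over the single branch point, forced by Riemann--Hurwitz because $\chi(\Sigma_2) = -2$ and the base is a torus) are all topological data, they are preserved by this forgetful assignment, so the map is well defined on individual objects.

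I would then verify the three properties of the induced map on quotients. \emph{Well-definedness}: two square-tilings in the same $\SLZ$-orbit differ by the affine automorphism of $E_0$ induced by a matrix $A\in\SLZ$; this automorphism is an orientation-preserving self-homeomorphism $f_1$ of $\Sigma_1 = E_0$ fixing the branch point, and it lifts to a homeomorphism $f_2$ of $\Sigma_2$, exhibiting the two covers as topologically equivalent. \emph{Surjectivity}: any type $(d,n)$ topological cover is realized by a square-tiling --- equip the base torus with the standard flat structure of $E_0$ and pull it back, so that the preimages of the unit square tile $\Sigma_2$; this is the standard realization of a one-branch-point torus cover as an origami. \emph{Injectivity} is the crux: if two square-tilings become topologically equivalent after forgetting the flat structure, then the base homeomorphism $f_1$ is, up to isotopy rel the branch point, an element of $\Mod(E_0,\mathrm{pt}) \cong \SLZ$, and one must show that this mapping-class action coincides with the $\SLZ$-action on square-tilings, so that the two tilings lie in a common orbit.

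To make injectivity precise I would pass to monodromy. A degree-$N$ cover of $E_0$ branched over the single marked point is encoded by a representation $\rho:\pi_1(E_0\setminus\{\mathrm{pt}\})\to S_N$, where $\pi_1(E_0\setminus\{\mathrm{pt}\})$ is free on generators $a,b$ with peripheral element $[a,b]$; equivalently by the pair $(\sigma,\tau) = (\rho(a),\rho(b))$ of horizontal and vertical gluing permutations, with $[\sigma,\tau]$ a product of two transpositions. A square-tiling of $\Sigma_2$ up to homeomorphism of $\Sigma_2$ is exactly such a pair up to simultaneous conjugation in $S_N$ (relabeling of sheets, i.e.\ the $f_2$ ambiguity). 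Under this dictionary the $\SLZ$-action on square-tilings is the standard action of $\SLZ$ on $(\sigma,\tau)$ through its action on $H_1(E_0) = \Z^2$, and by Nielsen's theorem the orientation-preserving outer automorphisms of $F_2 = \langle a,b\rangle$ preserving the conjugacy class of $[a,b]$ form precisely $\SLZ = \Mod(E_0,\mathrm{pt})$. Hence topological equivalence of the covers --- conjugation in $S_N$ together with the $\Mod(E_0,\mathrm{pt})$-action on monodromy --- is exactly the combined relation ``same square-tiling of $\Sigma_2$'' and ``same $\SLZ$-orbit,'' yielding injectivity and completing the bijection.

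The main obstacle I anticipate is this last identification: checking that the $\SLZ$-action defined analytically on square-tilings (pulling back $dz$ by the affine deformation) agrees, under the monodromy dictionary, with the purely topological $\Mod(E_0,\mathrm{pt})$-action, and that no topological equivalences arise beyond those from $\SLZ$ and from relabeling sheets. A related point of care is the role of $-I\in\SLZ$, which acts on $E_0$ as the elliptic involution fixing the branch point: it is a nontrivial mapping class acting by $(\sigma,\tau)\mapsto(\sigma^{-1},\tau^{-1})$ on monodromy, so the relevant group is genuinely $\SLZ$ rather than $\PSLZ$, consistent with its use in the square-tiling count. Finally one should confirm the degenerate case $n=1$, where the two branch points coincide and the intermediate torus cover is trivial, so that the factorization reduces to a single primitive degree-$d$ cover branched over one point.
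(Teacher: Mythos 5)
Your proposal is correct and follows essentially the same route as the paper: the paper also establishes the statement by passing through type $(d,n)$ square-tilings, pulling back the flat structure of $(E_0,dz)$ along a topological cover and identifying the $\SLZ$-action on square-tilings with the action of $\Mod_{1,1}\cong\SLZ$ on the once-marked torus. The only difference is one of detail — the paper asserts this identification directly from the commutative diagram of affine automorphisms, whereas you flesh out the injectivity via monodromy pairs $(\sigma,\tau)$ and Nielsen's theorem, which is a legitimate expansion of the same argument rather than a new approach.
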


Questions about topological classes of branched covers have a long history dating back to Hurwitz. He used representation theory of symmetric groups to treat the topological classes of branched covers of the sphere.

%%%
%%%
%%%
%%%
%%%

\noindent
{\bf Previous results in genus 2.}
We now move to the references. 
Primitive Teichm\"uller curves in $\M_2$ were classified by McMullen in a series of works \cite{McM05a},  \cite{McM05b} and \cite{McM06}. In \cite{McM06} it was shown that every primitive Teichm\"uller curve in $\M_2$ is an irreducible component of $W_D$ or the decagon curve and in \cite{McM05a} it was shown that the Weierstrass curve $W_D$ has $1$ or $2$ components depending on the values of $D \mod 8$. The irreducible components of $W_D$ are primitive Teichm\"uller curves if and only if $D\ne d^2$. The components of $W_{d^2}$ for prime $d$ were also classified in \cite{HL06} using square-tiled surfaces.

The Euler characteristic of $W_D$ was computed by Bainbridge in \cite{Bai07}. In \cite{Muk14} Mukamel computed the number of elliptic points of $W_D$. The foliations of Hilbert modular surfaces $X_D$ for a general $D$ are discussed in \cite{McM07b}. The geometry and dynamics of the absolute period leaves $\A_D$ in the case $D\ne d^2$ were studied in \cite{McM14}. Our work extends these results to the case $D=d^2$. The major difference between these two cases is that $\A_D \cong \Hyp$, when $D \ne d^2$, and $\A_{d^2} \cong X(d)$. For more on real multiplication and Hilbert modular surfaces see \cite{McM03} and \cite{McM07a}. For another perspective see \cite{Cal04}.

\noindent
{\bf Previous work on the parity conjecture.}
The study of the square-tiling of $\A_{d^2}$ was initiated by Schmoll in \cite{Sch05}. The connection to the modular curves was established in \cite{Kani03}. The parity conjecture has been proved for $d=2$ and arbitrary $n$ in \cite{HWZ}, and investigated using a computer program by Delecroix and Leli\`evre. Another approach to the conjecture is presented in \cite{KM17}.

\noindent
{\bf Related research.}
Work \cite{EO01} relates the number of square-tiled surfaces to quasi-modular forms and volumes of moduli spaces. An algebro-geometric approach to Teichm\"uller curves generated by square-tiled surfaces is given in \cite{Mol05}, \cite{Che10} and \cite{KM17}.

The {\em cylinder coordinates} presented in \cite{EMS03} are used in this work to study the square-tiling of $\A_{d^2}$. For the most recent results on illumination on translation surfaces see \cite{HST08} and \cite{LMW16}.

\noindent
{\bf Background references.}
For expositions on $\GL$ action on $\Omega\M_g$ see \cite{MT02}, \cite{Zor06}, \cite{FM14} and \cite{Wri15}.
For a survey on square-tiled surfaces see \cite{Zm11}.

The first examples of primitive Teichm\"uller curves were given by \cite{Vee89} and came from the study of billiards in rational polygons. Further references on primitive Teichm\"uller curves in higher genera are \cite{Mol08}, \cite{MMW17}, \cite{Fil17}. 

The theory of topological classes of branched covers of the sphere starts with works \cite{Lur71} (1871), \cite{Cle73} (1873) and \cite{Hur91} (1891). More general results for generic covers of any topological closed surfaces are obtained in \cite{GK87} (1987). However the case of non-generic covers is widely unexplored. Some results on topological classes of non-generic covers of the sphere can be found in \cite{Pro88}.

\noindent
{\bf Pictures of square-tilings of $X(d)$.} We conclude by giving pictures of the square-tilings of all modular curves $X(d)$ of genus $0$: $X(2), X(3), X(4)$ and $X(5)$. 

The singularities of the flat metric $|q|$ are simple zeroes of $q$ (black points) and simple poles of $q$ (white and red points). We describe identifications of the edges of the squares in terms of horizontal and vertical intervals joining the singularities. The ones that are labeled with numbers and strokes are identified via parallel translations. The adjacent ones that are labeled with arrows are identified via rotations by $\pi$. 

The cusps of $X(d)$ are labeled with red points. In particular, one can easily verify that $X(2), X(3), X(4)$ and $X(5)$ have 3, 4, 6 and 12 cusps respectively.

\begin{comment1}
\begin{figure}[H]
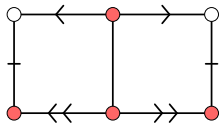
 
\centering
\includestandalone[width=\textwidth]{tikzX2intro}
%\vspace{-10pt}
\caption{The square-tiling of the modular curve $X(2)\cong \A_{4}$.}
\label{figX(2)intro}
\end{figure}

\begin{figure}[H] 
\centering
\includestandalone[width=\textwidth]{tikzX3intro}
\caption{The square-tiling of the modular curve $X(3) \cong \A_{9}$.}
\label{figX(3)intro}
\end{figure}

\begin{figure}[H]
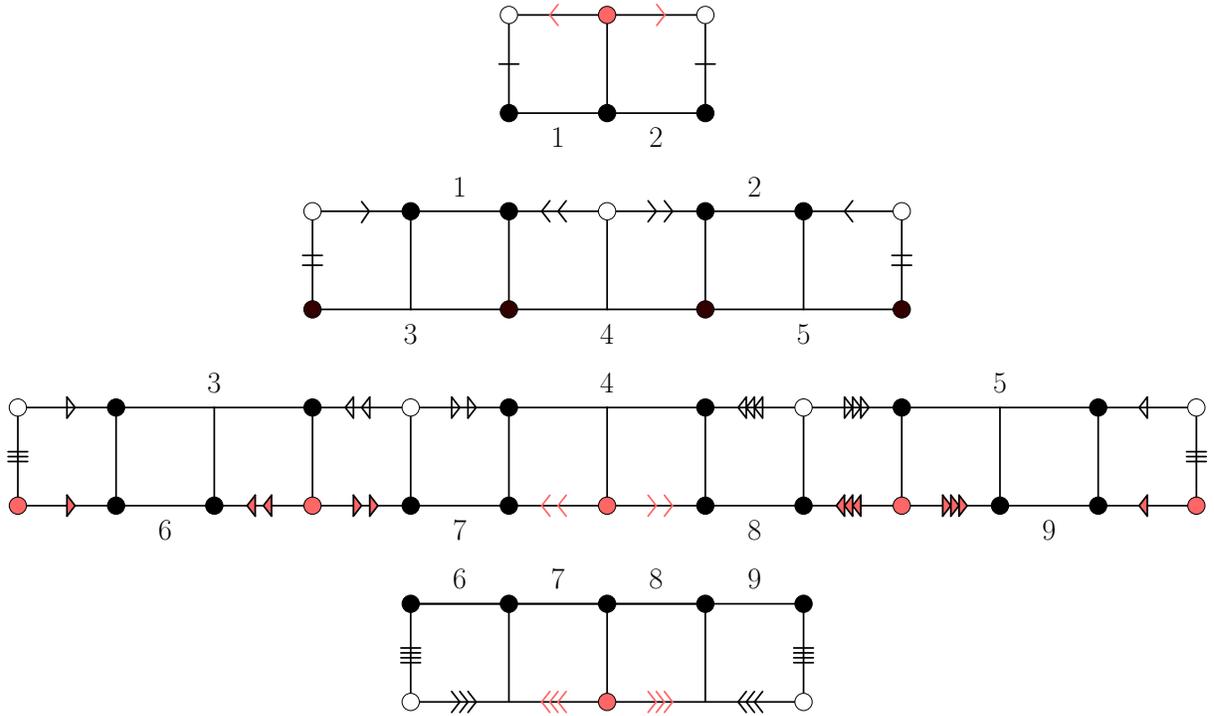

\centering
  \includestandalone[width=\textwidth]{tikzX4}
  \caption{The square-tiling of the modular curve $X(4) \cong \A_{16}$.}
  \vspace{-10pt}
  \label{figX(4)intro}
\end{figure}

Given the complexity of the square-tiling of $X(5)$, we prefer not to use arrows and strokes as labels, instead we describe the missing identifications as follows. The vertical edges of each horizontal rectangle are identified via parallel translation. The horizontal edges that are are labeled with letters and the adjacent horizontal line segments that start at a pole (white or red) and end at a singularity (black, white or red) are identified via rotations by $\pi$. 

\begin{figure}[H]
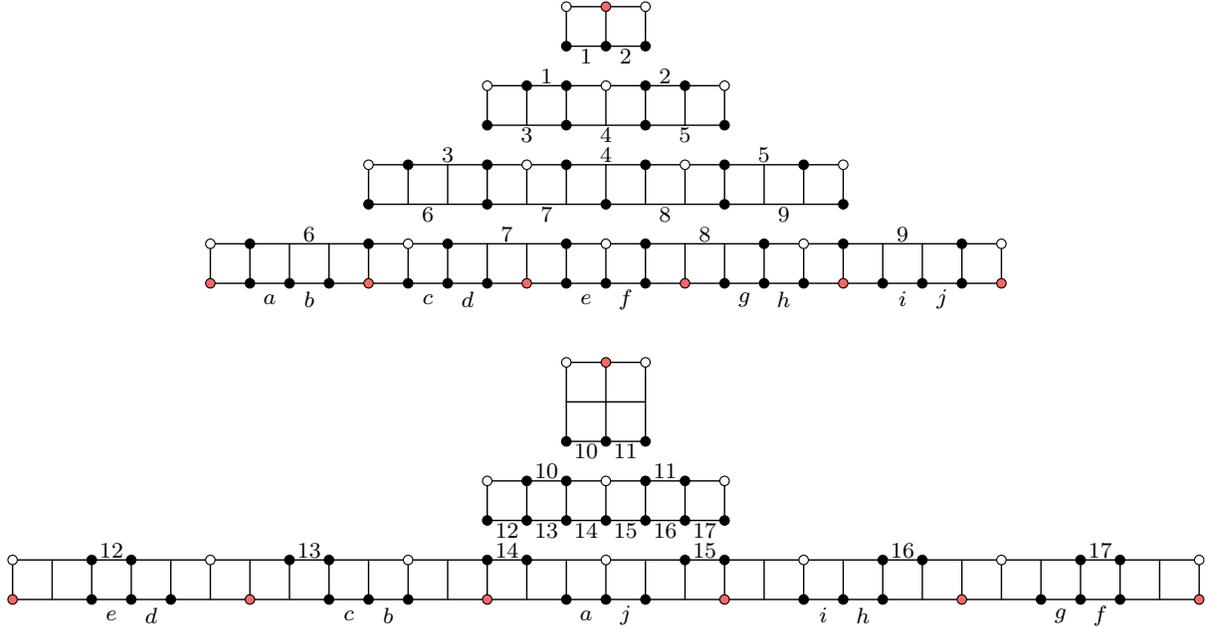
 
  \includestandalone[width=\textwidth]{tikzX5}
  \caption{The square-tiling of the modular curve $X(5) \cong \A_{25}$.}
  \label{figX(5)intro}
\end{figure}
\end{comment1}

\noindent
{\bf Appendices.} 
The remaining cases of the parity conjecture are open. In Appendix~\ref{sectioncounts} we review some counts of elliptic covers and give bounds on the number of irreducible components of $W_{d^2}[n]$ that works for all $d$ and $n$ (see Corollary~\ref{thmsimplebound}).

In Appendix~\ref{secpagoda} we will also give a simpler geometric description of the square-tilings of $X(d)$ for every prime $d$ and explain the pagoda structure of the modular curves that arise in this case. In particular, we will show (see Corollary~\ref{corgraph}):

\begin{theorem}
For every prime $d$, the modular curve $X(d)$ carries an embedded trivalent graph, well-defined up to the action of $\Aut(X(d))$, whose complement is a union of $\frac{d-1}2$ disks.
\end{theorem}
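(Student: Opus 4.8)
The plan is to extract the graph directly from the pagoda decomposition of $X(d)$ built in Appendix~\ref{secpagoda}. For prime $d$ that decomposition presents the flat surface $(X(d),|q|)$ as a union of $\frac{d-1}{2}$ polygonal pieces — the pagodas — each assembled from unit squares of the tiling and glued to the others along boundary saddle connections. I would set $G\subset X(d)$ to be the gluing locus: the union of these boundary saddle connections together with their endpoints. By construction $X(d)\setminus G$ is exactly the disjoint union of the $\frac{d-1}{2}$ open pagodas, so the number of complementary components is immediate; what remains is to show that each pagoda is an open disk and that $G$ is trivalent.

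For the disk claim I would use the explicit model of a single pagoda from Appendix~\ref{secpagoda}: each pagoda is a simply connected ``staircase'' union of squares, hence a topological disk, and the interior cone points (the poles, and the simple zeros not meeting $G$) do not affect this, since a disk with cone points is still a disk. For trivalence I would argue locally at the vertices of $G$, which are precisely the simple zeros of $q$ lying on the boundary saddle connections. Each such zero has cone angle $3\pi$, and I would read off from the gluing pattern that exactly three boundary edges emanate from it, separated by sectors of angle $\pi$, while checking that no point of $G$ has valence different from $3$. As a consistency check, feeding $2E=3V$, the standard genus formula $g(X(d))=1+\frac{(d^2-1)(d-6)}{24}$, and $F=\frac{d-1}{2}$ into $V-E+F=2-2g$ forces $V=d-5+4g$, the number of boundary zeros, confirming that $G$ together with the pagodas is a genuine cell structure.

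Canonicity up to $\Aut(X(d))$ would follow from canonicity of the input. The graph $G$ is defined purely from the quadratic differential $q$; by \S\ref{secmodular} the pair $(X(d),q)$ has no translation automorphisms and its $\GL$-orbit projects to a single point, so any choices entering the pagoda decomposition (a preferred direction, or an initial square) are permuted transitively by $\Aut(X(d))$, and different decompositions produce graphs differing by an automorphism.

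The crux — and the main obstacle — is the trivalence analysis together with the structural input from Appendix~\ref{secpagoda}: establishing that the pagoda pieces are simply connected and that their boundaries meet exactly in trivalent vertices is a genuine combinatorial argument about the square-tiling and its edge identifications, not a formal consequence of the setup. Finally I would treat the small primes separately: for $d=2$ the quantity $\frac{d-1}{2}$ is not even an integer, and for $d=3,5$ (also genus zero) it is $\le 2$, forcing $G$ to degenerate to a point or a single circle so that the statement holds only in a degenerate sense; the genuine content is for odd $d\ge 7$, where $g\ge 3$ and $G$ is a nonempty trivalent graph.
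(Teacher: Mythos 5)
Your construction is essentially the paper's own: the graph is the union of the saddle connections between simple zeroes of $q$ lying on the bottoms of the eaves (equivalently, the locus along which the stories of the pagoda are glued), the complementary disks are the $\frac{d-1}2$ stories (Corollary~\ref{cordisk}), and well-definedness up to $\Aut(X(d))$ comes from the canonicity of $q$ together with Corollary~\ref{corcanonical}. One caveat for your trivalence step: a simple zero adjacent to a cusp pole along an eave bottom has one of its three horizontal prongs ending at that pole, and since pole-to-zero saddle connections are excluded from the graph (they are folded arcs hanging in the interior of a story), such a zero meets only two edges of $G$ and must be treated as an interior point of an edge rather than a vertex -- the paper's proof is silent on trivalence entirely, so your cone-angle-$3\pi$ analysis is an addition rather than a deviation, and you are right to flag it as the part requiring genuine combinatorial work.
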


%In Appendix~\ref{secisoperiodic} we will compare our results to the results on absolute period leaves consisting of the 1-forms whose set of periods has maximal rank $2g$ (see \cite{McM14}).

\noindent
{\bf Acknowledgments.} 
This is a version of my PhD thesis. I would like to thank my advisor Curtis T. McMullen for his guidance throughout my PhD starting from introducing me to the main problem of my thesis and inspiring my interest in it to engaging me in a thorough work over this text. I would also like to thank other people with whom I had many useful conversations and whose ideas contributed to my work: Matt Bainbridge, Dawei Chen, Vincent Delecroix, Philip Engel, Simion Filip, Pascal Hubert, Erwan Lanneau, Ronen Mukamel, Duc-Manh Nguyen, Anand Patel, Ananth Shankar, Peter Smillie, Alex Wright and Anton Zorich.

%%%%%%%%%%%%%%%%%%%%%%%
%%%%%%%%%%%%%%%%%%%%%%%
%%%%%%%%%%%%%%%%%%%%%%%

\section{Background and equivalence of conjectures} \label{secbackground}

In this section we give some background on Abelian differentials and establish the equivalence of different approaches to the parity conjecture mentioned in the introduction. In particular we will define Teichm\"uller curves, square-tiled surfaces, type $(d,n)$ covers, and type $(d,n)$ elliptic differentials. We will show:

\begin{theorem} \label{thmequivconj}
For any $(d,n)$, where $d>1$ and $n \ge 1$, the following finite sets are in bijection:
\begin{enumerate}
\item[(a)]
irreducible components of $W_{d^2}[n]$;

\item[(b)]
$\SLZ$ orbits in $\A_{d^2}[n]$;

\item[(c)]
$\SLZ$ orbits of type $(d,n)$ square-tiled surfaces;

\item[(d)]
topological classes of type $(d,n)$ covers of a torus; and

\item[(e)]
$\GL$ invariant loci of type $(d,n)$ elliptic differentials $\Omega W_{d^2}[n]$.
\end{enumerate}
\end{theorem}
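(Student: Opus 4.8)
The plan is to route all five equivalences through the most concrete set, namely (c), the $\SLZ$-orbits of type $(d,n)$ square-tiled surfaces, establishing the four bijections (c)$\leftrightarrow$(d), (c)$\leftrightarrow$(b), (c)$\leftrightarrow$(e) and (e)$\leftrightarrow$(a); transitivity then yields the theorem, and the composite (a)$\leftrightarrow$(b) recovers Theorem~\ref{thmintro1}, which is thereby proved rather than assumed. Finiteness of all five sets follows from the finiteness of $\A_{d^2}[n]$ together with the bijections established below. Throughout, the real work is to check that each assignment is equivariant for the relevant group action, so that it descends to orbits.

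For (c)$\leftrightarrow$(d) I would first note that primitivity of the degree-$d$ factor makes the intermediate torus intrinsic to a type $(d,n)$ cover $\pi\colon \Sigma_2 \to \Sigma_1$, since it is recovered as the sublattice $\pi_* H_1(\Sigma_2) \subset H_1(\Sigma_1)$; a square-tiling and a type $(d,n)$ cover are then the same combinatorial datum. The core of the bijection is the identification of the $\SLZ$-action with topological equivalence: under $\SLZ \cong \Mod(\Sigma_1)$, the action on square-tilings is exactly post-composition of the covering map with a mapping class of the base torus, while orientation-preserving homeomorphisms of $\Sigma_2$ supply the equivalence upstairs, so two square-tiled surfaces lie in one $\SLZ$-orbit if and only if the covers are topologically equivalent. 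I would prove this by converting a homeomorphism realizing topological equivalence into an affine map, which is the Hurwitz-theoretic heart of the statement.

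The remaining bijections connect this flat picture to the modular-curve and dynamical pictures, and require tracking normalizations: (b) makes the intermediate torus the unit square (area $d$, via the primitive degree-$d$ map $\pi\colon X\to E_0$ appearing in the definition of $\A_{d^2}[n]$, whose two critical values differ by a point of order $n$), whereas (c) makes the base torus the unit square (area $dn$). For (c)$\leftrightarrow$(b) I would pass between the two by forming the degree-$dn$ composite that collapses the two critical values to a single branch point, clearing the denominator of the order-$n$ relative period so that the branch point lands at a vertex, and I would check the assignment is $\SLZ$-equivariant and invertible. For (c)$\leftrightarrow$(e) I would use that $\GL$ acts transitively on flat tori, so every $\GL$-orbit in $\Omega W_{d^2}[n]$ contains a differential with absolute periods $\Z[i]$; normalizing this way and passing to the same degree-$dn$ cover produces a type $(d,n)$ square-tiled surface, and since $\SLZ = \{A \in \GL : A\cdot\Z[i]=\Z[i]\}$ is precisely the $\GL$-stabilizer of this normalization, the square-tiled representatives of a single $\GL$-orbit form a single $\SLZ$-orbit. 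Finally (e)$\leftrightarrow$(a) is induced by the projection $\Omega\M_2 \to \M_2$: each $\GL$-orbit closure is generated by a square-tiled, hence Veech, surface and so maps onto an irreducible component of $W_{d^2}[n]$, while conversely every such component is an imprimitive Teichm\"uller curve and is so generated.

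The hard part will be the injectivity of (e)$\leftrightarrow$(a): ruling out that two distinct $\GL$-orbit closures in $\Omega W_{d^2}[n]$ project to the same irreducible component of $W_{d^2}[n]$. Equivalently, I must show that a generic $X$ in a component carries a unique primitive degree-$d$ elliptic cover with critical values of order $n$, up to scale and the relevant equivalence --- that is, the canonical lift of such a Teichm\"uller curve to $\Omega\M_2$ is a single $\GL$-orbit. I expect to handle this using the rigidity of the genus-$2$ eigenform locus together with the constraints imposed by primitivity and the fixed torsion order $n$, which pin down the generating differential; once the $\SLZ$-equivariance in each of the other steps is verified, those steps reduce to bookkeeping of definitions.
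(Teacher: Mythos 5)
Your proposal is correct in substance and uses the same ingredients as the paper's proof, but it is organized around a different hub: you route everything through (c), the square-tiled surfaces, whereas the paper routes (b), (c) and (d) each through (e), the $\GL$-invariant loci. The paper's choice is not purely cosmetic. Your direct map (c)$\leftrightarrow$(b) sends $(X,\omega)\in\A_{d^2}[n]$ to an element of $\ST(d,n)$ by quotienting $E_0$ by the order-$n$ subgroup generated by $z_1-z_2$ and then renormalizing the quotient torus by a matrix in $\GL$ that depends on that subgroup; since $\SLZ$ moves the subgroup, this assignment is not $\SLZ$-equivariant on the nose, only up to the $\GL$-normalization, so the clean statement is the one the paper proves: both orbit sets biject with the set of $\GL$-orbits of type $(d,n)$ elliptic differentials, with injectivity coming from the fact that any $A\in\GL$ carrying one normalized form to another must preserve $\Z[i]$ and hence lie in $\SLZ$. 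Your composite (c)$\leftrightarrow$(e)$\leftrightarrow$(b) repairs this automatically, so it is an organizational wrinkle rather than a gap. The other genuine difference is (a)$\leftrightarrow$(e): you single out as the hard point the injectivity of the projection to $\M_2$, i.e.\ that two distinct topological components of $\Omega W_{d^2}[n]$ cannot map onto the same irreducible component of $W_{d^2}[n]$. The paper disposes of this in one line, asserting that the irreducible components of $W_{d^2}[n]$ \emph{are} the projections of the topological components of $\Omega W_{d^2}[n]$; this rests on exactly the rigidity you invoke --- for a generic $X$ in a component the primitive degree-$d$ elliptic cover with torsion $n$, hence the generating eigenform up to scale, is essentially unique --- so your extra care there is warranted rather than redundant, and your argument, once that uniqueness is spelled out, yields the same bijections.
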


\noindent
{\bf  Teichm\"uller curves.} Let $\M_g$ be the moduli space of Riemann surfaces of genus $g$. A {\em Teichm\"uller curve} is an isometric immersion $f:V \to \M_g$ with respect to the hyperbolic metric on an algebraic curve $V$ and the Teichm\"uller metric on $\M_g$.  Any pair $(X,q)$, where $q \ne 0$ is a holomorphic quadratic differential on a Riemann surface $X$, generates a complex geodesic $\tilde f: \Hyp \to \M_g$. When $\displaystyle \Stab(\tilde f) = \left\{ A \in \Isom(\Hyp) \cong \PSL_2(\R) \mid \tilde f ( A \tau ) = \tilde f ( \tau) \mbox{ for all } \tau \in \Hyp \right\}$ is a lattice in $\PSL_2(\R)$, the complex geodesic $\tilde f$ descends under the quotient by $\Stab(\tilde f)$ to a Teichm\"uller curve in $\M_g$.

We are going to focus on Tiechm\"uller curves generated by quadratic differentials of the form $(X,q)=(X,\omega^2)$, where $\omega$ is a holomorphic 1-form on $X$. Note however that the following discussion can be generalized for any quadratic differential.

\noindent 
{\bf Abelian differentials.} Let $X \in \M_g$ be a Riemann surface of genus $g$ and let $\Omega(X)$ denote the space of holomorphic 1-forms on $X$. Let $\omega \in \Omega(X)$ be a non-zero holomorphic 1-form on $X$. A pair $(X, \omega)$ is called an {\em Abelian differential}. The set of zeroes of $\omega$ is called the set of {\em conical points} or {\em singularities} and will be denoted by $Z(\omega)$. Any geodesic segment in the singular flat metric $\left| \omega \right|^2$ that starts and ends at singularities is called a {\em saddle connection}. 

For any Abelian differential $(X,\omega)$ the integration of $\omega$ produces a {\em translation structure}, an atlas of complex charts on $X \setminus Z(\omega)$ with parallel translations $z\mapsto z+c$ as transition functions. A neighborhood of a conical point possesses a singular flat structure obtained by pulling back flat metric on a disk via the map $z \mapsto z^k$. Abelian differentials are also called {\em translation surfaces}.  
The total space $\Omega \M_g$ of the bundle $\Omega \M_g \to \M_g$ is called the {\em moduli space of translation surfaces} or the {\em moduli space of Abelian differentials}. 

Any positive integer partition $\kappa =  (k_1, \ldots, k_n)$ of $2g-2$ defines a {\em stratum} of the moduli space of Abelian differentials:
$$\Omega \M_g (\kappa) = \left\{ (X, \omega)  \,\middle\vert\, X \in \M_g, \ Z(\omega) = k_1 p_1 + ... + k_n p_n, \mbox{ where } p_i  \mbox{ are distinct points of } X \right\}.
$$
For example, $\Omega \M_2$ consists of two strata $\Omega \M_2 (1,1) \textrm{ and } \Omega \M_2 (2)$.

The {\em period map} map $\Per_{(X,\omega)}: H_1(X, Z(\omega), \Z) \to \C$ is defined by $\Per_{(X,\omega)}(\gamma) = \int_{\gamma} \omega$. The period map is a local homeomorphism on each stratum $\Omega \M_g(\kappa)$. The {\em relative periods} of $(X,\omega)$ is the following subset of $\C$: 
$$\RPer(X,\omega) = \left\{ \int_\gamma \omega  \,\middle\vert\, \gamma \in H_1(X,(\omega), \Z)\right\}.$$

Abelian differentials can be presented as polygons in $\R^2 \cong \C$ with pairs of equal parallel sides identified by translations. The set $Z(\omega)$ is then contained in the set of vertices of the polygon. 
The sides of the polygon are saddle connections and, if viewed as vectors in $\C$, belong to $\RPer(X,\omega)$.  

\noindent
{\bf $\GL$ action.}
A natural $\GL$-action on $\R^2\cong \C$ induces an action on the relative periods and hence on $\Omega \M_g(\kappa) \subset \Omega\M_g$. By the results of \cite{EMM15} and \cite{Fil16} the projections of the $\GL$-orbit closures in $\Omega \M_g$ to $\M_g$ are algebraic subvarieties. In particular, the Teichm\"uller curve generated by a quadratic differential $(X, \omega^2)$ is a projection of a closed orbit $\GL \cdot (X,\omega)$ to $\M_g$.

\begin{figure}[H]    \centering
     \includegraphics[width=0.68\textwidth]{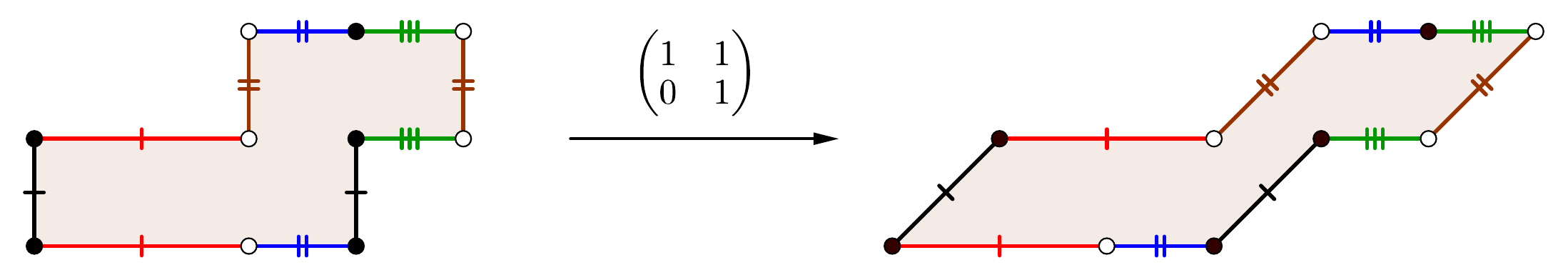}
    \caption{Action of a matrix on an Abelian differential.}
      \label{action}
\end{figure}

We will denote the stabilizer of $(X,\omega)$ under the action of $\GL$ by $\SL(X,\omega)$. The stabilizer $\SL(X,\omega)$ is a subgroup of $\SLR$, and for Teichm\"uller curves $\SL(X,\omega) \subset \SLR$ is a lattice.

\noindent
{\bf Teichm\"uller curves in $\M_2$.}
Here we review the results on the classification of Teichm\"uller curves in $\M_2$. First, we define the following loci in $\M_2$ (see \cite{McM06}):
\begin{itemize}
\item[] $\quad W_D \quad = \quad \left\{ 
  X \in \M_2 \,\middle\vert\,
  \begin{aligned} 
  & \Jac(X) \mbox{ admits a real multiplication by } \OO_D, \mbox{ and } \\ 
  & X \mbox{ carries an eigenform } \omega \mbox{ with a double zero } 
  \end{aligned}  
\right\}$,

\item[] $\ W_{d^2}[n] \  = \quad \left\{ 
  X \in \M_2 \,\middle\vert\,
  \begin{aligned} 
  &\exists \mbox{ a primitive degree } d \mbox{ elliptic cover } \pi: X \to E  \mbox{ for some} \\ 
  &  E\in\M_1, \mbox{ whose ramification points }  x_1 \ne x_2 \in X  \\ 
  &  \mbox{satisfy: }\pi(x_1)-\pi(x_2) \mbox{ has order } n\mbox{ in } \Jac(E)
  \end{aligned}
\right\}$.
\end{itemize}

It follows from the results of \cite{McM06} that every Teichm\"uller curve in $\M_2$ is an irreducible component of one of the following algebraic curves:
\begin{itemize}
\item[(1)] $W_{D}$, where $D\ge 5$ and $D \equiv 0, 4$ or $5 \mod 8$ or $D=9$;

\item[(2)] $W_{D}^\epsilon$, where $D\ge 17$, $D \equiv 1 \mod 8$ and $\epsilon = 0$ or $1$;

\item[(3)] $W_{4^2}[1]$, $W_{5^2}[1]$ and $W_{d^2}[n]$, where $n$ is even;

\item[(4)] $W^\epsilon_{d^2}[n]$, where $d\cdot n >5$, $n$ is odd and $\epsilon = 0$ or $1$ is the {\em spin invariant} (see \S\ref{secspin}); and

\item[(5)]  the decagon curve generated by $\frac{dx}y$ on $y^2 = x^6 -x$.
\end{itemize}

From \cite{McM05a} (Theorem~1.1) it is known that the algebraic curves from (1) and (2) are irreducible. The decagon curve (5) is a single Teichm\"uller curve. The irreducible components of (3) $W_{d^2}[n]$ and (4) $W^\epsilon_{d^2}[n]$ are unknown. The parity conjecture would imply that these algebraic curves are irreducible. Therefore the parity conjecture would complete the classification of Teichm\"uller curves in $\M_2$.

\noindent
{\bf Primitive and imprimitive Teichm\"uller curves.}
A 1-form $(X,\omega)$ is called {\em geometrically primitive} when it is not pulled back from a lower genus Riemann surface.  A Teichm\"uller curve is {\em primitive} if it is generated by a geometrically primitive form. 

The primitive Teichm\"uller curves in $\M_2$ are the decagon curve and the irreducible components of $W_D$, when $D \ne d^2$ (see \cite{McM06}). Hence the classification of primitive Teichm\"uller curves in $\M_2$ is complete. 

Every imprimitive Teichm\"uller curve in $\M_2$ is generated by a 1-form that is pulled back from an elliptic curve and therefore it is an irreducible component of $W_{d^2}$ or $W_{d^2}[n]$. In particular, $W_{d^2}$ consists of $X \in \M_2$ that admit a primitive degree $d$ elliptic cover with a single critical point. It follows from \cite{McM05a} (Theorem~1.1) that $W_{d^2}$ has two irreducible components when $d$ is odd, and one when $d$ is even.
The remaining open cases of the classification of imprimitive Teichm\"uller curves in $\M_2$ are the irreducible components of $W_{d^2}[n]$.

\begin{remark}
Note that our notation is slightly different from \cite{McM06}, where $D$ is assumed to be non-square. The Weierstrass curves $W_D$ are also denoted by $W_D[1]$ in \cite{McM06}. They arise from the 1-forms with a single zero, unlike $W_{d^2}[1]$ from above, which arise from the 1-forms with two simple zeroes.
\end{remark}

\noindent
{\bf Elliptic differentials.} An elliptic cover $\pi: X \to E$ is called {\em primitive} if the induced map $\pi_*:H_1(X,\Z) \to H_1(E,\Z)$ is surjective. For any $X \in W_{d^2}[n]$ there exists a primitive degree $d$ elliptic cover $\pi: X \to E$ defined up to translation automorphism of $E$, such that its ramification points $ x_1 \ne x_2 \in X$ satisfy $\pi(x_1)-\pi(x_2)$ has order $n$ in $\Jac(E)$. An Abelian differential $(X, \omega)$ is called a {\em type $(d,n)$ elliptic differential} if $X \in W_{d^2}[n]$ and $\omega = \pi^*(dz)$ for some holomorphic 1-form $dz$ on $E \in \M_1$.  The locus of type $(d,n)$ elliptic differentials in $\Omega\M_2$ will be denoted by:
\begin{flalign*} 
\indent \Omega W_{d^2}[n]  =  \left\{ 
  (X, \omega) \in \Omega \M_2 \,\middle\vert\,
   \begin{aligned} 
   X \in W_{d^2}[n] \mbox{ and } \omega = \pi^*(dz) \mbox{ for some } dz \in \Omega(E) \\
  \end{aligned}
\right\}.
\end{flalign*}
The locus $\Omega W_{d^2}[n]$ is a closed $\GL$ invariant two-dimensional complex subvariety of $\Omega \M_2$. The irreducible components of $W_{d^2}[n]$ are the projections of the topological connected components of $\Omega W_{d^2}[n] \subset \Omega\M_2$ to $\M_2$.

\noindent
{\bf Square-tiled surfaces.} Here we discuss a particular class of Abelian differentials in $\M_2$ that generate all the imprimitive Teichm\"uller curves in $\M_2$.

A {\em square-tiled surface} is an Abelian differential $(X,\omega)$ whose relative periods 
$\RPer(\omega)$
belong to $\Z[i]$. A square-tiled surface is called {\em reduced} if $\RPer(X,\omega) = \Z[i]$ and {\em primitive} if the absolute periods $\Per(X,\omega) = \Z[i]$. Note that a primitive square-tiled surface is necessarily reduced, but not the other way around (see Figure~\ref{figreduced}). 

\begin{comment2}
\begin{figure}[H]
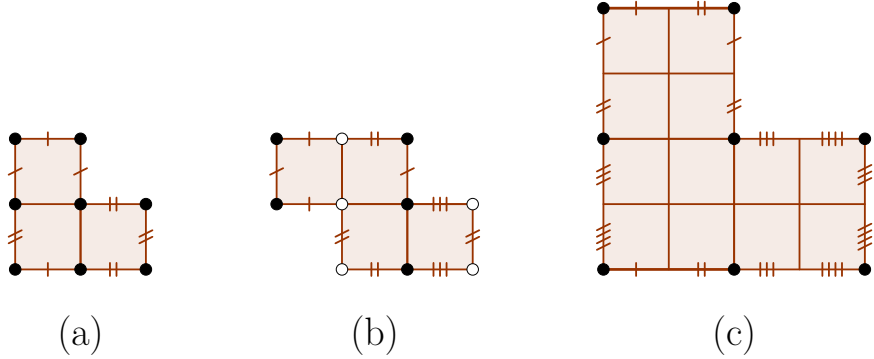
 
\centering
\includestandalone[width=0.8\textwidth]{reducedprimitive}
\caption{Examples of square-tiled surfaces that are: (a) primitive, (b) reduced but not primitive, and (c) not reduced.}
\label{figreduced}
\end{figure}
\end{comment2}

Recall that $E_0 = \C/\Z[i]$ is the square torus and 1-from $dz$ on $\C$ descends to $E_0$. Given a square-tiled surface $(X,\omega)$ there is a unique cover $\pi: X \to E_0$ branched over the origin such that $\omega = \pi^* dz$. A square-tiled surface $(X,\omega)$ is primitive if and only if the corresponding cover $\pi: X \to E_0$ is primitive, and a square-tiled surface is reduced if and only if $\pi: X \to E_0$ does not factor through another cover branched over a single point.

The metric $\left| dz \right|^2$ from $E_0$ pulls back to the metric $\left| \omega \right|^2$ that gives a tiling of $X$ by unit squares with matching sides. A square-tiled surface can be thought of as a number of unit squares in $\R^2$ with opposite sides identified by parallel translations. Note that square-tiled surfaces with two simple zeroes are in one-to-one correspondence with the isotopy classes of tilings of a topological surface of genus 2 with squares, discussed in the introduction. 

For any pair of integers $(d,n)$, where $d>1$ and $n\ge1$, define the set of type $(d,n)$ square-tiled surfaces $\ST(d,n)$ as:
\begin{flalign*}
\left\{ 
  (X,\omega) \in \Omega\M_2(1,1) \,\middle\vert\,
   \Per(\omega) \subset \RPer(\omega) = \Z[i] \mbox{ has index $n$ and } \displaystyle  \int_X \left| \omega \right|^2 = d\cdot n  
\right\}.
\end{flalign*}
We will also denote the set of primitive square-tiled surfaces in $\Omega\M_2(2)$ by $\ST(d,0)$. The group $\SL(E_0,dz) = \SLZ$ acts on the set of square-tiled surfaces and preserves $\ST(d,n)$. The $\SLZ$ action on the square-tiled surface of genus 2 made out of 4 tiles is illustrated in Figure~\ref{fig4tiled}. 

\begin{comment2}
\begin{figure}[H]
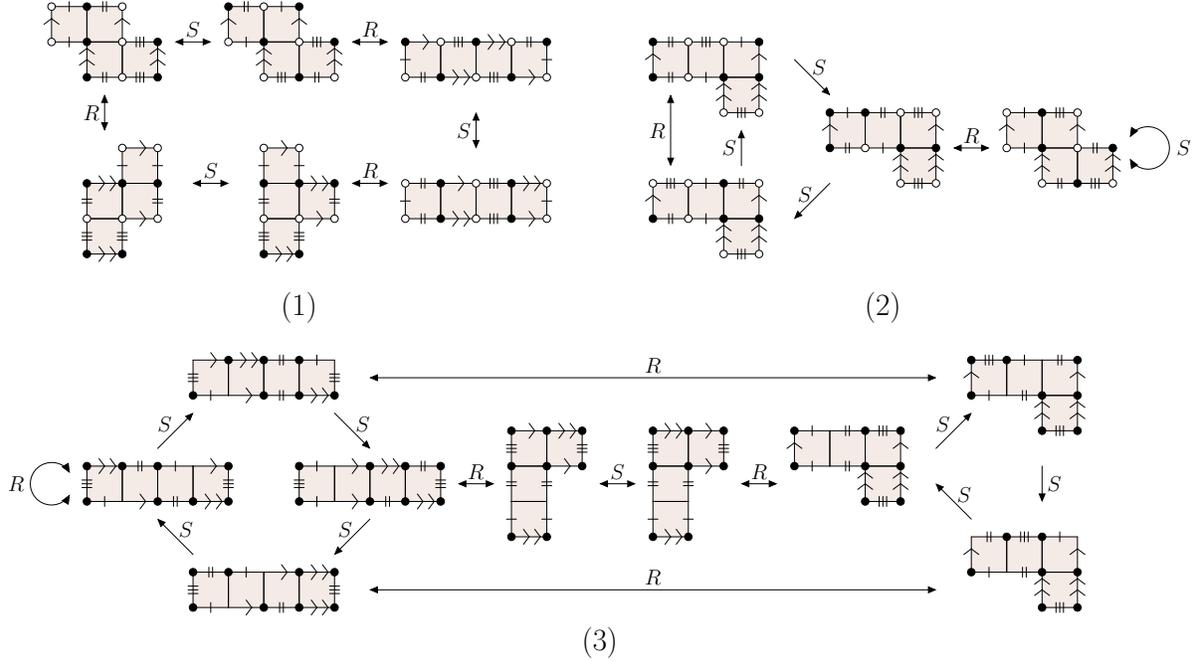

\centering
  \includestandalone[width=\textwidth]{tikz4tiled}
  \caption{The square-tiled surfaces of genus 2 made out of 4 tiles of (1) type $(2,2)$; (2) type $(4,1)$; and (3) type $(4,0)$ together with the $\SLZ$ action on them presented by its generators $S = \protect\begin{pmatrix} 1 & 1 \\ 0 & 1 \protect\end{pmatrix}$ and $R = \protect\begin{pmatrix} 0 & -1 \\ 1 & 0 \protect\end{pmatrix}$.}
  \vspace{-5pt}
  \label{fig4tiled}
\end{figure}
\end{comment2}

\noindent
{\bf Cover factorization.} A primitive elliptic cover is primitive if and only if it does not factor through another cover.
Given a reduced square-tiled surface $(X,\omega) \in \ST(d,n)$ with $n\ge1$ the integration of $\omega$ defines a unique degree $d\cdot n$ covering map to the square torus branched over the origin. This cover uniquely factors through a primitive degree $d$ elliptic cover and an isogeny of elliptic curves of degree $n$:
\begin{equation} \label{sqtfactors}
(X,\omega)  \xrightarrow{d} (E',\eta)  \xrightarrow{n}  (E_0, dz).
\end{equation}
We will call $d$ the {\em degree} and $n$ the {\em torsion} of a reduced square-tiled surface. We summarize the discussion above in the following proposition:

\begin{proposition}
Any reduced square-tiled surface $(X,\omega) \in \Omega\M(1,1)$ belongs to $\ST(d,n)$ for some $d>1$ and $n\ge1$.
\end{proposition}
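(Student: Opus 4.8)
The plan is to extract the two integers $d$ and $n$ directly from the period lattices of $\omega$ and verify the two defining conditions of $\ST(d,n)$: that $\Per(\omega) \subset \RPer(\omega) = \Z[i]$ has finite index $n \ge 1$, and that $\int_X|\omega|^2 = d\cdot n$ for some integer $d > 1$. Since $(X,\omega)$ is reduced we already have $\RPer(\omega) = \Z[i]$, and since every absolute cycle is a relative cycle we have $\Per(\omega) \subset \RPer(\omega)$; so the real content is (a) finiteness of the index and (b) the divisibility $n \mid \int_X|\omega|^2$ together with the bound $d>1$.

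First I would show that $n := [\Z[i] : \Per(\omega)]$ is finite, which amounts to showing that $\Per(\omega)$ spans $\C$ over $\R$. The key input is the positivity of the area $\int_X|\omega|^2 = \tfrac{i}{2}\int_X \omega \wedge \bar\omega > 0$. Evaluating this integral on a symplectic basis of $H_1(X,\Z)$ via the Riemann bilinear relations expresses the area as an alternating sum of products of absolute periods; if all of $\Per(\omega)$ lay on a single real line $\R e^{i\theta}$, every such product would be real and the alternating sum would vanish, contradicting positivity. Hence $\Per(\omega)$ contains two $\R$-linearly independent elements, so as a subgroup of the rank-two lattice $\Z[i]$ it has finite index $n \ge 1$.

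Next I would realize $(X,\omega)$ as an elliptic differential. Fixing a base point in $X \setminus Z(\omega)$, the developing map $x \mapsto \int \omega$ is well defined modulo the group of periods of $\omega$ over $H_1(X \setminus Z(\omega), \Z)$; since $\omega$ is holomorphic it has no residue around a zero, so this group is exactly $\Per(\omega)$ and the developing map descends to a holomorphic map $\pi' : X \to E' := \C/\Per(\omega)$ with $\pi'^{*}(dz) = \omega$. By construction the image of the periods is all of $\Per(\omega)$, so $\pi'_*$ is onto $H_1(E',\Z)$ and $\pi'$ is primitive; composing with the degree-$n$ isogeny $E' \to E_0 = \C/\Z[i]$ recovers the covering $\pi : X \to E_0$ of $(\ref{sqtfactors})$. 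The torus $E'$ has area equal to the covolume of $\Per(\omega)$, namely $n$, so $\pi'$ has degree $d := \int_X|\omega|^2 / \mathrm{area}(E') = \int_X|\omega|^2/n$; in particular $\int_X|\omega|^2 = d\cdot n$ with $d$ a positive integer. Finally, a degree-one holomorphic map of compact Riemann surfaces is an isomorphism, so $d = 1$ would force $X$ to have genus $1$, contradicting $X \in \M_2$; hence $d > 1$ and $(X,\omega) \in \ST(d,n)$.

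The main obstacle is the finiteness step: everything else is bookkeeping with degrees and genera, but the finiteness of the index $n$ genuinely uses the geometry of $\omega$ (its area form) rather than the combinatorics of the tiling, and the cleanest route is through the Riemann bilinear relations as above. A secondary point requiring care is the well-definedness of $\pi'$ into $E'$ rather than only into $E_0$, which is exactly the no-residue observation that makes the $H_1(X\setminus Z(\omega),\Z)$-periods collapse onto the absolute periods $\Per(\omega)$.
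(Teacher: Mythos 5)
Your proof is correct and follows the same route as the paper: both factor the cover through the intermediate torus $E' = \C/\Per(\omega)$, identify $n$ as the index $[\Z[i]:\Per(\omega)]$ and $d$ as the degree of the primitive cover $X \to E'$, and read off that the area is $d\cdot n$. The only real difference is completeness: the paper simply cites the unique factorization of a reduced cover through a primitive elliptic cover followed by an isogeny, whereas you prove the needed ingredients from scratch --- the finiteness of the index via the Riemann bilinear relations and positivity of $\int_X|\omega|^2$, the developing-map construction of the primitive map $X \to \C/\Per(\omega)$, and the genus argument for $d>1$ --- all of which are sound and fill in details the paper leaves implicit.
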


\begin{proof}
Let $(X,\omega) \in \Omega\M(1,1)$ be any reduced square-tiled surface. The integration of $\omega$ defines a unique covering map $\pi: X \to E_0$ branched over the origin. This cover uniquely factors through a primitive degree $d$ elliptic cover and an isogeny of elliptic curves of degree $n$, for some $d$ and $n$:
\begin{equation}
(X,\omega)  \xrightarrow{d} (E' ,\eta) \xrightarrow{n} (E_0,dz).
\end{equation}
Since $(X,\omega)$ is reduced, $\RPer(X,\omega) = \Z[i]$. The degree $d$ cover is primitive, therefore $\Per(X,\omega) = \Per(E',\eta)$. The degree $n$ map is an isogeny of elliptic curves, hence $\Per(E',\eta)$ is an index $n$ sublattice of $\Per(E_0,dz) = \Z[i]$. This implies that $(X,\omega) \in \ST(d,n)$.
\end{proof}

In particular this implies that any primitive square-tiled surfaces $(X,\omega) \in \Omega\M(1,1)$ belongs to $\ST(d,1)$ for some $d>1$.

\noindent
{\bf The set $\A_{d^2}[n]$.} Next we discuss a subset of points of the absolute period leaf $\A_{d^2}$ that will be define in \S\ref{secabs} and used for the proof of the main result.

 Let $(X,\omega) \in \Omega\M_2$ be an Abelian differential pulled back from the square torus $(E_0 = \C/\Z[i],dz)$ via a primitive elliptic cover $\pi: X \xrightarrow{d} E_0$ of degree $d$. 
Define the following finite subset of such Abelian differentials:
\begin{flalign*}
\A_{d^2}[n]  = \left\{ (X,\omega) \in \Omega\M_2 \, \middle\vert 
  \begin{aligned} 
  &  \exists \mbox{ a primitive cover } \pi: X \xrightarrow{d} E_0 \mbox{ such that } \\ 
  &  \omega = \pi^* dz \mbox{ and} \mbox{the critical points }  x_1 \ne x_2 \in X \mbox{ satisfy:}   \\ 
  &  \pi(x_1) - \pi(x_2)  \mbox{ has order } n\mbox{ in } \Jac(E_0)
  \end{aligned}
   \right\}.
\end{flalign*}
The subgroup $\SLZ \subset \GL$ preserves the set $\A_{d^2}[n]$. 

\noindent
{\bf Topological branched covers.} 
Let $\pi: \Sigma_g \to \Sigma_h$ be a topological branched covering between two closed connected surfaces of genera $g$ and $h$. Similarly to the case of elliptic covers, such cover is called {\em primitive}, if the induced map $\pi_*:H_1(\Sigma_g,\Z) \to H_1(\Sigma_h,\Z)$ is surjective. Two primitive covers $\pi_1, \pi_2: \Sigma_g \to \Sigma_h$ are {\em topologically equivalent} if there are orientation-preserving homeomorphisms $f_1: \Sigma_h \to \Sigma_h$ and $f_2: \Sigma_g \to \Sigma_g$, such that the following diagram commutes:
\begin{equation} \label{equiv}
\begin{tikzcd}
\Sigma_g \arrow{d}{\pi_1} \arrow{r}{f_2}
& \Sigma_g \arrow{d}{\pi_2} \\
\Sigma_h \arrow{r}{f_1}
& \Sigma_h.
\end{tikzcd}
\end{equation}

A cover is called {\em generic} (or {\em simply branched}) if every fiber contains at least $d-1$ points, where $d$ is the degree of the cover. Since the works of L\"uroth (\cite{Lur71}, 1871), Clebsch (\cite{Cle73}, 1873) and 
Hurwitz (\cite{Hur91}, 1891) it was known that classes of topological equivalence of generic primitive covers of the 2-sphere are classified by their degree. Later Gabai and Kazez (\cite{GK87}, 1987) showed that this is true if one replaces sphere with any closed surfaces. However the case of non-generic covers is widely unexplored. Some results on topological classes of non-generic covers of the sphere can be found in \cite{Pro88}.

This work is related to the case of covers of the 2-torus branched over a single point with ramification profile $(2,2,1,\ldots,1)$. One can verify by Riemann-Hurwitz formula that such covers exist in any degree and the genus of the covering space is always 2. A cover $\pi: \Sigma_2 \to \Sigma_{1}$ branched over a single point is called {\em reduced} if does not factor through another cover branched over a single point.
Every reduced cover $\pi: \Sigma_2 \to \Sigma_{1}$ uniquely factors as:
$$
\Sigma_2 \xrightarrow{d} \Sigma_{1} \xrightarrow{n} \Sigma_{1},
$$
where $\Sigma_2 \xrightarrow{d} \Sigma_{1}$ is a primitive cover of degree $d$. In this case a cover $\pi: \Sigma_2 \to \Sigma_{1}$ is called a {\em type $(d,n)$ cover}.

We are now ready to give a proof of Theorem~\ref{thmequivconj}.

\begin{proof}[Proof of Theorem~\ref{thmequivconj}]
(a) $\iff$ (e): The $\GL$ orbits of type $(d,n)$ elliptic differentials are the loci $\Omega W_{d^2}[n]$. The projections of topological components of $\Omega W_{d^2}[n]$ are the Teichm\"uller curves in $W_{d^2}[n]$.

(b) $\iff$ (e): Let $(X, \omega) \in \A_{d^2}[n]$. The integration of $\omega$ defines a cover $\pi: X \to E_0$ branched over $z_1, z_2 \in E_0$ satisfying $z_1 - z_2$ has order $n$ in $\Jac(E_0)$. Therefore $(X,\omega)$ is a type $(d,n)$ elliptic differential and generates a $\GL\cdot (X,\omega)$ from (b). On the other hand every $\GL$ orbit of a type $(d,n)$ elliptic differential contains an element of $\A_{d^2}[n]$ since $\GL$ acts surjectively on $\Omega\M_1$. Since $\SL(X,\omega) \subset \SLZ$ two Abelian differentials $(X, \omega), (X', \omega') \in \A_{d^2}[n]$ are in the same $\GL$ orbit if and only if they differ by an element of $\SLZ$.

(c) $\iff$ (e): Let $(X,\omega)$ be a type $(d,n)$ square-tiled surface. Note that the cover $(X,\omega)  \xrightarrow{d} (E',\eta)$ from (\ref{sqtfactors}) is branched over two points $z_1$ and $z_2 \in E'$ satisfying $z_1-z_2$ has order $n$ in $\Jac(E')$, since $z_1$ and $z_2$ are sent to a single point via $E' \to E$. Therefore $(X,\omega)$ is a type $(d,n)$ elliptic differential and generates a $\GL$ orbit from (e). This $\GL$ orbit does not depend on a representative of $\SLZ \cdot (X,\omega)$ since $\SLZ \subset \GL$.

This association is surjective. Indeed, consider a type $(d,n)$ elliptic differential $(X,\omega)$ admitting a translation cover $(X,\omega) \to (E, \eta)$ for some $E \in \M_1$ with ramification points $x_1 \ne x_2 \in X$ satisfying $\pi(x_1) - \pi(x_2)$ has order $n$ in $\Jac(E)$. Set $x_1$ to be an origin of $E$, then points $x_1$ and $x_2$ generate a subgroup $\ZZ{n}$ in $E$. Quotienting by this subgroup we obtain:
$$
(X, \omega) \xrightarrow{d} (E, \eta) \xrightarrow{n} (E', \eta'),
$$
where $X \xrightarrow{\pi'} E'$ is a cover branched over a single point. Choose a matrix $A \in \GL$ that sends $(E',\eta')$ to $(E_0, dz)$. Then $(X', \omega') = A \cdot (X,\omega)$ belongs to $\ST(d,n)$ since it admits a factorization of covers as in (\ref{sqtfactors}):
$$
(X',\omega') = A \cdot (X,\omega) \xrightarrow{d} A \cdot (E,\eta)  \xrightarrow{n} A \cdot (E', \eta') = (E_0, dz),
$$
where degree $d$ cover is primitive.

This association is also injective. Indeed, if two Abelian differentials $(X,\omega), (X',\omega') \in \ST(d,n)$ satisfy $(X',\omega') = A \cdot (X,\omega)$ for some $A \in \GL$, then $A$ induces a bijection $\RPer(\omega) = \Z[i] \to \RPer(\omega') = \Z[i]$ and hence $A \in \SLZ$. Thus $(X,\omega)$ and $(X',\omega')$ belong to the same $\SLZ$ orbit.

(d) $\iff$ (e): Start with any cover $\pi: \Sigma_2 \to \Sigma_1$ from (d). Mark the branch point on $\Sigma_1$ and endorse $\Sigma_{1,1}$ with complex structure $E_0$ and a holomorphic 1-form $dz$. Pulling it back to $\Sigma_2$ via $\pi: \Sigma_2 \to \Sigma_1$ one obtains $(X,\omega) \in \ST(d,n)$. Note that action of $A \in \SLZ$  yields a commutative diagram:
\begin{equation*}
\begin{tikzcd}
(X,\omega) \arrow{d}{\pi_1} \arrow{r}{f'_A}
& A \cdot (X,\omega) \arrow{d}{\pi_2} \\
(E_0,dz) \arrow{r}{f_A}
& (E_0,dz).
\end{tikzcd}
\end{equation*}
Forgetting an extra data of complex structures and 1-forms gives a diagram (\ref{equiv}). The flat metric on $E_0$ is defined by $\left| dz\right|^2$. The $\SLZ$ acts on $(E_0,|dz|^2)$ by affine automorphisms $f_A$ in this metric.  Forgetting the complex structure and the 1-form one obtains the action of $\Mod_{1,1} \cong \SLZ$ on $\Sigma_{1,1}$. Therefore two reduced covers are of the same type if and only if the corresponding elements of $\ST(d,n)$ are in the same $\SLZ$ orbit. This implies the bijection.
\end{proof}

%%%%%%%%%%%%%%%%%%%%%%%
%%%%%%%%%%%%%%%%%%%%%%%
%%%%%%%%%%%%%%%%%%%%%%%

\section{The spin invariant} \label{secspin}

In this section we introduce the spin invariant $\epsilon$, that distinguishes the components of $\Omega W_{d^2}[n]$ when $n$ is odd. Spin is valued in $\ZZ2$ and depends on the number of integer Weierstrass points. It gives a lower bound on the number of $\SLZ$ orbits in $\ST(d,n)$. The goal of the future sections is to show that degree, torsion and spin give the full set of invariants in infinitely many cases.

Spin invariant is a generalization of the integer Weierstrass points invariant presented in \cite{HL06} 4.2 in the case of square-tiled surfaces $(X,\omega) \in \Omega \M_2 (2)$. In that case it is defined as the number of Weierstrass points of $X$ that get mapped to the branch point under the covering map $\pi$. We will show:

\begin{theorem} \label{thmspin}
Let $(X,\omega) \in \Omega W_{d^2}[n]$ and let the cover $\pi: X \to E$, obtained by integration of $\omega$, be branched over $z_1, z_2 \in E$. A Weierstrass point $W_i$ is called {\em integer} if $\pi(W_i) - z_1 = z_2 - \pi(W_i) \in \Jac(E)[n]$. Then:
\begin{enumerate}

\item[(i)]  the number of integer Weierstrass points is:
\begin{equation} \label{spinformula}
\IWP(X,\omega) = \begin{cases}
d\mod 2 \mbox{ or } (d\mod 2 )+ 2,  & \mbox{when }  n \mbox{ is odd} \\

0, & \mbox{when }  n  \mbox{ is even};
\end{cases}
\end{equation}

\item[(ii)] $\IWP$ is a locally constant function on $\Omega W_{d^2}[n]$, that is globally constant when $n$ is even, and takes two values when $n$ is odd.

\end{enumerate} 
\end{theorem}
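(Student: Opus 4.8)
The plan is to analyze the images of the six Weierstrass points under $\pi$ using the hyperelliptic involution, and then to pin down their distribution by a $2$-torsion count that uses primitivity of $\pi$. First I would establish the structural fact that $\pi$ intertwines the hyperelliptic involution $\sigma$ of $X$ with the involution $\tau(z)=z_1+z_2-z$ of $E$. Since $(X,\omega)\in\Omega\M_2(1,1)$, the form $\omega=\pi^*(dz)$ has two simple zeros, namely the ramification points $x_1,x_2$; as $\sigma^*\omega=-\omega$ and $x_1+x_2\sim K_X$, the involution $\sigma$ swaps $x_1\leftrightarrow x_2$. Both $\pi\sigma$ and $\tau\pi$ pull $dz$ back to $-\omega$ and agree at $x_1$, so $\pi\sigma=\tau\pi$. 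Consequently every Weierstrass point $W$ (a fixed point of $\sigma$) has $\pi(W)$ fixed by $\tau$, i.e. $2\pi(W)=z_1+z_2$; there are four such fixed points $F_1,\dots,F_4$, forming a torsor under $E[2]$. Writing $a_k$ for the number of Weierstrass points over $F_k$, I note that $F_k\ne z_1,z_2$, so $\pi$ is unramified over $F_k$ with a fibre of $d$ points on which $\sigma$ acts; counting fixed points of an involution on a $d$-element set gives $a_k\equiv d\pmod 2$, while $\sum_k a_k=6$.

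Next I would dispose of the torsion condition. The relation $\pi(W)-z_1=z_2-\pi(W)$ is exactly $2\pi(W)=z_1+z_2$, automatic from the above; the substantive condition is $\pi(W)-z_1\in\Jac(E)[n]$, where $2(\pi(W)-z_1)=z_2-z_1$ has order $n$. A direct computation shows that when $n$ is even $n(\pi(W)-z_1)=\tfrac n2(z_2-z_1)\ne 0$ for every fixed point, so there are no integer Weierstrass points and $\IWP=0$. When $n$ is odd, doubling is invertible on the odd-order part, so among the four square roots of $z_2-z_1$ exactly one lies in $E[n]$; hence exactly one fixed point, say $F_1$, hosts integer Weierstrass points and $\IWP=a_1$.

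The heart of the argument is then to constrain the distribution $(a_k)$ via the $2$-torsion of $\Jac(X)$. Classically the $15$ differences $[W_i-W_j]$ ($i\ne j$) are precisely the $15$ nonzero elements of $\Jac(X)[2]$, the map $\{i,j\}\mapsto[W_i-W_j]$ being a bijection (distinct pairs give inequivalent classes, since $W_i+W_l\sim W_j+W_k$ with $\{i,l\}\ne\{j,k\}$ would exhibit a $g^1_2$, impossible as $W_i+W_l\sim K$ forces $i=l$ in genus two). I would identify $\Jac(X)[2]=H_1(X,\Z/2)$ and $E[2]=H_1(E,\Z/2)$, under which $\pi_*$ on $2$-torsion is $\pi_*\otimes\Z/2$; primitivity makes $\pi_*\colon H_1(X,\Z)\to H_1(E,\Z)$ surjective, so after tensoring $\pi_*\colon\Jac(X)[2]\to E[2]$ is onto with kernel of order $4$. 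Since $\pi_*[W_i-W_j]=\pi(W_i)-\pi(W_j)$, the pairs with $W_i,W_j$ over the same $F_k$ are exactly those mapping into the kernel, so $\sum_k\binom{a_k}{2}=|\ker|-1=3$. Together with $\sum_k a_k=6$ this forces $(a_k)$ to be $(3,1,1,1)$ or $(2,2,2,0)$ as a multiset; matching the parity $a_k\equiv d\pmod 2$ then gives $\IWP=a_1\in\{1,3\}$ when $d$ is odd and $\in\{0,2\}$ when $d$ is even, which is the formula \eqref{spinformula}.

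Finally, for (ii): $\IWP$ is integer-valued and determined by the discrete data $(a_k)$ together with the choice of special fixed point, both of which vary locally constantly over $\Omega W_{d^2}[n]$, so $\IWP$ is locally constant, and when $n$ is even it is identically $0$. To see it takes both values $d\bmod 2$ and $(d\bmod 2)+2$ when $n$ is odd I would exhibit explicit type $(d,n)$ square-tiled surfaces realizing each value (or invoke the combinatorial models of \S\ref{sectiling}). I expect the main obstacle to be the $2$-torsion step: correctly using primitivity to obtain surjectivity of $\pi_*$ on $2$-torsion, together with the identification of $\Jac(X)[2]$ with differences of Weierstrass points, which is what upgrades the crude parity constraint into the sharp dichotomy $(3,1,1,1)$ versus $(2,2,2,0)$; realizing both values in (ii) is the other point needing genuine input.
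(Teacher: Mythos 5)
Your argument is correct in substance and follows the same skeleton as the paper's proof: the images of the Weierstrass points land in the four-point set $\{F_k\}$ symmetric about $z_1,z_2$ and forming an $E[2]$-torsor (your involution identity $\pi\sigma=\tau\pi$ is a cleaner packaging of Lemma~\ref{lemmaWP}); for even $n$ the relation $2(\pi(W)-z_1)=z_2-z_1$ kills all candidates; for odd $n$ exactly one of the four halves of $z_2-z_1$ lies in $E[n]$; and the value of $\IWP$ is then read off from the Weierstrass profile. The genuine divergence is at the profile step. The paper states Proposition~\ref{WPprofile} (profile $(3,1,1,1)$ for odd $d$, $(0,2,2,2)$ for even $d$) and cites \cite{Kuhn88} and \cite{FK09} for its proof. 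You instead derive it from scratch: the fifteen classes $[W_i-W_j]$ exhaust $\Jac(X)[2]\setminus\{0\}$, primitivity makes $\pi_*$ surjective on $2$-torsion (via $\Jac(X)[2]\cong H_1(X,\Z/2)$), so the kernel has order $4$ and $\sum_k\binom{a_k}{2}=3$; combined with $\sum_k a_k=6$ and the parity constraint $a_k\equiv d\ (\mathrm{mod}\ 2)$ this pins down the profile. This is a correct, self-contained replacement for the cited input, and it makes visible exactly where primitivity enters --- something the paper's exposition leaves inside the reference.

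Two caveats. First, your parity step asserts $F_k\ne z_1,z_2$, hence an unramified fibre of $d$ points; this fails precisely when $n=1$, where $z_1=z_2$ is itself one of the four fixed points of $\tau$ and the fibre over it has $d-2$ points with $\sigma$ swapping the two ramification points. The congruence $a_k\equiv d\ (\mathrm{mod}\ 2)$ survives (the two swapped ramification points contribute nothing to the fixed-point count), but the case needs to be acknowledged since the theorem is stated for all $n\ge1$. Second, for part (ii) the claim that both values of $\IWP$ are actually attained when $n$ is odd is deferred to ``explicit examples''; this is not automatic from the rest of the argument, and the paper discharges it by exhibiting concrete square-tiled surfaces of each spin at the end of \S\ref{secspin}. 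As written, that half of (ii) remains a promissory note rather than a proof.
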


For odd $n$ we will define the {\em spin invariant} of $(X,\omega) \in \Omega W_{d^2}[n]$ as:
$$
\epsilon(X,\omega) = \begin{cases}
0, & \mbox{ when } \IWP(X,\omega) = 3 \mbox{ or } 0  \\

1, & \mbox{ when } \IWP(X,\omega) = 1 \mbox{ or } 2.
\end{cases}
$$
The corresponding components of $\Omega W_{d^2}[n]$ will be denoted by $\Omega W^0_{d^2}[n]$ and $\Omega W^1_{d^2}[n]$.

\noindent 
{\bf Weierstrass points.} 
Every genus 2 algebraic curve $X$ is hyperelliptic, i.e.\ admits a degree $2$ map to $\P^1$ ramified at 6 points $W_1, W_2, W_3, W_4, W_5, W_6$ on $X$, called {\em Weierstrass points}. The following lemma will be used in the proof of Theorem~\ref{thmspin}:

\begin{lemma} \label{lemmaWP}
Let $\pi: X \to E$ be a cover branched over $z_1, z_2 \in E$, for some $X \in \M_2$ and  $E\in \M_1$. Then:
\begin{enumerate}
\item[(i)] $\left\{ \pi(W_1), \pi(W_2), \pi(W_3), \pi(W_4), \pi(W_5), \pi(W_6) \right\} $ is a subset of four points $\left\{ P_0, P_1, P_2, P_3\right\} \subset E$, such that $P_i - P_j \in \Jac(E)[2]$ for any $i,j$; and

\item[(ii)] $z_1 - P_i = P_i - z_2$ in $\Jac(E)$, or equivalently the branch points are symmetric with respect to $P_i$.
\end{enumerate}
\end{lemma}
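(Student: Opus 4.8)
\textbf{Proof plan for Lemma~\ref{lemmaWP}.}

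The plan is to exploit the hyperelliptic involution $\sigma$ on $X$ and to descend it to $E$ through the cover $\pi$. Let me think about what structure forces the conclusion.

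First I would recall that the six Weierstrass points $W_1,\dots,W_6$ are exactly the fixed points of the hyperelliptic involution $\sigma$ on $X$, and that $\sigma$ acts as $-1$ on $\Jac(X)$, hence also on $H_1(X,\Z)$. The key observation is that the branch divisor $z_1+z_2$ of $\pi$ should be preserved by the symmetry of the configuration: since $\sigma$ is the only nontrivial automorphism acting as $-1$ on homology, and the cover $\pi:X\to E$ is primitive (so $\pi_*$ is surjective on $H_1$), I would first argue that $\sigma$ descends to an involution $\tau$ on $E$ making the square
\begin{equation*}
\begin{tikzcd}
X \arrow{d}{\pi} \arrow{r}{\sigma} & X \arrow{d}{\pi} \\
E \arrow{r}{\tau} & E
\end{tikzcd}
\end{equation*}
commute. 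The descended map $\tau$ acts as $-1$ on $H_1(E,\Z)$ as well (by surjectivity of $\pi_*$ and $\sigma_* = -1$), so $\tau$ is of the form $w \mapsto c - w$ for some $c \in E$; that is, $\tau$ is an elliptic involution of $E$ with four fixed points, which are precisely the four points $P_0,P_1,P_2,P_3$ with $P_i - P_j \in \Jac(E)[2]$.

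Second, for part (i), I would observe that each Weierstrass point $W_k$ is fixed by $\sigma$, so $\pi(W_k)$ is fixed by $\tau$: indeed $\tau(\pi(W_k)) = \pi(\sigma(W_k)) = \pi(W_k)$. Hence every $\pi(W_k)$ lies among the four fixed points of $\tau$, which gives exactly $\{P_0,P_1,P_2,P_3\}$. The relation $P_i - P_j \in \Jac(E)[2]$ then follows because the fixed locus of $w\mapsto c-w$ consists of the solutions of $2w = c$, any two of which differ by a $2$-torsion point.

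Third, for part (ii), I would use that the branch points $z_1,z_2$ of $\pi$ must be interchanged (or each fixed) by $\tau$. Since $\sigma$ permutes the critical points $x_1,x_2$ lying over $z_1,z_2$ respectively, and $\sigma$ is the deck-type symmetry, $\tau$ swaps $z_1$ and $z_2$: thus $\tau(z_1) = z_2$, i.e.\ $c - z_1 = z_2$, so $z_1 + z_2 = c$. Combined with $2P_i = c$ for each fixed point, this yields $z_1 + z_2 = 2P_i$, which rearranges to $z_1 - P_i = P_i - z_2$, the symmetry statement. The main obstacle I anticipate is justifying rigorously that $\sigma$ descends to a well-defined automorphism $\tau$ of $E$ and that $\tau$ acts as $-1$ on $H_1(E,\Z)$; this requires care with the primitivity hypothesis and with the fact that $\sigma$ genuinely permutes the fibers of $\pi$ compatibly (equivalently, that $\pi\circ\sigma$ and $\pi$ have the same fibers so that $\pi\circ\sigma$ factors through $\pi$). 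One should check whether $\sigma$ could instead fix the branch points individually in the $n=1$ degenerate case, but the relation $z_1+z_2 = 2P_i$ holds uniformly regardless.
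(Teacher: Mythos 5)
Your route is genuinely different from the paper's. The paper never descends the hyperelliptic involution: it simply pushes forward two divisor-class identities under $\pi_*:\Pic(X)\to\Pic(E)$, namely $2W_i\sim K$ (so $2(P_i-P_j)=0$) and $x_1+x_2\sim K$ (so $z_1+z_2=\pi_*(K)=2P_i$), which settles both parts in a few lines and needs neither primitivity nor any automorphism of $E$. Your approach is heavier but yields more structure (the elliptic involution $\tau$ on $E$), and the descent you worry about is in fact true and cleanly provable without primitivity: writing $\pi$ as a translate of $\pi_*\circ u$ for the Abel--Jacobi map $u:X\to\Jac(X)$ and using $\sigma_*=-1$ on $\Jac(X)$ gives directly $\pi\circ\sigma=\tau\circ\pi$ with $\tau(w)=c-w$. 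Granting that, your part (i) is correct.

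There is, however, a genuine gap in your part (ii). Everything hinges on $\tau$ \emph{swapping} $z_1$ and $z_2$; a priori $\tau$ could fix each $z_i$ individually, and your closing remark that ``the relation $z_1+z_2=2P_i$ holds uniformly regardless'' is not a valid resolution --- in that alternative case one would have $2z_1=2z_2=c=2P_i$ with $z_1\ne z_2$, hence $z_1+z_2=c-(z_1-z_2)\ne 2P_i$, and the conclusion would \emph{fail}. The correct resolution is to show this case cannot occur: if $\tau$ fixed $z_1$ and $z_2$ separately, then (since $\sigma$ preserves the ramification locus $\{x_1,x_2\}$, being the ramification locus of $\tau\circ\pi$ as well) $\sigma$ would fix $x_1$, making $x_1$ a Weierstrass point; then $2x_1\sim K\sim x_1+x_2$ forces $x_1\sim x_2$ and hence $x_1=x_2$ on a curve of positive genus, contradicting $x_1\ne x_2$. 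You need this (or the paper's direct computation $z_1+z_2=\pi_*(x_1+x_2)=\pi_*(K)=2P_i$, which sidesteps the dichotomy entirely) to close the argument.
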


\begin{proof}
(i) A Weierstrass point $W_i$ is characterized by the fact that it admits a holomorphic 1-form on $X$ vanishing to the order two at $W_i$. Then $2 W_i = K \in \Pic(X)$, where $K$ is a canonical divisor and $\Pic(X)$ is a group of divisors up to linear equivalence. The Jacobian $\Jac(X) \subset \Pic(X)$ is a subgroup of degree $0$ divisors on $X$. The covering map $\pi$ induces a map $\pi_*: \Jac(X) \to \Jac(E)$. Suppose $\pi$ maps $W_i$ to $P_i$ and $W_j$ to $P_j$. Then:
$$
2 W_i - 2 W_j = K - K = 0 \in \Jac(X) \implies 2(P_i-P_j) = \pi_*(2 W_i - 2 W_j) = 0 \in \Jac(E),
$$ 
which implies $P_i - P_j \in \Jac(E)[2]$.

(ii) Let $x_1 \ne x_2$ be ramification points of $\pi$. Then $x_1 + x_2 = K$, since for any non-zero holomorphic 1-form $\eta$ on $E$, 1-form $\omega = \pi^*(\eta)$ has simple zeroes at $x_1$ and $x_2$. Therefore:
$$z_1 + z_2 = \pi_*( x_1 + x_2 ) = \pi_*(K) = \pi_*(2 W_i) = 2 P_i \mbox{ in } \Pic(E),$$
which implies $z_1 - P_i = P_i - z_2$. 
\end{proof}

\noindent
{\bf Weierstrass profile.} Let $N_i = \left| \left\{ W_j \ \middle\vert \  \pi(W_j)=P_i \right\} \right|$, then $(N_0, N_1, N_2, N_3)$ is a partition of $6$ called a {\em Weierstrass profile} of $\pi: X  \to E$.

\begin{proposition} \label{WPprofile}
Let $X \in \M_2$, $E\in \M_1$ and $\pi: X \to E $ be a primitive branched cover of degree $d$.
The Weierstrass profile of $\pi$ is:
\begin{equation} \label{WPprofileformula}
(N_0, N_1, N_2, N_3) = 
\begin{cases}
(3,1,1,1), &  \mbox{ when } d \mbox{ is odd} \\
(0,2,2,2), & \mbox{ when } d \mbox{ is even}.
\end{cases}
\end{equation}
\end{proposition}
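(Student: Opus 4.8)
The plan is to introduce the involution $\iota\colon E \to E$ given by $\iota(z) = z_1 + z_2 - z$ (reflection through the common midpoint of the two branch points) and to show that the hyperelliptic involution $\tau$ of $X$ descends to it, i.e. $\pi \circ \tau = \iota \circ \pi$. The four fixed points of $\iota$ are exactly the solutions of $2z = z_1 + z_2$, hence precisely the points $P_0, P_1, P_2, P_3$ of Lemma~\ref{lemmaWP}. Once the descent identity is available, the profile $(N_0,N_1,N_2,N_3)$ can be read off by a parity count on the fibers over the $P_i$, and primitivity is used to finish the even case.

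First I would prove the descent identity. Fix a nonzero $dz \in \Omega(E)$ and set $\omega = \pi^*(dz)$. Since $\tau$ acts as $-1$ on $\Omega(X)$ we have $\tau^*\omega = -\omega$, while $\iota^*(dz) = -dz$ gives $(\iota\pi)^*(dz) = -\omega$ as well; thus $\pi\tau$ and $\iota\pi$ pull $dz$ back to the same form, so they differ by a translation of $E$. To see this translation is trivial, note that $\omega$ has simple zeros exactly at the two ramification points $x_1 \ne x_2$, that $x_1 + x_2 \sim K$, and that $p + \tau(p) \sim K$ for every $p$; hence $\tau(x_1) = x_2$. Evaluating both maps at $x_1$ then gives $\pi\tau(x_1) = \pi(x_2) = z_2 = \iota(z_1) = \iota\pi(x_1)$, so the translation vanishes and $\pi\tau = \iota\pi$.

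Next I would run the parity count. Each $P_i$ is fixed by $\iota$, so $\tau$ preserves the fiber $\pi^{-1}(P_i)$; its fixed points there are exactly the Weierstrass points mapping to $P_i$, of which there are $N_i$, while the remaining points of the fiber are interchanged in $\tau$-pairs. Counting the fiber (with the evident bookkeeping when $P_i$ happens to be a branch point, which only removes the extra $\tau$-orbit $\{x_1,x_2\}$) yields $N_i \equiv d \pmod 2$ for every $i$. When $d$ is odd all four $N_i$ are odd and positive, and the only partition of $6$ into four odd parts is $(3,1,1,1)$. When $d$ is even all four $N_i$ are even, which leaves the possibilities $(0,2,2,2)$, $(0,0,2,4)$ and $(0,0,0,6)$ up to order.

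The main obstacle is the even case, where parity alone is not decisive; here I would invoke primitivity. Surjectivity of $\pi_*\colon H_1(X,\Z) \to H_1(E,\Z)$ survives reduction mod $2$, so $\pi_*\colon \Jac(X)[2] \to \Jac(E)[2]$ is onto a group of order $4$. Every nonzero element of $\Jac(X)[2]$ has the form $W_i - W_j$ (the fifteen classes $W_i - W_j$ exhaust the nonzero $2$-torsion), and $\pi_*(W_i - W_j) = \pi(W_i) - \pi(W_j) = P_{c(i)} - P_{c(j)}$, where $c(i)$ records the index with $\pi(W_i) = P_{c(i)}$. Thus the image of $\pi_*$ on $2$-torsion is generated by the differences of those $P_i$ that are actually hit; since the $P_i$ form a coset of $\Jac(E)[2]$, any three of them have pairwise differences filling all of $\Jac(E)[2]$, so the image is everything precisely when at least three of the $P_i$ occur among the $\pi(W_j)$. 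This rules out $(0,0,2,4)$ and $(0,0,0,6)$ and leaves $(0,2,2,2)$, completing the proof. The two steps requiring the most care are the descent identity of the second paragraph and this passage from primitivity to surjectivity on $2$-torsion.
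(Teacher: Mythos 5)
Your proof is correct, and it is worth noting that the paper itself does not prove Proposition~\ref{WPprofile} at all: it only points the reader to Lemma~2.2 of \cite{FK09} and to Sections~1 and~5 of \cite{Kuhn88} for an algebro-geometric argument. So where the paper supplies a citation, you supply a complete, self-contained proof, along lines close in spirit to Kuhn's: the key point is that the hyperelliptic involution $\tau$ descends to the involution $\iota(z)=z_1+z_2-z$ of $E$, whose four fixed points are exactly the $P_i$ of Lemma~\ref{lemmaWP}. The two steps you flag as delicate both check out. Two maps $X\to E$ pulling $dz$ back to the same form have local primitives differing by a locally constant function, hence differ by a translation, and $\tau(x_1)=x_2$ follows from $x_1+x_2\sim K\sim x_1+\tau(x_1)$ on a curve of positive genus, so the translation is trivial; the fiber count then gives $N_i\equiv d \mod 2$ for each $i$ (and your parenthetical bookkeeping is only needed when $z_1=z_2$, where removing the $\tau$-orbit $\{x_1,x_2\}$ preserves parity). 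In the even case, surjectivity of $\pi_*$ on $H_1$ does persist after tensoring with $\Z/2\Z$, the fifteen classes $[W_i-W_j]$ do exhaust $\Jac(X)[2]\setminus\{0\}$ for a genus-$2$ curve, and the image of $\pi_*$ on $2$-torsion is all of $\Jac(E)[2]$ only if at least three of the $P_i$ are hit, which together with parity forces $(0,2,2,2)$. A small bonus of your argument over the bare statement: the odd case $(3,1,1,1)$ follows from the parity count alone and does not use primitivity, which enters only to exclude $(0,0,2,4)$ and $(0,0,0,6)$ when $d$ is even.
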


Compare to Lemma 2.2 in \cite{FK09}. The algebro-geometric proof of Proposition~\ref{WPprofile} can be found in Section 1 and 5 of \cite{Kuhn88}. 

\begin{proof}[Proof of Theorem~\ref{thmspin}]
Let us first show that when $n$ is even, none of the $P_i$ satisfy $P_i - z_1 \in \Jac(E)[n]$. Suppose some $P_i$ does, then $n (P_i - z_1) =0$. From Lemma~\ref{lemmaWP} (ii) we know that $z_2 - P_i = P_i - z_1$, which implies $2(P_i - z_1) = z_2 - z_1$. Now $n (P_i - z_1) = \frac{n}2 2 (P_i - z_1) = \frac{n}2 (z_2 - z_1) = 0 \in \Jac(E)$, which contradicts with the fact that $z_2 - z_1$ has order $n$ in $\Jac(E)$ (see the definition of $\Omega W_{d^2}[n]$).

For odd $n$ we will first show that the set $\{ n(P_i - z_1) \mid i = 0,1,2,3 \}$ is equal to $\Jac(E)[2]$. Clearly $n(P_i - z_1) \in \Jac(E)[2]$ since $2 n(P_i - z_1) = n (z_2 - z_1) = 0$. It remains to show that all $n (P_i - p)$ are different. Suppose $n (P_i - p) = n (P_j - p)$, then $n (P_i - P_j) = 0$, which implies that $P_i=P_j$, because $n$ is odd and $2 (P_i - P_j)=0$ from Lemma~\ref{lemmaWP} (i). Therefore $\{ n(P_i - z_1) \} = \Jac(E)[2]$. Then there is a unique point $P_i$, for which $P_i - z_1 \in \Jac(E)[n]$, and the value of the Weierstrass profile on that point is the value of $\IWP$. Together with Proposition~\ref{WPprofile} it finishes the proof of (i). 

For (ii) note that $\IWP(X,\omega)$ is a continuous and discreet function, hence it is an invariant of any topological connected component of $\Omega W_{d^2}[n]$. In the end of this section we show that both values of the invariant are achieved, when $n$ is odd, by giving examples of the corresponding $(X,\omega)$. This ends the proof.
\end{proof}

\noindent
{\bf Normalized cover.} Proposition~\ref{WPprofile} implies that for any primitive genus 2 cover $\pi: X \to E$ there is a unique point $P_0 \in E$ with a distinguished value in the Weierstrass profile. A choice of the origin of $E$ fixes an isomorphism $E \cong \C /\Z[\tau]$ for some $\tau \in \Hyp$. A primitive genus 2 cover $\pi: X \to \C /\Z[\tau]$ is called {\em normalized} if $P_0$ is the origin. Under this choice we have:
$$
P_i \in E[2] \mbox{ and } z_1 = - z_2,
$$
where $z_1$ and $z_2 \in E \cong \C /\Z[\tau]$ are the branch points of $\pi$. This convention gives a convenient way of computing the spin invariant:

\begin{proposition} \label{spincompute}
For odd $n$, let $(X,\omega) \in W_{d^2}[n]$ and let the cover $\pi: X \to E$, obtained by integration of $\omega$, be a normalized cover branched over $\pm z$. Then:
\begin{equation}
\epsilon(X,\omega) = \begin{cases}
0, & \mbox{ when } nz =0  \\

1, & \mbox{ otherwise}.
\end{cases}
\end{equation}
\end{proposition}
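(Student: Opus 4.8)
The plan is to reduce the computation of $\epsilon(X,\omega)$ to identifying, among the four points $P_0, P_1, P_2, P_3 \in E$ carrying the images of the Weierstrass points, the unique one whose Weierstrass-profile value equals $\IWP(X,\omega)$. Recall from the proof of Theorem~\ref{thmspin} that for odd $n$ exactly one index $i$ satisfies $P_i - z_1 \in \Jac(E)[n]$, that the four values $n(P_i - z_1)$ exhaust $\Jac(E)[2]$, and that $\IWP(X,\omega) = N_i$ for that distinguished index. Hence, by the definition of the spin invariant together with the Weierstrass profile of Proposition~\ref{WPprofile}, it suffices to decide whether the distinguished index is $0$ (giving $N_0 = 3$ or $0$, so $\epsilon = 0$) or lies in $\{1,2,3\}$ (giving $N_i = 1$ or $2$, so $\epsilon = 1$).

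The main step is a short computation in $\Jac(E)$ that exploits the normalization. Since the cover is normalized, $P_0 = 0$, every $P_i \in E[2]$, and $z_1 = -z_2 = z$. Because $n$ is odd and $2 P_i = 0$, one has $n P_i = P_i$, so the integer condition $n(P_i - z_1) = 0$ collapses to $P_i - n z = 0$, i.e. $P_i = n z$. I would also note that $n z \in E[2]$ independently: $2z = z_1 - z_2$ has order $n$ by the definition of $\Omega W_{d^2}[n]$, whence $2(nz) = n(2z) = 0$. Combined with the fact that the four distinct points $\{P_0, \dots, P_3\}$ exhaust $E[2]$ (distinctness follows from $\{n(P_i - z_1)\} = \Jac(E)[2]$ established in the proof of Theorem~\ref{thmspin}), this guarantees that $nz$ genuinely coincides with one of the $P_i$, namely the distinguished one.

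Finally I would split into the two cases. If $nz = 0$ then $P_i = P_0$, so $\IWP = N_0$, which is $3$ for $d$ odd and $0$ for $d$ even; in either case $\epsilon = 0$. If $nz \ne 0$ then $nz$ is a nonzero two-torsion point, hence equals some $P_i$ with $i \in \{1,2,3\}$, so $\IWP = N_i$, which is $1$ for $d$ odd and $2$ for $d$ even, giving $\epsilon = 1$. The only delicate point is the algebraic reduction $n(P_i - z_1) = P_i - nz$ and the observation that $nz$ lands on one of the $P_i$; both rest solely on the oddness of $n$ and the two-torsion statements already in hand, so I expect no genuine obstacle beyond bookkeeping.
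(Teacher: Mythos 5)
Your proposal is correct and follows essentially the same route as the paper: both identify the unique $P_i$ with $P_i - z_1 \in \Jac(E)[n]$ and observe that, under the normalization $P_0 = 0$, $z_1 = -z_2 = z$, this distinguished point is $P_0$ precisely when $nz = 0$, whence $\epsilon$ is read off from the Weierstrass profile of Proposition~\ref{WPprofile}. Your version merely makes explicit the steps the paper leaves implicit, namely that $n(P_i - z) = P_i - nz$ for odd $n$ and that $nz \in E[2] = \{P_0, P_1, P_2, P_3\}$, so the distinguished point is exactly $nz$.
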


\begin{proof}
Assume $\epsilon(X,\omega) = 0$ and note that it happens if and only if the image of integer Weierstrass points is $P_0$. By definition $W_i$ is integer if $z - \pi(W_i) =  z - P_0 = z \in \Jac(E)[n]$, hence $nz=0$. Clearly if  $\epsilon(X,\omega) = 1$, then $\pi(W_i) \in E[2]^*$ and $z - \pi(W_i) \notin \Jac(E)[n]$.
\end{proof}

\noindent
{\bf Examples.} We conclude by providing examples for each of the values of the spin invariant $\epsilon$, when $n$ is odd. In Figure~\ref{figspinex} one can see Abelian differentials from $\A_{d^2}[n]$ ($d$ is odd in figure $(1)$ and even in figure $(2)$) together with covering maps to the square torus. The horizontal sides of these surfaces are identified to the opposite ones directly below or above and the vertical sides -- to the opposite ones directly on the left or right. Note that the punctured lines split each surface into four rectangles. The hyperelliptic involution is given by rotating by $\pi$ of each of these rectangles around its centers. The Weierstrass points and their images are labeled with crosses. 

The surface in figure $(1)$ has odd number of squares and its $\epsilon = k \mod 2$. Indeed, note that the red cross is the distinguished point $P_0$ with $3$ Weierstrass points in its fiber. Then $P_0 - z = \frac k{2n} \in \C/\Z[i]$, where $z$ is one of the branch points. Therefore $n(P_0 - z) = 0 \iff k \equiv 0\mod 2$.

The surface in figure $(2)$ has even number of squares and its $\epsilon = k+1 \mod 2$. Indeed, note that the black cross is the distinguished point $P_0$ with no Weierstrass points in its fiber. Then $P_0 - z = \frac {n-k}{2n} \in \C/\Z[i]$, where $z$ is one of the branch points. Therefore $n(P_0 - z) = 0 \iff k \equiv 1\mod 2$.

\begin{comment3}
\begin{figure}[H]
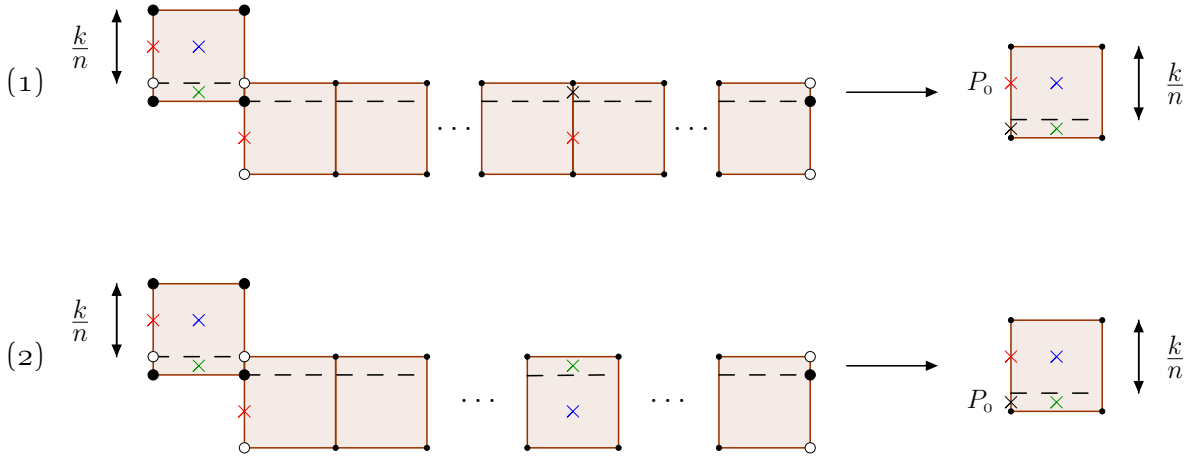
 
\centering
\includestandalone[width=\textwidth]{tikzspinex}
\vspace{-10pt}
\caption{Examples of $(X,\omega) \in W_{d^2}[n]$, when $n$ is odd, with $(1)$ $\epsilon(X,\omega) = k \mod 2$, when $d$ is odd, and $(2)$ $\epsilon(X,\omega) = k \mod 2$, when $d$ is even.}
\label{figspinex}
\end{figure}
\end{comment3}

%\noindent
%{\bf Primitive and minimal covers.} 
%Cover $\pi: X \to E$ of some elliptic curve $E$ is called {\em primitive} if it does not factor through a cover $E \to E$. Note that when cover $\pi$ is branched over a single point $0_E$ that gives a natural choice of the origin at the branch point and $(E, 0_E)$ can be identified with $(\C / \Lambda, 0)$ for some rank 2 lattice $\Lambda \subset \C$. Then if $\pi$ factors through a map $E\to E$ it is necessarily a multiplication by $n$ map:
%$$
%(\C / \Lambda, 0) \xrightarrow{\times n} (\C / \Lambda, 0), \quad z \mapsto n z
%$$
%We call a translation cover {\em minimal} (in some literature {\em reduced}), if it does not factor through a non-trivial cover $E' \to E$ of elliptic curves. 
%
%\begin{lemma} \label{minimal}
%The following are equivalent:
%
%(i) a translation cover $\pi: (X,\omega) \to (\C/\Lambda,dz)$ is minimal
%
%(ii) $\pi_*: H_1(X,\Z) \to H_1(E,\Z)$ is surjective
%
%(iii)  $\pi_*: \pi_1(X,x_0) \to \pi_1(E,y_0)$ is surjective
%
%(iv) periods of $(X, \omega)$ generate a lattice $\Lambda$
%\end{lemma}

%%%%%%%%%%%%%%%%%%%%%%%
%%%%%%%%%%%%%%%%%%%%%%%
%%%%%%%%%%%%%%%%%%%%%%%

\section{The absolute period leaf} \label{secabs}

In \S\ref{secbackground} we showed that the number of irreducible components of $W_{d^2}[n]$ is equal to the number of $\SLZ$ orbits of points $\A_{d^2}[n]$ on the absolute period leaf $\A_{d^2}$. In this section we will present:
\begin{itemize}
\item the absolute period leaf $\A_{d^2}$;
\item the relative period map $\rho: \A_{d^2} \to \P^1$;
\item the discriminant map $\delta: \A_{d^2} \to \P^1$;
\item the meromorphic quadratic differential $q$ on $\A_{d^2}$ that gives a square-tiling of $\A_{d^2}$; and
\item a natural $\SLZ$ action on $\A_{d^2}$ respects the square-tiling.
\end{itemize} 

We summarize this in the following theorem:

\begin{theorem} \label{thmquaddiff}
For any $d>1$, the absolute period leaf $\A_{d^2}$ admits a meromorphic quadratic differential $q$ with the following properties:
\begin{enumerate}

\item[(i)] the flat metric $\left|q\right|$ defines a square-tiling of $\A_{d^2}$; and

\item[(ii)] there is a natural $\SLZ$ action on $\A_{d^2}$ compatible with this square-tiling. This action is an extension of the natural $\SLZ \subset \GL$ action on $\A^\circ_{d^2} \subset \Omega\M_2$.
\end{enumerate}
\end{theorem}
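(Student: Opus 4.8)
The plan is to produce $q$ directly from the relative period, read the square-tiling off a pullback description, and obtain the symmetry from an equivariance. First I would fix $(X,\omega)\in\A^\circ_{d^2}$, let $z_1,z_2$ be the two simple zeroes of $\omega$, and consider $\rho=\int_{z_1}^{z_2}\omega$. Changing the path changes $\rho$ by an absolute period, so $\rho$ is well defined modulo $\Per(X,\omega)=\Z[i]$; changing the (unordered) labelling of the zeroes changes its sign. Differentiating removes the $\Z[i]$-ambiguity, so $d\rho$ is a holomorphic $1$-form defined up to sign and $\td q:=(d\rho)^2$ is a single-valued holomorphic quadratic differential on $\A^\circ_{d^2}$. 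I would stress that the surviving sign ambiguity is essential: the monodromy of $d\rho$ around loops that interchange $z_1$ and $z_2$ is $-1$, which is why $q$ is only a quadratic differential (not the square of a global Abelian differential) and is the source of the rotations by $\pi$ in the edge identifications. Non-vanishing of $\td q$ is immediate, since the absolute period leaf is one-dimensional with tangent line the relative direction $\ker\big(H^1(X,Z(\omega);\C)\to H^1(X;\C)\big)$, on which $\rho$ is a local coordinate.

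Next I would pass to the completion. Because $\omega=\pi^*(dz)$ for the primitive degree-$d$ cover $\pi:X\to E_0$, one computes $\rho=\pi(z_2)-\pi(z_1)\in E_0$, so $\rho$ is a holomorphic map $\A^\circ_{d^2}\to E_0$ with $\td q=\rho^*(dz^2)$. Where $\rho$ is unramified it is a local isometry from $|q|$ to the flat metric $|dz|^2$ of $E_0$, while at an interior ramification point of local degree $k$ (these lie over $0\in E_0$, i.e.\ at the vertices of the tiling) $q$ acquires a zero of order $2(k-1)$. I would then extend $q$ across the finitely many cusps of $\A_{d^2}$ by a local analysis of the degenerating flat surface, showing that there $\rho$ has square-root type behaviour and $q$ extends meromorphically with at worst simple poles. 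Property (i) then follows formally: $|q|=\rho^*|dz|^2$ is the pullback of the unit-square metric on $E_0=\C/\Z[i]$, so the $\rho$-preimages of the single square of $E_0$ tile $\A_{d^2}$ by unit squares meeting along whole edges, with cone points precisely at the zeroes and poles of $q$.

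For (ii) I would first check that $\SLZ=\SL(E_0,dz)$ preserves $\A^\circ_{d^2}$: for $A\in\SLZ$ the transformed differential has periods $A\cdot\Z[i]=\Z[i]$ and the same area $d$, so the restriction of the $\GL$-action gives self-maps of $\A^\circ_{d^2}$. In the flat coordinate $\rho$ these maps are $\rho\mapsto A\cdot\rho$, i.e.\ they are affine with respect to $|q|$ with linear part $A$; since affine maps preserve cone angles they extend over the cone points to homeomorphisms of the compact surface $\A_{d^2}$, carrying zeroes to zeroes and poles to poles. Compatibility with the tiling is then the equivariance $\rho(A\cdot(X,\omega))=A\cdot\rho(X,\omega)$ in $E_0$: as $A$ preserves the lattice $\Z[i]$ it fixes the vertex set $\rho^{-1}(0)$ setwise and carries unit squares to unit squares, so $\SLZ$ acts as symmetries of the square-tiling extending the given action on $\A^\circ_{d^2}$. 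I would note that this action is affine, not holomorphic, which is precisely what is needed for counting the orbits relevant to the parity conjecture.

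I expect the real work to be the boundary analysis hidden in the second step. Showing that $\A_{d^2}$ is a smooth compact curve and that $q$ extends there with only the claimed simple poles requires a precise description of how the square-tiled surfaces degenerate at the cusps: one must identify the vanishing saddle connections or cylinders, the resulting square-root behaviour of $\rho$, and hence the cone-angle-$\pi$ (simple pole) structure. Once this local model at the cusps, together with the ramification pattern of $\rho$ over $0\in E_0$ at the vertices, is in hand, the pullback description of the tiling and the affineness and extension of the $\SLZ$-action are routine.
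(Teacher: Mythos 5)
Your proposal is correct and follows essentially the same route as the paper: $q$ is obtained by pulling back $dz^2$ along the relative-period/discriminant map to a flat quotient of $\C$, the square-tiling is the pullback of the square decomposition downstairs, and the $\SLZ$ action respects it because it preserves $\Z[i]$ and is affine in the flat coordinate $\pm z$. One slip worth fixing: having correctly observed that the ordering of the two zeroes cannot be fixed globally, you cannot then treat $\rho$ as a holomorphic map to $E_0$ — it is a map to $\iota\backslash\C/\Z[i]$ (the paper instead uses $\delta=\pm 2z$ into the pillowcase $\bfP=\iota\backslash\C/2\Z[i]$), so the tiles are preimages of the squares of this pillowcase-type quotient rather than of ``the single square of $E_0$''; this only changes the normalization (the paper notes the $\rho$- and $\delta$-tilings differ by a subdivision into side-$\tfrac12$ squares), not the validity of (i) or (ii). The boundary analysis you flag as the real work is indeed where the content lies, but the paper places it outside this proof (in Theorem~\ref{thmvertices} and Proposition~\ref{cornerfibers}), obtaining the meromorphic extension of $q$ for free once $\delta$ is a holomorphic map of compact Riemann surfaces.
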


We will show relation between the vertices of the squares, the zeros and poles of $q$ and the boundary points of $\A_{d^2}$. We will give a geometric description of the corresponding Abelian differentials. In particular, we will define square-tiled surfaces with separating and non-separating nodes and will show:

\begin{theorem} \label{thmvertices}
For any $d>1$, the set of the singularities of $(\A_{d^2},q)$ is a subset of the vertices of the square-tiling of $\A_{d^2}$. The vertices are regular points of $q$, simple zeroes of $q$ or simple poles of $q$. They correspond to:
\begin{enumerate}
\item[(i)] (regular points) primitive $d$-square-tiled surfaces in $\Omega\M_2(1,1)$;

\item[(ii)] (simple zeroes) primitive $d$-square-tiled surfaces in $\Omega\M_2(2)$; and

\item[(iii)] (simple poles) genus $2$ $d$-square-tiled surfaces with separating and non-separating nodes.
\end{enumerate}

The set of the boundary points of $\A^\circ_{d^2}$ is the set of simple poles of $q$.
\end{theorem}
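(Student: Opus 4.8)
The plan is to use the relative period $\rho=\int_{z_1}^{z_2}\omega$ as the organizing coordinate. Recall that $\rho$ is well defined only up to sign (interchanging the two zeros of $\omega$) and up to $\Z[i]$ (the fixed absolute periods), so it descends to a holomorphic map to $E_0/\{\pm1\}$ while $q=d\rho^2$ is single valued. First I would check that on the part of $\A^\circ_{d^2}$ lying in the open stratum $\Omega\M_2(1,1)$ the function $\rho$ is a genuine local holomorphic coordinate: since $\A_{d^2}$ is one dimensional and the absolute periods are pinned to the fixed lattice $\Z[i]$, the single relative period is the only remaining modulus. In the coordinate $w=\rho$ one then has $q=dw^2$, so $q$ is holomorphic and nonvanishing there and the metric $|q|$ is flat. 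By construction the unit squares are the $\rho$-preimages of the unit square tiling of $E_0$, and their vertices are exactly the points of $\rho^{-1}(\Z[i])$, i.e.\ the configurations in which the two branch points collide on $E_0$.

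Granting this, the first assertion is immediate: away from the vertices we are in the open stratum with distinct branch points, where $\rho$ is a local coordinate and $q=dw^2$ has no zero or pole; hence every singularity of $q$ (and, as the analysis below shows, every boundary point of $\A^\circ_{d^2}$) must occur at a vertex. It then remains to classify, at a vertex where the two branch points have collided on $E_0$, the behavior of the two zeros of $\omega$ on $X$, in three cases matching (i)--(iii):
\begin{enumerate}
\item[(i)] the two zeros remain distinct points of $X$ over the common branch value: then $(X,\omega)\in\Omega\M_2(1,1)$ is a primitive $d$-square-tiled surface, $\rho$ is still a local coordinate, and $q$ is regular;
\item[(ii)] the two zeros merge into a double zero: then $(X,\omega)\in\Omega\M_2(2)$ is a primitive $d$-square-tiled surface, and $q$ has a simple zero;
\item[(iii)] $X$ degenerates, a closed curve of zero $\omega$-period pinches, and the limit is a nodal genus-$2$ square-tiled surface; here $q$ has a simple pole.
\end{enumerate}

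The technical heart is the local computation of the order of $q$ in cases (ii) and (iii), which amounts to finding the uniformizing coordinate of $\A_{d^2}$ at the special point and expanding $\rho$ in it. For (ii) I would use the standard splitting model $\omega=(u^2-\epsilon)\,du$, with $\epsilon$ the rel-deformation parameter; a direct integration gives $\rho=\int_{-\sqrt\epsilon}^{\sqrt\epsilon}(u^2-\epsilon)\,du=-\tfrac43\epsilon^{3/2}$, so that in the coordinate $\epsilon$ one gets $q=d\rho^2\propto\epsilon\,d\epsilon^2$, a simple zero. For (iii) the key point is that the absolute periods stay pinned to $\Z[i]$, so the pinching curve must have \emph{zero} $\omega$-period; this forces it to be either separating (automatically of zero period, since it bounds) or a non-separating curve mapping to a null-homotopic loop on $E_0$, which produces precisely the separating and non-separating nodal square-tiled surfaces of the statement. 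The local model of the pinch gives $\rho=\rho_0+c\sqrt t+\cdots$ in a uniformizing coordinate $t$, whence $q=d\rho^2\propto t^{-1}dt^2$, a simple pole. In both cases the monodromy of $\rho$ about the special point is the order-$2$ interchange of the two zeros (which is exactly why half-integer exponents appear and why $q=d\rho^2$ stays single valued), but the two cases are distinguished by the different local geometry (exponent $3/2$ versus $1/2$). Hence the monodromy alone does not settle the order: extracting it from the two explicit local models, and rigorously identifying the stable limits as the boundary of the leaf, is the main obstacle.

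Finally, since the degenerate surfaces of (iii) are precisely the points at which $\A^\circ_{d^2}$ acquires a boundary, the set of boundary points coincides with the set of simple poles of $q$, which is the last assertion. As a consistency check I would compare the resulting zero/pole count with $\deg q=4g-4$ for $g=g(X(d))$, using the isomorphism $\A_{d^2}\cong X(d)$ together with the known genus and number of cusps of $X(d)$.
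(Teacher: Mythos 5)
Your proposal is correct in substance but takes a different route from the paper. The paper does not expand $\rho$ in local models at all: it works with the discriminant map $\delta:\A_{d^2}\to\bfP$ to the pillowcase, observes that $dz^2$ has a simple pole (cone angle $\pi$) at each pillowcase vertex $Q_i$, and then computes the \emph{local topological degree} of $\delta$ at each vertex of the tiling by counting deformations with a prescribed small relative period: $2$ ways to move apart two distinct simple zeroes, $3$ ways to split a double zero along its three horizontal prongs, and $1$ way to open a slit at a node (Proposition~\ref{cornerfibers}). Since pulling back a simple pole of a quadratic differential under a map of local degree $k$ yields a point of order $k-2$ (cone angle $k\pi$), the three counts give order $0$, $+1$, $-1$ respectively, which is exactly your regular point / simple zero / simple pole trichotomy. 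The two arguments are equivalent in content — your exponents $3/2$ and $1/2$ in $\rho$ are the analytic shadow of the local degrees $3$ and $1$ — but the paper's version buys something you correctly identify as your ``main obstacle'': it never has to justify that $\epsilon$ or $t$ is a uniformizing parameter for $\A_{d^2}$ at the special point, because only the count of preimages of a nearby regular value is needed, not a holomorphic normal form. Your version, in exchange, makes the orders of $q$ visible by direct computation. One further point where you are leaning on something the paper proves explicitly: the exhaustiveness of cases (i)--(iii), and in particular why the only nodal limits are the separating and non-separating ones of the stated form, is established in the paper by enumerating which configurations of saddle connections joining the two zeroes can be contracted (at most two in a fixed direction since each cone angle is $4\pi$, and their union cannot bound a disk); your appeal to ``a closed curve of zero $\omega$-period pinches'' needs this enumeration to be complete. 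With those two points filled in, your argument goes through, and your closing check against $\sum\operatorname{ord}=4g-4$ is consistent with the counts in Table~\ref{table}.
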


We then will define the set $\bfP[n]$ of primitive $n$-torsion points of the pillowcase $\bfP$ and show that $\A_{d^2}[n]$ is the set of primitive $n$-rational points of the squares of $\A_{d^2}$:

\begin{theorem} \label{thmlocating}
For any $d>1$ and $n>1$, $\A_{d^2}[n]$ is the preimage of $\bfP[n]$ under the discriminant map $\delta$.
\end{theorem}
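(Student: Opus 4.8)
The plan is to identify the discriminant map $\delta$ with the relative-period class and then read off the theorem as an order computation in $E_0 = \C/\Z[i]$. On $\A^\circ_{d^2}$ the quadratic differential is $q = d\rho^2$, the square of the exact differential of $\rho = \int_{z_1}^{z_2}\omega$; thus $\delta$ is precisely the developing map of $q$, sending $(X,\omega)$ to the class of $\rho$ in the pillowcase $\bfP = E_0/\{\pm 1\}$, and $q = \delta^*(dz^2)$ for the natural quadratic differential $dz^2$ on $\bfP$. Once $\delta$ is described this way, the identity $\A_{d^2}[n] = \delta^{-1}(\bfP[n])$ reduces to the statement that $\rho$ has exact order $n$ in $E_0$ exactly when $\delta(X,\omega)$ is a primitive $n$-torsion point of $\bfP$.

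First I would check that $\delta$ is well defined as a map into $\bfP$. Because every $(X,\omega)\in\A^\circ_{d^2}$ is a primitive square-tiled surface, its absolute periods satisfy $\Per(X,\omega) = \Z[i]$, so $\rho$ is well defined modulo $\Z[i]$, i.e.\ as a point of $E_0$. The two zeros of $\omega$ carry no canonical ordering — the hyperelliptic involution interchanges the ramification points, and the monodromy of $\A_{d^2}$ can do the same — so globally only $\rho$ up to sign is defined; this is exactly the descent to $\bfP = E_0/\{\pm 1\}$, under which $q = d\rho^2$ is unchanged. Next, since $\omega = \pi^*(dz)$ and the zeros of $\omega$ are the ramification points $x_1, x_2$ mapping to $z_1, z_2 \in E_0$, the identity $\int_{x_1}^{x_2}\omega = \int_{z_1}^{z_2} dz$ gives $\rho = \pm(\pi(x_1)-\pi(x_2))$ in $E_0$. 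Finally I would recall that $\bfP[n]$ is, by definition, the image in $\bfP$ of the points of $E_0$ of exact order $n$; this descends to $\bfP$ because $w$ and $-w$ always have the same order.

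Chaining these observations, $(X,\omega)\in\A_{d^2}[n]$ iff $\pi(x_1)-\pi(x_2)$ has order $n$ in $\Jac(E_0) = E_0$, iff $\rho$ has exact order $n$ in $E_0$, iff $\delta(X,\omega) = [\rho]$ lies in $\bfP[n]$. This yields the desired equality at the level of smooth points, and simultaneously the reformulation that $\A_{d^2}[n]$ consists of the primitive $n$-rational points of the squares, since the $n$-rational points of $\bfP$ are exactly the images of $E_0[n]$.

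The step needing genuine care is the interaction with the boundary of $\A^\circ_{d^2}$. Since $dz^2$ has poles only at the four cone points of $\bfP$ — the images of the $2$-torsion $E_0[2]$ — and $q = \delta^*(dz^2)$, every pole of $q$ develops to a cone point of $\bfP$. When $n>2$ this causes no trouble: a primitive $n$-torsion point has order $>2$, hence is not $2$-torsion, so $\bfP[n]$ is disjoint from all cone points and $\delta^{-1}(\bfP[n])$ automatically misses every pole of $q$ and lies in the smooth locus $\A^\circ_{d^2}$; the equality then follows from the order computation above. The delicate case is $n=2$, where $\bfP[2]$ is exactly the set of three nonzero cone points. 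Here one must rule out poles of $q$ lying over these points, equivalently show that every boundary point of $\A^\circ_{d^2}$ develops to the single cone point $[0]$, the image of the origin. I expect this to follow from the classification of poles in Theorem~\ref{thmvertices} together with the fact that the absolute periods are frozen at $\Z[i]$ along $\A_{d^2}$, which constrains how the covers $\pi\colon X\to E_0$ may degenerate; carrying this out carefully — in particular for the cusps, which correspond to non-separating nodes — is the main obstacle, and is where the structure of $\A_{d^2}$ as a branched cover of the pillowcase does the real work.
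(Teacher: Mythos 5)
Your core computation is the paper's: the discriminant map records the relative period $z_1-z_2=2z$ up to sign, and the condition that $\pi(x_1)-\pi(x_2)$ have exact order $n$ in $\Jac(E_0)$ transfers verbatim to the condition of being a primitive $n$-rational point of the target. However, you have conflated $\delta$ with the relative period map $\rho$: the paper's pillowcase is $\bfP=\iota\backslash\C/2\Z[i]$, not $E_0/\{\pm1\}=\iota\backslash\C/\Z[i]$, and $\delta(X,\omega)=\pm 2z\bmod 2\Z[i]$ pushes the branch point $z$ through the doubling map $\eta$; your map $(X,\omega)\mapsto\pm(z_1-z_2)\bmod\Z[i]$ is $\rho=\sigma\circ\delta$ from diagram~(\ref{diag:rhodelta}). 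This conflation is harmless for the chain of equivalences (the degree-4 map $\sigma$ preserves primitivity of $n$-rational points), but it manufactures your ``delicate case $n=2$.'' With the paper's definition $\bfP[n]=\iota\backslash\frac1n\Z[i]^*/2\Z[i]$, a corner of the pillowcase is an integer point $c+id=\frac{nc+ind}{n}$ with $\gcd(nc,nd,n)=n>1$, so $\bfP[n]$ is disjoint from the corners for \emph{every} $n>1$, including $n=2$, and $\delta^{-1}(\bfP[n])$ automatically avoids all vertices of the square-tiling. (Relatedly, your claim that $\bfP[n]$ is the image of $E_0[n]^*$ under $\eta$ fails for even $n$: $\eta(1/2)=1$ is a corner, not a point of $\bfP[2]$.)

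More importantly, the step you defer as ``the main obstacle'' is where your proof is actually incomplete, and it closes in one line rather than requiring the branched-cover structure of $\A_{d^2}$ over $\bfP$. Every point of $\A_{d^2}$ that fails to lie in some $\A_{d^2}[m]$ with $m>1$ --- a stable form with a node, a form with a double zero, or a form whose two simple zeros lie over the same point of $E_0$ --- satisfies $z_1=z_2$ by construction (this is how $\rho$ is extended to the boundary, and is the defining property of $\A_{d^2}[0]$ and $\A_{d^2}[1]$), hence has vanishing relative period and maps to the corner $[0]$. No appeal to Theorem~\ref{thmvertices} or to how the covers degenerate is needed. So: right idea, same route as the paper, but the write-up leaves open a step that you have made artificially hard by misreading the definitions of $\bfP$ and $\delta$.
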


When $n>1$ is odd, $\A_{d^2}[n]$ consists of two $\SLZ$ invariant subsets of $\A^{\epsilon}_{d^2}[n]$ distinguished by the spin $\epsilon$. We will interpret these subsets in terms of the square-tiling on $\A_{d^2}$ and use it to show:
\begin{theorem} \label{thmformula}
For any $d>1$ and odd $n>1$, let $t_{d,n,\epsilon}$ be the number of square-tiled surfaces of type $(d,n)$ and spin $\epsilon$. Then:
$$
t_{d,n,\epsilon}  =
\displaystyle (2 \epsilon +1 ) \cdot
\frac{d-1}{12n} \cdot \left| \PSL_2(\ZZ{d}) \right| \cdot  \left| \SL_2(\ZZ{n}) \right|.
$$
\end{theorem}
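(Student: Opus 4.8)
By the correspondences of \S\ref{secbackground} (Theorem~\ref{thmequivconj}) together with Theorem~\ref{thmlocating}, a type $(d,n)$ square-tiled surface of spin $\epsilon$ is the same datum as a spin-$\epsilon$ point of $\A_{d^2}[n]=\delta^{-1}(\bfP[n])$; write $\A^\epsilon_{d^2}[n]$ for this subset, so that $t_{d,n,\epsilon}=|\A^\epsilon_{d^2}[n]|$. The plan is to compute these cardinalities by fibering $\A_{d^2}$ over the pillowcase along a refinement of $\delta$. For $(X,\omega)\in\A_{d^2}$, normalize the cover $\pi\colon X\to E_0$ so that the distinguished Weierstrass image $P_0$ of Proposition~\ref{WPprofile} is the origin; then the branch points are $\pm z$ (Lemma~\ref{lemmaWP}), and I set $\td\delta(X,\omega)=\pm z\in\bfP$. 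Since $P_0$ is canonically determined and varies algebraically, $\td\delta$ is a finite morphism of curves, and $\delta=[2]\circ\td\delta$ for the doubling map $[2]\colon\bfP\to\bfP$, because $\rho=z_2-z_1=-2z$ and $\delta=\pm\rho$. By Proposition~\ref{spincompute}, a point of $\A_{d^2}[n]$ has spin $0$ exactly when $z$ has order $n$ and spin $1$ when $z$ has order $2n$; thus $\A^\epsilon_{d^2}[n]=\td\delta^{-1}(B_\epsilon)$, where $B_0,B_1\subset\bfP$ are the images of the order-$n$, resp. order-$2n$, points $z$ with $2z$ of order $n$.

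\textbf{The two elementary inputs.} First I count $B_\epsilon$. The points of exact order $n$ on $E_0$ number $n^2\prod_{p\mid n}(1-p^{-2})=|\SL_2(\ZZ n)|/n$, and, $n$ being odd, $z\mapsto-z$ acts on them freely, so $|B_0|=|\SL_2(\ZZ n)|/(2n)$. Every $z$ of order $2n$ with $2z$ of order $n$ is uniquely $z=z'+t$ with $z'$ of order $n$ and $t$ a nonzero $2$-torsion point, giving $3$ choices of $t$ and hence $|B_1|=3|B_0|$; this is the origin of the factor $(2\epsilon+1)$. Second, by Theorem~\ref{thmvertices} the differential $q=d\rho^2$ is singular only at vertices of the square-tiling, so $\td\delta$, being essentially $\rho/2$, is ramified only over images of integer and $2$-torsion points of $E_0$. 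As every point of $B_0\cup B_1$ has order $\ge n>2$, the sets $B_\epsilon$ avoid the branch locus of $\td\delta$; each fiber over them has exactly $\deg\td\delta$ points, so $t_{d,n,\epsilon}=\deg(\td\delta)\cdot|B_\epsilon|=(2\epsilon+1)\,\deg(\td\delta)\,\dfrac{|\SL_2(\ZZ n)|}{2n}$.

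\textbf{The degree — main obstacle.} It remains to establish $\deg\td\delta=\tfrac{d-1}{6}\,|\PSL_2(\ZZ d)|$, equivalently $\deg\delta=4\deg\td\delta=\tfrac{2(d-1)}{3}|\PSL_2(\ZZ d)|$, which is the number of primitive degree-$d$ genus-$2$ covers of $E_0$ with a prescribed pair of simple branch points. This Hurwitz count is the real work: I would read it off the explicit square-tiling of $\A_{d^2}\cong X(d)$ produced in \S\ref{sectiling} from cylinder coordinates (\cite{EMS03}), where the $|\PSL_2(\ZZ d)|$ cosets of $\Gamma(d)$ in $\PSLZ$ index the combinatorial pieces and the cylinder widths and heights supply the factor $(d-1)/6$, dividing by the area of the pillowcase to obtain $\deg\td\delta$. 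The delicate point, and the one left open for even $d$ in \cite{KM17}, is that the normalization defining $\td\delta$ depends on whether the distinguished fiber $P_0$ carries three Weierstrass points ($d$ odd) or none ($d$ even), cf. Proposition~\ref{WPprofile}; the virtue of the present fibration is that it renders the spin split uniform in the parity of $d$, leaving only the single number $\deg\td\delta$ parity-dependent. As an independent check one may instead import the total $t_{d,n}=t_{d,n,0}+t_{d,n,1}=\tfrac{d-1}{3n}|\PSL_2(\ZZ d)|\,|\SL_2(\ZZ n)|$ from the reduced-tiling formula of \cite{EMS03},\cite{KM17}; combined with the previous step this forces $\deg\td\delta=\tfrac{d-1}{6}|\PSL_2(\ZZ d)|$ and reduces the whole theorem to the $1:3$ ratio $|B_0|:|B_1|$ proved above.

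\textbf{Conclusion.} Substituting $\deg\td\delta=\tfrac{d-1}{6}|\PSL_2(\ZZ d)|$ into $t_{d,n,\epsilon}=\deg(\td\delta)\cdot|B_\epsilon|$ yields $t_{d,n,\epsilon}=(2\epsilon+1)\cdot\tfrac{d-1}{6}|\PSL_2(\ZZ d)|\cdot\dfrac{|\SL_2(\ZZ n)|}{2n}=(2\epsilon+1)\,\dfrac{d-1}{12n}\,|\PSL_2(\ZZ d)|\,|\SL_2(\ZZ n)|$, the claimed formula.
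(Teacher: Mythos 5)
Your proposal is, in substance, the paper's own proof: identify $\A^\epsilon_{d^2}[n]$ with the preimage under the discriminant map of an explicit set of torsion points on the pillowcase, establish the $1:3$ ratio between the two spin classes of such points, and multiply by the degree of the map, which is imported from \cite{EMS03} and \cite{Kani06}. Your dichotomy ``$z$ of exact order $n$ versus exact order $2n$'' is, via Proposition~\ref{spincompute}, the same as the paper's partition $\bfP[n]=\bfP[n]^0\sqcup\bfP[n]^1$ by the parity of the coordinates of $2z$ (Proposition~\ref{propIWPvalues}), and your count $|B_1|=3|B_0|$ is Proposition~\ref{ratioorbits}.

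There is one bookkeeping slip, which happens to cancel. The map $\td\delta(X,\omega)=\pm z$ is not well defined with values in $\bfP=\iota\backslash\C/2\Z[i]$, since $z$ is only determined modulo $\Z[i]$; the well-defined branch-locus map takes values in $\iota\backslash\C/\Z[i]$ and differs from $\delta$ by the isomorphism $\pm w\bmod 2\Z[i]\mapsto\pm\tfrac{w}{2}\bmod\Z[i]$, so it has the \emph{same} degree as $\delta$, not one quarter of it. Correspondingly, the Hurwitz number of primitive degree-$d$ covers of $E_0$ with two prescribed branch points is $\deg\rho=4\deg\delta=\tfrac{2(d-1)}{3}\left|\PSL_2(\ZZ d)\right|$ (see diagram~(\ref{diag:rhodelta}) and Appendix~\ref{sectioncounts}), whereas $\deg\delta=\tfrac{d-1}{6}\left|\PSL_2(\ZZ d)\right|$; your identification of $\deg\delta$ with that Hurwitz count is off by the same factor of $4$ as your relation $\deg\delta=4\deg\td\delta$, and the two errors cancel, leaving the correct multiplier $\tfrac{d-1}{6}\left|\PSL_2(\ZZ d)\right|$ per branch locus and hence the correct formula. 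Your fallback of importing the total $t_{d,n}$ and invoking only the $1:3$ ratio is sound and sidesteps the issue entirely.
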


\noindent
{\bf Absolute period leaf.} Recall that:
\begin{flalign*}
 \A^{\circ}_{d^2} = \left\{   (X,\omega) \in \Omega\M_2 \,\middle\vert\,
   \Per(\omega) = \Z[i] \mbox{ and } \displaystyle \int_X  |\omega|^2 = d  
\right\}.
\end{flalign*}
This set is also referred to as a {\em rel leaf} or a {\em modular fiber}. The locus $\A^{\circ}_{d^2} \subset \Omega\M_2$ is a complex 1-dimensional subvariety that parametrizes a family of Abelian differentials with varying relative periods but fixed absolute periods. We define its completion following \cite{McM14}.

Let $\cM_g$ be the Deligne-Mumford compactification of the moduli space $\M_g$ by stable curves. A {\em stable form} on $X \in \cM_g$ is a non-zero meromorphic 1-form on $X$, which is holomorphic on the smooth locus and has at worst simple poles with opposite residues at the nodes of $X$. Then $\Omega\cM_g$ will denote the moduli space of stable forms of genus $g$. The {\em absolute period leaf} $\A_{d^2}$ is the completion $\A^{\circ}_{d^2}$ in $\Omega\cM_2$.

Let $P(\omega)$ denote the set of simple poles of a stable form $\omega$.  Then the periods of a stable form are defined as $\Per(X,\omega) = \left\{ \int_\gamma \omega \mid \gamma \in H_1(X \setminus P(\omega),\Z)\right\}$. Then one obtains:
\begin{proposition} \label{prop:completion}
The completion of $\A^{\circ}_{d^2}$ in $\Omega\cM_2$ is:
\begin{flalign*}
 \A_{d^2} = \left\{   (X,\omega) \in \Omega\cM_2 \,\middle\vert\,
   \Per(X,\omega) = \Z[i] \mbox{ and } \displaystyle \int_X  |\omega|^2 = d  
\right\}.
\end{flalign*}
\end{proposition}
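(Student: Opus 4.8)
The plan is to prove the two inclusions separately, writing $R = \{(X,\omega)\in\Omega\cM_2 : \Per(X,\omega)=\Z[i],\ \int_X|\omega|^2=d\}$ for the set on the right. Since $\A_{d^2}$ is by definition the closure of $\A^{\circ}_{d^2}$ in $\Omega\cM_2$, the inclusion $\A_{d^2}\subseteq R$ reduces to checking that $R$ is closed and contains $\A^{\circ}_{d^2}$, while the reverse inclusion $R\subseteq \A_{d^2}$ is a density statement and is where the real work lies.

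For $\A_{d^2}\subseteq R$, first note that $\A^{\circ}_{d^2}\subseteq R$ is immediate: for a smooth form $P(\omega)=\emptyset$, so $H_1(X\setminus P(\omega),\Z)=H_1(X,\Z)$ and the two definitions of $\Per$ agree. To see that $R$ is closed I would argue that both defining conditions are closed in the stable-form topology. The quantity $\int_X|\omega|^2$ is governed by the symplectic pairing $\tfrac{i}{2}\int_X\omega\wedge\bar\omega$ of the absolute periods; read cohomologically it is a continuous function, locally constant and equal to $d$ along the isoperiodic leaf, and it extends continuously to the boundary (this is the correct reading even at a non-separating node, where the literal flat area diverges but the cohomological value stays $d$). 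For the lattice condition I would use that, as a smooth family degenerates to a stable curve, each vanishing cycle limits to a loop encircling a node whose period is the corresponding residue period; since all these periods lie in $\Z[i]$ throughout the family and the complementary non-vanishing cycles retain their periods, the limiting period group is again $\Z[i]$. Hence $R$ is closed and contains $\A_{d^2}$.

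For $R\subseteq \A_{d^2}$, given a stable form $(X_0,\omega_0)\in R$ I must produce smooth forms in $\A^{\circ}_{d^2}$ converging to it. The idea is to open up each node of $X_0$ by a standard plumbing $xy=t$, tuning the plumbing parameters so that the deformed form stays isoperiodic, i.e.\ keeps its absolute periods pinned at $\Z[i]$ (and hence, by the symplectic formula, keeps area $d$). The residues, equivalently the circumferences at the nodes, lie in $\Z[i]$ and control the leading behaviour of the relative period $\rho$, which is precisely the transverse coordinate to the boundary predicted by Theorem~\ref{thmvertices}. Invoking the general structure of absolute-period-preserving deformations and their completions from \cite{McM14}, such a one-parameter smoothing exists and lands in $\A^{\circ}_{d^2}$, so $(X_0,\omega_0)$ is a limit point of $\A^{\circ}_{d^2}$ and therefore lies in $\A_{d^2}$.

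The hard part is the density inclusion $R\subseteq\A_{d^2}$, and within it the verification that the plumbing can be carried out isoperiodically, i.e.\ that opening a node never forces the absolute periods off the lattice $\Z[i]$. This requires pinning down the local model of $\A_{d^2}$ at a boundary (pole) point: one must match the flat geometry of the degenerate square-tiled surface to that of its nearby smoothings, and in particular account for the fact that the natural coordinate is essentially a square root of $\rho$, so that $q=(d\rho)^2$ acquires a \emph{simple} rather than double pole there, as asserted in Theorem~\ref{thmvertices}. Once the local model at each nodal point is established, both inclusions close up and the completion is exactly $R$.
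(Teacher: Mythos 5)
Your overall strategy (two inclusions, with a node-opening surgery doing the real work for $R\subseteq\A_{d^2}$) matches the paper's, and your plumbing step is essentially the paper's explicit construction: there one replaces a neighborhood of each node by a neighborhood of two saddle connections $s_t,s'_t$ joining two simple zeroes with $\int_{s_t}\omega_t=-\int_{s'_t}\omega_t=t$, which keeps the absolute periods equal to $\Z[i]$ and the area equal to $d$, and converges to the stable form as $t\to 0$. You are right that this is the substantive half.

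The genuine gap is in your first inclusion, specifically the claim that at a non-separating node ``the literal flat area diverges but the cohomological value stays $d$,'' and the ensuing proposal to read the area condition cohomologically. This is backwards for the degenerations that actually occur on an isoperiodic leaf. Because the absolute periods are pinned at $\Z[i]$, every core curve of a cylinder has circumference at least $1$, so a cylinder cannot be pinched without the area blowing up; the only possible degenerations contract saddle connections joining the two zeroes, and the limiting stable forms are holomorphic, with \emph{zero} residue at every node and literal flat area exactly $d$ (the non-separating case is an unbranched degree $d$ cover $E\to E_0$ with two points of $E$ identified). The paper's proof of $\A_{d^2}\subseteq R$ turns precisely on this point: the finiteness of $\int_X|\omega|^2=d$ forces the stable form to have no poles, so $\Per$ is computed on $H_1(X,\Z)$ and a limiting argument gives $\Per=\Z[i]$. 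If you instead define $R$ via a ``cohomological area'' --- which is not even well defined on a stable curve with a non-separating node, since the cycle dual to the vanishing cycle does not survive --- then $R$ would admit stable forms with nonzero residues and infinite flat area, and your own inclusion $R\subseteq\A_{d^2}$ would become false, since such forms are not limits of isoperiodic forms of area $d$. Keep the literal area condition; it is doing essential work, and it is exactly what rules out the poles.
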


\begin{proof}
Let $(X,\omega)$ be a stable form in $\A_{d^2}$. It has no poles, since $\int_X  |\omega|^2 = d$ is finite. Hence its periods are $\Per(X,\omega) = \left\{ \int_\gamma \omega \mid \gamma \in H_1(X,\Z)\right\}$. Every curve $\gamma \subset X$ can be obtained as a limit of curves $\gamma_t \subset X_t$, where $(X_t,\omega_t) \in \A^{\circ}_{d^2}$, therefore $\Per(X,\omega) = \Z[i]$. 

On the other hand for any stable form $(X,\omega)$ with $\Per(X,\omega) = \Z[i]$ and $\int_X  |\omega|^2 = d$, the neighborhood of the node of $X$ can be replaced with a neighborhood of two saddle connections $s_t$ and $s'_t$ between two simple zeroes, such that $t = \int_{s_t} \omega_t =  - \int_{s'_t} \omega_t$. The resulting Abelian differentials belong to $\A^{\circ}_{d^2}$ and converge to $(X,\omega)$ as $t\to 0$. Therefore $(X,\omega) \in \A_{d^2}$ and this finishes the proof.
\end{proof}

%Consider a family of Abelian differentials $(X_t,\omega_t) \in \A^{\circ}_{d^2}$ degenerating to a stable form $(X_0,\omega_0)$ supported on a singular algebraic curve $X_0 \in \cM_2$. Then there exists a continuous family of simple closed curves $\gamma_t \subset X_t$ contracting to a node. Then $\int_{\gamma_t} \omega_t \to 0$. Since the absolute periods $\Per(X_t,\omega_t)=\Z[i]$ are fixed, this implies $\int_{\gamma_t} \omega_t = 0$ for large $t$. Then the geodesic representative of $\gamma_t$ in the metric $|\omega|^2$ is a union of two saddle connections $s_t$ and $s'_t$ such that $\int_{s_t} \omega_t =  - \int_{s'_t} \omega_t \to 0$.

\noindent
{\bf Relative period map $\rho$.} For any $(X,\omega) \in \A_{d^2}$ the integration of $\omega$ defines a degree $d$ map to $\C / \Per(X,\omega) = E_0$. If $X$ is smooth this map is a branched cover, and if $X$ is singular the induced map from the normalization of $X$ to $E_0$ is a covering map. In the first case, let $z_1$ and $z_2$ denote the critical values of this map, and in the latter case let $z_1=z_2$ denote the image of the node of $X$ under this map.

Let $\iota: \C \to \C$ be an involution given by $\iota(z)=-z$. Define the {\em relative period map} $\rho: \A_{d^2} \to  \iota \backslash\C/\Z[i] \cong \P^1$ by setting:
$$\rho(X,\omega) = \pm \int_{z_1}^{z_2} \omega.$$ 
This is a well-defined continuous map. Indeed, it depends neither on the choice of the path between $z_1$ and $z_2$ nor on the ordering of the critical values, since it is valued in the quotient of $\C$ by $\Z[i]=\Per(X,\omega)$ and $\iota$. The continuity follows from that fact that as $(X_t,\omega_t) \A_{d^2}$ approaches the boundary there is a relative period $\int_{z_1(t)}^{z_2(t)} \omega_t \to 0$ (see proof of Proposition~\ref{prop:completion}).

The quadratic differential $dz^2$ on $\C$ descends to $\iota \backslash\C/\Z[i]$. Define the holomorphic quadratic differential $q = \rho^*dz^2$ on $\A_{d^2}$. As we will see later the singular flat metric given by $|q|$ defines the square-tiling of $\A_{d^2}$. But first we give another perspective on $\A_{d^2}$ and $q$ using the discriminant map to the pillowcase.

%Pillowcase
\noindent
{\bf Pillowcase.} Now define:
$$\bfP = \iota \backslash \C / 2\Z[i] \cong \P^1.$$ 
The quadratic differential $dz^2$ on $\C$ descends to $\bfP$, and $\bfP$ equipped with this quadratic differential is called the {\em pillowcase}. In the singular flat metric $\left| dz\right|^2$ the pillowcase is isometric to two unit squares put together and sewn along the edges (see Figure~\ref{figpillow}). This quadratic differential has four simple poles at points that we denote by $Q_i$ and refer to them as the vertices of the pillowcase.

\begin{comment4}
\begin{figure}[H]
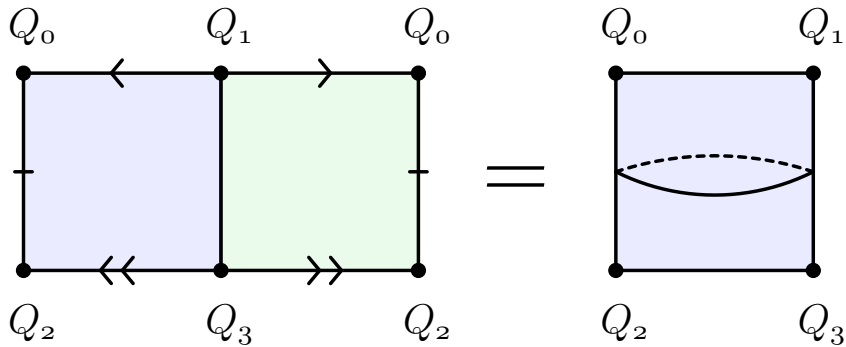
 
\centering
\includestandalone[width=0.9\textwidth]{tikzpillowcase}
\vspace{-10pt}
\caption{The pillowcase $\bfP$. Folding the left picture along the middle vertical line and zipping up the edges one obtains a shape on the right that reminds of a pillowcase.}
    \label{figpillow}
\end{figure}
\end{comment4}

The involution $\iota: \C \to \C$ descends to $\iota_{E_0}: E_0 \to E_0$ that fixes the 2-torsion points of $E_0$. There is a map $\eta: E_0 \to \bfP$ given by:
$$
z \mod \Z[i] \mapsto \pm 2z \mod 2\Z[i],
$$
and the following diagram commutes:
\begin{equation} \label{eta}
\begin{tikzcd}
\C \arrow{d}{\pi_{E_0}} \arrow{r}{\times 2}
& \C \arrow{d}{\pi_{\bfP}} \\
E_0 = \C/\Z[i] \arrow{r}{\eta}
&  \iota \backslash \C / 2\Z[i] = \bfP.
\end{tikzcd}
\end{equation}

The map $\eta: E_0 \to \bfP$ has degree $2$, it is ramified at the 2-torsion points of $E_0$ and branched over the vertices of the pillowcase $Q_i$'s. Note that $\eta^*dz^2 = 4 dz^2$ and the area of the pillowcase is $2$. 

%Discriminant map
\noindent
{\bf Discriminant map $\delta$.}  For any $(X,\omega) \in \A_{d^2}$ the integration of $\omega$ gives a map $\pi: X \to E_0 = \C/\Z[i]$ well-defined up to translation. Recall from \S\ref{secspin} that if $(X,\omega)$ is smooth, requiring a distinguished image of the Weierstrass points $P_0$ to be the origin in $E_0$ one obtains a unique normalized cover $\pi$ branched over $z$ and $-z$. For a stable form $(X,\omega)$ on the boundary of $\Omega\cM_2$, the map is {\em normalized} when the node is mapped to the origin $z=0$ on $E_0$. For convenience we will keep calling such map for a singular curve $X$ a {\em normalized cover} and $z=0$ will be called a {\em branch point}. Define a {\em discriminant map} $\delta: \A_{d^2} \to \bfP \cong \P^1$ as a unique holomorhpic map such that for any $(X,\omega) \in \A_{d^2}$:
\begin{align*}
\delta(X,\omega) = \eta(z) = \pm 2z \in \bfP.
\end{align*}
Note that although we cannot distinguish between the branch points $z$ and $-z \in E_0 \cong \C/\Z[i]$, the map $\delta$ is well-defined since $\eta(z) = \eta(-z)$. Compare this definition to the one from Section 3 of \cite{Kani06}.

The discriminant map $\delta: \A_{d^2} \to \bfP$ is a local homeomorphism as long as the branch points $z$ and $-z$ stay distinct. These points collide exactly when $z \in E_0[2]$. In particular $\delta$ is branched over the vertices of the pillowcase and maps the boundary $\A_{d^2} \setminus \A^\circ_{d^2}$ to the vertices of the pillowcase.

%Square-tiling of A_{d^2}
\noindent
{\bf Square-tiling of $\A_{d^2}$.} We summarize the discussion above and prove Theorem~\ref{thmquaddiff}. 
Let $(X,\omega) \in \A_{d^2}$ and let $\pi: X \to E_0$ be a normalized cover branched over $\pm z$. The maps $\rho$ and $\delta$ are given by:
$$
\rho(X,\omega) = \pm 2z \in  \iota \backslash\C/\Z[i] \mbox{ and }
\delta(X,\omega) = \pm 2z \in \iota \backslash\C/2\Z[i].
$$
These maps are related by the following diagram:
\begin{equation} \label{diag:rhodelta}
\begin{tikzcd}
\A_{d^2} \arrow{d}{\delta} \arrow{r}{Id}
& \A_{d^2} \arrow{d}{\rho} \\
\iota \backslash\C/2\Z[i] \arrow{r}{\sigma}
&  \iota \backslash \C / \Z[i] ,
\end{tikzcd}
\end{equation}
where $\sigma: \pm z \mod 2\Z[i] \mapsto \pm z \mod \Z[i]$. Note that $\sigma$ is a degree 4 rational map and $dz^2$ on $\iota \backslash \C / \Z[i]$ pulls back to $dz^2$ on $\iota \backslash \C / 2\Z[i]$ via $\sigma$, and therefore: 
$$q =  \delta^* (dz^2) = \rho^* (dz^2).$$

\begin{proof}[Proof of Theorem~\ref{thmquaddiff}]
(i) The quadratic differential $q$ is a pullback of $dz^2$ from $\bfP$ via $\delta: \A_{d^2} \to \bfP$. The metric $|dz^2|$ on the pillowcase $\bfP$ defines a tiling by two unit squares. The pull back of this metric to $\A_{d^2}$ is $|q|$ and gives a square-tiling of $\A_{d^2}$. Note that $(\A_{d^2}, q)$ is not a translation surface and the identifications of the edges of the squares can be rotations by $\pi$.

(ii) Since $\SLZ \subset \GL$ action preserves $\Z[i]$, it acts on $\A_{d^2}$ by homeomorphisms. The local coordinate on $\A_{d^2}$ defined by $q$ is given by a branch point $\pm z$ on $E_0$ and $\SLZ$ action on it is affine. Hence $\SLZ$ acts by affine automorphisms on $(\A_{d^2}, q)$ and respects the square-tiling.
\end{proof}

Note that although $\rho$ and $\delta$ give the same quadratic differentials, they give different square-tilings.
The tiling defined by the pullback of $|dz^2|$ on $\iota \backslash \C / \Z[i]$ via $\rho$ consists of the squares whose sides have lengths $1/2$ and the tiling defined by the pullback of $|dz^2|$ on $\iota \backslash \C / 2\Z[i]$ via $\delta$ consists of the unit squares. The former tiling has 4 times more squares and can be obtained from the latter by subdivision of each unit square into 4 squares whose sides have lengths $1/2$. We prefer to use the latter square-tiling given by the discriminant map $\delta$ in order to avoid having extra vertices of the tiling.

%Boundary points
\noindent
{\bf Vertices of the square-tiling and boundary points of $\A_{d^2}$.}
The preimages of the vertices $Q_i$ of the pillowcase $\bfP$ under the discriminant map $\delta$ are the vertices of the square-tiling of $\A_{d^2}$. In this subsection we give a geometric interpretation of the vertices of $\A_{d^2}$, in particular the boundary points $\A_{d^2} \setminus \A^\circ_{d^2}$.

Consider a small linear segment $s: (0,\varepsilon) \to \bfP$ of angle $\theta$ with a limit $s(t) \to Q_i$ as $t \to 0$ for some vertex $Q_i$. Choose its lift $\tilde s: (0,\varepsilon) \to \A_{d^2}$. Degeneration of the family $\pi_t: X_t \to E_0$ along this segment corresponds to colliding the branch points $z_t$ and $-z_t$ along a linear segment in the direction $\theta$ on $E_0$, i.e.\ in such a way that $2z_t = \pm t(\cos\theta + i \sin\theta)$. There are two scenarios of what can happen to $(X_t, \omega_t)$ as $t \to 0$: (1) two conical singularities stay different, or (2) two conical singularities collide.

In the case (1), one obtains an Abelian differential $(X_0, \omega_0)$ that is a primitive square-tiled surface in $\Omega\M_2(1,1)$. The subset of such square-tiled surfaces in $\A_{d^2}$ will be denoted by $\A_{d^2}[1]  \subset \A_{d^2}$.

In the case (2) there is a subset of saddle connections joining two singularities of $(X_t, \omega_t)$ that are being contracted, while the absolute periods stay fixed. Next we analyze the possibilities for contracting saddle connections joining distinct singularities. 

There are at most two such saddle connections in a fixed irrational direction $\theta$, since the conical singularities have total angle $4 \pi$ each. The union of two saddle connections on $(X, \omega)$ cannot be a contractible curve, otherwise they would bound a flat disk with a boundary consisting of two parallel saddle connections. Therefore there are exactly three possibilities for the set of contracting saddle connections to be (see Figure~\ref{figsc}):
\begin{enumerate}
\item[(2a)] a single saddle connection; or

\item[(2b)] union of two saddle connections that is a separating closed curve on $X$; or

\item[(2c)] union of two saddle connections that is a non-separating closed curve on $X$.
\end{enumerate}

\begin{figure}[H]
    \centering
    \includegraphics[width=0.8\textwidth]{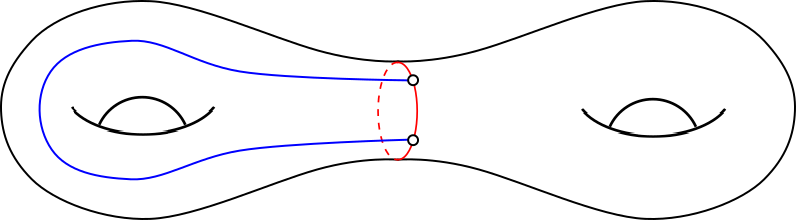}
    \caption{Three types of contracting curves: (2a) a single red saddle connection; (2b) a union of two red saddle connections (a separating closed curve); and (2c) a union of red and blue saddle connections (a non-separating closed curve).}
    \label{figsc}
\end{figure}

Contracting them we obtain respectively:
\begin{enumerate}
\item[(2a)] a primitive degree $d$ cover $\pi_0: X_0 \to E_0$ branched over the origin with a single ramification of order 3; or

\item[(2b)] two elliptic curves $E_1$ and $E_2$ joined at a node $p$, together with a pair of unbranched covers $\pi_1: E_1 \to E_0$ and $\pi_2: E_2 \to E_0$, satisfying:
\begin{itemize}
\item $\deg(\pi_1)+\deg(\pi_2)=d$;
\item $\pi_1(p)=\pi_2(p)$; and
\item $\pi_1$ and $\pi_2$ do not simultaneously factor through a non-trivial cover $\pi': E' \to E_0$; or
\end{itemize}

\item[(2c)] an elliptic curve $E$ with two points identified points $x_1, x_2 \in E$, together with an unbranched cover $\pi: E \to E_0$, satisfying:
\begin{itemize}
\item $\deg(\pi)=d$;
\item $\pi(x_1)=\pi(x_2)$; and
\item $\pi$ does not factor through a cover $\pi': E \to E'$, such that $\pi'(x_1)=\pi'(x_2)$.
\end{itemize}
\end{enumerate}

Note that the covers in the case (2a) correspond to primitive $d$-square-tiled surface in $\Omega\M_2(2)$. The subset of such square-tiled surfaces in $\A_{d^2}$ will be denoted by $\A_{d^2}[0]  \subset \A_{d^2}$.

In the case (2b) the pullback of $\left|dz^2\right|$ from $E_0$ gives a square-tiling of a nodal curve with a separating node. We will call it a {\em square-tiled surface with a separating node}. Similarly, in the case (2c) the pullback of $\left|dz^2\right|$ from $E_0$ gives a square-tiling of a nodal curve with a non-separating node. We will call it a {\em square-tiled surface with a non-separating node}.
The subsets of $\A_{d^2}$ corresponding to the square-tiled surfaces with a separating node and a non-separating node  will be denoted by $P_{s}(d) \subset \A_{d^2}$ and $P_{ns}(d) \subset \A_{d^2}$ respectively.

\begin{proposition} \label{cornerfibers}
For any $d>1$ and any vertex $z$ of the square-tiling of $\A_{d^2}$, one of the following holds:
\begin{enumerate}

\item[(i)] $z \in \A_{d^2}[1]$ and the local degree of $\delta$ at $z$ is 2; or

\item[(ii)] $z \in \A_{d^2}[0]$ and the local degree of $\delta$ at $z$ is 3; or

\item[(iii)] $z \in P_{s}(d) \cup P_{ns}(d)$ and $\delta$ is a local homeomorphism at $z$.
\end{enumerate}
\end{proposition}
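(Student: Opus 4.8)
My plan is to compute the local degree of $\delta$ at each vertex $z_0$ from an explicit local model of the normalized cover $\pi\colon X \to E_0$ near the colliding branch points, using the trichotomy of degenerations already isolated above: case (1) is exactly (i), case (2a) is exactly (ii), and cases (2b) and (2c) are exactly (iii). The key reduction is that near the $2$-torsion point $z_*\in E_0[2]$ to which the branch points collapse, the map $\eta\colon E_0\to\bfP$ is $2$-to-$1$ and ramified; writing the moving branch point as $z_*+\beta$ with $\beta\to 0$, a local holomorphic coordinate on $\bfP$ at $Q_i=\eta(z_*)$ therefore pulls back to a constant multiple of $\beta^2$. Since $\delta(X,\omega)=\eta(z)$, the local degree of $\delta$ at $z_0$ equals $\operatorname{ord}_u(\beta^2)$ for a uniformizer $u$ of the smooth curve $\A_{d^2}$ at $z_0$, and then $\operatorname{ord}_{z_0}(q)=\operatorname{ord}_u(\beta^2)-2$ by pulling back the simple pole of $dz^2$ at $Q_i$. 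Note also that $\beta=-\tfrac12(v-v_0)$, where $v=\int_{z_1}^{z_2}\omega$ is the relative period and $v_0\in\Z[i]$ is its limit at $z_0$; since the two zeroes are unordered, $v$ and hence $\beta$ is only well defined up to sign.

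I would then carry out the three computations. In case (i) the zeroes stay distinct, so $v_0\neq 0$ and this nonzero limit breaks the sign ambiguity: $v$ (up to sign) is a genuine local coordinate, $\beta\propto u$, hence $\beta^2\propto u^2$, the local degree is $2$, and $q$ is regular, placing $z_0$ in $\A_{d^2}[1]$. In case (ii) a single saddle connection contracts and the limiting cover has one point of ramification index $3$, modeled by $z-z_*=\zeta^3-a\zeta$ in a coordinate $\zeta$ on $X$; the critical points $\zeta=\pm\sqrt{a/3}$ have critical values $\pm\tfrac{2}{3}a\sqrt{a/3}$, so $\beta\propto a^{3/2}$, and with $a$ the uniformizer this gives $\beta^2\propto u^3$, local degree $3$, and a simple zero of $q$, placing $z_0$ in $\A_{d^2}[0]$. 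In case (iii) the node is smoothed by the standard plumbing $z-z_*=\zeta_1+t/\zeta_1$ across the neck joining the two sheets over $z_*$; its zeroes $\zeta_1=\pm\sqrt{t}$ have critical values $\pm 2\sqrt{t}$, so $\beta\propto t^{1/2}$, and since the plumbing parameter $t$ is the uniformizer one gets $\beta^2\propto u$, local degree $1$, and a simple pole of $q$. The same neck model applies to both the separating node (2b) and the non-separating node (2c), so $z_0\in P_s(d)\cup P_{ns}(d)$.

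The main obstacle I anticipate is justifying that the displayed parameter really is a uniformizer of the smooth curve $\A_{d^2}$, i.e.\ correctly pinning down the half-integer exponents in (ii) and (iii). In case (i) this amounts to the observation that $v_0\neq 0$ makes $\{v,-v\}$ a genuine coordinate, whereas in cases (ii) and (iii) one has $v_0=0$ and the sign ambiguity $\beta\mapsto-\beta$ is realized by the monodromy around $z_0$: in (ii) looping $a$ once multiplies $\beta\propto a^{3/2}$ by $-1$, and in (iii) looping $t$ once exchanges the two zeroes, and in both cases this sign is harmlessly absorbed by the target $\iota$-quotient so that $\beta^2$ is single valued. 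Making this rigorous requires checking that $z-z_*=\zeta^3-a\zeta$ is an injective chart near $a=0$, i.e.\ that the order-$3$ local symmetry $\zeta\mapsto e^{2\pi i/3}\zeta$ does not extend to a global automorphism of $(X_a,\omega_a)$, and that in case (iii) the vanishing cycle carries zero absolute period, so that the stable form is genuinely holomorphic on the normalization and the plumbing model is valid. Once these normalizations are fixed, the three computations give exactly the local degrees $2,3,1$ of (i)--(iii), and the orders $0,1,-1$ of $q$ recover the regular-point/simple-zero/simple-pole trichotomy, in agreement with Theorem~\ref{thmvertices}.
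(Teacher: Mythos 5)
Your proposal is correct, but it proves the statement by a genuinely different route than the paper. The paper's proof is a direct flat-geometric count of preimages: for a nearby regular value of $\delta$ (equivalently, a prescribed small horizontal relative period $\pm t$), it counts the surgeries on the degenerate surface that realize it --- two ways to move a pair of simple zeroes apart, three ways to split a double zero along its three horizontal prongs, and a unique way to open a slit at a node --- and reads off the local degrees $2,3,1$ from these counts. You instead compute the order of vanishing of the pulled-back coordinate $\beta^2$ on $\bfP$ in an explicit holomorphic uniformizer, using the local models $\zeta^3-a\zeta$ and $\zeta_1+t/\zeta_1$; the exponents $2,3,1$ come out of $\beta\propto u,\ a^{3/2},\ t^{1/2}$ respectively. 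What your approach buys is that the order of $q=\delta^*(dz^2)$ falls out simultaneously as $\operatorname{ord}_u(\beta^2)-2$, recovering Theorem~\ref{thmvertices} in the same stroke, and that the sign/monodromy bookkeeping is made explicit rather than read off pictures. What it costs is exactly the point you flag: you must know that $a$ and $t$ are injective parameters on $\A_{d^2}$, i.e.\ that the local $\Z/3$ symmetry of the splitting model is not induced by a translation automorphism of $(X_a,\omega_a)$. This is true --- such an automorphism would exhibit $X$ as a cyclic degree-$3$ cover of a torus totally ramified over one point, which is impossible since the boundary loop dies in any abelian quotient of $\pi_1(E_0\setminus\{0\})$ --- but note that the paper's count carries the same implicit assumption (that the three splittings give three distinct points of $\A_{d^2}$), so neither argument is more complete here; it would be worth adding the one-line justification in either version.
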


\begin{proof}
$(i)$ The vertex $z$ is a square-tiled surface $(X, \omega)$ with two simple zeroes. There are only two ways to deform $(X, \omega)$ keeping its absolute periods in $\Z[i]$ and making relative periods to belong to $\pm t + \Z[i]$ for some $t \in \R$ (see Figure~\ref{figsplit1}). Thus locally $\delta$ has degree $2$ at $z \in \A_{d^2}[1]$.

\begin{comment4}
\begin{figure}[H]
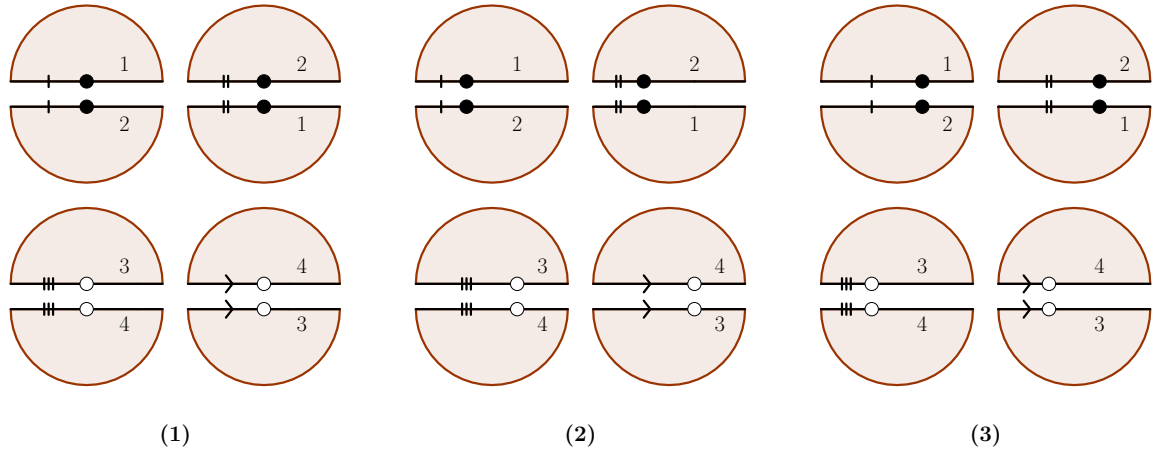
 
\centering
\includestandalone[width=\textwidth]{tikzsplit1}
\vspace{-10pt}
\caption{{\bf (1)} Neighborhoods of simple zeroes of an Abelian differential; {\bf (2)-(3)} two ways to move simple zeroes in a horizontal direction.}
\label{figsplit1}
\end{figure}
\end{comment4}

$(ii)$ The vertex $z$ is a square-tiled surface $(X, \omega)$ with a double zero. The double zero of $(X,\omega)$ has three prongs with slope $0$. There are three ways to split this double zero into two simple zeroes (see Figure~\ref{figsplit2}). Thus $\delta$ locally has degree $3$ at $\A_{d^2}[0]$.

\begin{comment4}
\begin{figure}[H]
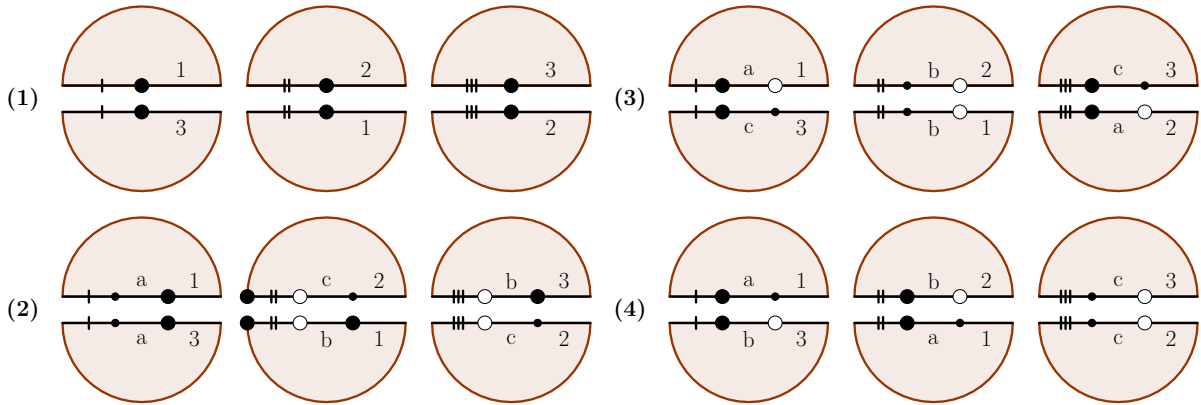
 
\centering
\includestandalone[width=\textwidth]{tikzsplit2}
\vspace{-10pt}
\caption{{\bf (1)} A neighborhood of a double zero of an Abelian differential; and {\bf (2)-(4)} three ways to split the double zero creating slits in horizontal direction and identify them to obtain a genus 2 surface.}
\label{figsplit2}
\end{figure}
\end{comment4}

$(iii)$ A neighborhood of the node $O$ of a square-tiled surface with a node is isomorphic to two disks glued at a point. There is a single way of producing slits at the node $O$ and identifying them to obtain a smooth genus 2 surface (see Figure~\ref{figsplit0}). In other words, for a square-tiled surface with a separating node there is a unique way to produce a horizontal slit of a given length on each of the tori. For square-tiled surface with a non-separating node there is a unique way to create a pair of horizontal slits and then identify them to obtain a genus 2 surface. Thus $\delta$ is a local homeomorphism at $P_{s}(d) \cup P_{ns}(d)$.
\begin{comment4}
\begin{figure}[H]
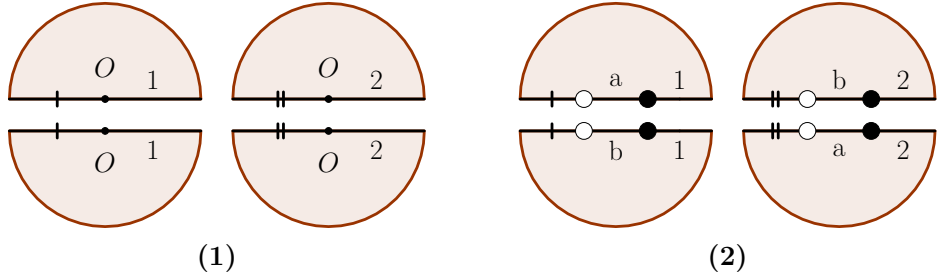
 
\centering
\includestandalone[width=0.8\textwidth]{tikzsplit0}
\vspace{-10pt}
\caption{{\bf (1)} A neighborhood of a node; and {\bf (2)} a unique way to create slits in horizontal direction at the node and identify them to obtain a genus $2$ surface. \qed}
\label{figsplit0}
\end{figure}
\let\qed\relax
\end{comment4}
\end{proof}

\begin{proof}[Proof of Theorem~\ref{thmvertices}]
Because $dz^2$ has simple poles at $Q_i$'s and $\delta$ is a local homeomorphism away from $Q_i$'s, the proof follows from Proposition~\ref{cornerfibers}.
\end{proof}

%n-rational points
\noindent
{\bf The subset $\A_{d^2}[n]$.} Define the {\em set of primitive $n$-rational points} of $\C$ as follows:
$$\frac1n \Z[i]^* = \left\{ \frac an + i \frac bn  \in \C \ \middle\vert \ \gcd(a,b,n)=1\right\}.$$
Note that $\frac1n \Z[i]^*$ is invariant under $\Z[i]$ translations and under $\iota: z\mapsto -z$. Then the set of primitive $n$-torsions of $E_0$ is:
$$E_0[n]^*= \frac1n \Z[i]^* / \Z[i].$$
Analogously we define the {\em set of primitive $n$-torsions} of $\bfP= \iota \backslash \C / 2 \Z[i]$ as:
$$\bfP[n] = \iota \backslash \frac1n \Z[i]^* / 2\Z[i] \subset \bfP.$$

\begin{proof}[Proof of Theorem~\ref{thmlocating}]
Let $(X,\omega) \in \A_{d^2}$ and let $\pi: X \to E_0$ be a corresponding normalized cover branched over $z_1=z$ and $z_2=-z$. For $n>1$ we can rewrite the definition of $\A_{d^2}[n]$ as:
$$
\A_{d^2}[n]  = \left\{ (X,\omega) \in \A_{d^2} \, \middle\vert z_1-z_2 \in E_0[n]^*  \right\}.
$$

Recall that $\delta (X,\omega)= \eta(z)$, where $\eta: E_0 \to \bfP$ is given by $z \mod \Z[i] \to \pm 2z \mod 2\Z[i]$ (see diagram~(\ref{eta})). Then: 
$$\delta(X,\omega) = \eta( z ) = \pm 2 z  \in \bfP[n] \iff 2z \in \frac1n \Z[i]^* \iff$$
\[
\iff z_1 - z_2 = 2z = \frac{a+ib}n, \ \gcd(a,b,n)=1 \iff z_1 - z_2 \in E_0[n]^*.
\pushQED{\qed} 
\qedhere
\popQED
\]
\let\qed\relax
\end{proof}

%Spin
\noindent
{\bf Spin invariant and $\A_{d^2}[n]$.} We conclude by interpreting the spin invariant in terms of the square-tiling of $\A_{d^2}$ and proving Theorem~\ref{thmformula}.

For odd $n$ the spin invariant distinguishes two $\SLZ$ invariant subsets of $\A_{d^2}[n]$:
\begin{equation*}
\A_{d^2}^0[n] =  \left\{ (X, \omega) \in  \A_{d^2}[n]  \ \middle\vert \
\epsilon(X,\omega) = 0\right\},
\end{equation*}
\begin{equation*}
\A_{d^2}^1[n] =  \left\{ (X, \omega) \in  \A_{d^2}[n] \ \middle\vert \
\epsilon(X,\omega) = 1\right\}.
\end{equation*}
Define the following subsets:
$$
\bfP[n]^{0} = \left\{ \frac{a+ib}n \in \bfP[n]  \ \middle\vert  \ a \equiv b \equiv 0 \mod 2 \right\}, \
\bfP[n]^{1} = \bfP[n] \setminus \bfP[n]^{0}.
$$

\begin{proposition} \label{propIWPvalues}
For any $d>1$ and odd $n>1$ we have:
$$
\A^0_{d^2}[n] = \delta^{-1}\left(\bfP[n]^0\right) \mbox{ and }
\A^1_{d^2}[n] = \delta^{-1}\left(\bfP[n]^1\right).
$$
\end{proposition}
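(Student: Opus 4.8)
The plan is to combine the computation of the spin invariant via the normalized cover (Proposition~\ref{spincompute}) with the explicit description of the discriminant map on torsion points from the proof of Theorem~\ref{thmlocating}. Both $\A^0_{d^2}[n]$ and $\bfP[n]^0$ are cut out by an arithmetic parity condition, so the proposition amounts to checking that $\delta$ matches these two conditions. The whole statement thus reduces to a short residue computation, once the normalized branch point $z$ has been recovered from $\delta(X,\omega)$.

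First I would fix $(X,\omega)\in\A_{d^2}[n]$ with normalized cover $\pi\colon X\to E_0$ branched over $\pm z$, so that by the proof of Theorem~\ref{thmlocating} one may write $\delta(X,\omega)=\eta(z)=\pm 2z=\frac{a+ib}{n}\bmod 2\Z[i]$ with $\gcd(a,b,n)=1$. The key observation is that the doubling map $\C/\Z[i]\to\C/2\Z[i]$, $z\mapsto 2z$, underlying $\eta$ is injective: if $2z\equiv 2z'\bmod 2\Z[i]$ then $z\equiv z'\bmod\Z[i]$. Hence $z$ is recovered unambiguously, up to the sign coming from $\iota$, as $z=\pm\frac{a+ib}{2n}\in E_0$. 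Then I would compute $nz=\pm\frac{a+ib}{2}\in\C/\Z[i]$ and observe that $nz=0$ in $\Jac(E_0)$ precisely when $\tfrac a2,\tfrac b2\in\Z$, i.e.\ when $a\equiv b\equiv 0\bmod 2$. Since $n$ is odd, Proposition~\ref{spincompute} gives $\epsilon(X,\omega)=0$ iff $nz=0$, so $\epsilon(X,\omega)=0$ iff $a\equiv b\equiv 0\bmod 2$, which is exactly the defining condition of $\bfP[n]^0$. This yields $\A^0_{d^2}[n]=\delta^{-1}(\bfP[n]^0)$, and the statement for $\A^1_{d^2}[n]$ follows by taking complements in $\A_{d^2}[n]$.

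The one point requiring care — and the place where a naive argument goes wrong — is the recovery of $z$ and the well-definedness of the parity condition. A careless attempt to invert $2z=\frac{a+ib}{n}$ inside $E_0$ alone would produce a spurious fourfold ambiguity in $z$ (the four two-torsion translates), and this ambiguity genuinely affects whether $nz=0$, since $n(z+\tau)=nz+\tau$ for $\tau\in E_0[2]$ and odd $n$. The resolution is that $\delta$ lands in $\bfP=\iota\backslash\C/2\Z[i]$ rather than in $E_0$, so injectivity of the doubling map pins $z$ down (up to sign). I would also verify that the residues $a,b\bmod 2$ are genuinely invariants of the class in $\bfP[n]$: replacing the representative by a $2\Z[i]$-translate changes $a$ and $b$ by multiples of $2n$, while applying $\iota$ negates both; in either case the parities are unchanged. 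These checks confirm that $\bfP[n]^0$ is well defined and complete the proof.
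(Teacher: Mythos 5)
Your proof is correct and follows essentially the same route as the paper: Proposition~\ref{spincompute} reduces the spin to the condition $nz=0$, and writing $z=\frac{a+ib}{2n}$ identifies this with the parity condition $a\equiv b\equiv 0 \bmod 2$ defining $\bfP[n]^0$. The additional checks you supply (injectivity of the doubling map $\C/\Z[i]\to\C/2\Z[i]$ and invariance of the parities under change of representative in $\bfP[n]$) are worthwhile points that the paper leaves implicit, but they do not change the argument.
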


\begin{proof}
Recall from Proposition~\ref{spincompute} that $\epsilon(X,\omega) = 0$ if and only if $nz = 0$, where $z=\displaystyle\frac{a+ib}{2n}$ and $-z$ are the branch points for some $a,b\in\Z$ with $\gcd(a,b,n)=1$. Clearly $nz = 0$ if and only if $a \equiv b \equiv 0 \mod 2$ and then:
$$
\delta(\pi) = \eta(z) = 2z = \frac{a+ib}{n} \in \bfP[n]^0.
$$
Therefore $\A^0_{d^2}[n] = \delta^{-1}\left(\bfP[n]^0\right)$ and, similarly, $\A^1_{d^2}[n] = \delta^{-1}\left(\bfP[n]^1\right)$. 
\end{proof}

\begin{proposition} \label{ratioorbits}
$\bigm| \A_{d^2}[n]^{1} \bigm| \ = 3\  \cdot \bigm| \A_{d^2}[n]^{0} \bigm|$.
\end{proposition}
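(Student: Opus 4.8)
The plan is to use Proposition~\ref{propIWPvalues} to transport the count from $\A_{d^2}$ down to the pillowcase $\bfP$ via the discriminant map $\delta$, and then to carry out an elementary parity count of primitive $n$-torsion points.

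First I would observe that since $n>1$ is odd, the set $\bfP[n]$ is disjoint from the vertices $Q_i$ of the pillowcase: a primitive $n$-torsion point $\frac{a+ib}{n}$ with $\gcd(a,b,n)=1$ cannot be $2$-torsion in $\bfP = \iota\backslash\C/2\Z[i]$, since that would force $n \mid a$ and $n \mid b$, contradicting primitivity. As recalled above, $\delta$ is a local homeomorphism wherever the two branch points stay distinct, hence it is ramified only over the $Q_i$; therefore over the complement, and in particular over $\bfP[n]$, the finite map $\delta: \A_{d^2} \to \bfP$ is unramified of constant degree $N = \deg\delta$. Thus $|\delta^{-1}(T)| = N\,|T|$ for every $T \subseteq \bfP[n]$, and combining this with Proposition~\ref{propIWPvalues} reduces the claim to the purely combinatorial identity $|\bfP[n]^1| = 3\,|\bfP[n]^0|$.

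To establish this identity I would work one level up, in the preimage $\frac1n\Z[i]^*/2\Z[i]$ before quotienting by $\iota$, which is parametrized by pairs $(a,b) \in (\Z/2n\Z)^2$ with $\gcd(a,b,n)=1$ (this condition depends only on $a,b \bmod n$). Sorting the primitive pairs into the four parity classes according to $(a \bmod 2,\, b \bmod 2)$, I claim all four classes are equinumerous. Indeed, since $n$ is odd, translation by $(n,0)$ is a bijection of $(\Z/2n\Z)^2$ that preserves $a,b \bmod n$ --- hence preserves primitivity --- while flipping the parity of $a$; translation by $(0,n)$ does the same for $b$. These give bijections among the four classes. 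The class $(a,b)\equiv(0,0)$ is exactly the lift of $\bfP[n]^0$, while the other three classes lift $\bfP[n]^1$, so before the $\iota$-quotient the ratio is precisely $3:1$.

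Finally I would check that passing to the $\iota$-quotient preserves the ratio. The involution $\iota$ sends $(a,b) \mapsto (-a,-b)$; a fixed point would satisfy $a \equiv b \equiv 0 \bmod n$, forcing $\gcd(a,b,n)=n>1$, so $\iota$ has no fixed points among primitive pairs and acts freely. Since negation also preserves each parity class, it halves the count of $\bfP[n]^0$ and of $\bfP[n]^1$ separately, leaving the $3:1$ ratio intact. This yields $|\bfP[n]^1| = 3\,|\bfP[n]^0|$ and hence $|\A_{d^2}[n]^1| = 3\,|\A_{d^2}[n]^0|$. The one point requiring genuine care is the first step --- verifying that $\delta$ has constant fiber cardinality over $\bfP[n]$ --- which rests precisely on $n$ being odd and greater than $1$ so that $\bfP[n]$ avoids the branch locus; everything after that is bookkeeping.
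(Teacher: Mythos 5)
Your proposal is correct. The first step --- reducing to the identity $\bigl|\bfP[n]^1\bigr| = 3\cdot\bigl|\bfP[n]^0\bigr|$ via Proposition~\ref{propIWPvalues} and the fact that $\delta$ is unramified over $\bfP[n]$ (because $\bfP[n]$ misses the vertices $Q_i$ for $n>1$) --- is exactly the paper's reduction. Where you diverge is in the combinatorial count. The paper works directly in a fundamental domain of the pillowcase, fixing the row $l$ and counting the points $\left(\frac kn,\frac ln\right)$ in each spin class: rows with $l$ odd lie entirely in $\bfP[n]^1$, rows with $l$ even split evenly, and the two boundary rows $l=0$ and $l=n$ are checked by hand; composite $n$ is then handled by an inclusion--exclusion over the divisors, stated without detail. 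Your argument instead lifts to $(\ZZ{2n})^2$ and exhibits explicit bijections --- translation by $(n,0)$ and by $(0,n)$, which preserve primitivity because they fix $(a,b)$ mod $n$ and flip one parity because $n$ is odd --- showing all four parity classes are equinumerous, and then observes that $\iota$ acts freely on primitive pairs and preserves parity, so the $3:1$ ratio descends. This is cleaner: it treats all odd $n>1$ uniformly, with no case split between prime and composite $n$ and no appeal to inclusion--exclusion, and it makes the freeness of the $\iota$-action (which the paper leaves implicit) explicit. The trade-off is only that the paper's row count also produces the exact cardinalities used in Theorem~\ref{thmformula}, whereas your symmetry argument gives the ratio directly; but for the proposition as stated, your route is complete and arguably preferable.
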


\begin{proof}
By Proposition~\ref{propIWPvalues} it suffices to show that $\left| \bfP[n]^{1} \right| \ = 3\  \cdot \left| \bfP[n]^{0} \right|$, since $\delta$ is unramified at $\A_{d^2}[n]$. We start with the case, when $n$ is prime. Note that when $1 \le l \le n-1$ and $l$ is odd, all the points $\left(\frac kn,\frac ln\right) \in \bfP[n]$ belong to $\bfP[n]^{1}$, and when $1 \le l \le n-1$ and $l$ is even, half of the points $\left(\frac kn,\frac ln\right) \in \bfP[n]$ belong to $\bfP[n]^{1}$ and half to $\bfP[n]^{0}$. Thus when $1 \le l \le n-1$, there are $3$ times more points in $\bfP[n]^{1}$ than in $\bfP[n]^{0}$. It remains to consider the points on the horizontal edges of the pillowcase, i.e. $l=0$ and $l=n$. When $l=0$ there are $\frac {n-1}2$ points in $\bfP[n]^{1}$ and $\frac {n-1}2$ points in $\bfP[n]^{0}$, whereas when $l=n$ there are $n-1$ points in $\bfP[n]^{1}$ and none in $\bfP[n]^{0}$, and the ratio of the points in two invariant subsets is again $3$. That finishes the proof when $n$ is prime.

When $n$ is not prime $\left(\frac kn,\frac ln\right) \in \bfP[n]$ satisfies $\gcd(k,l,n)=1$. Therefore we have to throw away the subsets $D_r = \left\{ \left(\frac kn,\frac ln\right) \in \bfP[n] \bigm| \gcd(k,l,n)=r, \mbox{ where }r | n \right\}$. Following the inclusion-exclusion procedure one can observe that the ratio of $\left| \bfP[n]^{1} \right|$ and $\left|\bfP[n]^0 \right|$ remains $3$.
\end{proof}

\begin{proof}[Proof of Theorem~\ref{thmformula}]
Proposition~\ref{ratioorbits} implies that $t_{d,n,\epsilon} =\left| \A_{d^2}^{\epsilon}[n] \right|= \frac{2 \epsilon + 1}4 \left| \A_{d^2}[n]\right|$. On the other hand from Theorem~\ref{thmlocating} we know that:
$$
\left| \A_{d^2}[n] \right| = \deg\delta \cdot \left| \bfP[n] \right|,
$$
and:
$$
 \left| \bfP[n] \right| = 2 \cdot \left| E_0[n]^*\right| = 2 \left| \SLZ : \Gamma_1(n) \right| = \frac 2n \cdot \left| \SL_2(\ZZ{n}) \right| = 2 n^2 \prod_{q | n} (1 - \frac1{q^2}).
$$
The degree of $\delta$ is computed in \cite{EMS03} (Remark after Lemma 4.9) or in \cite{Kani06} (equation (31) and Corollary 30):
$$
\deg\delta = \frac{(d-1)}6  \cdot \left| \PSL_2(\ZZ{d}) \right|.
$$
Therefore we obtain:
\[
t_{d,n,\epsilon}  =
\displaystyle
(2 \epsilon + 1) \cdot \frac{d-1}{12n} \cdot \left| \PSL_2(\ZZ{d}) \right| \cdot \left| \SL_2(\ZZ{n}) \right|.
\pushQED{\qed} 
\qedhere
\popQED
\]
\let\qed\relax
\end{proof}

%%%%%%%%%%%%%%%%%%%%%%%
%%%%%%%%%%%%%%%%%%%%%%%
%%%%%%%%%%%%%%%%%%%%%%%

\section{Modular curves} \label{secmodular}

In this section we will show that the absolute period leaf $\A_{d^2}$ is isomorphic to the modular curve $X(d)$, and the isomorphism is canonical up to the automorphisms of $X(d)$. Therefore every modular curve $X(d)$ comes equipped with a quadratic differential $q$ that defines a square-tiling on it. In particular we will discuss the relation between the cusps of $Y(d) \subset X(d)$ and the poles of $q$. We then will compare symmetries of $X(d)$ as an algebraic curve and as a flat surface with metric given by $|q|$. We will also show that $(X(d), q)$ provide examples of two interesting phenomena: they are flat surfaces with Veech group $\PSLZ$ and no translation automorphisms; and they are quadratic differentials whose $\GL$ orbit projects to a point in $\M_g$.

We begging by reviewing the following result:

\begin{theorem}[\cite{Kuhn88}, \cite{Kani03}] \label{thmmodular}
For any choice of isomorphism $f_0:  (\ZZ{d})^2 \cong E_0[d]$ respecting the Weil pairing there is a natural isomorphism $i_{f_0}: \A_{d^2} \to X(d)$, such that for any $(X,\omega) \in \A^\circ_{d^2}$:
$$
i_{f_0}(X,\omega)=(E,f), \mbox{ where } \Jac(X) \sim E_0 \times E.
$$
\end{theorem}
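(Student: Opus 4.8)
The plan is to realize the correspondence $(X,\omega)\mapsto(E,f)$ explicitly through the theory of complementary abelian subvarieties of a principally polarized abelian surface, to write down an inverse construction, and then to extend the resulting biholomorphism to the smooth compactifications.

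First I would define the forward map on the open leaf $\A^\circ_{d^2}$. Given $(X,\omega)\in\A^\circ_{d^2}$, integration of $\omega$ produces a primitive degree $d$ cover $\pi\colon X\to E_0$, hence an embedding $\pi^*\colon E_0\hookrightarrow\Jac(X)$ on which the principal polarization restricts with exponent $d$ (indeed $\pi_*\circ\pi^* = [d]$). Let $E\subset\Jac(X)$ be the complementary elliptic curve, so that the addition map $\mu\colon E_0\times E\to\Jac(X)$ is an isogeny of degree $d^2$. The key structural input is that $\ker\mu$ is the graph of an anti-isometry $\psi\colon E_0[d]\xrightarrow{\sim}E[d]$ for the Weil pairings. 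Composing with the fixed identification $f_0\colon(\ZZ{d})^2\cong E_0[d]$ yields a level structure $f:=\psi\circ f_0\colon(\ZZ{d})^2\cong E[d]$, and I set $i_{f_0}(X,\omega)=(E,f)$. By construction $\Jac(X)\sim E_0\times E$, as required, and since the Jacobian, the complementary subvariety, and the $d$-torsion with its Weil pairing all vary holomorphically in families, $i_{f_0}$ is holomorphic.

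Next I would construct the inverse. Given $(E,f)$, set $\psi:=f\circ f_0^{-1}\colon E_0[d]\to E[d]$ and form $A:=(E_0\times E)/\Delta_\psi$, where $\Delta_\psi$ is the graph of $\psi$. The requirement that $f_0$ and $f$ respect the Weil pairing is exactly the condition that $\psi$ be an anti-isometry, which is in turn precisely the condition under which the product polarization on $E_0\times E$ descends to a \emph{principal} polarization on $A$. Thus $A$ is a principally polarized abelian surface; by the Torelli theorem it is $\Jac(X)$ for a unique genus $2$ curve $X$ away from the locus where $A$ is a polarized product of elliptic curves. The projection to the $E_0$-factor induces a degree $d$ cover $\pi\colon X\to E_0$, and $\omega:=\pi^*(dz)$ recovers a point of $\A^\circ_{d^2}$. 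That these two constructions are mutually inverse is then formal, since forming the complementary subvariety and forming the quotient by the graph are inverse operations, and the level structures match by the definitions of $f$ and $\psi$.

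The step I expect to be the main obstacle is the structural input above: verifying that $\ker\mu$ is precisely the graph of an anti-isometry of $d$-torsion, and dually that the anti-isometry condition is equivalent to descent of the product polarization to a principal one. This is the arithmetic heart supplied by \cite{Kuhn88} and \cite{Kani03}, and it is also what controls the dependence on $f_0$: replacing $f_0$ by $g\cdot f_0$ with $g\in\SL_2(\ZZ{d})$ post-composes $i_{f_0}$ with the induced automorphism of $X(d)$, so the isomorphism is canonical only up to $\Aut(X(d))$. Finally, the passage from $\A^\circ_{d^2}\cong Y(d)$ to the completions is automatic: $i_{f_0}$ is a biholomorphism onto a dense open subset, and both $\A_{d^2}$ and $X(d)$ are smooth projective curves, hence each is the unique nonsingular completion of this open curve, so the isomorphism extends uniquely; under this extension the boundary square-tiled surfaces with nodes (the simple poles of $q$ from Theorem~\ref{thmvertices}) are matched with the cusps of $X(d)$.
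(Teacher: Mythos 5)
Your proposal follows essentially the same route as the paper's proof: identify the complementary elliptic curve $E\subset\Jac(X)$ (the paper takes $E=\ker(\pi_*)$), show the kernel of the degree $d^2$ isogeny $E_0\times E\to\Jac(X)$ is the graph of a Weil-pairing-reversing isomorphism $E_0[d]\cong E[d]$, compose with $f_0$ to get the level structure, invert by quotienting $E_0\times E$ by the graph, and extend the resulting birational map to the completions, with the key structural facts deferred to \cite{Kuhn88} and \cite{Kani03}. The argument is correct and matches the paper's in all essentials.
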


\noindent
{\bf Isomorphism with the modular curve.} Recall that the modular curve is a Riemann surface $Y(d) = \Hyp \big/ \Gamma(d)$, where:
$$
 \Gamma(d) = \left\{
\begin{pmatrix}
a & b \\
c & d
\end{pmatrix} \in \SLZ \ \middle\vert \ a, d \equiv 1  \mbox{ and } b, c \equiv 0 \mod d
\right\}.
$$
The modular curve $Y(d)$ parametrizes equivalence classes of pairs $(E,f)$, where $E \in \M_1$ and $f:  (\ZZ{d})^2 \cong E[d] $ is an isomorphism that respects the Weil pairing. Its compactification is $X(d) = (\Hyp \cup \Q \cup \infty) \big/ \Gamma(d)$. The group of automorphisms $\Aut(X(d))$ of $X(d)$ is isomorphic to $\PSL(2,\ZZ{d}) = \PSL(2,\Z)/(\Gamma(d)/\pm \mathrm{Id})$. For any $g \in \PSL(2,\ZZ{d})$ the composition of $g$ with $f: (\ZZ{d})^2 \cong E[d]$ defines an automorphism of $X(d)$. In particular, Theorem~\ref{thmmodular} implies:

\begin{corollary} \label{corcanonical}
The isomorphism between $\A_{d^2}$ and $X(d)$ is canonical up to composition with elements of $\Aut(X(d))$.
\end{corollary}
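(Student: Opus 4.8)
The plan is to show that varying the auxiliary framing $f_0$ moves the isomorphism $i_{f_0}$ of Theorem~\ref{thmmodular} exactly within a single orbit of $\Aut(X(d))$. First I would record that the set $\mathcal{F}$ of all isomorphisms $f_0 : (\ZZ{d})^2 \to E_0[d]$ respecting the Weil pairing is nonempty and carries a simply transitive action of $\SL_2(\ZZ{d})$: the automorphisms of $(\ZZ{d})^2$ preserving the standard symplectic form are precisely $\SL_2(\ZZ{d})$, so any two admissible framings differ by precomposition with a unique $g \in \SL_2(\ZZ{d})$. Thus $\mathcal{F}$ is an $\SL_2(\ZZ{d})$-torsor, and since $\A_{d^2}$ is defined with no reference to a framing, each $f_0 \in \mathcal F$ produces an isomorphism $i_{f_0} : \A_{d^2} \to X(d)$ of one fixed pair of curves; consequently $i_{f_0'} \circ i_{f_0}^{-1}$ is tautologically an element of $\Aut(X(d))$. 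The content of the corollary is therefore not this formal observation but the sharper claim that the family $\{i_{f_0}\}_{f_0 \in \mathcal F}$ is a \emph{full} $\Aut(X(d))$-orbit.

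The key step is an equivariance statement: for every $g \in \SL_2(\ZZ{d})$ I claim $i_{f_0 \circ g} = \phi_g \circ i_{f_0}$, where $\phi_g \in \Aut(X(d))$ is the automorphism $(E,f) \mapsto (E, f \circ g)$ attached to the image of $g$ in $\PSL_2(\ZZ{d}) \cong \Aut(X(d))$. To see this I would unwind the construction underlying Theorem~\ref{thmmodular}: a point $(X,\omega) \in \A^\circ_{d^2}$ determines its elliptic cover $\pi : X \to E_0$ and the complementary factor $E$ with $\Jac(X) \sim E_0 \times E$, and the kernel of the isogeny $E_0 \times E \to \Jac(X)$ is the graph of a canonical isomorphism $\psi_{(X,\omega)} : E_0[d] \to E[d]$ depending only on $(X,\omega)$. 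The level structure assigned by $i_{f_0}$ is then $f = \psi_{(X,\omega)} \circ f_0$. Because $\psi_{(X,\omega)}$ does not see the choice of framing, replacing $f_0$ by $f_0 \circ g$ replaces $f$ by $(\psi_{(X,\omega)} \circ f_0) \circ g = f \circ g$, which is exactly $\phi_g$ applied to $i_{f_0}(X,\omega)$; by density of $\A^\circ_{d^2}$ in $\A_{d^2}$ and continuity, this identity of holomorphic maps persists on the whole curve.

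Granting the equivariance, the corollary follows: given any two admissible framings write $f_0' = f_0 \circ g$, so $i_{f_0'} = \phi_g \circ i_{f_0}$ differs from $i_{f_0}$ by a definite automorphism $\phi_g$; and since $\SL_2(\ZZ{d}) \twoheadrightarrow \PSL_2(\ZZ{d}) \cong \Aut(X(d))$ is surjective, with $\pm\mathrm{Id}$ inducing the identity of $X(d)$, every automorphism is realized by some change of framing. Hence the set of isomorphisms $\A_{d^2} \to X(d)$ produced by Theorem~\ref{thmmodular}, as $f_0$ ranges over $\mathcal F$, is a single $\Aut(X(d))$-coset, which is the assertion that $i_{f_0}$ is canonical up to composition with $\Aut(X(d))$.

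I expect the main obstacle to be the equivariance step, specifically pinning down the canonical identification $\psi_{(X,\omega)} : E_0[d] \to E[d]$ and verifying that the level structure $f$ produced by the Kuhn--Kani construction is genuinely $\psi_{(X,\omega)} \circ f_0$, rather than this composite twisted by a sign or a transpose coming from the Weil-pairing normalization. Tracking compatibility with the Weil pairing---to confirm that the relevant twisting group is $\SL_2(\ZZ{d})$ and that the residual $\pm\mathrm{Id}$ is precisely what collapses $\SL_2(\ZZ{d})$ onto $\Aut(X(d)) = \PSL_2(\ZZ{d})$---is the delicate bookkeeping, but it reduces to properties of the isogeny $E_0 \times E \to \Jac(X)$ already contained in the references cited for Theorem~\ref{thmmodular}.
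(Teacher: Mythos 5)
Your argument is correct and is essentially the paper's own (the paper derives the corollary in one line from Theorem~\ref{thmmodular} together with the remark that composing $f$ with $g \in \PSL_2(\ZZ{d})$ gives the automorphisms of $X(d)$). The equivariance step you single out as the main obstacle is in fact already explicit in the paper's proof of Theorem~\ref{thmmodular}, where the level structure is written as $f = \Psi \circ f_0$ with $\Psi: E_0[d] \cong E[d]$ determined by $(X,\omega)$ alone, so replacing $f_0$ by $f_0 \circ g$ visibly replaces $f$ by $f \circ g$.
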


We say that two abelian varieties $A$ and $B$ are {\em isogenous} is there exists a finite subgroup $\Gamma \subset A$ such that $A \big/ \Gamma \cong B$.  We now sketch a proof of Theorem~\ref{thmmodular} (see \cite{Kuhn88} and \cite{Kani03} for the details):

\begin{proof}[Proof of Theorem~\ref{thmmodular}]
Let $(X,\omega) \in \A^\circ_{d^2}$. The integration of $\omega$ defines a unique degree $d$ normalized cover $\pi: X \to E_0 = \C/\Z[i]$. It induces two maps of Jacobians: $\pi_*: \Jac(X) \to \Jac(E_0) \cong E_0$ by push forward of degree $0$ divisors, and $\pi^*: \Jac(E_0) \cong E_0 \to \Jac(X)$ by their pull back. Note that the isomorphism $\Jac(E_0) \cong E_0$ is well-defined, since $E_0$ comes with a choice of origin. The kernel of $\pi_*$ is a 1-dimensional subvariety of $\Jac(X)$. It is connected, since $\pi$ is a primitive cover. Therefore $\ker(\pi_*) = E$ for some $E \in \M_1$, and we obtain two exact sequences of maps:
\begin{align*}
& 0 \to E \xrightarrow{\phi}  \Jac(X) \xrightarrow{\pi_*} E_0 \to 0, \\
& 0 \to E_0 \xrightarrow{\pi^*}  \Jac(X) \xrightarrow{\phi^*} E \to 0.
\end{align*}
 Note that the maps $\phi$ and $\phi^*$ are induced by a degree $d$ branched cover $\pi': X \to E$ obtained in the following way. Let $u: X \to \Jac(X)$ be the Abel-Jacobi map, then $\pi' = \phi^* \circ u: X \to E$.

Define the map
$
\pi^*-\phi: E_0 \times E \to \Jac(X)
$, by
$(\pi^*-\phi)(x,y) = \pi^*(x) - \phi(y)$. It defines another exact sequence of maps:
$$
0 \to K \to E_0 \times E \to \Jac(X) \to 0,
$$
where $K = \ker(\pi^*-\phi)$. Then $K = \pi^*(E_0) \cap \phi(E) =  \pi^*(E_0) \cap \ker(\pi_*) = \pi^* (\ker ( \pi_* \circ \pi^*)) =\pi^*( E_0[d])$ and for the same reason $K = \phi(E[d])$. Since $\pi^*$ and $\phi$ are injective, this gives an isomorphism $\Psi: E_0[d] \cong E[d]$ that reverses the Weil pairing and $K = \Gamma_\Psi$ is a graph of this isomorphism in $E_0 \times E$. Then $f = \Psi \circ f_0$ and $i_{f_0}(X,\omega)=(E,f)$, which clearly satisfies $ \Jac(X) \sim E_0 \times E$.

This construction can be inverted on the open subset of points $(E,f) \in X(d)$ for which the abelian variety $E_0 \times E \big/ \Gamma_{f \circ f_{0}^{-1}}$ is a Jacobian of some Riemann surface $X \in \M_2$.

This birational morphism extends to an isomorphism $i_{f_0}: \A_{d^2} \to X(d)$ that only depends on the choice of  $f_0: E_0[d] \cong (\ZZ{d})^2 $.
\end{proof}

\noindent
{\bf Cusps of $Y(d)$ and poles of $q$.} We now show that some of the simple poles of $q$ on $\A_{d^2}$ are cusps of $Y(d) \subset X(d)$.

\begin{theorem} \label{thmcusps}
The set of cusps of the modular curve $X(d) \cong \A_{d^2}$ is a subset of simple poles of $q$ corresponding to stable curves with a non-separating node.
\end{theorem}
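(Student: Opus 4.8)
The plan is to trace the cuspidal degeneration on the modular side through the isogeny $\Jac(X)\sim E_0\times E$ provided by Theorem~\ref{thmmodular}, and then to read off the type of node from the toric rank of the degenerate Jacobian. The whole point is that the fixed factor $E_0$ carries the abelian part, so the toric contribution at a cusp can only come from $E$, and a separating node produces no toric part at all.

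First I would observe that a cusp of $X(d)$ is a point $(E,f)$ at which the elliptic curve $E$ degenerates to a nodal cubic, whose generalized Jacobian is $\C^*$; thus at a cusp the abelian part of $E$ has dimension $0$ and its toric rank is $1$. Since $E_0$ is the fixed square torus and does not degenerate, the isogeny $\Jac(X)\sim E_0\times E$ forces the limiting Jacobian of the corresponding $(X,\omega)=i_{f_0}^{-1}(E,f)$ to be a semi-abelian variety with abelian rank $1$ (from $E_0$) and toric rank exactly $1$ (from $E$). In particular $X$ cannot be a smooth genus $2$ curve, so $(X,\omega)$ lies on the boundary $\A_{d^2}\setminus\A^\circ_{d^2}$, and by the last sentence of Theorem~\ref{thmvertices} it is a simple pole of $q$, associated either to a separating node (the locus $P_{s}(d)$) or to a non-separating node (the locus $P_{ns}(d)$).

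The heart of the argument is to separate these two cases by their toric rank. For a stable curve $X_0$ the toric rank of $\Pic^0(X_0)$ equals the first Betti number of the dual graph of $X_0$, while its abelian part is the product of the Jacobians of the normalizations of the components. A separating node (case (2b)) has dual graph a tree on two vertices, hence first Betti number $0$: the limiting Jacobian is $E_1\times E_2$, an abelian variety of toric rank $0$. A non-separating node (case (2c)) has dual graph a single vertex with one loop, hence first Betti number $1$ and toric rank $1$. Comparing with the toric rank $1$ computed above, a cusp cannot arise from a separating node; therefore every cusp of $X(d)$ lies in $P_{ns}(d)$, which is the desired inclusion.

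The main obstacle I anticipate is making the toric-rank bookkeeping rigorous across the boundary. One must invoke the theory of semi-abelian (N\'eron / Deligne--Mumford) limits of the family of Jacobians over $\A_{d^2}$, verify that the isogeny $\Jac(X)\sim E_0\times E$ extends over the cusp to an isogeny of semi-abelian varieties (so that toric and abelian ranks are preserved, since isogenies respect the toric--abelian decomposition), and identify the limiting $\Pic^0(X_0)$ of the stable curve with the semi-abelian variety whose toric part is governed by the dual graph. These are standard facts in the theory of degenerations of abelian varieties, and together they pin down the node type purely from the arithmetic of the cusp.
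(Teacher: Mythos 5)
Your proposal is correct and follows essentially the same route as the paper: both arguments use the isogeny $\Jac(X)\sim E_0\times E$ from Theorem~\ref{thmmodular} and distinguish the two boundary types of Theorem~\ref{thmvertices} by whether the limiting Jacobian stays compact (separating node, $E_1\times E_2\in\mathfrak A_2$) or acquires a toric part (non-separating node, cusp of $Y(d)$). Your toric-rank/dual-graph bookkeeping is just a more explicit phrasing of the paper's compact versus non-compact Jacobian dichotomy.
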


\begin{proof}
Let $\mathfrak A_2$ be the moduli space of principally polarized Abelian varieties of dimension $2$. The Jacobian map $j: \M_2 \to \mathfrak A_2$ is given by $j(X) = \Jac(X)$. It embeds $\A^\circ_{d^2}$ into $\mathfrak A_2$. The closure of its image $\overline{j(\A^\circ_{d^2})}$ is an algebraic curve isomorphic to a non-compactified modular curve:
$$
Y(d) = \Hyp \big/  \Gamma(d).
$$
The boundary $\overline{j(\A^\circ_{d^2})} \setminus j(\A^\circ_{d^2})$ consists of products of elliptic curves $E_0 \times E$ for some $E \in \M_1$. Recall from Theorem~\ref{thmvertices} that the boundary points of $\A^{\circ}_{d^2} \subset \A_{d^2}$ are poles of $q$ and they have two types: the ones supported on stable curves with a separating node and the ones supported on stable curves with a non-separating node. The first type has compact Jacobian and corresponds to the boundary locus of $j(\A^\circ_{d^2})$ in $\mathfrak A_2$. The second type has non-compact Jacobian and corresponds to the cusps of $Y(d)$. 
\end{proof}

The subset $P_{ns}(d) \subset \A_{d^2}$ corresponding to the square-tiled surfaces with a non-separating node will be referred to as the set of {\em cusp poles} of $q$, and the subset $P_{s}(d) \subset \A_{d^2}$ corresponding to the square-tiled surfaces with a separating node will be referred to as the set of {\em non-cusp poles} of $q$.

\noindent
{\bf Symmetries of $(X(d),q)$.} 
Let $\M_{g,n}$ be the moduli space of Riemann surfaces of genus $g$ with $n$ marked points. Then let $\QQ\M_{g,n} \to \M_{g,n}$ define the bundle of pairs $(X,\eta)$, where $\eta\ne 0$ is a meromorphic quadratic differential on $X \in \M_g$ with $n$ simple poles located at $n$ marked points. Similarly to the case of $\Omega\M_g$, there is an action of $\GL / \pm Id$ on $\QQ\M_{g,n}$. The stabilizer of $(X,\eta)$ under this action is denoted by $\PSL(X,\eta) \subset \PSL(2, \R)$. Let $\Aff^+(X,\eta)$ denote the group of affine automorphisms in the metric $|\eta|$. 

\begin{theorem} For any $d>1$ we have:
$\Aff^+(X(d),q) \cong \PSL(X(d),q) \cong \PSLZ$ and $\Aut(X(d)) \cap \Aff^+(X(d),q) \cong \ZZ{2}$.
\end{theorem}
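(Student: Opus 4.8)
The plan is to analyze the standard exact sequence
\[
1 \longrightarrow \ker D \longrightarrow \Aff^+(X(d),q) \xrightarrow{\ D\ } \PSL(X(d),q) \longrightarrow 1,
\]
where $D$ is the derivative map and $\ker D$ consists of the automorphisms of the half-translation surface, i.e.\ those affine maps with derivative $\pm\mathrm{Id}$. I would establish, in order: (a) the Veech group $\PSL(X(d),q)$ equals $\PSLZ$; (b) $\ker D$ is trivial, so that $D$ is an isomorphism and hence $\Aff^+(X(d),q) \cong \PSL(X(d),q) \cong \PSLZ$; and finally (c) $\Aut(X(d)) \cap \Aff^+(X(d),q) \cong \ZZ{2}$.

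\textbf{The Veech group.} For the inclusion $\PSLZ \subseteq \PSL(X(d),q)$, I would use the natural affine $\SLZ$ action of Theorem~\ref{thmquaddiff}(ii). The element $-\mathrm{Id}$ acts trivially: on $(X,\omega)\in\A^\circ_{d^2}$ it sends $\omega$ to $-\omega$, and the hyperelliptic involution gives $(X,-\omega)\cong(X,\omega)$; equivalently, in the $\delta$-coordinate $\pm z$ it acts by $z\mapsto -z$, which is trivial modulo $\iota$. Thus the action factors through $\PSLZ$ and furnishes a homomorphic section $\PSLZ\to\Aff^+(X(d),q)$ of $D$ (the affine map attached to $A$ has derivative $A$), so $\PSLZ\subseteq\PSL(X(d),q)$. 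For the reverse inclusion I would pass to the holonomy double cover $\hat X\to X(d)$, on which $q=\hat\omega^2$. Since $q=\delta^*(dz^2)$ and the holonomy double cover of the pillowcase is the torus $\C/2\Z[i]$ carrying $dz$, the surface $\hat X$ is a branched cover of $\C/2\Z[i]$ with $\hat\omega$ pulling back $dz$; hence $\Lambda:=\Per(\hat X,\hat\omega)$ is a full-rank sublattice of $2\Z[i]$. Every affine automorphism lifts to the canonical cover $\hat X$ and preserves $\Lambda$, so its derivative lies in $\mathrm{SL}(\Lambda)$. Because $\Lambda$ is also preserved by the $\PSLZ$ already found, $\Lambda=c\,\Z[i]$ and $\mathrm{SL}(\Lambda)=\SLZ$, giving $\PSL(X(d),q)\subseteq\PSLZ$ and therefore equality.

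\textbf{Triviality of $\ker D$ (the main obstacle).} I would first identify $\ker D=\{g\in\Aut(X(d)) : g^*q=q\}$: a holomorphic $g$ that is an isometry of $|q|$ is locally $z\mapsto \zeta z+c$ with $|\zeta|=1$, and $g^*q=\zeta^2 q$, so $g^*q=q$ forces $\zeta=\pm1$, i.e.\ derivative $\pm\mathrm{Id}$; conversely derivative $\pm\mathrm{Id}$ gives $g^*q=q$. Thus $\ker D$ is a finite group of biholomorphisms of $X(d)$ fixing $q$, and I must show it is trivial. The approach is to note that such a $g$ is a flat isometry, so in local flat charts (where $\delta$ itself is a coordinate) it acts by $z\mapsto\pm z+c$; hence $g$ descends along $\delta$ to a translation automorphism $\bar g$ of the pillowcase, of which there are only finitely many (the half-translations permuting the four corners). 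One then treats the case $\bar g=\mathrm{id}$ (so $g$ is a flat deck transformation of the branched cover $\delta$) and the case $\bar g\neq\mathrm{id}$, ruling out $g\neq\mathrm{id}$ by showing that $\delta$ admits no nontrivial flat deck transformations, equivalently that the monodromy of $\delta$ has trivial centralizer in the symmetric group on a generic fiber. This centralizer-triviality — which I expect to reduce to the primitivity of the degree-$d$ covers parametrized by the fibers together with the irreducibility of $\A_{d^2}$ — is the hard part of the whole argument. Granting it, $D$ is injective, hence an isomorphism onto $\PSLZ$, proving the first assertion.

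\textbf{The intersection with $\Aut(X(d))$.} Any $g\in\Aut(X(d))\cap\Aff^+(X(d),q)$ is both holomorphic and affine, so its derivative is conformal and lies in the Veech group: $Dg\in\PSLZ\cap\mathrm{PSO}(2)$. The rotations inside $\SLZ$ form the cyclic group of order $4$ generated by $R=\begin{pmatrix}0&-1\\1&0\end{pmatrix}$, which maps to $\{\mathrm{Id},R\}\cong\ZZ{2}$ in $\PSLZ$. Thus $D$ carries the intersection into $\ZZ{2}$, and it is injective by the triviality of $\ker D$ just established. For surjectivity I would invoke the section element $\psi_R$ attached to $R$: since its derivative is a rotation, $\psi_R$ is conformal, hence $\psi_R\in\Aut(X(d))$, while $D\psi_R=R\neq\mathrm{Id}$. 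Therefore the image is exactly $\{\mathrm{Id},R\}$ and $\Aut(X(d))\cap\Aff^+(X(d),q)\cong\ZZ{2}$, completing the proof modulo the centralizer computation flagged above.
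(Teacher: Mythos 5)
Your framework via the exact sequence $1 \to \ker D \to \Aff^+(X(d),q) \xrightarrow{D} \PSL(X(d),q) \to 1$ is sound, and two of your three steps are essentially complete: the containment $\PSLZ \subseteq \PSL(X(d),q)$ from Theorem~\ref{thmquaddiff}(ii) together with the lattice argument on the holonomy double cover for the reverse containment is correct (and more explicit than what the paper records), and your final computation that the conformal elements of $\PSLZ$ are the rotations by $\pm\pi/2$, giving $\ZZ{2}$, is exactly the paper's argument for the second assertion.

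The genuine gap is the step you flag yourself: the triviality of $\ker D$. Both the isomorphism $\Aff^+(X(d),q)\cong\PSLZ$ and the injectivity you invoke in your last paragraph depend on it, and your proposal reduces it to the claim that the monodromy of $\delta$ has trivial centralizer in the symmetric group on a generic fiber --- a claim you do not prove and which is not obviously easier than the original statement. Irreducibility of $\A_{d^2}$ only gives transitivity of the monodromy, hence that the centralizer is semiregular; triviality would further require that a point stabilizer fixes no other point of the fiber, and you would separately have to rule out the case $\bar g\neq\mathrm{id}$ (and to justify that $g$ descends along $\delta$ at all, which needs the local constants in $z\mapsto\pm z+c$ to agree across sheets). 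The paper closes this gap by a short geometric argument that avoids monodromy entirely: by the cylinder enumeration of Theorem~\ref{thmdecomposition}, $\A_{d^2}$ contains a \emph{unique} embedded open horizontal cylinder of circumference $2$ and height $1$ --- the lighthouse $\LL_1$ of \S\ref{seclighteave}, since $\lcm(w_1,w_2,w_1+w_2)=2$ forces $w_1=w_2=1$ and height $1$ forces $\{s_1,s_2\}=\{1,d-1\}$ --- and one of its boundary components contains a single cusp. Any holomorphic automorphism preserving $q$ is an isometry of $|q|$ preserving the horizontal foliation, so it permutes horizontal cylinders preserving their dimensions; it therefore fixes $\LL_1$ and the distinguished points on its boundary, hence fixes the cylinder and, by analytic continuation, all of $\A_{d^2}$. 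I recommend replacing your centralizer reduction by this uniqueness-of-$\LL_1$ argument.
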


In particular there are no automorphisms of $X(d)$ that preserve $q$, and the only affine automorphism of $(X(d),q)$ that acts by an automorphism of $X(d)$ has order $2$. For example, the modular curve $X(2)$ has a group of symmetries of a regular tetrahedron, however the only symmetry that persists on the level of the square-tiling of $X(2)$ (see Figure~\ref{figX(2)intro}) is given by rotating each square by $\pm \pi/2$ and switching their places.

\begin{proof}
In \S\ref{seclighteave} we will show that $\A_{d^2}$ contains a unique embedded open horizontal cylinder of circumference $2$ and height $1$ and one of its boundaries contains a single cusp. It follows that this cylinder, and hence all of $\A_{d^2}$, must be fixed by any holomorphic automorphism that preserves $q$. Together with Theorem~\ref{thmquaddiff} it implies that $\Aff^+(X(d),q) \cong \PSL(X(d),q) \cong \PSLZ$

The only non-trivial elements of $\PSL(X(d),q)$ that act holomorphically on $\C$ are rotations by $\pm \pi/2$, therefore $\Aut(X(d)) \cap \Aff^+(X(d),q) \cong \ZZ{2}$.
\end{proof}

\noindent
{\bf Teichm\"uller point.} Unlike for holomorphic 1-forms, for meromorphic quadratic differentials the projection of an orbit $\GL \cdot (X,\eta) \subset \QQ\M_{g,n}$ to $\M_g$ can be a single point. 

For any $E \cong \C/\Z[\tau] \in \M_1$, where $\tau \in \Hyp$, one can define an absolute period leaf $\A_{d^2}(E)$ as:
\begin{flalign*}
 \A_{d^2}(E) = \left\{   (X,\omega) \in \Omega\cM_2 \,\middle\vert\,
   \Per(\omega) = \Z[\tau] \mbox{ and } \displaystyle \int_X  |\omega|^2 = d \cdot \Imag\tau
\right\}.
\end{flalign*}
Defining the discriminant map in the same way as for $E_0$ one obtains a quadratic differentials $q_E$ on $\A_{d^2}(E)$, which gives a tiling of $\A_{d^2}(E)$ by parallelograms of the shape $\langle 1, \tau \rangle$. Clearly:
$$
\begin{pmatrix}
1 & \Real\tau \\
0 & \Imag\tau
\end{pmatrix} \cdot (\A_{d^2} , q) = (\A_{d^2}(E), q_E).
$$
Note also that Theorem~\ref{thmmodular} does not rely on any special properties of $E_0$ and works as well for any choice of $E \in \M_1$ and an isomorphism of torsion $f_E:  (\ZZ{d})^2 \cong E[d]$ that respects the Weil pairing. This implies $\A_{d^2}(E) \cong X(d)$ and therefore:

\begin{theorem}
The projection of the orbit $\GL \cdot (X(d),q) \subset \QQ\M_{g,n}$ to $\M_g$, where $g$ is the genus of $X(d)$,  is a point.
\end{theorem}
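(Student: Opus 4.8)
The plan is to bootstrap from the two facts just recorded: the displayed identity $A_\tau \cdot (\A_{d^2}, q) = (\A_{d^2}(E), q_E)$ for the upper-triangular matrix $A_\tau = \left(\begin{smallmatrix} 1 & \Real\tau \\ 0 & \Imag\tau \end{smallmatrix}\right)$ and $E = \C/\Z[\tau]$, together with the fact that Theorem~\ref{thmmodular} applies to any base torus and hence gives an isomorphism of Riemann surfaces $\A_{d^2}(E) \cong X(d)$ for every $E \in \M_1$. Combining these, for each such $A_\tau$ the underlying Riemann surface of $A_\tau \cdot (X(d), q)$ is isomorphic to $X(d)$. What remains is to propagate this from the upper-triangular matrices to all of $\GL$, which I would do by exhibiting every orbit point as a conformal image of some $A_\tau \cdot (X(d), q)$.

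The key structural observation is that applying a conformal matrix — an element of the subgroup $\C^* \subset \GL$ generated by the rotations $\mathrm{SO}(2)$ and the positive scalings — does not alter the underlying complex structure of a quadratic differential; it merely rotates and rescales the flat metric and multiplies $q$ by a nonzero constant. Indeed, if the developing charts of $A \cdot (X,q)$ are $\psi_\alpha$, then those of $\lambda A \cdot (X,q)$ are $\lambda\psi_\alpha$ for $\lambda \in \C^*$, and since $z \mapsto \lambda z$ is a biholomorphism of $\C$ the identity map is holomorphic between the two complex structures. Therefore the point of $\M_g$ determined by $A \cdot (X(d),q)$ depends only on the left coset $\C^* A \in \C^*\backslash\GL$, and it suffices to show that every such coset meets the family $\{A_\tau\}$.

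This last step is an elementary linear computation: writing the first column of $A = \left(\begin{smallmatrix} a & b \\ c & d\end{smallmatrix}\right)$ as the complex number $a + ic$, which is nonzero since $\det A > 0$, the conformal matrix $\lambda = (a+ic)^{-1} \in \C^*$ makes $\lambda A$ have first column $(1,0)^{\mathrm T}$; its second column is some $x + iy$, and $\det(\lambda A) = |\lambda|^2 \det A > 0$ forces $y > 0$, so $\lambda A = A_\tau$ with $\tau = x + iy \in \Hyp$. Hence every element of the orbit $\GL \cdot (X(d), q)$ has underlying Riemann surface $\A_{d^2}(\C/\Z[\tau]) \cong X(d)$, and the projection to $\M_g$ is the single point represented by $X(d)$. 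Given Theorem~\ref{thmmodular}, there is no serious obstacle here; the only point that genuinely deserves care is the reduction in the second paragraph, namely that the conformal subgroup $\C^*$ acts trivially on complex structures, for it is precisely this invariance that collapses the two-parameter family of sheared shapes $A_\tau$ onto a single point of moduli.
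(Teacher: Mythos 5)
Your proof is correct and follows essentially the same route as the paper, which simply records the identity $\left(\begin{smallmatrix} 1 & \Real\tau \\ 0 & \Imag\tau \end{smallmatrix}\right)\cdot(\A_{d^2},q)=(\A_{d^2}(E),q_E)$ together with the observation that Theorem~\ref{thmmodular} applies to any base torus, so that $\A_{d^2}(E)\cong X(d)$ for every $E$. The only difference is that you make explicit the reduction the paper leaves implicit --- that the conformal subgroup $\C^*\subset\GL$ fixes the underlying complex structure and that every left coset $\C^*A$ contains exactly one $A_\tau$ --- which is a correct and welcome clarification rather than a different argument.
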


%%%%%%%%%%%%%%%%%%%%%%%
%%%%%%%%%%%%%%%%%%%%%%%
%%%%%%%%%%%%%%%%%%%%%%%

%
%\section{Comparison to case $D\ne d^2$} \label{secnonsquare}
%
%In \cite{McM07b} McMullen studied a more general set of leaves of the isoperiodic foliation. They have to do with real multiplication by an order $\OO_{D}$, where $D$ is a discriminant of a quadratic polynomial. Leaves $\A_{d^2}$ are particular cases of the square discriminant $D=d^2$. We give an overview of some of the results from \cite{McM07b} and interpret them in terms of square-tiling on $\A_{d^2}$ from Section~\ref{sectiling}.

%%%%%%%%%%%%%%%%%%%%%%%
%%%%%%%%%%%%%%%%%%%%%%%
%%%%%%%%%%%%%%%%%%%%%%%

\section{The square-tilings of the modular curves} \label{sectiling}

In this section we describe the procedure that can be used to construct the square-tiling of the modular curve $X(d)$ defined in the previous sections for any $d>1$. The squares of the tiling of $X(d)$ form maximal horizontal strips of various heights and widths. We call such strip a {\em horizontal cylinder} of $X(d)$, since its vertical edges are identified. We will enumerate horizontal cylinders of $X(d)$ and find their dimensions:

\begin{theorem} \label{thmdecomposition}
For any $d>1$ the square-tiling of the modular curve $X(d)$ naturally decomposes into a union of horizontal cylinders, that consists of squares and for which the following conditions hold:

(i) (enumeration) The set of horizontal cylinders is in bijection with the set of unordered pairs of triples $\{ (w_1, s_1, T_1), (w_2, s_2, T_2 ) \}  \in \Sym^2\N^3,$ satisfying the following conditions:

\begin{itemize}
\item (area) $s_1 w_1 + s_2 w_2 = d$,

\item (twist) $0 \le T_1,T_2 < \gcd(w_1, w_2) $,

\item (primitivity) $\gcd(s_1, s_2) = 1$ and $\gcd(T_1s_2 - T_2s_1, w_1, w_2)=1$;
\end{itemize}

(ii) (dimensions) The height of the cylinder $\CC =  \{ (w_1, s_1, T_1), (w_2, s_2, T_2) \}$ is \newline
$H_{\CC} = \min(s_1,s_2),$
its circumference is
$W_{\CC} = \lcm(w_1, w_2, w_1 + w_2)$.
\end{theorem}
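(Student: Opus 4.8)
The plan is to describe every square-tiled surface in $\A^{\circ}_{d^2}$ by the cylinder coordinates of \cite{EMS03} and to identify a horizontal cylinder of $(\A_{d^2},q)$ with the locus of surfaces sharing a fixed horizontal cylinder decomposition of the genus-$2$ cover. Recall that $q=\rho^*dz^2$ with $\rho=\int_{z_1}^{z_2}\omega$, so the horizontal (resp.\ vertical) direction on $\A_{d^2}$ is generated by horizontal (resp.\ vertical) rel deformation, which slides the two zeros $z_1,z_2$ relative to one another while fixing the absolute periods $\Z[i]$. Since rel in a fixed direction preserves the combinatorial type of the cylinder decomposition in the transverse direction, I would first show that a generic $(X,\omega)$ in the interior of a horizontal cylinder $\CC$ has $X$ split into exactly two horizontal cylinders $C_1,C_2$, recorded by triples $(w_i,s_i,T_i)=(\text{width},\text{height},\text{twist})$. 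The assignment $(X,\omega)\mapsto\{(w_1,s_1,T_1),(w_2,s_2,T_2)\}$ is constant along $\CC$ (only the common horizontal position of the zeros varies), and the pair is unordered because the two cylinders carry no preferred labels; this is the map underlying the claimed bijection.

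For the enumeration I would translate the defining conditions of $\A_{d^2}$ into the three constraints. The area identity $\int_X|\omega|^2=d$ says the cylinders tile $d$ unit squares, giving $s_1w_1+s_2w_2=d$. Primitivity of the cover, i.e.\ $\Per(X,\omega)=\Z[i]$ rather than a proper sublattice, becomes the statement that the lattice generated by the cylinder core curves together with their heights and twists is all of $\Z[i]$; computing this index yields $\gcd(s_1,s_2)=1$ (no vertical isogeny) together with $\gcd(T_1s_2-T_2s_1,w_1,w_2)=1$ (no remaining diagonal sublattice). The twist range $0\le T_i<\gcd(w_1,w_2)$ I would read off from the gluing pattern of the genus-$2$ two-cylinder diagram: the singular points subdivide the shared top and bottom boundaries so that the two twists are only well-defined modulo $\gcd(w_1,w_2)$. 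Conversely, every admissible pair of triples reassembles into a primitive degree-$d$ cover, giving surjectivity.

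For the dimensions, the height is the easier half: separating the zeros in the vertical direction keeps the horizontal diagram fixed until one zero crosses the shorter of the two cylinders and collides, producing a degeneration that by Theorem~\ref{thmvertices} is a vertex of the tiling; hence the vertical extent of $\CC$ is $H_{\CC}=\min(s_1,s_2)$. The circumference requires following the horizontal rel flow until the configuration first returns to itself. Here the dragged singularity travels along the saddle connections bounding $C_1$ and $C_2$, whose lengths are governed by $w_1$, $w_2$, and the combined length $w_1+w_2$ of the separatrix cycle joining the two cylinders, so the flow closes up only after a common multiple of all three, giving $W_{\CC}=\lcm(w_1,w_2,w_1+w_2)$. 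I would then check the full package against the area count $\sum_{\CC}H_{\CC}W_{\CC}=2\deg\delta=\tfrac{d-1}{3}\,|\PSL_2(\ZZ{d})|$ (the factor $2$ being the area of the pillowcase $\bfP$), which both serves as a consistency test and pins down any leftover constants; the cases $d=2,3$ can be verified directly by hand.

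The main obstacle is the precise computation of the circumference, namely extracting $\lcm(w_1,w_2,w_1+w_2)$, and in particular the $w_1+w_2$ term, from the first-return map of the horizontal rel flow. This forces a careful study of the genus-$2$ two-cylinder separatrix diagram: one must track how the two simple zeros sit on the shared cylinder boundaries, how dragging one of them horizontally permutes the saddle connections, and why the return time is controlled by the combined cycle length $w_1+w_2$ rather than by $w_1$ and $w_2$ alone. The accompanying bookkeeping, justifying the reduced twist range and checking that the two $\gcd$ conditions are exactly equivalent to primitivity, is routine lattice arithmetic once the diagram is fixed.
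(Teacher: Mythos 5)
Your overall strategy --- cylinder coordinates in the sense of \cite{EMS03}, horizontal and vertical rel deformations, and degenerations at the boundary --- is the same as the paper's, but your combinatorial model of a generic fiber is wrong, and the error sits exactly where you locate your ``main obstacle.'' A generic $(X,\omega)$ in the interior of a horizontal cylinder of $\A_{d^2}$ does \emph{not} split into two horizontal cylinders: since the two simple zeros lie on distinct horizontal leaves, the singular horizontal leaves form two disjoint figure eights, and the complement is a union of \emph{three} cylinders $C_1\cup C_2\cup C_3$ with circumferences $w_1,w_2,w_1+w_2$ (Proposition~\ref{encoding}). The two-cylinder picture you describe occurs only on the boundary of the cylinder of $\A_{d^2}$, where the middle cylinder $C_3$ collapses. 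Accordingly $s_i$ is not the height of a cylinder of $X$ but the combination $s_i=h_i+h_3$, which is what stays constant as the vertical rel parameter $h_3$ sweeps from $0$ to $\min(s_1,s_2)$; your height computation survives once this is corrected.

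The third cylinder is not bookkeeping: it is the source of every ingredient you say you cannot derive. Horizontal rel translation by $t$ acts on the three twists by $(t_1,t_2,t_3)\mapsto(t_1+t,t_2+t,t_3+t)$ in $\R/w_1\R\times\R/w_2\R\times\R/(w_1+w_2)\R$, so the circumference of the cylinder of $\A_{d^2}$ is simply the order of $(1,1,1)$ in $\ZZ{w_1}\times\ZZ{w_2}\times\ZZ{(w_1+w_2)}$, namely $\lcm(w_1,w_2,w_1+w_2)$ --- no first-return analysis of a dragged singularity is needed. The twist normal form $(T_1,T_2,0)$ with $0\le T_i<\gcd(w_1,w_2)$ is the unique orbit representative for this diagonal $\Z$-action, not a feature of a two-cylinder gluing diagram; and the primitivity condition $\gcd(T_1s_2-T_2s_1,w_1,w_2)=1$ comes from the real parts of the periods of the $\beta$-cycles, which cross $C_3$ and hence involve $t_3$. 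Without the three-cylinder model, the $w_1+w_2$ factor, the reduced twist range, and the second gcd condition cannot be extracted, so the gap is essential rather than a matter of outstanding computation.
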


We will use the decomposition into horizontal cylinders to define {\em cylinder coordinates} and {\em Euclidean coordinates} on $X(d)$. We will conclude by discussing the square-tilings of $X(d)$ for $d=2,3,4,5$ presented in the introduction.

\noindent
{\bf Horizontal foliation.}
According to Theorem~\ref{thmmodular} there is an isomorphism $X(d) \cong \A_{d^2}$. We will carry out the description of the square-tiling in terms of $\A_{d^2}$ and meromorphic differential $q$ that defines the square-tiling of $\A_{d^2}$ (see \S\ref{secabs} for definitions). The kernel of the harmonic 1-form $\Imag( \pm \sqrt q)$ defines a singular foliation of $\A_{d^2}$ that we call the {\em horizontal foliation} of $\A_{d^2}$. Every non-singular leaf of the horizontal foliation is closed. Any maximal open connected subset of $\A_{d^2}$ that is a union of non-singular closed leaves of the horizontal foliation is called a {\em horizontal cylinder}. Every non-singular closed leaf belongs to some horizontal cylinder. Therefore removing the singular leaves from $\A_{d^2}$ we obtain a finite disjoint union of horizontal cylinders. We say that $\A_{d^2}$ naturally decomposes into horizontal cylinders. We begin by presenting the combinatorial approach to the study of primitive genus $2$ covers of the square torus introduced in \cite{EMS03} and by defining {\em cylinder and Euclidean coordinates} of $(X,\omega) \in \A_{d^2}$.

\noindent
{\bf Cylinder decomposition of $(X,\omega)$.} 
An Abelian differential $(X, \omega) \in \A_{d^2}$ with $Z(\omega) = \{ x_1, x_2 \}$ is called {\em generic}, if none of the relative periods $\int_{x_1}^{x_2}\omega$ are purely real. In terms of the horizontal foliation defined by $\Imag \omega$ this simply means that no horizontal leaf contains both singularies $x_1$ and $x_2$.
Therefore singular horizontal leaves of a generic $(X,\omega)$ start and end at the same singularity. Then a generic $(X,\omega) \in \A_{d^2}$ naturally decomposes into a disjoint union of horizontal cylinders $C_j$, whose boundaries are formed by singular leaves $\gamma_i$. The total angle around each of the two singularities of $(X, \omega)$ is $4\pi$, hence they have four prongs in horizontal directions and the union of loops $\gamma_i$ on $X$ is topologically a pair of figure eights. The only possibility for this cylinder decomposition is $(X, \omega) = C_1 \cup C_2 \cup C_3$, where the circumference of one of the cylinders is the sum of the circumferences of the other two (see Figure~\ref{fig3cyl}). We review a proof of the following result:

\begin{proposition}[\cite{EMS03}] \label{encoding}
Let $d>1$ be any integer.
Consider an Abelian differential $(X,\omega)$ obtained as three horizontal cylinders of circumferences $w_1, w_2, w_3 \in \R$, heights $h_1, h_2, h_3 \in \R$ with boundaries identified by the twists $t_1, t_2, t_3 \in \R$, where $t_i \in [0,w_i)$, as in Figure~\ref{fig3cyl}. Then $(X,\omega)$ belongs to $\A_{d^2}$ and generic if and only if the following conditions hold:
\begin{itemize}
\item (circumference) $w_1+w_2=w_3$;

\item (area) $w_1(h_1+h_3)+w_2(h_2+h_3)=d$;

\item (generic) $h_1, h_2, h_3 > 0$;

\item (integral periods) $w_1, w_2, h_1+h_3, h_2+h_3, t_1-t_3, t_2-t_3 \in \Z$; and

\item (primitivity) $\gcd(h_1+h_3,h_2+h_3)=1=\gcd( (t_1-t_3) (h_2+h_3) - (t_2-t_3) (h_1+h_3), w_1, w_2) ) = 1$.
\end{itemize}
\end{proposition}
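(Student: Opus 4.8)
The plan is to read the geometric and arithmetic invariants of the surface directly off the cylinder data and match them term by term against the five listed conditions. I work throughout in the interior $\A^\circ_{d^2}$, since a generic form has no contracted saddle connections and hence lies on a smooth genus $2$ curve.

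First I would settle the topology. In the identification pattern of Figure~\ref{fig3cyl} the top boundary of $C_3$, of length $w_3$, is glued to the bottom boundaries of $C_1$ and $C_2$, of lengths $w_1$ and $w_2$ (and symmetrically for the remaining boundaries); these lengths fit together to close up a surface precisely when $w_1 + w_2 = w_3$, which is the circumference condition. Granting this and $h_1, h_2, h_3 > 0$, the horizontal translation charts make $(X,\omega)$ a translation surface whose singular horizontal leaves form a pair of figure-eights meeting in two cone points of angle $4\pi$. Hence $\omega$ has exactly two simple zeros, and from $\sum_i (\deg z_i) = 2g-2$ the genus is $2$. The two zeros lie on distinct horizontal leaves exactly when every height is strictly positive, so $(X,\omega)$ is generic iff $h_1, h_2, h_3 > 0$; conversely the discussion preceding the proposition shows that every generic $(X,\omega) \in \A^\circ_{d^2}$ arises from exactly this three-cylinder model, so no generality is lost. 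The area step is then immediate: $\int_X |\omega|^2$ is the sum of the Euclidean areas of the three cylinders, so $\int_X |\omega|^2 = w_1 h_1 + w_2 h_2 + w_3 h_3 = w_1(h_1+h_3) + w_2(h_2+h_3)$ after substituting $w_3 = w_1+w_2$, and this equals $d$ iff the area condition holds.

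The heart of the argument, and the step I expect to be the main obstacle, is the period computation. I would fix a basis $a_1, a_2, b_1, b_2$ of $H_1(X,\Z)$, taking $a_1, a_2$ to be the core curves of $C_1, C_2$ and $b_1, b_2$ to be transverse cycles ascending through $C_1$ (resp. $C_2$) and descending through $C_3$. Tracking the horizontal and vertical displacements, twists included, gives $\int_{a_1}\omega = w_1$, $\int_{a_2}\omega = w_2$, $\int_{b_1}\omega = (t_1 - t_3) + i(h_1+h_3)$ and $\int_{b_2}\omega = (t_2 - t_3) + i(h_2+h_3)$; the differences $t_j - t_3$ appear precisely because each transverse cycle accumulates one twist going up and the opposite twist coming down. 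Pinning down the signs and the admissible twist combinations from the figure is the delicate point. With the four generators in hand, $\Per(\omega)$ is the lattice they span, and $\Per(\omega) \subseteq \Z[i]$ is literally the integral-periods condition.

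For the equality $\Per(\omega) = \Z[i]$ I would argue lattice-theoretically. Projecting $\Per(\omega)$ to the imaginary axis shows it meets that axis in $\gcd(h_1+h_3, h_2+h_3)\,\Z$, so membership of $i$ forces $\gcd(h_1+h_3, h_2+h_3)=1$. Granting this, the real combinations of $b_1, b_2$ are the integer multiples of the single vector with real part $(t_1-t_3)(h_2+h_3) - (t_2-t_3)(h_1+h_3)$, whence $\Per(\omega)\cap\R = \gcd\!\big(w_1, w_2, (t_1-t_3)(h_2+h_3) - (t_2-t_3)(h_1+h_3)\big)\,\Z$; this equals $\Z$ iff the second primitivity condition holds. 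Conversely, these two gcd conditions produce an element of $\Per(\omega)$ with imaginary part $1$ and integer real part, which together with $\Per(\omega)\cap\R=\Z$ yields $1, i \in \Per(\omega)$ and hence $\Per(\omega) = \Z[i]$. Assembling the three steps, the requirement that $(X,\omega) \in \A_{d^2}$ and be generic unwinds exactly into the five stated conditions.
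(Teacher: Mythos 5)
Your proposal is correct and follows essentially the same route as the paper: both compute the periods along a symplectic basis consisting of the core curves of $C_1,C_2$ and transverse cycles through $C_3$, obtaining $w_1$, $w_2$, $(t_1-t_3)+i(h_1+h_3)$, $(t_2-t_3)+i(h_2+h_3)$, and then translate $\Per(\omega)=\Z[i]$ into the two gcd conditions by intersecting the period lattice with $\R$ and projecting to the imaginary axis. The only differences are cosmetic (a sign convention on the twists, which the gcd absorbs).
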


\begin{comment6}
\begin{figure}[H]
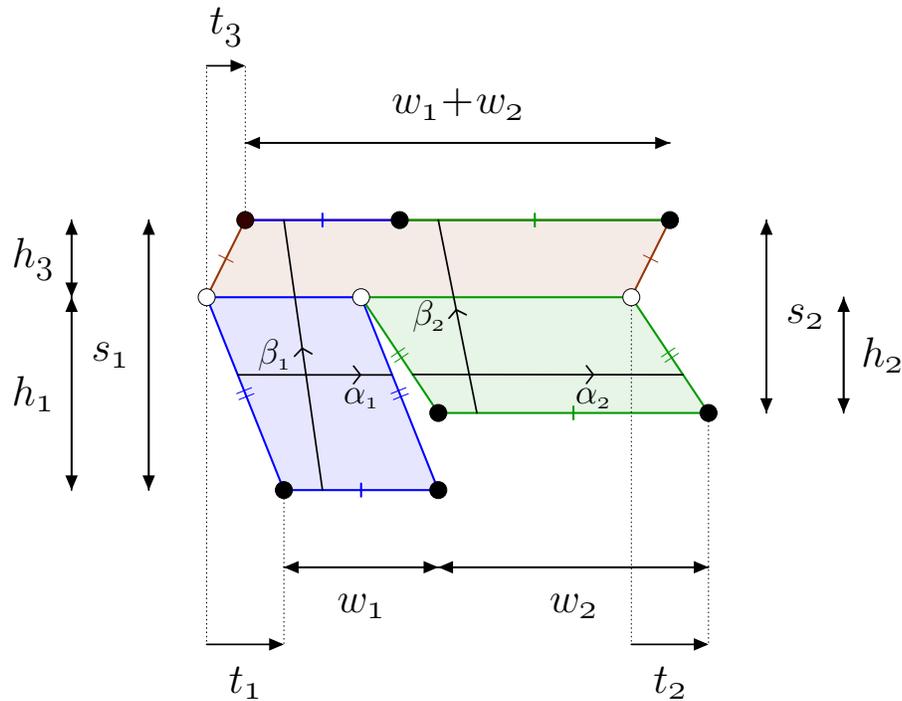
 
\centering
  \includestandalone[width=0.8\textwidth]{tikz3cyl}
  \caption{The 3-cylinder decomposition of $(X,\omega)$.}
  \label{fig3cyl}
\end{figure}
\end{comment6}

\begin{proof}[Proof of Proposition~\ref{encoding}]
The circumference condition comes from the discussion above. Since $(X,\omega)$ is a degree $d$ cover of an area $1$ torus, its area is $d = w_1h_1 + w_2h_2+w_3h_3 = w_1(h_1+h_3)+w_2(h_2+h_3)$. The heights are positive if and only if the singularities are not on the same horizontal leaf. 

Primitivity is equivalent to checking that the absolute periods of $(X, \omega)$ generate $\Z[i]$. Choose a symplectic basis $\alpha_1, \beta_1, \alpha_2, \beta_2 \in H_1(X,\Z)$ (see Figure~\ref{fig3cyl}). The integration of $\omega$ along these cycles gives respectively:
$$
w_1, (t_3-t_1)+i(h_1+h_3), w_2,  (t_3-t_2)+i(h_2+h_3).
$$
For $(X,\omega)$ to be in $\A_{d^2}$ the real and imaginary parts of the absolute periods must be integers, which justifies the integral periods condition. For the primitivity condition, first note that the imaginary parts $h_1+h_3$ and $h_2+h_3$ must span $1$ over $\Z$, which is equivalent to $\gcd(h_1+h_3,h_2+h_3)=1$. Secondly, note that $\R \cap \spans_{\Z}(\int_{\beta_1}\omega,\int_{\beta_2}\omega) = ((t_1-t_3) (h_2+h_3) - (t_2-t_3) (h_1+h_3))\cdot \Z $ and, since the real parts must span $1$, we have:
\[
\gcd( (t_1-t_3) (h_2+h_3) - (t_2-t_3) (h_1+h_3), w_1, w_2) ) = 1.
\pushQED{\qed} 
\qedhere
\popQED
\]
\let\qed\relax
\end{proof} 

\noindent
{\bf Cylinder coordinates on $\A_{d^2}$.} We say that a vector $(w_1, s_1, w_2, s_2, t_1, t_2, t_3, h) \in \N^4 \times \R^4$ is {\em admissible} if the numbers $w_1, w_2, h_1 = s_1 - h, h_2 = s_2 -h, h_3 = h, t_1, t_2$ and $t_3$ satisfy the conditions of Proposition~\ref{encoding}. We define the {\em cylinder coordinates} of a generic $(X,\omega) \in  \A_{d^2}$ to be an equivalence class of admissible vectors:
\begin{equation} \label{eq:cylcoord}
(w_1, s_1, w_2, s_2, t_1, t_2, t_3, h) \sim (w_2, s_2, w_1, s_1, t_2, t_1, t_3, h) \in \N^4 \times \R^4.
\end{equation}
We impose an equivalence relation on vectors, because there is no consistent way to order the narrower cylinders. 

Since generic $(X,\omega)$ are dense in $\A_{d^2}$, the cylinder coordinates extend to all of $\A_{d^2}$ by continuity, however this extension is not globally injective.

\noindent
{\bf Horizontal cylinders of $\A_{d^2}$.} Let $Cyl(\A_{d^2})$ denote the set of horizontal cylinders of $\A_{d^2}$.
We give a proof of Theorem~\ref{thmdecomposition}, that enumerates the horizontal cylinders and gives their dimensions, below.

\begin{comment6}
\begin{figure}[H]
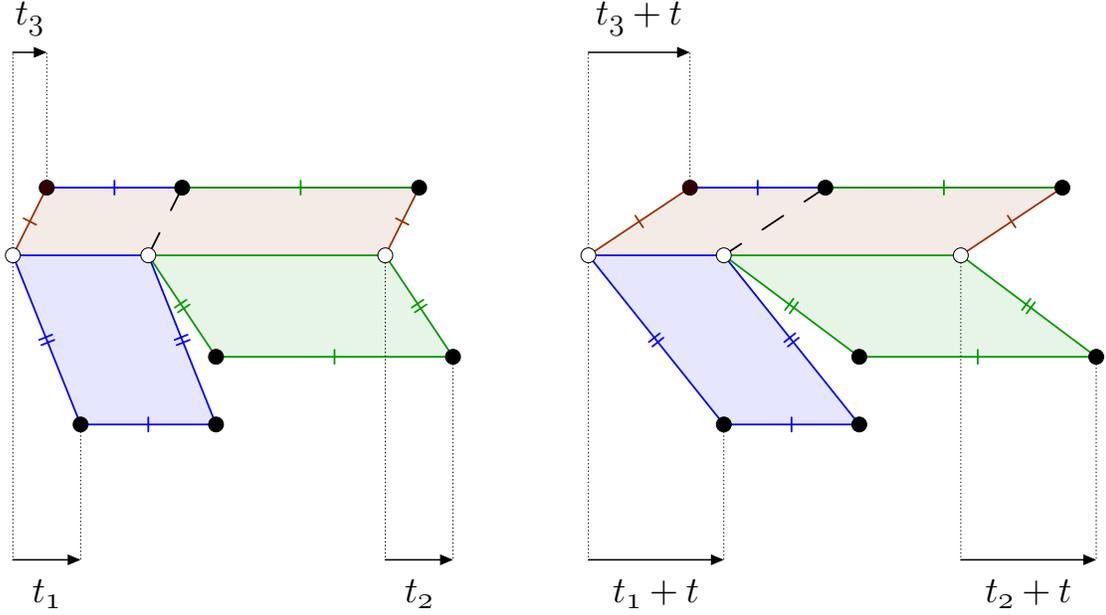
 
\centering
  \includestandalone[width=\textwidth]{tikz3cylvar}
  \caption{Varying relative periods of $(X, \omega)$ by a horizontal vector $t\in\C$.}
    \label{fig3cylvar}
\end{figure}
\end{comment6}

\begin{proof}[Proof of Theorem~\ref{thmdecomposition}]
We first give a proof for prime $d$, in particular we have $\gcd(w_1,w_2)=1$. Generic covers correspond to points in the interior of the cylinders and $(i)$ follows from Proposition~\ref{encoding} by setting $s_i = h_i + h_3$. It is clear that $h_3$ can vary between $0$ and $\min(s_1, s_2)$ while leaving a cover generic, hence $H_{\mathcal C} = \min(s_1,s_2)$. A pair of nearby points of $\A_{d^2}$ that differ by a small horizontal vector $t \in \C$ corresponds to a pair of Abelian differentials whose relative periods differ by $t$. Varying the relative periods of $(X,\omega)$ by $t \in \R$ changes the twists $(t_1, t_2, t_3)$ to $(t_1 + t, t_2 + t , t_3 + t) \in \R / w_1\R \times \R / w_2\R \times \R / (w_1+w_2)\R$ and leaves all other parameters fixed (see Figure~\ref{fig3cylvar}). The points on the vertical edges of the squares in the tiling of $\A_{d^2}$ have integral twist. Varying the relative periods of such points by $t=1$ moves us to the next square in the horizontal direction. The element $(1,1,1) \in \ZZ{w_1} \times \ZZ{w_2} \times \ZZ{(w_1+w_2)}$ generates the whole group and has order $w_1 w_2 (w_1 + w_2)$, which shows $(ii)$ for prime $d$.

When $d$ is not prime, the area and primitivity conditions still follow from Proposition~\ref{encoding}. The only difference from the case of prime $d$ is that the element $(1,1,1) \in \ZZ{w_1} \times \ZZ{w_2} \times \ZZ{(w_1+w_2)}$ does not generate the whole group, when $\gcd(w_1,w_2) \ne 1$. The order of $(1,1,1)$ is $\lcm(w_1,w_2,w_1+w_2)$, therefore $W_{\CC} = \lcm(w_1, w_2, w_1 + w_2)$. The element $(1,1,1)$ acts $\ZZ{w_1} \times \ZZ{w_2} \times \ZZ{(w_1+w_2)}$ and every equivalence class under this action has a unique representative $(T_1,T_2,0)$, such that $0 \le T_1,T_2 < \gcd(w_1, w_2)$, which justifies the twist condition and completes the enumeration of horizontal cylinders of $\A_{d^2}$.
\end{proof}

\begin{remark} \label{remarkordering}
If $d$ is prime, then we have $\gcd(w_1,w_2)=1$. Therefore the set $Cyl(\A_{d^2})$ is in bijection with the set of unordered pairs $\{ (w_1, s_1), (w_2, s_2) \} \in \Sym^2\N^2$, satisfying:
\begin{itemize}
\item $s_1 w_1 + s_2 w_2 = d$; and
\item $\gcd(s_1, s_2) = 1$.
\end{itemize}
The integers $w_1,s_1,w_2,s_2$ can be ordered in such a way that $w_1 < w_2$ if $s_1=s_2=1$, and $s_1<s_2$ otherwise. We will denote the corresponding horizontal cylinder by an ordered 4-tuple $(w_1,s_1,w_2,s_2)$.
\end{remark}

\noindent
{\bf Euclidean coordinates on $\A_{d^2}$.}
Note that the interiors $\CC^\circ$ of the cylinders $\CC \in Cyl(\A_{d^2})$ consist of generic $(X,\omega) \in \A_{d^2}$. Let $(X,\omega) \in \CC^\circ$ have cylinder coordinates $(w_1,s_1,w_2,s_2,t_1,t_2,t_3,h)$. Then its {\em Euclidean coordinates} are $(x,y) \in \R / W_{\CC} \R \times (0, H_{\CC})$ such that:
\begin{equation} \label{eq:euclcoord}
t_1 = x \% w_1, t_2 = x \% w_2, t_3 = x \% (w_1+w_2), y = h,
\end{equation}
where $a\%b = b \cdot \left\{ \frac ab \right\}$, or the distance from $a$ to the largest integer multiple of $b$ that does not exceed $a$. 

Note that these coordinates give isometry between the union of the interiors of all cylinders of $\A_{d^2}$ and: 
$$\bigsqcup_{\CC \in Cyl(\A_{d^2})}\R / W_{\CC} \R \times (0, H_{\CC})$$
in the flat metric $|q|$. For each cylinder $\CC$ its Euclidean coordinates also extend to its boundary by continuity, however this extension is not globally injective on $\CC$.

\noindent
{\bf Construction of the square-tiling of $\A_{d^2}$.} We now describe an algorithm that can be used to construct the square-tiling of $\A_{d^2}$ for any $d>1$. In Appendix~\ref{secpagoda}, for any prime $d$, we give a more uniform and efficient way of constructing the square-tiling of $\A_{d^2}$ that reveals the {\em pagoda structure} of $\A_{d^2} \cong X(d)$.

The center of each square in the square-tiling of $\A_{d^2}$ corresponds to a generic Abelian differential $(X,\omega) \in \A_{d^2}$ with cylinder coordinates $(w_1, s_1, w_2, s_2, t_1, t_2, t_3, h)$, such that $t_i - \frac12, h- \frac12 \in \Z$ for all $i=1,2,3$. We draw a square for each equivalence class of such an admissible vector. It remains to describe the identifications of the side of these squares.

In the discussion above we have obtained that the square-tiling of $\A_{d^2}$ is given by horizontal cylinders $\CC \in Cyl(\A_{d^2})$ with widths $W_{\CC}$ and heights $H_{\CC}$ (see Theorem~\ref{thmdecomposition}). We first interpret that result in terms of the identification between the side of the squares.

The vertical sides are identified in the following way. The right side of the square with the center at $(w_1, s_1, w_2, s_2, t_1, t_2, t_3, h)$ is identified to the left side of the square with the center at $(w_1, s_1, w_2, s_2, (t_1+1)\%w_1, (t_2+1)\%w_2, (t_3+1)\%(w_1+w_2), h)$ by a parallel translation.

The horizontal sides within a single horizontal cylinder of $\A_{d^2}$ are identified as follows. The bottom side of the square with the center at $(w_1, s_1, w_2, s_2, t_1, t_2, t_3, h)$ with $1/2 \le h < \min(s_1,s_2) - 1/2$  is identified to the top side of the square with the center at $(w_1, s_1, w_2, s_2, t_1, t_2, t_3, h+1)$ by a parallel translation.

It remains to understand the identifications of the boundaries of the horizontal cylinders of $\A_{d^2}$. In other words we need to describe the identifications among (1) the bottom sides of the squares with the centers at $(w_1, s_1, w_2, s_2, t_1, t_2, t_3, \min(s_1,s_2)-1/2)$ and (2) top sides of the squares with the centers at $(w_1, s_1, w_2, s_2, t_1, t_2, t_3, 1/2)$. For this we find limits of the Abelian differentials $(X_h,\omega_h)$ given by $(w_1, s_1, w_2, s_2, t_1, t_2, t_3, h)$ as $h \to \min(s_1,s_2)$ in the case (1), and $h \to 0$ in the case (2).
Informally we have to look at the 3-cylinder decompositions (see Figure~\ref{fig3cyl}) and vertically zip down the white singularity in the case (1), and zip it up in the case (2). For an example of zipping see Figure~\ref{zipupdown}. We then obtain a collection of non-generic Abelian differentials in $\A_{d^2}$ that correspond to the centers of the edges on the boundaries of horizontal cylinders of $\A_{d^2}$. This collection splits into pairs of equal Abelian differentials. Each pair determines the edges that must be identified. If the Abelian differentials in a pair were obtained by zipping in different directions, the corresponding edges are identified by a parallel translation. If the Abelian differentials in a pair were obtained by zipping in the same direction, the corresponding edges are identified by a rotation by $\pi$.

\begin{comment6}
\begin{figure}[H]
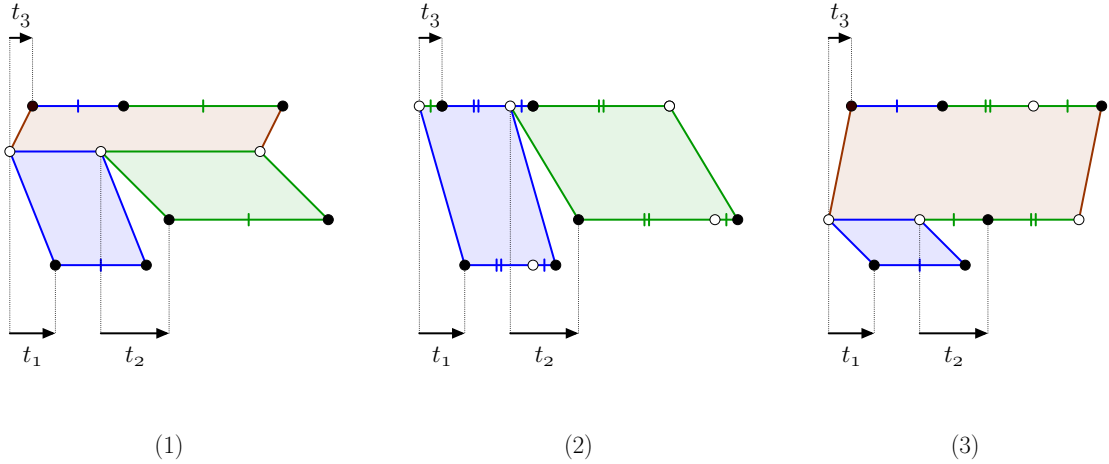
 
\centering
  \includestandalone[width=\textwidth]{tikztobdry1}
  \caption{(1) Zipping up and (3) zipping down a singularity in (2) the 3-cylinder decomposition of a generic $(X,\omega) \in \A_{d^2}$.}
  \label{zipupdown}
\end{figure}
\end{comment6}

\noindent
{\bf Examples.} One can verify that following these instructions one obtains the square-tilings of $\A_{4}$, $\A_{9}$, $\A_{16}$ and $\A_{25}$ as in Figures~\ref{figX(2)intro},~\ref{figX(3)intro},~\ref{figX(4)intro} and~\ref{figX(5)intro}. 

\begin{remark} \label{orientation}
Note that the horizontal cylinders on all our pictures are oriented in such a way that Euclidean coordinate $x$ increases left-to-right and Euclidean coordinate $y$ increases top-to-bottom.
\end{remark}

We describe the square-tiling of $\A_{4}$ in detail in \S\ref{secproof2}. For the association between the vertices of the squares of $\A_9$ (see Figure~\ref{figX(3)}) and the corresponding square-tiled surfaces see Figure~\ref{figX(3)corners}. From this point on the identifications of the unlabeled vertical sides will always be determined by parallel translations by horizontal vectors. The picture of $\A_{9}$ can also be found in \cite{Sch05}.

\begin{comment6}
\begin{figure}[H]
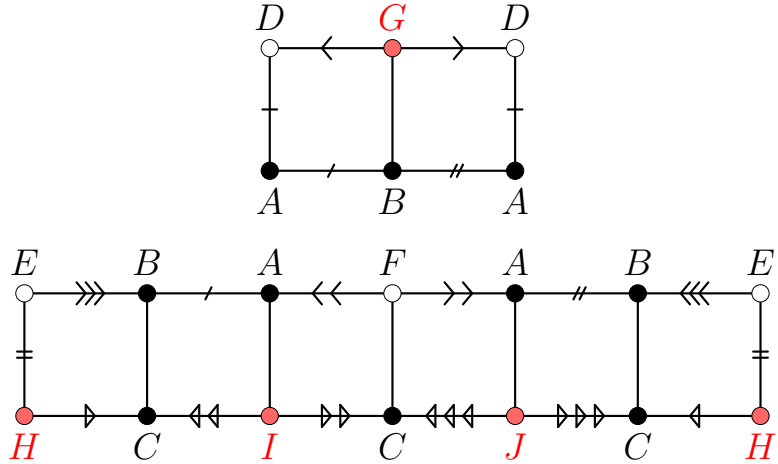
 
\centering
\includestandalone[width=0.8\textwidth]{tikzX3}
\caption{The square-tiling of $X(3) \cong \A_{9}$. The sides identified by rotation by $\pi$ are labeled with arrows, the sides labeled with numbers are identified by parallel translations. The points $A, B, C$ are the zeroes of $q$, the points $D,E,F$ are the non-cusp poles of $q$ and the points $G,H,I,J$ are the cusp poles of $q$.
}
\label{figX(3)}
\end{figure}
\end{comment6}

\begin{comment6}
\begin{figure}[H]
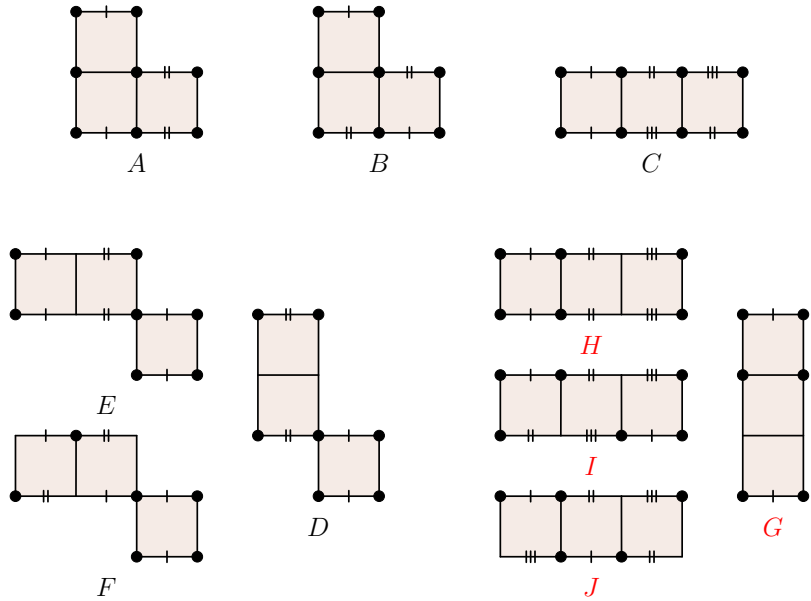
 
\centering
\includestandalone[width=0.7\textwidth]{tikzX3corners}
\vspace{-10pt}
\caption{The square-tilings corresponding to the vertices of $\A_9$: $A,B,C$ are square-tiled surfaces in $\Omega\M_2(2)$; $D,E,F$ are square-tiled surfaces with separating nodes; $G,H,I,J$ are square-tiled surfaces with non-separating nodes. The vertical sides are identified by horizontal parallel translations.}
\label{figX(3)corners}
\end{figure}
\end{comment6}

%%%%%%%%%%%%%%%%%%%%%%%
%%%%%%%%%%%%%%%%%%%%%%%
%%%%%%%%%%%%%%%%%%%%%%%

\section{Lighthouses and eaves} \label{seclighteave}

In this section we describe a class of horizontal cylinders of $\A_{d^2}$, called {\em lighthouses}, for each $d>1$, and another class of horizontal cylinders of $\A_{d^2}$, called {\em eaves}, for each prime $d$. We give some properties of the $\SLZ$ action on these cylinders, that will be used in the proof of the main result. We will show:

\begin{theorem} \label{thmrotation} 
For any prime $d$ and $1 \le k \le (d-1)/2$, the matrix
$R = \begin{pmatrix} 0 & -1 \\ 1 & 0 \end{pmatrix} \in \SLZ$ acts on the lighthouse $\LL_k \subset \A_{d^2}$ as follows:
\begin{itemize}
\item the left $k$ squares of the lighthouse $\LL_k$ are rotated by $-\pi/2$ and sent to the right $k$ squares of the eave $\EE_k \subset \A_{d^2}$; and

\item the right $k$ squares of the lighthouse $\LL_k \subset \A_{d^2}$ are rotated by $\pi/2$ and sent to the left $k$ squares of the eave $\EE_k \subset \A_{d^2}$.
\end{itemize}
\end{theorem}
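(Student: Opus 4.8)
The plan is to compute the image of every square of the lighthouse directly in cylinder coordinates and match it against the squares of the eave, using that $R$ acts by an affine automorphism of $(\A_{d^2},q)$. Recall from the enumeration of horizontal cylinders (Theorem~\ref{thmdecomposition} together with the ordering convention of Remark~\ref{remarkordering}) that $\LL_k$ is the cylinder with data $(w_1,s_1,w_2,s_2)=(1,k,1,d-k)$, so it is two squares wide and $k$ squares tall (its two columns being its left $k$ and right $k$ squares), while $\EE_k$ is the cylinder $(k,1,d-k,1)$, which is one square tall and $\lcm(k,d-k,d)=k(d-k)d$ squares wide. By Theorem~\ref{thmquaddiff}(ii), $R\in\SLZ$ acts as an affine automorphism of the half-translation surface $(\A_{d^2},q)$; hence it permutes the unit squares, rotating each by $\pm\pi/2$, and the whole task is to identify the targets and the signs.

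First I would compute the action on absolute periods. For a generic $(X,\omega)\in\LL_k$ with cylinder coordinates $(1,k,1,d-k,t_1,t_2,t_3,h)$, Proposition~\ref{encoding} gives absolute periods $1$, $(t_3-t_1)+ik$, $1$, $(t_3-t_2)+i(d-k)$. Applying $R$ multiplies every period by $i$, producing $i$, $-k+i(t_3-t_1)$, $i$, $-(d-k)+i(t_3-t_2)$ as the absolute periods of $R\cdot(X,\omega)$. Reading off the horizontal decomposition of the rotated surface, the real core-curve periods are now $k$ and $d-k$ with $w_3'=d$, and the transverse periods have imaginary part $1$; thus $R\cdot(X,\omega)$ has cylinder data $(k,1,d-k,1)$ and lies in $\EE_k$. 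This already shows that $R$ carries all $2k$ squares of $\LL_k$ into $\EE_k$.

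Next I would pin down the precise target squares. Since rotation by $\pi/2$ exchanges the vertical and horizontal directions, the Euclidean height coordinate $h$ of $\LL_k$ becomes the Euclidean horizontal coordinate of $\EE_k$; hence the $k$ stacked squares of a single column of $\LL_k$ (at $h=\tfrac12,\tfrac32,\ldots,k-\tfrac12$) map to $k$ consecutive squares of the single row $\EE_k$. Whether a column lands in the left or right block is then read from the horizontal twist distinguishing the two columns: for the left column $t_3-t_1=t_3-t_2=0$, whereas for the right column these both equal $1$, and under $R$ this real twist becomes the horizontal offset placing the left column in the right $k$ squares of $\EE_k$ and the right column in the left $k$ squares. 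Tracking the relative period $\rho=\pm 2z$, on which $R$ acts by $z\mapsto iz$, confirms the exact square in each case and shows the two blocks are hit without collision.

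The sign of the rotation is the subtle point and the step I expect to be the main obstacle. Although $R$ rotates the flat structure by $+\pi/2$, the quotient $(\A_{d^2},q)$ is only a half-translation surface, so rotations are well-defined merely modulo $\pi$; the two narrow cylinders making up the two columns of $\LL_k$ are interchanged by the $\pi$-rotation built into the edge identifications at their common boundary (equivalently, by the hyperelliptic involution), so they acquire opposite effective signs. Making this precise will require verifying, via the explicit identifications produced by the zipping construction of \S\ref{sectiling}, that the left column rotates by $-\pi/2$ and the right by $+\pi/2$. The remaining bookkeeping—checking that genericity is preserved so that all $2k$ images are distinct interior squares, and that the twist reductions modulo the widths of $\EE_k$ land exactly at its two ends—is routine once the orientation conventions are fixed.
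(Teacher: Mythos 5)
Your overall strategy is the same as the paper's: use the explicit boundary structure of $\LL_k$ and $\EE_k$ from \S\ref{seclighteave} together with the fact that $R$ acts affinely on $(\A_{d^2},q)$, and pin the map down by matching distinguished data. Your first step is carried out in more detail than the paper's: computing the absolute periods $1,\ (t_3-t_1)+ik,\ 1,\ (t_3-t_2)+i(d-k)$ from Proposition~\ref{encoding} and multiplying by $i$ correctly shows that the image has cylinder data $(k,1,d-k,1)$, hence $R(\LL_k)\subset\overline{\EE_k}$. That part is sound.

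The issue is that the step you defer --- which column lands at which end of the eave, and the $\pm\pi/2$ signs --- is exactly the nontrivial content of the statement, and "verify via the zipping identifications" is not yet an argument. The paper closes this gap with a single, concrete observation that your proposal is missing: the top boundary of $\LL_k$ carries a \emph{unique} cusp pole, at cylinder coordinates $(1,k,1,d-k,0,0,1)$ (Euclidean position $(1,0)$, the corner shared by the two columns), and since $\SLZ$ preserves the set of cusp poles, its image must be a cusp pole of $\EE_k$; one checks it is the one with cylinder coordinates $(k,1,d-k,1,0,0,0)$, i.e.\ the corner $x=0\equiv k(d-k)d$ shared by the leftmost and rightmost squares of $\EE_k$. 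Once one point and its image are fixed, affine rigidity (Theorem~\ref{thmquaddiff}(ii)) forces the left column onto the right $k$ squares and the right column onto the left $k$ squares, with the opposite local rotations, because the two columns sit on opposite sides of that corner in $\LL_k$ and their images sit on opposite sides of its image in $\EE_k$. Your heuristic that the twist difference $t_3-t_1\in\{0,1\}$ "becomes the horizontal offset" does not by itself distinguish the left end of $\EE_k$ (near $x=0$) from the right end (near $x=k(d-k)d$); you need an invariantly characterized point such as the cusp pole to anchor the identification. I would replace the deferred verification by this cusp-pole matching; with that added, your proof is complete and is essentially the paper's.
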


\begin{theorem} \label{thmunipotent}
For any prime $d$ and $1 \le k \le (d-1)/2$, the matrix
$S = \begin{pmatrix} 1 & -1 \\ 0 & 1 \end{pmatrix} \in \SLZ$ acts on the eave $\EE_k \subset \A_{d^2}$ as follows. Let $\left(x,y\right)$ be the Euclidean coordinates of a point in $\EE_k$, then:
\begin{equation*}
S : \left( x, y \right) \mapsto \left( x + y + T_k \cdot d  , y \right) \in \EE_k,
\end{equation*}
where $0 \le T_k < k(d-k)$ is uniquely determined by $T_k\cdot d \equiv -1 \mod k(d-k)$.
\end{theorem}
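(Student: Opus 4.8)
The plan is to exploit that $S \in \SLZ$ lies in the Veech group $\PSL(X(d),q) \cong \PSLZ$, so that by Theorem~\ref{thmquaddiff}(ii) it acts on $(\A_{d^2},q)$ by an affine automorphism whose linear part is $S$ itself. Since $S$ is parabolic and fixes the horizontal direction, it preserves the horizontal foliation of $(\A_{d^2},q)$; I will check that, being a horizontal multitwist, it fixes the cylinder $\EE_k$ setwise, so that $S(\EE_k)=\EE_k$ and $S$ restricts to an affine self-map of $\EE_k$ with linear part $S$. The first task is then simply to read off what such a map looks like in Euclidean coordinates.

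First I would record the linear part. Writing the developing map of $q$ in the Euclidean coordinates $(x,y)$ of $\EE_k$, and remembering (Remark~\ref{orientation}) that $y$ increases downward, the matrix $S=\left(\begin{smallmatrix}1&-1\\0&1\end{smallmatrix}\right)$ acts on the flat coordinate $x-iy$ by $(x,y)\mapsto(x+y,\,y)$. Because $S$ fixes the horizontal direction it preserves heights, so $y$ is unchanged and the restriction must have the form $S:(x,y)\mapsto(x+y+c,\,y)$ for a single translation constant $c$, well defined modulo the circumference $W_{\EE_k}=\lcm(k,d-k,d)=d\,k(d-k)$ (Theorem~\ref{thmdecomposition}, applied to the eave parameters $(w_1,s_1,w_2,s_2)=(k,1,d-k,1)$). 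Everything now reduces to identifying $c$, and the claim $c=T_kd$ is equivalent, via the Chinese Remainder Theorem and $\gcd(d,k(d-k))=1$, to the pair of congruences $c\equiv 0 \pmod d$ and $c\equiv -1\pmod{k(d-k)}$ together with $0\le c<W_{\EE_k}$.

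To produce these two congruences I would pass to the branch-point coordinate. For $(X,\omega)$ with normalized cover branched over $\pm z$, the discriminant $\delta(X,\omega)=\pm 2z\in\bfP$ is the flat coordinate on which $\SLZ$ acts \emph{linearly} (the affine action of \S\ref{secabs} restricts to $2z\mapsto S\cdot 2z$ on $\bfP=\iota\backslash\C/2\Z[i]$). Relating $(x,y)$ to $\Real(2z)$ and to the sheet of the covering $\delta|_{\EE_k}$, I would track a single marked point of $\partial\EE_k$ — most conveniently a vertex of the square-tiling on the boundary of the eave (a node/pole or a zero of $q$), whose Euclidean position is pinned down by the cylinder coordinates $(w_1,s_1,w_2,s_2,t_1,t_2,t_3)$ and whose $S$-image is another such vertex — and evaluate $x_{\mathrm{new}}=x_{\mathrm{old}}+c$ at $y=0$. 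The congruence $c\equiv 0\pmod d$ should fall out of the normalized-cover structure (the horizontal shift is a full multiple of the torus degree), while $c\equiv -1\pmod{k(d-k)}$ should be the twist accumulated over one horizontal period of $\delta|_{\EE_k}$, whose length is $k(d-k)$. A possibly cleaner variant uses the conjugacy $S=R\,U\,R^{-1}$ with $U=\left(\begin{smallmatrix}1&0\\1&1\end{smallmatrix}\right)$ together with Theorem~\ref{thmrotation}, which realizes $\EE_k=R(\LL_k)$ for the explicit lighthouse $\LL_k$ (the width-$2$, height-$k$ cylinder $(1,k,1,d-k)$): the $S$-action on $\EE_k$ is then the $R$-conjugate of the Dehn twist $U$ on the simple cylinder $\LL_k$, where the twist is transparent.

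The main obstacle is the bookkeeping of this twist: determining precisely how the sheared horizontal displacement $x\mapsto x+y$ must be corrected by wrapping around $\EE_k$, and isolating the congruence $c\equiv -1\pmod{k(d-k)}$ rather than merely $c\equiv 0\pmod d$. The primitivity and twist constraints of Theorem~\ref{thmdecomposition} are exactly what force the $-1$, and verifying this cleanly — keeping the pillowcase fold $\iota$ and the local degree of $\delta|_{\EE_k}$ straight throughout the sheet-counting — is the delicate part of the argument.
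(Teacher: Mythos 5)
Your overall skeleton matches the paper's: $S$ preserves the horizontal cylinder $\EE_k$, acts there as $(x,y)\mapsto(x+y+c,y)$ for a single constant $c$ modulo the circumference $dk(d-k)$, and the content of the theorem is the identification $c=T_kd$ with $T_kd\equiv-1\pmod{k(d-k)}$. But that identification is exactly the step you leave open (``should fall out'', ``the delicate part''), and it is the entire content of the proof. The paper settles it in two lines using the boundary data of the eave established just before the theorem: the top boundary of $\EE_k$ carries non-cusp poles precisely at $x\equiv 0\pmod d$, and the bottom boundary carries cusp poles precisely at $x\equiv 0\pmod{k(d-k)}$. The leftmost vertical edge of $\EE_k$ joins the non-cusp pole $B_1=(0,0)$ to the cusp $A_1=(0,1)$; since $S$ preserves each of the two classes of poles and sends this edge to a slope-$(-1)$ segment from $(c,0)$ to $(c+1,1)$, one gets $d\mid c$ (so $c=T_kd$ with $0\le T_k<k(d-k)$) and $k(d-k)\mid c+1$, i.e.\ $T_kd\equiv-1\pmod{k(d-k)}$. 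Your heuristics for the two congruences (``normalized-cover structure'', ``twist accumulated over one horizontal period of $\delta|_{\EE_k}$, whose length is $k(d-k)$'') do not substitute for this: the relevant moduli are the spacings of the two kinds of poles on the two boundary components ($d$ on top, $k(d-k)$ on the bottom), not a period of $\delta$, and without citing those spacings the congruences are asserted rather than derived.

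Your proposed ``cleaner variant'' also rests on a false premise: Theorem~\ref{thmrotation} does not give $\EE_k=R(\LL_k)$. The lighthouse $\LL_k$ consists of only $2k$ squares, while the eave has $dk(d-k)$; $R(\LL_k)$ is just the $2k$ squares of $\EE_k$ adjacent to a single non-cusp pole. So conjugating by $R$ does not transport the $S$-action on all of $\EE_k$ to a twist on $\LL_k$; that route would require controlling the $U$-action on the full preimage $R^{-1}(\EE_k)$, which is spread over many cylinders and is no easier than the original problem.
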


\noindent
{\bf Lighthouses.} The cylinder $\CC = \{(w_1,s_1,T_1),(w_2,s_2,T_2)\}$ is called a {\em lighthouse} if $w_1 = w_2 = 1$. Let $\phi(d)$ be an Euler's totient function, which returns the number of integers from $1$ to $d$ that are coprime with $d$. For every $d>1$, the absolute period leaf $\A_{d^2}$ has exactly $\phi(d)/2$ lighthouses. They will be denoted by:
$$
\LL_k = \{(1, k, 0), (1, d-k, 0)\},
$$
where $1 \le k \le (d-1)/2$ and $\gcd(k,d)=1$. In this case $T_1=T_2=0$ and the pair of triples is ordered, hence we can write $\LL_k = (1, k, 1, d-k)$ instead.

The lighthouse $\LL_k = (1, k, 1, d-k)$ is a horizontal cylinder of height $k$ and circumference $2$ (see Figure~\ref{figlighthouse}). Its top boundary consists of a cusp pole, non-cusp pole and two edges between them, which are identified by a rotation by $\pi$. Its bottom boundary consists of two zeroes and two edges between them.

\begin{comment7}
\begin{figure}[H] 
\centering
\includestandalone[width=0.3\textwidth]{tikzlighthouse}
\caption{The lighthouse $\LL_k = (1, k, 1, d-k) \subset \A_{d^2}$.
Red points are the cusp poles of $q$, white points are the non-cusp poles of $q$, black points are the zeroes of $q$. In this specific example $k=5$ and $d=11$.
}
\label{figlighthouse}
\end{figure}
\end{comment7}

To see that, note that the vertices $A, B, C, D$ on Figure~\ref{figlighthouse} correspond to the square-tilings on Figure~\ref{figlighthousevert}. Since $A$ and $B$ are simple poles, the total angle around each of them has to be $\pi$. This forces the adjacent edges to be ``folded'',~i.e. identified by $\pi$ rotation. In general, this applies to any horizontal line segment between a pole and any other singularity.

\begin{comment7}
\begin{figure}[H] 
\centering
\includestandalone[width=0.8\textwidth]{tikzlighthousevert}
\caption{Square-tilings corresponding to the vertices on the boundaries of the lighthouse $\LL_k \subset \A_{d^2}$. In this specific example $k=5$ and $d=11$.
}
\label{figlighthousevert}
\end{figure}
\end{comment7}

\noindent
{\bf Eaves.} 
Let $d$ be any prime number. Recall from Remark~\ref{remarkordering} that in this case every horizontal cylinder $\CC \in Cyl(\A_{d^2})$ corresponds to a vector $(w_1,s_1,w_2,s_2)$, where $s_1 < s_2$ if $w_1=w_2=1$, and $w_1<w_2$ otherwise. The cylinder $\CC = (w_1,s_1,w_2,s_2)$ is called an {\em eave} if $s_1 = s_2 = 1$. For every prime $d$, the absolute period leaf $\A_{d^2}$ has exactly $\phi(d)/2 = (d-1)/2$ eaves. They will be denoted by:
$$
\EE_k = (k,1,d-k,1),
$$
where $1 \le k \le (d-1)/2$. 

The eave $\EE_k = (k,1,d-k,1)$ is a horizontal cylinder of height 1 and circumference $k(d-k)d$ (see Figure~\ref{figeave}). Its bottom boundary has $d$ cusp poles at every point with the Euclidean coordinates $(i \cdot k(d-k),1)$, where $0 \le i < d$. Its top boundary has $k(d-k)$ non-cusp poles at every point with the Euclidean coordinates $(j\cdot d,0)$, where $0 \le j < k(d-k)$.

\begin{figure}[H] 
\centering
\includestandalone[width=\textwidth]{tikzeave}
\caption{The eave $\EE_k = (k, 1, d-k, 1) \subset \A_{d^2}$.
Red points are the cusp poles of $q$, white points are the non-cusp poles of $q$, black points are the zeroes of $q$. In this specific example $k=2$ and $d=5$.
}
\label{figeave}
\end{figure}

To see that, note that the vertices $A_i, B_j$ on Figure~\ref{figeave} correspond to the square-tilings on Figure~\ref{figeavevert}. Note that the twists depend on $i$ and $j$, however we do not specify them on the picture.

\begin{comment7}
\begin{figure}[H] 
\centering
\includestandalone[width=0.6\textwidth]{tikzeavevert}
\caption{Square-tilings corresponding to the vertices on the boundaries of the eave $\EE_k \subset \A_{d^2}$. In this specific example $k=2$ and $d=5$.
}
\label{figeavevert}
\end{figure}
\end{comment7}

Note that the data of the eave boundaries described above is not complete. For example, it is missing the positions of the zeroes of $q$. However this data will suffice to give the proof of the main result. For any prime $d$, we will give a full and detailed description of all boundaries of the horizontal cylinders of $\A_{d^2}$ (including the ones that are neither lighthouses, nor eaves) and their identifications in Appendix~\ref{secpagoda}.

\noindent
{\bf Action of $\SLZ$.} The group $\SLZ$ is generated by two matrices:
\begin{align*}
S = \begin{pmatrix} 1 & -1 \\ 0 & 1 \\ \end{pmatrix}
\mbox{ and }
R = \begin{pmatrix} 0 & -1 \\ 1 & 0 \\ \end{pmatrix}.
\end{align*}

The essential part of the proofs of the main result is analyzing how $R$ and $S$ act on the subsets $\A_{d^2}[n]$. Below we describe properties of the action of $R$ and $S$ on lighthouses and eaves that will be sufficient to understand the $\SLZ$ orbits in $\A_{d^2}[n]$.

\noindent
{\bf Rotation of lighthouses.} We now show how lighthouses and eaves are related by $\pi/2$ rotation. 

\begin{proof}[Proof of Theorem~\ref{thmrotation}]
The proof follows from the structures of eaves and lighthouses (see Figure~\ref{figlighthouse} and~\ref{figeave}) and the following observation: the cusp pole with cylinder coordinates $(1,k,1,d-k,0,0,1) \in \LL_k \subset \A_{d^2}$ is sent to the cusp pole with cylinder coordinates $(k,1,d-k,1,0,0,0) \in \EE_k \subset \A_{d^2}$ via rotation by $\pi/2$ (see Figure~\ref{figlighttoeave}).
\end{proof}

\begin{comment7}
\begin{figure}[H]
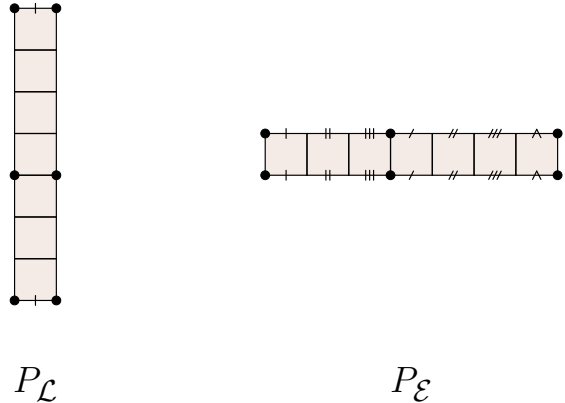
 
\centering
\includestandalone[width=0.7\textwidth]{tikzlighttoeave}
\caption{The rotation sends the cusp pole $P_\LL$ in the lighthouse $\LL_k \subset \A_{d^2}$ to the cusp pole $P_\EE$ in the eave 
$\EE_k \subset \A_{d^2}$. In this specific example $k=3$ and $d=7$.
}
\label{figlighttoeave}
\end{figure}
\end{comment7}

\noindent
{\bf Unipotent action on eaves.} We denote the unipotent subgroup of $\SLZ$ by:
$$
U = \left\{
\begin{pmatrix}
1 & n \\
0 & 1 
\end{pmatrix} \bigg\vert \ n \in \Z
\right\} \subset \SLZ.
$$
It is generated by the matrix $S= \begin{pmatrix} 1& -1 \\ 0&1 \end{pmatrix}$.

The point of the eave $\EE_k$ with the Euclidean coordinates $(0,0)$ is the non-cusp pole of $\EE_k$ with cylinder coordinates $(k,1,d-k,1,0,0,0)$, which is the leftmost white point on Figure~\ref{figeave}. 
Recall from Remark~\ref{orientation} that in Euclidean coordinates $(x,y)$: $x$ is a standard horizontal axes and $y$ is a vertical axes pointing downwards. 

\begin{proof}[Proof of Theorem~\ref{thmunipotent}]
First note that $S$ preserves horizontal cylinders, and in particular $\EE_k$. Let $(0,y) \in \EE_k \subset \A_{d^2}$ be a point on the leftmost vertical edge $v$ of $\EE_k$. This edge connects a cusp $A_1$ and a non-cusp pole $B_1$ in $\EE_k$ (see Figure~\ref{figeave}). Since $\SLZ$ preserves the sets of cusps and non-cusp poles of $\A_{d^2}$, the matrix $S$ has to send $v$ to an interval of slope $-1$ between a cusp $S(A_1)$ and a non-cusp pole $S(B_1)$ in $\EE_k$. Then from the structure of the eave (see Figure~\ref{figeave}), $S(B_1)$ has flat coordinates  $(T_k \cdot d,0)$ for some $0\le T_k < k(d-k)$. Because of the slope $-1$ condition, the flat coordinates of $S(A_1)$ have to be $(T_k\cdot d +1,1)$. Therefore we obtain $T_k\cdot d +1 \equiv 0 \mod k(d-k)$, which is equivalent to $T_k\cdot d \equiv -1 \mod k(d-k)$.

It remains to notice that $S$ shifts points on the same horizontal line by the same amount and therefore we obtain:
\[
S : \left( x, y \right) \mapsto \left( x + y + T_k \cdot d, y \right).
\pushQED{\qed} 
\qedhere
\popQED
\]
\let\qed\relax
\end{proof}

%%%%%%%%%%%%%%%%%%%%%%%
%%%%%%%%%%%%%%%%%%%%%%%
%%%%%%%%%%%%%%%%%%%%%%%

\section{Proof for $d=2$} \label{secproof2}

In this section we present a square-tiling of $(\A_{4}, q)$ and give proof of the parity conjecture in the case when $d=2$. We will show:

\begin{proposition} \label{propproof2}
The discriminant map $\delta: \A_{4} \to \bfP$ is an isomorphism. For any integer $n>1$, the action of 
$\SLZ$ on:
$$
\A_{4}[n] = \left\{ \left( \frac kn,\frac ln \right) \in \A_{4} \cong \bfP \ \middle\vert \ \gcd(k,l,n)=1, 0\le k\le 2n, 0\le l\le n \right\}
$$
is transitive, when $n$ is even, and has two orbits:
\begin{align*}
\A_{4}[n]^{0} &= \left\{ \left( \frac kn,\frac ln \right) \in \A_{4}[n] \ \middle\vert \ k\equiv l\equiv 0 \mod 2 \right\}, \\
\A_{4}[n]^{1}  &= \left\{ \left( \frac kn,\frac ln \right) \in \A_{4}[n] \ \middle\vert \ k \mbox{ or } l\equiv 1 \mod 2 \right\},
\end{align*}
when $n$ is odd.
\end{proposition}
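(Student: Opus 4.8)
The plan is to first pin down the isomorphism and then transport the orbit-counting problem to an elementary computation with vectors modulo $2n$. For the isomorphism, recall from the proof of Theorem~\ref{thmformula} the formula $\deg\delta = \frac{d-1}{6}\left|\PSL_2(\ZZ d)\right|$. For $d=2$ we have $\PSL_2(\ZZ 2)\cong S_3$, so $\deg\delta = \frac16\cdot 6 = 1$. Since both $\A_{4}$ and $\bfP$ are isomorphic to $\P^1$ (indeed, by Theorem~\ref{thmdecomposition} the only cylinder with $s_1w_1+s_2w_2=2$ is $(1,1,1,1)$, a single cylinder of height $1$ and circumference $2$, i.e.\ two unit squares, which is exactly the pillowcase), a degree one holomorphic map is an isomorphism, giving the first claim.

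Next I would reduce the orbit count to a lattice computation. By Theorem~\ref{thmlocating}, $\A_{4}[n] = \delta^{-1}(\bfP[n])$, and since $\delta$ is $\SLZ$-equivariant for the linear action of $\SLZ$ on $\bfP = \iota\backslash\C/2\Z[i]$, the $\SLZ$ orbits in $\A_{4}[n]$ correspond bijectively to the $\SLZ$ orbits in $\bfP[n]$. Writing points as $\left(\frac kn,\frac ln\right)$, the set $\bfP[n]$ becomes $\{(k,l)\in(\ZZ{2n})^2 : \gcd(k,l,n)=1\}$ modulo $\pm$, on which $\SLZ$ acts linearly modulo $2n$; because $-I\in\SLZ$ realizes $\iota$, I may drop the $\pm$ identification and count plain $\SLZ$ orbits.

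The computation then hinges on the invariant $g(k,l)=\gcd(k,l,2n)$ together with the standard fact that $\SLZ$ acts transitively on the primitive vectors of $(\ZZ m)^2$ (those with $\gcd(k,l,m)=1$): lifting to $\Z^2$ and adjusting $k$ by a multiple of $m$ makes $(k,l)$ primitive in $\Z^2$, hence the first row of a matrix in $\SLZ$. If $n$ is even, then $\gcd(k,l,n)=1$ forces one of $k,l$ to be odd, whence $g(k,l)=1$ on all of $\bfP[n]$, which is therefore a single orbit of primitive vectors. If $n$ is odd, then $g(k,l)=2$ exactly when $k,l$ are both even, the set $\A_{4}[n]^{0}$, and $g(k,l)=1$ otherwise, the set $\A_{4}[n]^{1}$; the former is a single orbit via the $\SLZ$-equivariant bijection $(k,l)\mapsto(k/2,l/2)$ onto the primitive vectors of $(\ZZ n)^2$, the latter is the single orbit of primitive vectors of $(\ZZ{2n})^2$, and the two are distinct because $g$ is $\SLZ$-invariant and takes different values.

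I expect the main obstacle to be precisely this parity bookkeeping: the gcd condition defining $\bfP[n]$ is taken relative to $n$ while the ambient modulus is $2n$, and one must verify carefully that $g(k,l)\in\{1,2\}$ throughout $\bfP[n]$, that its value is governed solely by the parities of $k$ and $l$ (invoking $n$ odd or even at the right places), and that the resulting subsets coincide exactly with $\A_{4}[n]^{0}$ and $\A_{4}[n]^{1}$. The transitivity statements themselves are routine once the correct invariant $g$ has been isolated.
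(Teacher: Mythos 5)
Your proof is correct, and the orbit-counting step takes a genuinely different route from the paper's. The paper also begins by showing $\deg\delta=1$, but it does so by directly enumerating the four monodromy pairs $(s_h,s_v)\in S_2\times S_2$ over a fixed pair of branch points and matching them with the four distinct images in $\bfP$, rather than by specializing the general degree formula as you do. For the orbits, the paper uses transitivity of $\SLZ$ on $E_0[n]^*$ to reduce to the four points $X_0,\dots,X_3$ of $\bfP[n]$ lying over a single element of $\iota\backslash E_0[n]^*$, connects $X_0,X_1,X_3$ (and, for even $n$, also $X_2$) by explicit words in the generators $S$ and $R$, and then invokes the spin invariant (Proposition~\ref{propIWPvalues}) to certify that the two orbits remain distinct when $n$ is odd. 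You instead transport everything to the linear action of $\SLZ$ on $(\ZZ{2n})^2$ and isolate $\gcd(k,l,2n)\in\{1,2\}$ as a complete invariant, applying transitivity on primitive vectors twice (mod $2n$, and mod $n$ after dividing by $2$). This buys a self-contained separation of the two odd-$n$ orbits with no appeal to the spin machinery, and replaces generator bookkeeping by a one-line gcd computation; what it loses is the template the paper's argument provides for $d\ge3$, where the $\SLZ$-action on $\A_{d^2}$ is no longer globally linear on a torus and one genuinely must chase points through cylinders with explicit generators. In a final write-up you should make explicit (i) that the linear-action claim requires taking the unique non-cusp pole $Q_0$ (the $\SLZ$-fixed vertex, i.e.\ the image under $\delta$ of the separating-node curve) as the origin of $\bfP$, which the paper arranges by convention, and (ii) the degenerate case $l\equiv 0$ in the standard lifting argument for transitivity on primitive vectors, where one adjusts the other coordinate by a multiple of the modulus instead.
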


\noindent
{\bf $(\A_{4},q)$ is a pillowcase.} Given any two points $z_1, z_2$ on the square torus $E_0$, there are exactly four degree $2$ covers of genus $2$ (necessarily primitive, because the degree is prime) branched over $z_1$ and $z_2$. To show this we analyze an example using monodromies of covers. 

Let $\pi: X \to E_0$ be a degree $d$ cover branched over $z_1$ and $z_2$. Choose a point $x_0 \in E_0 \setminus \{ z_1,z_2 \}$ and fix a labeling of the fiber over $x_0$ by numbers from $1$ through $d$. Lifting a loop representative of an element of the fundamental group of $E_0$ gives a permutation of points in the fiber and hence an element of $S_d$. The corresponding representation $\rho: \pi_1(E_0 \setminus \{ z_1,z_2 \}, x_0) \to S_d$ is called a {\em monodromy} of the cover $\pi$.

For example, let $z_1=0$, $z_2=\frac in$. Let $h$ be a horizontal and $v$ a vertical loops on $E_0$ with endpoints at the center $x_0 = \frac12 + \frac i2$, $\gamma_1$ and $\gamma_2$ be small loops around $z_1$ and $z_2$ with endpoints at $x_0$. Fix a labeling of the fiber over $x_0$ by numbers 1 and 2 and let $s_h, s_v, s_1, s_2 \in S_2$ be the images of $h,v,\gamma_1,\gamma_2$ under the monodromy representation. Because the cover is simply branched over $z_1$ and $z_2$ we have $s_1=s_2=(12)\in S_2$. Any choice of $s_h, s_v \in S_2$ produces a desired cover, and there are four of them (see Figure~\ref{X(2)genfiber}).

There are also four images of the covers ramified over $z_1=0$, $z_2=\frac in$ under $\delta$ on the pillowcase: $(0, \frac1n)$, $(0, \frac{n-1}n)$, $(1, \frac1n)$ and $(1, \frac{n-1}n) \in \bfP = \iota \backslash \C / 2\Z[i]$. Therefore $\deg(\delta)=1$ and $(\A_{4},q) \cong (\bfP,dz^2)$. 

\begin{comment8}
\begin{figure}[H]
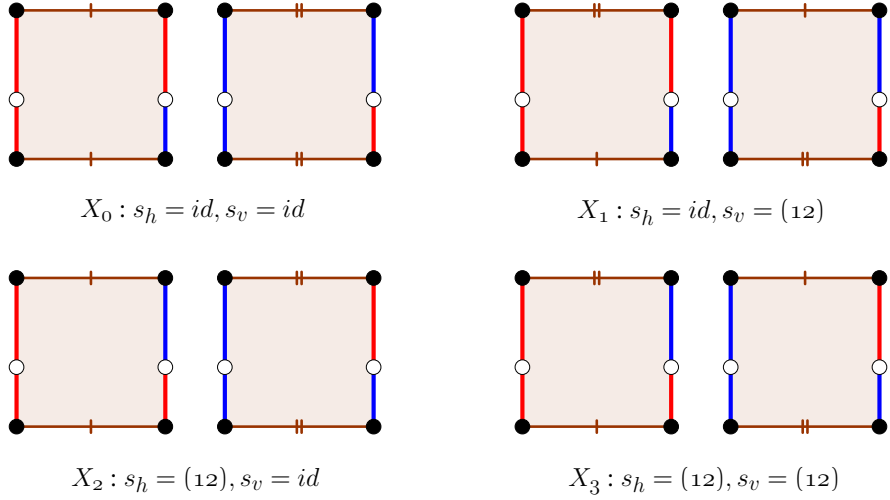
 
\centering
\includestandalone[width=0.8\textwidth]{tikzdeg2covers}
\vspace{-10pt}
\caption{Four degree $2$ covers of a genus $2$ surface over $E_0$ branched over two given points and their monodromies.}
\label{X(2)genfiber}
\end{figure}
\end{comment8}

Compare this result to Theorem~\ref{thmdecomposition}, according to which $(\A_{4}, q)$ has only one cylinder given by $(w_1,s_1,w_2,s_2) = (1,1,1,1)$ with height $\min(s_1,s_2)=1$ and circumference is $w_1 w_2 (w_1 + w_2)=2$.

%\noindent
%{\bf Sliding.}
%We now explain how one navigates on $\A_{d^2}$, i.e. how the corresponding covers vary as we vary a point on $\A_{d^2}$. Start with some degree $d$ genus $2$ minimal cover over $E_0$. Consider a naturally parametrized segment $s:[0,\varepsilon] \to (E_0,dz)$ of angle $\theta$. Assume $s(0)=z_2$, in other words it starts at a branch point $z_2$. By varying continuously the branch point we vary the cover and obtain a family of covers ramified over $z_1$ and $s(t)$. We will call such variation {\em sliding} along $s(t)$. For instance, Figure~\ref{sliding} demonstrates sliding along the horizontal segment by varying the values of $t$.

\noindent
{\bf Singular covers.}
Denote the vertices of $\A_{4}$ by $Q_0, Q_1, Q_2, Q_3$. It is easy to exhaust all singular covers: one square-tiled surface with a separating node and three with non-separating nodes (see Figure~\ref{X(2)singular}).
\begin{figure}[H] 
    \centering
    \includegraphics[width=0.8\textwidth]{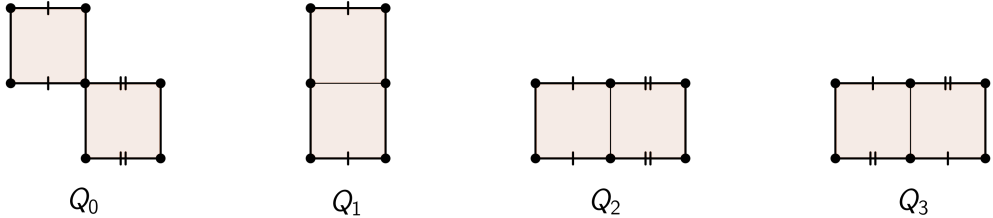}
    \caption{The square-tiled surface $Q_0$ with a separating node and the square-tiled surfaces $Q_1, Q_2, Q_3$ with non-separating nodes.}
     \label{X(2)singular}
\end{figure}

\begin{wrapfigure}{r}{0.29\textwidth}
    \begin{center}
    \includegraphics[width=0.15\textwidth]{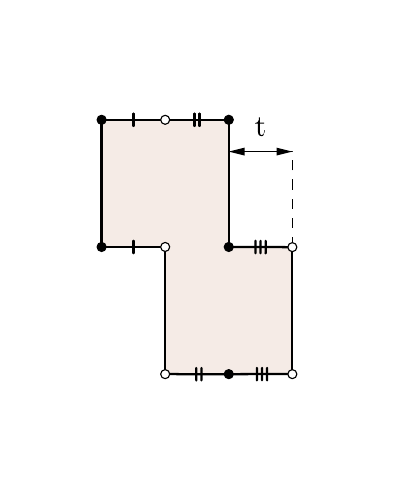}
    \caption{Sliding.}
     \label{sliding}
    \end{center}
\end{wrapfigure}
By horizontally varying the relative period $t$ in the example in Figure~\ref{sliding}, one obtains $Q_1$ as $t \to 0$ and $Q_0$ as $t \to 1$. This produces a family of points $Q_t, 0 \le t \le 1$ on $\A_4$ that lie on the horizontal edge joining $Q_0$ and $Q_1$. The same idea applies for the vertical edges and as a result it becomes clear how $Q_0, Q_1, Q_2, Q_3$ are positioned on the pillowcase (see Figure~\ref{X(2)}): $Q_0$ is on the same horizontal line with $Q_1$ and strictly above $Q_2$, which leaves only one possibility for the position of $Q_3$. We label the unique non-cusp pole (corresponding to $Q_0$) with a white point. The three cusps poles of $X(2)$ (corresponding to $Q_1, Q_2, Q_3$) are labeled with red points. The four covers from Figure~\ref{X(2)genfiber} are represented by the points labeled $X_0, X_1, X_2, X_3$ in Figure~\ref{X(2)}.

\begin{comment8}
\begin{figure}[H]
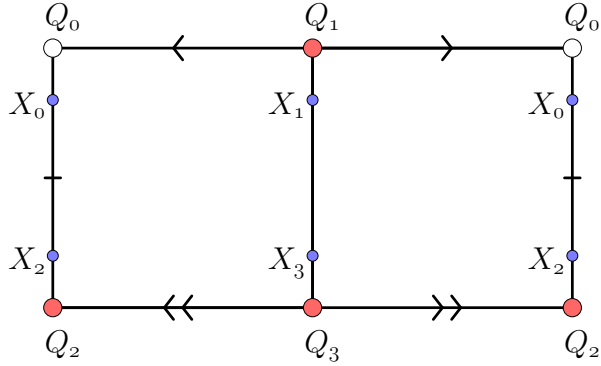
 
\centering
\includestandalone[width=0.6\textwidth]{tikzX2}
\vspace{-10pt}
\caption{The square-tiling of $X(2)\cong \A_{4}$.}
\label{X(2)}
\end{figure}
\end{comment8}

The only cylinder of $(\A_{4},q)$ is at the same time a lighthouse and an eave. Indeed, it is a lighthouse, since $w_1=w_2=1$ and its top boundary consists of a cusp, a non-cusp pole and two edges identified by rotation. It is an eave, since $s_1=s_2=1$ and both vertices of the bottom are cusp poles. This is the only case when lighthouse and eave coincide.

\noindent
{\bf Locating $\A_{4}[n]$.}
The results of Theorem~\ref{thmlocating} and Proposition~\ref{propIWPvalues} are illustrated in Figure~\ref{X(2)orbits} for $n=5$. The parity conjecture states that all green points belong to one $\SLZ$ orbit and all blue points belong to another. Showing that will finish the proof of Proposition~\ref{propproof2}.

\begin{comment8}
\begin{figure}[H] 
\centering
\includestandalone[width=0.6\textwidth]{tikzX2orbits}
\vspace{-10pt}
\caption{The set $\A_{4}[5]$ consists two $\SLZ$ orbits: $\A^0_{4}[5] $ (green) and $\A^1_{4}[5]$ (blue).}
\label{X(2)orbits}
\end{figure}
\end{comment8}

\noindent
{\bf $\SLZ$ action.} We will now describe the action of $\SLZ$ on $\A_{4}[n]$ and give a proof of Proposition~\ref{propproof2}. Recall that the group $\SLZ$ is generated by two matrices:
\begin{align*}
S = \begin{pmatrix} 1 & -1 \\ 0 & 1 \\ \end{pmatrix}
\mbox{ and }
R = \begin{pmatrix} 0 & -1 \\ 1 & 0 \\ \end{pmatrix}.
\end{align*}

The vertex $Q_0$ is stabilized by $\SLZ$, since it is a unique non-cusp pole. It makes sense to set $Q_0$ to be the origin. Recall that we define Euclidean coordinates so that $x$-axes points to the right and $y$-axes points downward (see Figure~\ref{X(2)orbits}).

The rotation $R$ acts by permuting the squares, while rotating the left one by $-\pi/2$ and the right one by $\pi/2$. The shear $S$ acts simply by shearing the whole picture followed by a suitable cut-and-paste, which can be written in Euclidean coordinates as:
\begin{equation} \label{unipotent2}
\displaystyle S :\left(\frac an, \frac bn\right) \mapsto \left(\frac an + \frac bn ,\frac bn\right).
\end{equation}
Compare this to Theorem~\ref{thmunipotent}.

\begin{proof}[Proof of Proposition~\ref{propproof2}]
Since $\SLZ$ acts transitively on $E_0[n]^*$, it suffices to show that the points 
$$
X_0 = \left(0, \frac 1n\right), X_1 = \left(1, \frac 1n\right), X_2 = \left(0, \frac{n-1}n\right), X_3 = \left(1, \frac{n-1}n\right)
$$ 
are connected by the elements of $\SLZ$. For any $n>1$ we obtain:
$$
S^{n}:X_0 = \left(0, \frac 1n\right) \mapsto \left(1, \frac 1n\right) = X_1,
$$
$$
X_1 = \left(1, \frac1n\right) \xmapsto{R} \left(\frac{2n-1}n,1\right)=\left(\frac1n, 1\right) \xmapsto{S} \left(\frac{n+1}n,1\right) \xmapsto{R} \left(1, \frac{n-1}n\right) = X_3.
$$
For any even $n$ we have:
$$
S^{n}:X_2 = \left(0, \frac{n-1}n\right) \mapsto  \left(n-1, \frac{n-1}n\right) = \left(1, \frac{n-1}n\right) = X_3.
$$ 
It follows that for any even $n$ there a single $\SLZ$ orbit that coincides with $\A_{4}[n]$, and from Proposition~\ref{propIWPvalues} for any odd $n>1$ there are exactly two orbits $\A^0_{4}[n]$ and $\A^1_{4}[n]$ distinguished by the spin invariant.
\end{proof}

\begin{remark}
One can use a similar approach to prove the conjecture for $d=3$: construct $\A_9$ using gluing instructions from \S\ref{sectiling} and analyze the action of $R$ and $S$ on $\A_9[n]$. However we will use a more powerful result about the illumination (see \S\ref{secillumination}), which will imply the main result for $d=3$ and $5$.
\end{remark}

%%%%%%%%%%%%%%%%%%%%%%%
%%%%%%%%%%%%%%%%%%%%%%%
%%%%%%%%%%%%%%%%%%%%%%%

\section{Proof for $d, n$ prime and $n > (d^3-d)/4$} \label{secproofgen}
In this section we give a proof of the parity conjecture for prime $d, n$ and $n > (d^3-d)/4$. We will show:

\begin{theorem} \label{thmprime}
For any prime $d$ and any prime $n>(d^3-d)/4$, the set $\A_{d^2}[n]$ consists of two $\SLZ$ orbits $\A^0_{d^2}[n]$ and $\A^1_{d^2}[n]$.
\end{theorem}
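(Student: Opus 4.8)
The plan is to prove that each of the two spin classes $\A^0_{d^2}[n]$ and $\A^1_{d^2}[n]$ is a single $\SLZ$-orbit; since $n$ is a prime exceeding $(d^3-d)/4\ge 6$ (hence odd), both classes are nonempty and $\SLZ$-invariant by Theorem~\ref{thmspin}, so this yields exactly two orbits. The key reduction is via the discriminant map. By Theorem~\ref{thmlocating}, $\delta$ carries $\A_{d^2}[n]$ onto $\bfP[n]$, and by Proposition~\ref{propIWPvalues} it respects the spin splitting, sending $\A^\epsilon_{d^2}[n]$ onto $\bfP[n]^\epsilon$. Here the case $d=2$ is decisive: since $\delta\colon\A_{4}\to\bfP$ is an isomorphism, Proposition~\ref{propproof2} is precisely the statement that $\SLZ$ acts transitively on each $\bfP[n]^\epsilon$. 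Because $\delta$ is $\SLZ$-equivariant, transitivity on the base reduces the orbit count for general prime $d$ to a fiber problem: fixing $p_0\in\bfP[n]^\epsilon$ and writing $\Gamma_0=\Stab_{\SLZ}(p_0)$, the number of $\SLZ$-orbits in $\A^\epsilon_{d^2}[n]$ equals the number of $\Gamma_0$-orbits on the single fiber $F=\delta^{-1}(p_0)$, a set of size $\deg\delta=\tfrac{d-1}{6}\lvert\PSL_2(\ZZ{d})\rvert$. Thus it suffices to show that $\Gamma_0$ acts transitively on $F$.

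Next I would make the $\Gamma_0$-action explicit and geometric using the eaves. I would choose $p_0$ on a horizontal edge of $\bfP$, so that $\Gamma_0$ contains the shear $S$, and so that the points of $F$ lie on the eaves near the poles of $q$. The action of $S$ on an eave $\EE_k$ is then read off from Theorem~\ref{thmunipotent}: it is $(x,y)\mapsto(x+y+T_k d,\,y)$ with $T_k d\equiv -1\pmod{k(d-k)}$. The congruence forces the shift of the bottom boundary line to be a multiple $m\cdot k(d-k)$ of the cusp spacing; reducing $m\,k(d-k)\equiv 1\pmod d$ shows $\gcd(m,d)=1$, so $S$ cyclically permutes the $d$ cusp poles of $\EE_k$. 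Combining this with Theorem~\ref{thmrotation}, which identifies the lighthouse cusp poles with eave cusp poles under $R$, organizes all cusps into a single cyclic pattern compatible with the parabolic and rotational generators, and shows how the fiber points supported near lighthouses are folded into the eave picture.

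The crux is the quantitative step, and this is exactly where the hypothesis enters: $n>(d^3-d)/4=\max_{1\le k\le(d-1)/2}k(d-k)d=\max_k W_{\EE_k}$, so $n$ exceeds every eave circumference. I would use this to prove that the $\Gamma_0$-action on $F$ has no proper invariant subset, i.e.\ that the shear $S$ together with the $R$-identifications sweeps out all $\deg\delta$ sheets in one orbit rather than splitting into several. The mechanism is that on each line $y=b/n$ of $\EE_k$ the shear displaces the $x$-coordinate by $b/n+T_k d$; because $n$ is prime and larger than $W_{\EE_k}$, the coincidences (common factors among $n$, $d$, and $k(d-k)$) that would let the fiber points split into distinct $\Gamma_0$-orbits cannot occur for any eave, and the cyclic permutation of cusps then knits the contributions of the different eaves and lighthouses together. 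The main obstacle is precisely this transitivity computation: controlling the relevant greatest common divisors so as to certify a single cycle, and checking that the bound $n>(d^3-d)/4$ is exactly strong enough to force it simultaneously for all $\tfrac{d-1}{2}$ eaves.
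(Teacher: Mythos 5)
Your opening reduction is valid and is a genuinely different framing from the paper's: since $\delta$ is $\SLZ$-equivariant and Proposition~\ref{propproof2} gives transitivity of $\SLZ$ on each $\bfP[n]^\epsilon$, counting orbits in $\A^\epsilon_{d^2}[n]$ does reduce to counting $\Gamma_0$-orbits on a single fiber $\delta^{-1}(p_0)$, and you have correctly identified $(d^3-d)/4=\max_k k(d-k)d$ as the largest cylinder circumference. But the heart of the matter --- transitivity of $\Gamma_0$ on the fiber --- is not carried out, and your sketch of how it would go fails in three concrete ways. First, for $p_0$ on a horizontal edge of $\bfP$ the fiber consists of $n$-rational points on the horizontal boundaries of \emph{all} cylinders of $\A_{d^2}$, not just the eaves and lighthouses: for every prime $d\ge5$ it meets the body cylinders, for which the paper provides no analogue of Theorems~\ref{thmrotation} and~\ref{thmunipotent}, so your toolkit cannot reach those fiber points. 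Second, $R$ does not stabilize such a $p_0$ (it sends a horizontal-edge point of $\bfP$ to a vertical-edge point), so Theorem~\ref{thmrotation} cannot be invoked inside $\Gamma_0$; one would have to work with conjugates like $S^jRS^kR^{-1}S^l$, whose action on the fiber you have not analyzed. Third, the claimed mechanism is backwards: on the boundary lines where your fiber lives, the number of $U$-orbits is a gcd of the form $\gcd(b+T_{\CC}(w_1+w_2)n,\,w_1w_2(w_1+w_2))$, which can exceed $1$ no matter how large $n$ is --- indeed the two spin classes themselves arise from exactly such an obstruction ($\gcd(b,2)=2$ in a lighthouse). Largeness of $n$ does not prevent $U$-orbits from splitting on these lines.

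The paper uses the hypothesis $n>(d^3-d)/4$ in a different place: for a point in the \emph{interior} of a cylinder, on a line $y=b/n$ with $\gcd(b,n)=1$, the $U$-orbit spacing is at most $W_{\CC}/n<1$, so the orbit meets every square of that cylinder; combined with $R\circ S^k\circ R^{-1}$ this puts a representative of every $\SLZ$-orbit into the two squares of the lighthouse $\LL_1$. It then closes the argument entirely inside $\LL_1$ and $R(\LL_1)\subset\EE_1$ by an explicit gcd computation, choosing the specific heights $a=n-d$ and $a=2k+1$ to exhibit lines with one and with two $U$-orbits respectively. As it stands your proposal gives the lower bound of two orbits (via the spin invariant) but not the upper bound; to salvage the fiber approach you would need, at minimum, control of the $\Gamma_0$-action on the body-cylinder boundaries and a replacement for the rotation step.
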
 

The proof for $d=2$ was given in \S\ref{secproof2}. For any prime $d>2$ and any prime $n>(d^3-d)/4$ the strategy consists of two steps:

\begin{enumerate}
\item Show that every orbit in $\A_{d^2}[n]$ has a representative in every square.

\item Show that all points of $\A_{d^2}[n]$ in the interior of the lighthouse $\LL_1 = (1,1,1,d-1)$ fall into 2 orbits.
\end{enumerate}

These clearly implies the conjecture. Start with any $z \in \A_{d^2}[n]$. The step (1) implies that there exists $A \in \SLZ$ such that $A(x)$ is in $\LL_1$. The step (2) then implies that $A(x)$, and hence $x$, belongs to one of the two orbits. We proceed to give proofs of the two steps.

\noindent
{\bf Step 1.} Consider a point $z \in \A_{d^2}[n]$ inside a cylinder $\CC =  (w_1,s_1,w_2,s_2)$ with the Euclidean coordinates $\displaystyle \left(\frac{a}n , \frac{b}n\right)$ with $\gcd(b,n) =1$. From Theorem~\ref{thmunipotent} it follows that the shortest distance $s$ between the points in the orbit $U \cdot z$ is:
$$
s = \frac1n \cdot \gcd(b + T_{\CC} (w_1+w_2) n, w_1 w_2 (w_1+w_2) n) = \frac1n \cdot \gcd(b + T_{\CC} (w_1+w_2) n, w_1 w_2 (w_1+w_2)),
$$
since $n$ is prime and $\gcd(b,n)=1$. The maximal circumference $w_1 w_2 (w_1+w_2)$ over all cylinders is achieved for an eave with the circumference $k(d-k)d$, where $k = \frac{d-1}2$, and therefore:
$$w_1 w_2 (w_1+w_2) \le \frac{d-1}2\cdot \frac{d+1}2 \cdot d = \frac{d^3 - d}4,$$
which is less than $n$ by the assumption.
It implies that:
$$
n\cdot s =  \gcd(b + T_{\CC} (w_1+w_2) n, w_1 w_2 (w_1+w_2) ) \le w_1 w_2 (w_1+w_2) \le \frac{d^3 - d}4 < n.
$$
Therefore $s < 1$, as long as $\gcd(b,n)=1$.

Now consider an arbitrary point $x \in \A_{d^2}[n]$. Because $\SLZ$ action on $E_0[n]^*$ is transitive it can be sent into the interior of a square in $\A_{d^2}$. Any point that lies in the interior of the squares and has Euclidean coordinates $\left(\frac{a}n , \frac{b}n\right)$ satisfies $\gcd(b,n) =1$, since $n$ is prime. We can now use the above observation: by applying a suitable power $k$ of $S$ it can be sent into the interior of the next square to the right, and similarly, by applying $R \circ S^k \circ R^{-1}$, where $R = \begin{pmatrix} 0 & -1 \\ 1 & 0 \\ \end{pmatrix}$, it can be sent to the next square above. This implies that $x$ can be sent into interior of any square, which finishes the proof of the step 1.

\noindent
{\bf Step 2.} For the lighthouse $\LL_1 = (w_1,s_1,w_2,s_2)= (1,1,1,d-1)$, which consists of two squares, we obtain as in (\ref{unipotent2}):
$$
\displaystyle S :\left(\frac an, \frac bn\right) \mapsto \left(\frac an + \frac bn ,\frac bn\right).
$$
Then the number of $U$-orbits of points $(x, \frac bn) \in \A_{d^2}[n] \cap \LL_1$, where $1 \le b \le n-1$ is:
$$
\gcd(b ,2n ) = \gcd(b,2),
$$
since $n$ is prime. For each $1 \le i \le (n-1)/2$ define the following $U$-orbits:
\begin{align}
N^1(2i-1) = & \left\{  \left(\frac an, \frac bn\right) \in \A_{d^2}[n] \cap \LL_1 \ \middle\vert \   b = 2i-1 \equiv 1 \mod 2  \right\}, \nonumber  \\
N^1(2i) = & \left\{  \left(\frac an, \frac bn\right) \in \A_{d^2}[n]  \cap \LL_1 \ \middle\vert \   b = 2i \equiv 0 \mod 2 \mbox{ and } a  \equiv 0 \mod 2 \right\}, \label{lighthouseorbits} \\
N^0(2i) = & \left\{  \left(\frac an, \frac bn\right) \in \A_{d^2}[n]  \cap \LL_1 \ \middle\vert \   b = 2i \equiv 0 \mod 2 \mbox{ and } a  \equiv 1 \mod 2 \right\}. \nonumber
\end{align}  
It remains to show that:
\begin{enumerate}
\item[(a)] all $N^1(2i-1)$ and $N^1(2i)$ belong to the same $\SLZ$ orbit; and 

\item[(b)] all $N^0(2i)$ belong to another $\SLZ$ orbit.
\end{enumerate}

From Theorem~\ref{thmrotation} we have that $R$ sends the non-cusp pole $O$ with cylinder coordinates $(w_1,s_1,w_2,s_2,t_1,t_2,t_3,h_3)= (1,1,1,d-1,0,0,0,0)$ to the non-cusp pole $R(O)$ with cylinder coordinates $ (1,1,d-1,1,0,0,0,0)$ that lies on the top of the eave $\EE_1$. The lighthouse $\LL_1$ itself is sent to two squares $R(\LL_1)$ of that eave $\EE_1$ that are adjacent to $R(O)$.
%might be good to add figure
For each $1 \le i \le (n-1)/2$ the sets $N^1(2i-1), N^1(2i)$ and $N^0(2i)$ are sent by $R$ to:
\begin{align*}
& \left\{  \left(\pm \frac bn, \frac an\right) \in \A_{d^2}[n] \cap R( \LL) \ \middle\vert \   b = 2i-1 \equiv 1 \mod 2  \right\},   \\
& \left\{  \left(\pm \frac bn, \frac an\right) \in \A_{d^2}[n]  \cap R( \LL) \ \middle\vert \   b = 2i \equiv 0 \mod 2 \mbox{ and } a  \equiv 0 \mod 2 \right\}, \\
& \left\{  \left(\pm \frac bn, \frac an\right) \in \A_{d^2}[n]  \cap R( \LL) \ \middle\vert \   b = 2i \equiv 0 \mod 2 \mbox{ and } a  \equiv 1 \mod 2 \right\}.
\end{align*}

To show (a) and (b) it suffices to prove that there exists:
\begin{enumerate}
\item[(A)] an even $a$ such that all points $\left(x, \frac an\right) \in \A_{d^2}[n] \cap R(\LL)$ form a single $U$-orbit; and

\item[(B)] an odd $a$ such that all points $\left(x, \frac an\right) \in \A_{d^2}[n] \cap R(\LL)$ form two $U$-orbits.
\end{enumerate}

Theorem~\ref{thmunipotent} implies that, for $1 \le a \le n-1$, the number $\nu$ of such $U$-orbits is:
\begin{align*}
\nu = & \gcd(a +  n d \cdot T_1, w_1 w_2 (w_1+w_2) n) = \gcd(a +  n d\cdot T_1, w_1 w_2 (w_1+w_2)) =\\
	& =  \gcd(a +  n d \cdot T_{1}, (d-1)d) =  \gcd(a +  n d \cdot T_{1}, d-1)\cdot  \gcd(a, d),
\end{align*}
where $T_1 \cong -1 \mod (d-1)$. Hence the number of orbits is:
$$
 \nu = \gcd(a - dn, d-1) \cdot  \gcd(a, d) =  \gcd(a - n, d-1) \cdot  \gcd(a, d).
$$
To show (A) let $a = n - d$. It is even and satisfies $1 \le a \le n-1$ and:
$$
 \nu = \gcd(- d , d-1) \cdot  \gcd(n-d, d) =  \gcd(d , d-1) \cdot  \gcd(n, d) = 1.
$$
This completes the proof of (A). 
To show (B) let $a = 2k+1$, where $0 \le k < (n-1)/2$. Then we have:
$$
\nu =  \gcd(2k + 1 -n, d-1)  \cdot  \gcd(2k+1, d) = 2\cdot \gcd(k + (1-n)/2, (d-1)/2)  \cdot  \gcd(2k+1, d).
$$
When $k$ ranges from $0$ to $(d-1)/2< (n-1)/2$, then $\gcd(2k+1, d) =1$ since $d$ is prime, and 
$k + (1-n)/2$ runs through all possible remainders modulo $(d-1)/2$ including $1$, as long as $d\ge 3$. This proves (B) and hence completes the proof of Theorem~\ref{thmprime}. \qed

%%%%%%%%%%%%%%%%%%%%%%%
%%%%%%%%%%%%%%%%%%%%%%%
%%%%%%%%%%%%%%%%%%%%%%%

\section{Everything is illuminated} \label{secillumination}

Illumination problem asks whether all of the translation surface is illuminated by a given point. We say that a point $x \in \A_{d^2}$ is illuminated by a subset $S \subset \A_{d^2}$ if there exists a geodesic segment of $(\A_{d^2},q)$ that starts at some $y\in S$, ends at $x$ and does not pass through singularities. We formulate a conjecture:

\begin{conjecture}[Illumination conjecture] \label{conjillumination}
Light sources at the cusps of the modular curve illuminate all of $X(d)$ except possibly for some of the vertices of the square-tiling.
\end{conjecture}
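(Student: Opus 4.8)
The plan is to study the \emph{unilluminated set}
\[
\Lambda = \{\, x \in X(d) \mid \text{no cusp of } X(d) \text{ illuminates } x \,\},
\]
and to prove that $\Lambda$ is contained in the set of vertices of the square-tiling. The whole argument reduces to two structural facts about $\Lambda$: that it is $\SLZ$-invariant and that it is finite.

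Invariance is immediate from the set-up of \S\ref{secmodular}. By Theorem~\ref{thmquaddiff}(ii) the group $\SLZ$ acts on $(X(d),q)$ by affine automorphisms of the flat metric $|q|$, so it carries singularity-free $|q|$-geodesics to singularity-free $|q|$-geodesics and permutes the singularities of $q$. Since the cusps are exactly the poles in $P_{ns}(d)$ (Theorem~\ref{thmcusps}), which form an $\SLZ$-invariant set, whenever a cusp $c$ illuminates a point $B$ along a geodesic $\gamma$ the cusp $g\cdot c$ illuminates $g\cdot B$ along $g\cdot\gamma$ for every $g\in\SLZ$. Hence the illuminated set, and with it its complement $\Lambda$, is $\SLZ$-invariant. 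For finiteness I would pass to the orientation double cover, on which $q$ becomes the square of an abelian differential and to which the illumination problem lifts; the main result of \cite{LMW16} then shows that each fixed source fails to illuminate only finitely many points. As $X(d)$ has finitely many cusps and $\Lambda$ is contained in the unilluminated set of any single one of them, $\Lambda$ is finite.

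Being finite and $\SLZ$-invariant, $\Lambda$ is a finite union of finite $\SLZ$-orbits. By Theorem~\ref{thmunipotent}, a point whose Euclidean coordinate in its horizontal cylinder is irrational has infinite orbit under the unipotent subgroup $U \subset \SLZ$, so every finite $\SLZ$-orbit consists of points with rational coordinates; by \S\ref{secabs} these are exactly the vertices of the tiling (the regular vertices, the zeros, and the poles of $q$) together with the torsion points comprising the loci $\A_{d^2}[n]$. The illumination conjecture therefore reduces to the single assertion that every point of every $\A_{d^2}[n]$ is illuminated by some cusp, and by $\SLZ$-invariance it suffices to illuminate one representative of each $\SLZ$-orbit in $\A_{d^2}[n]$.

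The hard part will be precisely this last reduction, and its difficulty is intrinsic: illuminating a chosen orbit representative is entangled with the $\SLZ$-orbit structure of $\A_{d^2}[n]$, which is the very content of the parity conjecture (compare Theorems~\ref{thmintro1} and~\ref{thmillumintro}). For small $d$ I would establish it by hand, moving a representative into a lighthouse $\LL_k$ or an eave $\EE_k$ via the $R$- and $S$-actions of Theorems~\ref{thmrotation} and~\ref{thmunipotent} and then exhibiting an explicit straight $|q|$-geodesic from one of its cusp poles to the point, exactly as in the $d=2$ computation of \S\ref{secproof2}. For general $d$ this verification is the genuine obstruction and is essentially equivalent to the parity conjecture itself, which is why the illumination conjecture is established here only for $d\le5$.
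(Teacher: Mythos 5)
The statement you were given is labelled a conjecture, and the paper never proves it in general: it is established only for $d\le 5$ (Theorem~\ref{thmillum2345}, by direct inspection of the tilings in \S\ref{secproof35}), and \S\ref{secillumination} merely observes that the parity conjecture implies it. Your proposal is honest about this, and the reduction you carry out is essentially the paper's own: $\SLZ$-invariance of the illuminated set together with the finiteness theorem of \cite{LMW16} forces the dark set into the rational points of the squares, i.e.\ the vertices together with $\bigcup_{n>1}\A_{d^2}[n]$, so the conjecture becomes the assertion that each $\SLZ$-orbit in each $\A_{d^2}[n]$ contains an illuminated point. As a reduction this is correct and matches the paper; as a proof it has exactly the irreducible gap the paper has, which you correctly identify as being entangled with the parity conjecture (Theorem~\ref{thmillumination} and the ``parity implies illumination'' paragraph make the two essentially equivalent for prime $d$ and $n>1$).

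Two smaller points. First, your appeal to Theorem~\ref{thmunipotent} to show that irrational points have infinite $\SLZ$-orbits is under-argued: that theorem is stated only for eaves, and the $U$-orbit of a point whose height $y$ in its cylinder is rational but whose $x$-coordinate is irrational is \emph{finite}; you need to conjugate by $R$ to swap the roles of the coordinates, or invoke the standard fact that points with finite orbit under the affine group of a square-tiled surface are exactly those with rational coordinates. Second, for the small-$d$ cases the paper does something stronger and structurally different from what you propose: rather than illuminating one orbit representative per $\A_{d^2}[n]$, \S\ref{secproof35} illuminates \emph{every} point of $X(3)$, $X(4)$ and $X(5)$ directly, cylinder by cylinder, by tracing explicit geodesics from cusps through labelled saddle connections and using $U$-invariance of the illuminated set. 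This avoids any circularity with the parity conjecture --- which for $d=3,5$ is \emph{deduced from} illumination, not the other way around --- and it also detects that for $d=5$ one vertex, the non-cusp pole $P$ corresponding to $2E_0$ joined with $E_0$ at a point, is genuinely not illuminated, confirming that the exception for vertices in the statement cannot be removed.
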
 
We then show that it implies the parity conjecture for prime $d$ and $n>1$:

\begin{theorem}[Illumination conjecture implies parity conjecture] \label{thmillumination}
Assume $d$ is prime and $n>1$. If every $x \in \A_{d^2}[n]$ is illuminated by the set of the cusp poles $P_{ns}(d)$ then $\A_{d^2}[n]$ consists of a single $\SLZ$ orbit when $n$ is even and two $\SLZ$ orbits when $n$ is odd.
\end{theorem}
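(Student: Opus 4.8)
The plan is to prove the equivalent statement that two points of $\A_{d^2}[n]$ with the same spin invariant lie in a common $\SLZ$ orbit. Indeed, the spin invariant is an $\SLZ$-invariant function (Theorem~\ref{thmspin}), and by Proposition~\ref{propIWPvalues} together with Theorem~\ref{thmspin} it cuts $\A_{d^2}[n]$ into exactly one nonempty class $\A^0_{d^2}[n]$ when $n$ is even and two nonempty classes $\A^0_{d^2}[n],\A^1_{d^2}[n]$ when $n$ is odd. Hence the number of orbits is bounded below by the number of spin classes, and all that remains is the matching upper bound: each spin class is a single orbit. This upper bound is where the illumination hypothesis will enter.

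First I would record that the set of points illuminated by $P_{ns}(d)$ is $\SLZ$-invariant: by Theorem~\ref{thmquaddiff} the group $\SLZ$ acts by affine automorphisms of $(\A_{d^2},q)$, hence carries geodesics to geodesics, and by Theorem~\ref{thmcusps} it preserves the set $P_{ns}(d)$ of cusp poles (which it permutes transitively, since $\SLZ$ acts transitively on the cusps $\Q\cup\{\infty\}$ of $X(d)\cong\A_{d^2}$). The crucial reduction is then: given $x\in\A_{d^2}[n]$, the hypothesis provides a geodesic $\gamma$ from some cusp pole $p\in P_{ns}(d)$ to $x$ missing the singularities of $q$. The point $p$ is a vertex of the square-tiling and, by Theorem~\ref{thmlocating}, $x$ is an $n$-rational point; both are rational points of the flat structure, so the holonomy of $\gamma$ is a rational vector and $\gamma$ lies in a periodic direction. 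Since the Veech group of $(\A_{d^2},q)$ is $\PSLZ$ and $\SLZ$ acts transitively on $\P^1(\Q)$, some $A\in\SLZ$ rotates the direction of $\gamma$ to horizontal. Then $Ax\in\A_{d^2}[n]$ is joined to the cusp pole $Ap$ by a horizontal geodesic missing singularities; in particular $Ax$ sits at integer height, horizontally visible from a cusp pole.

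Next I would pin this cusp-adjacent configuration down to a canonical cylinder and compute there. Using that the cusp poles with horizontally visible neighbours lie on the boundaries of the eaves and lighthouses, and that the rotation $R$ interchanges $\LL_k$ with $\EE_k$ (Theorem~\ref{thmrotation}), I would transport the configuration onto the eave $\EE_1=(1,1,d-1,1)$ and its cusp-laden bottom boundary, equivalently onto the closure of the lighthouse $\LL_1=(1,1,1,d-1)$. The orbit count is then carried out explicitly exactly as in Step~2 of the proof of Theorem~\ref{thmprime}: feeding the formula
\[
S:\left(x,y\right)\mapsto\left(x+y+T_1 d,\ y\right)
\]
of Theorem~\ref{thmunipotent} (on $\EE_1$, where $T_1 d\equiv -1\bmod (d-1)$) together with $R$ into the list of $U$-orbits of $\A_{d^2}[n]$ in $\LL_1$, and checking against $\bfP[n]^0,\bfP[n]^1$ via Proposition~\ref{propIWPvalues} and the ratio $|\A^1_{d^2}[n]|=3|\A^0_{d^2}[n]|$ of Proposition~\ref{ratioorbits}, one verifies that the $U$-orbits amalgamate into one $\SLZ$ orbit when $n$ is even and into exactly two (the spin classes) when $n$ is odd. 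Since every $x$ was moved into this locus by $\SLZ$, and spin is preserved, this gives the required upper bound on orbits.

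The hard part will be the final step for composite $n$. When $n$ was prime (Theorem~\ref{thmprime}) the $U$-orbit counts collapsed to $\gcd(b,2)$ because $\gcd(b,n)=1$ was automatic and no square was unreachable; for general $n$ the counts are genuine greatest common divisors of the heights with $2n$ and of the twists with the circumferences $k(d-k)d$, and the role of illumination is precisely to remove the obstruction that blocked the purely unipotent argument, by guaranteeing that \emph{every} point can be rotated into the cusp-adjacent locus of $\EE_1$ and $\LL_1$ in the first place. The delicate bookkeeping is twofold: organizing the $\gcd$ computations so that points of equal spin are always merged while points of opposite spin never are, and correctly coupling the two normalizations—``rotate $\gamma$ to horizontal'' and ``standardize the cusp via transitivity''—since a parabolic fixing a cusp cannot rotate an arbitrary rational direction to horizontal, so these reductions must be interleaved rather than applied independently. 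Controlling both simultaneously is the technical core of the argument.
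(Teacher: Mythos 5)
Your opening reductions are sound and match the paper's Step~I: the spin classes give the lower bound, illumination plus the $\SLZ$-invariance of the illuminated set and of $P_{ns}(d)$ produce, for any $x\in\A_{d^2}[n]$, a matrix $A$ with $A\cdot x$ joined to a cusp pole by a horizontal saddle-free segment, and such segments live only on the bottom boundaries of eaves or the top boundaries of lighthouses. But the next move --- ``transport the configuration onto the eave $\EE_1$, equivalently onto the closure of $\LL_1$, and compute there exactly as in Step~2 of Theorem~\ref{thmprime}'' --- is where the argument breaks. The horizontal segment you produce lands on the boundary of $\EE_k$ or $\LL_k$ for some $k$ depending on $x$, and the rotation $R$ interchanges $\LL_k$ with (part of) $\EE_k$, not with $\EE_1$. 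There is no a priori reason an element of $\SLZ$ carries your cusp-adjacent point into $\EE_1$ while keeping it cusp-adjacent; establishing a bridge between different values of $k$ is a separate step in the paper (Proposition~\ref{between}), which hinges on showing $R(\EE_k)\cap\EE_1\ne\varnothing$ for every $k$, and even then the connection is made only along two specially chosen horizontal lines in each eave, not for arbitrary points.

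The second, and larger, gap is the one you yourself flag as ``the technical core'' and then defer. For composite $n$ the count of $U$-orbits at height $b/n$ in $\EE_k$ is $\nu_k(b)=\gcd(b-n,k(d-k))\cdot\gcd(b,d)$, and to amalgamate the $U$-orbits $N_k(b)$, $N^\epsilon_k(b)$ of a given lighthouse into one or two $\SLZ$-subsets one needs to exhibit a height $s_1$ with $\nu_k(s_1)=1$ and (for odd $n$) an odd height $s_2$ with $\nu_k(s_2)=2$. This is Lemma~\ref{lemmaillumination} in the paper; its proof is not routine bookkeeping: the case $n=d+2$ requires a prime-counting estimate valid for $d>800$, a computer verification for $d<800$, and two genuinely exceptional pairs $(d,k)=(7,3)$ and $(19,4)$ that must be handled by ad hoc chains of unipotent orbits. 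Your proposal contains no mechanism that would produce these special heights or detect the exceptional cases, and your suggestion to check the answer against $\bigl|\A^1_{d^2}[n]\bigr|=3\bigl|\A^0_{d^2}[n]\bigr|$ only constrains the sizes of the putative orbits, not their number. As written, the proposal reproduces the paper's first reduction but leaves both the within-lighthouse amalgamation and the between-lighthouse connection unproved.
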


In this section we will prove Theorem~\ref{thmillumination} and use it to prove the main result for any prime $d$ and all sufficiently large $n$. In \S\ref{secproof35} we will establish the illumination conjecture for $d=3,4$ and $5$ and use it together with Theorem~\ref{thmillumination} to prove the parity conjecture for $d=3$~and~$5$.

% GOOD story (INCLUDE IN THE FINAL VERSION)

%We can prove it only for that the ``top story'' supporting the Conjecture~\ref{conjillumination}:
%
%\begin{theorem}[The good story] \label{goodstory}
%Assume $d$ is prime. The story $S_1$ containing the cylinder $\LL =  (w_1=1,s_1=1,w_2=1,s_2=d-1)$ is illuminated by the set of cusp poles it contains.
%\end{theorem}

\noindent
{\bf Background on illumination problem.} The illumination problem goes back to Roger Penrose (1958) and George Tokarsky (1995) who constructed the first examples of rooms with mirror walls (on which light reflects) that are not illuminated by a light source at some point of the room. Penrose's example uses walls in a shape of ellipse and has open subsets that are not illuminated. Tokarsky's example is polygonal, however there is only one point that is not illuminated. Recent works \cite{HST08} and \cite{LMW16} used the $\GL$ action on $\Omega\M_g$ to further investigate this question. In particular, it was shown that for any translation surface and a light source on it the set of points that are not illuminated is finite. Thus we obtain a corollary of Theorem~\ref{thmillumination}:
\begin{corollary}
For any prime $d$ the parity conjecture is true for $n > C_d$, where $C_d$ is some constant that depends on $d$.
\end{corollary}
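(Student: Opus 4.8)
The plan is to combine the general finiteness theorem for illumination of \cite{LMW16} with Theorem~\ref{thmillumination}. The key mechanism is that the set of non-illuminated points is a \emph{fixed} finite subset $B_d \subset \A_{d^2}$ depending only on $d$, whereas each individual point of $\A_{d^2}$ can be an $n$-torsion point for at most one value of $n$. Forcing $n$ to be large therefore pushes $\A_{d^2}[n]$ entirely off of $B_d$, after which Theorem~\ref{thmillumination} applies.

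First I would record that $(\A_{d^2}, q)$ is only a half-translation surface (the edge identifications may be rotations by $\pi$; see Theorem~\ref{thmquaddiff}), so to invoke \cite{LMW16} directly I would pass to its orientation double cover $(\widehat X, \widehat\omega)$, on which $q$ becomes the square of an abelian differential $\widehat\omega$ and geodesics of $(\A_{d^2},q)$ lift to geodesics of $(\widehat X, \widehat\omega)$. The finite set of cusp poles $P_{ns}(d)$ lifts to a finite set of points upstairs. By the main theorem of \cite{LMW16}, the set of points of $\widehat X$ not illuminated by this finite family of light sources is finite; pushing back down, the set
$$
B_d = \{\, x \in \A_{d^2} \mid x \text{ is not illuminated by } P_{ns}(d) \,\}
$$
is finite and depends only on $d$.

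Next I would observe that, for $n>1$, membership in $\A_{d^2}[n]$ is governed by the order of a single torsion point. By Theorem~\ref{thmlocating}, $x \in \A_{d^2}[n]$ precisely when the difference $z_1 - z_2 \in E_0$ of the branch values of the associated normalized cover has order $n$ in $\Jac(E_0) = E_0$. Since the order of an element of a group is unique, each $x \in \A_{d^2}$ lies in $\A_{d^2}[n]$ for \emph{at most one} value of $n$ (and in none if $z_1-z_2$ fails to be torsion). Applying this to the finitely many points of $B_d$, I would set
$$
C_d = \max\bigl(1,\ \max\{\, n \mid \A_{d^2}[n] \cap B_d \neq \emptyset \,\}\bigr),
$$
which is finite because $B_d$ is finite and each of its points contributes at most one value of $n$. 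Then for every $n > C_d$ we have $\A_{d^2}[n] \cap B_d = \emptyset$, i.e. every point of $\A_{d^2}[n]$ is illuminated by the cusp poles $P_{ns}(d)$.

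Finally, for prime $d$ and $n > C_d$ (so in particular $n>1$), Theorem~\ref{thmillumination} shows that $\A_{d^2}[n]$ is a single $\SLZ$ orbit when $n$ is even and two orbits when $n$ is odd; by Theorem~\ref{thmintro1} (equivalently Theorem~\ref{thmequivconj}) this count equals the number of irreducible components of $W_{d^2}[n]$, which is exactly the parity conjecture for these $(d,n)$. The main obstacle, and the only genuinely analytic input, is the finiteness of $B_d$; everything else reduces to the elementary but decisive remark that a fixed point is $n$-torsion for a single $n$, which is what converts ``finitely many bad points'' into ``bad for only finitely many torsions $n$.'' The one place demanding care is that \cite{LMW16} is phrased for translation surfaces, so I would need to justify the passage to the orientation double cover and confirm that the cusps, although they are cone points of angle $\pi$, are admissible light sources in the sense of the illumination definition used in Theorem~\ref{thmillumination}.
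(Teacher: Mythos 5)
Your proposal is correct and follows essentially the same route as the paper: the finiteness of the non-illuminated set from \cite{LMW16}, combined with the fact that each point of $\A_{d^2}$ is a primitive $n$-torsion point for at most one $n$, yields the constant $C_d$, after which Theorem~\ref{thmillumination} finishes the argument. The paper states this derivation very tersely; you have merely supplied the details (the passage to the orientation double cover and the explicit definition of $C_d$) that the paper leaves implicit.
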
 

\noindent
{\bf Parity implies illumination.} It is also known that the set of points unilluminated by a vertex of a square-tiled surface is a subset of $n$-rational points of the squares for some $n$. If parity conjecture is true, then Proposition~\ref{propIWPvalues} implies that every $\SLZ$ orbit $\A_{d^2}[n]$ has a representative in each square of $\A_{d^2}$. Since illumination is $\SLZ$ invariant property, it implies that every point in $\A_{d^2}[n]$ is illuminated by cusp. Therefore, the parity conjecture (Conjecture~\ref{parity}) implies the illumination conjecture (Conjecture~\ref{conjillumination}) for all $d$ and $n>1$.

The proof of the converse statement is harder and will extensively use the structure of the square-tiling of $\A_{d^2}$ for prime $d$. Note that we only show that the illumination conjecture implies the parity conjecture for $n>1$. It is not known if the same applies for $n=1$ and our methods do not extend to this case.

\noindent
{\bf Idea of proof of Theorem~\ref{thmillumination}.}
We will break the proof into the following three steps. We call a subset $S \subset \A_{d^2}[n]$ an {\em $\SLZ$-subset} if any two points in $S$ are connected by an element of $\SLZ$.
\begin{enumerate}
\item[\Rmnum{1}.\ \ ] We will first show, using illumination conjecture, that any point of $\A_{d^2}[n]$ can be sent via $\SLZ$ to a point in some lighthouse.

\item[\Rmnum{2}.\ ] Then, in any given lighthouse $\LL$, we will describe $U$-orbits of the points in $\A_{d^2}[n] \cap \LL$ and use $U$ action on the eaves to connect these orbits into one or two $\SLZ$-subsets.

\item[\Rmnum{3}.] Finally, we will use $U$ and $R$ action on the eaves to show that for any two lighthouses $\LL_i$ and $\LL_j$ these subsets are connected by $\SLZ$.
\end{enumerate} 

We begin by analyzing unipotent orbits in the eave cylinders and proving a lemma that will be used in the proof of Theorem~\ref{thmillumination}.

\noindent
{\bf Unipotent orbits in eaves.}
Let $\EE_k$ be the eave $(k,  1, d-k, 1)$. Recall form Theorem~\ref{thmunipotent} that for any $\left( \frac an, \frac bn \right)  \in \A_{d^2}[n] \cap \EE_k$:
\begin{equation*}
S : \left( \frac{a}n, \frac{b}n \right) \mapsto \left( \frac{a}n + \frac{b}n + T_{k}\cdot d  , \frac{b}n \right),
\end{equation*}
where $T_k \cdot d \equiv -1 \mod k(d-k)$. For any given $1 \le b \le n-1$ denote the number of $U$-orbits of points $\left( \frac an, \frac bn \right) \in \A_{d^2}[n] \cap \EE_k$ by $\nu_k(b)$. When $\gcd(b,n)=1$:
\begin{multline*} 
\nu_k(b) = \gcd( b + T_{\CC}dn, k(d-k)d n) = \gcd( b + T_{\CC}dn, k(d-k)d) = \\
= \gcd( b + T_{\CC}dn, k(d-k)) \cdot \gcd( b + T_{\CC}dn, d).
\end{multline*}
Note that the last equality used the primality of $d$. We obtain a formula: 
\begin{equation} \label{eaveunip}
\nu_k(b) = \gcd( b - n, k(d-k)) \cdot \gcd( b , d), \mbox{ whenever } \gcd(b,n)=1.
\end{equation}

\begin{lemma} \label{lemmaillumination}
Assume $d>2$ is prime and $n>1$.
\begin{enumerate}
\item[(1)] There exists an integer $1 \le s_1 \le n-1$ such that $\gcd(s_1,n)=1$ and $\nu_k(s_1)=1$.

\item[(2)] If $n \ne d+ 2 $ is odd, then there exists an odd integer $1 \le s_2 \le n-1$ such that $\gcd(s_2,n)=1$ and  $\nu_k(s_2)=2$.

\item[(3)] Assume $n = d+2$. As long as $(d,k) \ne (7,3)$ or $(19,4)$, there exists an odd integer $1 \le s_3 \le n-1$ such that $\gcd(s_3,n)=1$ and $\nu_k(s_3)=2$.
\end{enumerate}

\end{lemma}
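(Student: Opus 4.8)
The plan is to reduce every assertion to elementary divisibility conditions via formula~(\ref{eaveunip}) and then exhibit explicit witnesses $s_i$ for parts (1) and (2), treating part (3) separately by a counting argument. Write $m = k(d-k)$. Since $d$ is odd, exactly one of $k$ and $d-k$ is even, so $m$ is even; and since $d$ is prime with $1 \le k, d-k < d$, we have $\gcd(m,d)=1$. Consequently, for any $s$ with $\gcd(s,n)=1$ formula~(\ref{eaveunip}) reads $\nu_k(s) = \gcd(s-n, m)\cdot \gcd(s, d)$, where $\gcd(s,d) \in \{1, d\}$. As $d>2$, the value $\nu_k(s)=1$ forces $\gcd(s,d)=1$ and $\gcd(s-n,m)=1$, while $\nu_k(s)=2$ forces $\gcd(s,d)=1$ and $\gcd(s-n,m)=2$ (the latter being possible precisely because $m$ is even).

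For part (1) I would take $s_1 = n-1$ when $d \nmid n-1$: then $\gcd(s_1,n)=1$, $\gcd(s_1,d)=1$, and $\gcd(s_1-n,m)=\gcd(-1,m)=1$, so $\nu_k(s_1)=1$. When $d \mid n-1$ (so $n \ge d+1$ and $d \nmid n$) I would instead take $s_1 = n-d$: here $\gcd(s_1,n)=\gcd(d,n)=1$, $\gcd(s_1,d)=\gcd(n,d)=1$, and $\gcd(s_1-n,m)=\gcd(-d,m)=1$ using $\gcd(d,m)=1$, so again $\nu_k(s_1)=1$. For part (2), with $n$ odd, the same idea works one parity up: take $s_2 = n-2$ when $d \nmid n-2$ (it is odd, coprime to $n$ and $d$, and $\gcd(-2,m)=2$ since $m$ is even), and take $s_2 = n-2d$ when $d \mid n-2$. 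In the latter case $n \equiv 2 \pmod d$ together with $n$ odd and $n \ne d+2$ forces $n \ge 3d+2$, so $s_2$ lies in range; it is odd, coprime to $nd$, and $\gcd(-2d, m) = 2$ because $m$ is even, $d$ is odd, and $\gcd(d,m)=1$. Thus parts (1) and (2) have no exceptions.

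The heart of the matter is part (3), $n = d+2$, where both witnesses above degenerate ($n-2 = d$ and $n-2d < 0$). Substituting $t = n - s$ I would rephrase the goal as: find an even $t$ with $4 \le t \le d+1$, $\gcd(t, n) = 1$, and $\gcd(t, m) = 2$ (here $\gcd(s,d)=1$ becomes $t \ne 2$, and $s$ odd becomes $t$ even). Writing $t = 2u$, this asks for $u \in [2, (d+1)/2]$ coprime to the odd part of $m$ and to $n$, with an extra $2$-adic constraint when $4 \mid m$. For $d$ large this is easy: it suffices to produce an odd prime $p \le (d+1)/2$ with $p \nmid n\,m$, since then $t = 2p$ meets all requirements; as $nm < (d+2)d^2/4$ has only $O(\log d)$ prime factors while $\pi\big((d+1)/2\big)$ grows, such a $p$ exists once $d$ exceeds an explicit bound.

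The main obstacle is the remaining finite range of small primes $d$, which must be examined directly for each admissible $k$, and this is where the two exceptions surface. For $(d,k)=(7,3)$ one has $n=9$ and $m=12$, and every even $t\in\{4,6,8\}$ fails ($t=4,8$ give $\gcd(t,12)=4$, while $t=6$ gives $\gcd(6,9)=3$); for $(d,k)=(19,4)$ one has $n=21$ and $m=60$, so the only viable $t$ (those $\equiv 2 \pmod 4$, since $4\mid 60$) are $6,10,14,18$, each sharing a factor $3$, $5$, $7$, or $3$ with $60$ or $21$. So the plan is to establish the uniform bound above and then verify by a finite check that for all primes $d$ below it and all $1 \le k \le (d-1)/2$ a valid $t$ exists, with exactly these two pairs excluded. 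The delicate point, most likely to require care, is making the sieve estimate explicit enough to keep the finite check genuinely finite and to confirm that no further exceptions appear.
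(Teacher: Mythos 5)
Your proposal is correct and follows essentially the same route as the paper: the same witnesses $s_1 = n-1$ or $n-d$ for part (1) and $s_2 = n-2$ or $n-2d$ for part (2), and for $n=d+2$ the same substitution $t = n-s$ (the paper uses $t=(n-s)/2$) reducing part (3) to finding a prime below $(d+1)/2$ avoiding the prime factors of $n$ and $k(d-k)$, settled by a prime-counting estimate for large $d$ plus a finite computer check. The only additions are cosmetic: you verify concretely that $(7,3)$ and $(19,4)$ are genuine exceptions, whereas the paper delegates this to the computation and treats $k=1,2$ by explicit witnesses.
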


\begin{proof}

We will use formula (\ref{eaveunip}) for the proof.

(1) If $n \not\equiv 1 \mod d$ setting $s_1=n-1$ we obtain:
$$
\gcd(s_1,n) = 1 \mbox{ and } \nu_k(s_1) = \gcd( 1, k(d-k)) \cdot \gcd( n-1 , d) = 1,
$$
and if $n \equiv 1 \mod d$ setting $s_1=n-d$ (note that $1 \le s_1 \le n-1$ since $n \ge d+1$) we obtain:
$$
\gcd(s_1,n) = 1 \mbox{ and } \nu_k(s_1) = \gcd( d, k(d-k)) \cdot \gcd( n-d , d) = 1.
$$

(2) If $n \not\equiv 2 \mod d$ setting $s_2=n-2$ we obtain:
$$
\gcd(s_2,n) = 1 \mbox{ and } \nu_k(s_2) = \gcd(2, k(d-k)) \cdot \gcd( n-2 , d) = 2.
$$
If $n \equiv 2 \mod d$ and $n \ne d+2$ then $n \ge 2d+2$. Thus $s_2=n-2d$ satisfies $1 \le s_2 \le n-1$ and we obtain:
$$
\gcd(s_2,n) = 1 \mbox{ and } \nu_k(s_2) = \gcd( 2d, k(d-k)) \cdot \gcd( n-2d , d) = 2.
$$
Note that $s_2$ is odd in both cases.

(3) When $n=d+2$:
$$
\nu_1(1) = \gcd(d+1, d-1) \gcd(1,d) = 2, \nu_2(3) = \gcd(d-1,2(d-2))\gcd(3,d) = 2,
$$
since $k>1$ implies $d>3$. If $k>2$ then we change the unknown variable $s_3$ to $t = \frac{n-s_3}2$. Then if $t$ is integer and if $2 \le t \le \frac{n-1}2$ then $s_3$ is odd and $1 \le s_3 \le n-4 = d -2$. In particular $\gcd(s_3,d)=1$, since $d$ is prime. In addition if:
\begin{equation} \label{find}
\gcd\left( t, \frac{k(d-k)}2\right) = \gcd(t,n) = 1,
\end{equation}
then:
$$
\gcd(s_3, n) = 1 \mbox{ and } \nu_k(s_3) = \gcd( n - s_3, k(d-k)) \gcd( s_3, d) = 2 \gcd\left(t, \frac{k(d-k)}2\right) = 2.
$$
Let $\pi(x)$ be the number of primes less than $x$ and $\mu(x)$ be the number of prime factors of $x$.
We are going to show that there exists a prime $2 \le t \le \frac{n-1}2$ satisfying (\ref{find}) and the corresponding $s_3 = n - 2t$ will satisfy part (3) of the lemma.  We will do so by showing:
$$
\pi\left(\frac{n-1}2\right) \ge \mu(n) + \mu\left(\frac{k(d-k)}2\right),
$$
for large $d$, and running a program for small $d$, which shows that the only exceptions are $(d,k) = (7,3)$ or $(19,4)$.

Note that $\mu(x) \le \log_2(x)$ and $\displaystyle \pi(x) \ge \frac{x}{\log(x) +2}$ for $x \ge 55$ (see \cite{Ros41}). Let $x_0 = \frac{d+1}2$, then we have:
$$
\pi\left(\frac{n-1}2\right) = \pi\left(\frac{d+1}2\right) \ge \frac{x_0}{\log(x_0) +2}.
$$
Because $\frac{k(d-k)}2 \le \frac{(d-1)(d+1)}8$ we obtain:
$$
 \mu(n) + \mu\left(\frac{k(d-k)}2\right) \le \log_2(d+2) + \log_2\left(\frac{(d-1)(d+1)}8 \right)= \log\left( \frac{(d-1)(d+1)(d+2)}8 \right) / \log(2).
$$
Because $(d-1)(d+2) \le (d+1)^2$ for positive $d$ we have:
$$
 \mu(n) + \mu\left(\frac{k(d-k)}2\right) \le \frac{3}{\log(2)} \log(x_0).
$$
If $d > 800$ then $x_0 > 400$ and:
$$\frac{3}{\log(2)} \log(x_0) < \frac{x_0}{\log(x_0) +2}.$$
Therefore there exists a prime $2 \le t \le \frac{n-1}2$ satisfying (\ref{find}). 

For $d<800$ we run a computer program that finds a required $t$ for any prime $d$ and arbitrary $1 \le k \le \frac{d-1}2$, such that $(d,k) \ne (7,3)$ or $(19,4)$ (these cases are analyzed separately below). This finishes the proof of the lemma.
\end{proof}

\noindent
{\bf Step I: into a lighthouse.}
Recall from \S\ref{secabs} and \S\ref{secmodular} that $P_{ns}(d) \subset \A_{d^2}$ is the set of cusp poles, points of $\A_{d^2}$ corresponding to the square-tiled surfaces with a non-separating node. Recall that $\LL_k$ denotes the lighthouse cylinder $(w_1=1, s_1= k, w_2=1, s_2 = d-k)$.

%Into a lighthouse
\begin{proposition}[\Rmnum{1}. Into a lighthouse] \label{into}
Assume $d>2$ is prime and $\A_{d^2}[n]$ is illuminated by $P_{ns}(d)$. Then every $\SLZ$ orbit in $\A_{d^2}[n]$ has a representative $\left( 1, \frac bn \right) \in \LL_k \cap \A_{d^2}[n]$ for some $b\in \Z$ with $\gcd(b,n)=1$. \end{proposition}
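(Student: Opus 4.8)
The plan is to turn the illumination hypothesis into a single straight geodesic and then straighten it with the Veech group. Fix $x \in \A_{d^2}[n]$. By hypothesis there is a cusp pole $P \in P_{ns}(d)$ and a geodesic segment $\gamma$ of $(\A_{d^2}, |q|)$ from $P$ to $x$ avoiding the singularities. The key first observation is that $\gamma$ has rational slope: $P$ is a vertex of the square-tiling and, by Theorem~\ref{thmlocating}, $x$ is a primitive $n$-rational point, so the holonomy of $\gamma$ lies in $\frac1n\Z[i]$. Since $\SLZ$ acts on $(\A_{d^2},q)$ by affine automorphisms respecting the square-tiling (Theorem~\ref{thmquaddiff}), I can pick $A \in \SLZ$ carrying a primitive integer vector in the direction of $\gamma$ to $(0,1)$; replacing $(x,P,\gamma)$ by $(Ax, AP, A\gamma)$ and using that $\SLZ$ preserves both $P_{ns}(d)$ and $\A_{d^2}[n]$, I may assume $\gamma$ is vertical. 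Thus $x$ now lies on a vertical separatrix issuing from the cusp pole $P$.

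The next step is to locate $P$ among the horizontal cylinders. A cusp pole is a simple pole of $q$, so it has cone angle $\pi$ and a single folded horizontal separatrix; hence it lies on the boundary of exactly one horizontal cylinder. For prime $d$ the $\frac{d-1}2$ lighthouses contribute one cusp pole each on their tops, and the $\frac{d-1}2$ eaves contribute $d$ cusp poles each on their bottoms (\S\ref{seclighteave}); since $\frac{d-1}2 + d\cdot\frac{d-1}2 = \frac{d^2-1}2$ equals the number of cusps of $X(d)$ (Theorem~\ref{thmcusps}), every cusp pole is either a lighthouse cusp pole or an eave cusp pole. If $P$ is a lighthouse cusp pole, its half-disk opens downward into $\LL_k$ and the vertical separatrix runs straight down, so $x = (1, \frac bn) \in \LL_k$ sits directly below $P$, as required. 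The coprimality $\gcd(b,n)=1$ is then automatic: $P$ maps under $\delta$ to a vertex $Q_i$ of the pillowcase, so $\delta(x) = (\text{integer}, \frac bn) \in \bfP$, and primitivity of this $n$-torsion point is precisely $\gcd(b,n)=1$.

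The remaining case, and the heart of the matter, is when $P$ is an eave cusp pole, so its half-disk opens upward and $x$ lies directly above $P$ inside an eave $\EE_k$. Here I would reduce to the lighthouse case using the explicit generators of \S\ref{seclighteave}. First, Theorem~\ref{thmunipotent} shows that $S$ shifts the bottom cusp poles of $\EE_k$ by $1 + T_k d \equiv 0 \pmod{k(d-k)}$, a multiple $m\cdot k(d-k)$ of the cusp-pole spacing with $m \equiv -k^{-2} \not\equiv 0 \pmod d$; since $d$ is prime this permutes the $d$ eave cusp poles transitively, so after a power of $S$ I may assume $P$ is the distinguished cusp pole $(k,1,d-k,1,0,0,0)$. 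By Theorem~\ref{thmrotation} the rotation $R$ (an involution on $(\A_{d^2},q)$) carries this cusp pole, together with the $2k$ squares of $\EE_k$ adjacent to it, onto the lighthouse $\LL_k$ and its cusp pole, and I would track $x$ through this rotation, sliding once more by $S$ if $x$ first needs to be moved into those squares, so as to place it on the central vertical line of $\LL_k$.

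The hard part will be this last reconciliation. Because $R$ interchanges vertical and horizontal separatrices, the length-one vertical separatrix through $x$ above an eave cusp pole is not directly carried to the length-$k$ vertical separatrix below a lighthouse cusp pole; so $R$ by itself does not send ``above an eave cusp pole'' to ``below a lighthouse cusp pole.'' The delicate point is to interleave $R$ with the unipotent slides of Theorem~\ref{thmunipotent} so that $x$ lands exactly on the line $\{(1,y)\}\subset\LL_k$, and to check that primitivity of the underlying $n$-torsion point, hence $\gcd(b,n)=1$, survives these moves. Once Proposition~\ref{into} is established, Steps~\Rmnum{2} and~\Rmnum{3} will reuse the same two matrices $R$ and $S$ to merge the $U$-orbits inside one lighthouse and then to identify the resulting subsets across different lighthouses.
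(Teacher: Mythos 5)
There is a genuine gap, and it originates in your very first normalization: you straighten the illuminating segment to the \emph{vertical} direction, whereas the paper straightens it to the \emph{horizontal} direction, and this difference is not cosmetic. A horizontal segment emanating from a cusp pole and avoiding singularities is forced to lie inside the singular horizontal leaf through that pole, i.e.\ on the (folded) bottom boundary of an eave or the (folded) top boundary of a lighthouse; these are short, completely explicit unions of saddle connections, so the location of $A\cdot x$ is pinned down to a finite list of edges. A vertical segment from a cusp pole enjoys no such confinement: from an eave cusp pole at $(i\,k(d-k),1)$ the vertical separatrix crosses the eave (height $1$) and meets the top boundary at a point that is generically a \emph{regular} point of $q$, after which the geodesic continues upward through the body cylinders for an arbitrarily long distance. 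So your assertion that in the eave case ``$x$ lies directly above $P$ inside an eave $\EE_k$'' is unjustified: $x$ can sit anywhere along this long separatrix, far outside $\EE_k$. Your proposed mechanism --- a power of $S$ to move $P$ to the distinguished cusp pole followed by the rotation $R$ of Theorem~\ref{thmrotation} --- only controls the $2k$ squares of $\EE_k$ adjacent to that pole and cannot reach such an $x$; you acknowledge this yourself as ``the hard part,'' but it is in fact most of the content of the proposition. (The lighthouse subcase of your argument does close up, because there the vertical separatrix terminates at the zero $(1,k)$ after length $k$, trapping $x$ on the segment $\{(1,y): 0<y<k\}$; but only one cusp pole in every $d+1$ is of this type.)

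Beyond this, note that even with the horizontal normalization the paper's proof is not over after one rotation: when $A\cdot x$ lands on the top boundary of a lighthouse, rotating by $R$ puts it on a vertical edge of an eave, and one must then split on whether $\gcd(b,d)=1$, slide by $S$ to sit above a cusp pole, and rotate a second time (with a separate holonomy argument, using $d\nmid\gcd(k(d-k)n,\,n-rd)$, when $d\mid b$) to return to the eave-bottom case. None of this machinery appears in your outline, and it is exactly the kind of argument your vertical setup would also need --- but applied to a much less constrained configuration. I would recommend redoing the reduction with the segment sent to a purely real period in $\tfrac1n\Z[i]^*$, so that $A\cdot x$ lies on a cylinder boundary from the start.
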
 

\begin{proof}
Take any $x \in \A_{d^2}[n]$ and a straight line segment $s$ that connects it to some $y \in P_{ns}(d)$. Then $\int_s \pm \sqrt q \in \frac1n \Z[i]^*$, the set of primitive $n$-rational points of $\C$. Choose a matrix $A \in \SLZ$ that sends it to a purely real number $\frac bn$ for some $b \in \Z$. Then $A \cdot s$ is a horizontal segment connecting a cusp pole $ A \cdot y \in P_{ns}(d)$ to a point $ A \cdot x \in \A_{d^2}[n]$. Note that such line segments can only be found (1) on the bottom boundary of an eave or (2) on the top boundary of a lighthouse. 

(1) Assume first $A \cdot x$ lies on the bottom boundary of $\EE= (k,1,d-k,1)$ for some $k$. The unipotent subgroup $U$ acts transitively on the cusp poles of $\EE_k$ and hence an appropriate power $i$ of $S$ sends $A \cdot y$ to an cusp pole $(k,1,d-k,1,0,0,0)$ and $(S^i \circ A) \cdot x$ is joined to it by a horizontal line. Rotating this pole by $\pi/2$ we obtain an cusp pole $(1,k,1,d-k,1,1,0)$, which belongs to the lighthouse $\LL_k$. Therefore:
$$
(R \circ S^i \circ A) \cdot x = \left( 1, \frac bn \right) \in \LL_k.
$$
Note that $\left( 1, \frac bn \right) = \left( \frac{n}{n}, \frac bn \right) $ is a primitive $n$-rational point of a square and therefore $\gcd(n,b,n) = \gcd(b,n)=1$. This finishes the proof in case (1).

(2) Now assume $A \cdot x$ lies on the top boundary of some lighthouse $\LL$. Rotating it by $\pi/2$ one obtains a point $(R \circ A)\cdot x = (0, \frac bn)$ on a vertical edge of some eave cylinder $\EE_k=(w_1=k, s_1= 1, w_2=d-k, s_2 = 1)$. Since $\gcd(b,n)=1$ from (\ref{eaveunip}) we obtain:
$$
\nu_k(b) = \gcd( b - n, k(d-k)) \cdot \gcd( b , d).
$$
Assume first that $ \gcd( b , d) = 1$. Then $\nu_k(b) \big\vert k(d-k)$ and for a suitable power $i$ of $S$:
$$
(S^i \circ R \circ A) \cdot x = \left(k(d-k), \frac bn\right) \in \EE_k.
$$
It is a point, which lies directly above the cusp pole with cylinder coordinates:
$$
(w_1=k, s_1= 1, w_2=d-k, s_2 = 1, t_1=0, t_2=0, t_3 = k(d-k) \% d) \in \EE_k,
$$
and Euclidean coordinates:
$$
\left( k(d-k) , 1 \right) \in \EE_k.
$$
The rotation by $\pi/2$ sends this cusp pole to some other cusp pole on the bottom boundary of some eave cylinder $\EE'$ and hence $(R \circ S^i \circ R \circ A) \cdot x$ belongs a horizontal segment that starts at the cusp pole on the bottom boundary of the eave $\EE'$, which brings us back to case (1).

It remains to treat the case (2) when $ \gcd( b , d) \ne 1$ or equivalently $b= r d$ for some $r \in \Z$. In that case $(R \circ A)\cdot x = (0, \frac bn) = (0, \frac {rd}n)$ belongs to a vertical edge of $\EE_k$. Let $s$ be a line segment contained in $\EE_k$ that connects $(R \circ A)\cdot x = (0, \frac {rd}n) \in \EE_k$ and the cusp pole $\left( k(d-k) , 1 \right) \in \EE_k$.
Let $v$ be given by:
$$v = \int_s \pm \sqrt q = \pm \left( \frac {k(d-k)n}n + i \cdot \frac {n-rd}n \right) \in \frac1n \Z[i]^*.$$
Note that $v$ is a $\gcd ( k(d-k) n , n-rd )$ multiple of a primitive element in $\frac1n \Z[i] / \Z[i]$. Also note that $1 = \gcd( b, n) = \gcd ( r d , n ) \implies \gcd(d,n) = 1$ and hence:
$$
\displaystyle d \nmid \gcd ( k(d-k) n , n-rd ).
$$
Therefore for any matrix $A' \in \SLZ$ that sends $z$ to a purely real number the following holds: $A' \cdot z = \frac{b'}n i$, where $d \nmid b' \in \Z$. Therefore $\gcd(b',d) = 1$ and it brings us either to the case (1) or the case (2), which were discussed above.
\end{proof}

\noindent
{\bf Step II: within a lighthouse.}
For the next proposition we introduce notation similar to the one of (\ref{lighthouseorbits}). Let $k$ be any integer such that $1\le k \le (d-1)/2$. For even $n$ and any integer $1\le b \le kn-1$ such that $\gcd(b,n)=1$ (in particular $b$ is odd) define a set:
$$
N_k(b) = \left\{  \left(\frac an, \frac bn\right) \in \A_{d^2}[n] \cap \LL_k \ \middle\vert \ 0 \le a \le 2n \right\}.
$$
For odd $n>1$ and any integer $1\le b \le kn-1$ such that $\gcd(b,n)=1$ define sets:
$$
N^1_k(b)= 
\begin{cases}
 \left\{  \left(\frac an, \frac bn\right) \in \A_{d^2}[n] \cap \LL_k \ \middle\vert \   0 \le a \le 2n  \right\} & \mbox{ when $b$ is odd}  \\
 & \\
  \left\{  \left(\frac an, \frac bn\right) \in \A_{d^2}[n]  \cap \LL_k \ \middle\vert \  0 \le a \le 2n, a \equiv 0 \mod 2 \right\} & \mbox{ when $b$ is even} 
\end{cases}
$$
$$
N^0_k(b) = \left\{  \left(\frac an, \frac bn\right) \in \A_{d^2}[n]  \cap \LL_k \ \middle\vert \   0 \le a \le 2n, a \equiv 1 \mod 2 \right\} \mbox{ \ \ \ when $b$ is even}.
$$

%Within a lighthouse
\begin{proposition}[\Rmnum{2}. Within a lighthouse] \label{within} 
Assume $d>2$ is prime and $n>1$. Let $k$ be any integer satisfying $1\le k \le (d-1)/2$. Let us call integer $b$ admissible if $1\le b \le kn-1$ and $\gcd(b,n)=1$.  Then for $N_k(b), N^\epsilon_k(b) \subset \A_{d^2}[n]$ defined above the following holds: 
\begin{enumerate}
\item[(1)] When $n$ is even, the union of $N_k(b)$ over all admissible $b$ belongs to a single $\SLZ$ orbit. 

\item[(2)] When $n$ is odd, the union of all $N^1_k(b)$ over all admissible $b$ belongs to a single $\SLZ$ orbit.

\item[(3)] When $n$ is odd, the union of all $N^0_k(b)$ over all admissible $b$ belongs to a single $\SLZ$ orbit.
\end{enumerate}
\end{proposition}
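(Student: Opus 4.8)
The plan is to exploit the two group elements available on $\LL_k$ — the unipotent $U=\langle S\rangle$, which acts within the lighthouse and fixes its heights, and the rotation $R$, which by Theorem~\ref{thmrotation} carries $\LL_k$ into the eave $\EE_k$ — and then to connect the $U$-orbits sitting at different heights of $\LL_k$ by detouring through $\EE_k$, where $U$ can move points \emph{horizontally} across a much longer cylinder.

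First I would pin down the $U$-action on the lighthouse. Since $\LL_k=(1,k,1,d-k)$ has circumference $2$, the same cut-and-paste computation as in~(\ref{unipotent2}) gives $S:\left(\tfrac an,\tfrac bn\right)\mapsto\left(\tfrac{a+b}n,\tfrac bn\right)$, with the first coordinate read modulo $2$. Hence on the line of height $b/n$ the number of $U$-orbits is $\gcd(b,2n)=\gcd(b,2)$ (because $\gcd(b,n)=1$): a single orbit containing every $a$ when $b$ is odd, and two orbits separated by the parity of $a$ when $b$ is even. Matching this against the definitions shows that each of $N_k(b)$, $N^1_k(b)$, $N^0_k(b)$ is exactly one such $U$-orbit, and — using Proposition~\ref{propIWPvalues} together with the coordinate offset relating the Euclidean coordinates on $\LL_k$ to the pillowcase coordinates read off by $\delta$ — that the superscript records the spin $\epsilon$. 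The proposition thus reduces to connecting these $U$-orbits across the various heights $b$.

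The mechanism for changing heights is a single cycle $R^{-1}\circ S^{m}\circ R$. By Theorem~\ref{thmrotation}, $R$ maps $\LL_k$ isometrically onto the $2k$ extreme squares of $\EE_k$, interchanging the lighthouse height-coordinate with the eave horizontal-coordinate and the lighthouse horizontal-coordinate with the eave vertical-coordinate (up to the orientation of the $\pm\pi/2$ rotation). Given a point at height $b$, I would first move it within its lighthouse $U$-orbit to a representative whose horizontal coordinate $a$ is chosen so that its image under $R$ lands on the eave line of height $s_j/n$, where $s_j$ is one of the distinguished values of Lemma~\ref{lemmaillumination}; this is possible because the height-$b$ orbit sweeps out all admissible $a$ (all of them if $b$ is odd, all of the correct parity if $b$ is even). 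On that eave line there are, by~(\ref{eaveunip}), exactly $\nu_k(s_j)$ unipotent orbits, and applying powers of $S$ (Theorem~\ref{thmunipotent}) moves the eave horizontal-coordinate — the \emph{original lighthouse height} — through one whole eave $U$-orbit while fixing the eave vertical-coordinate, i.e.\ fixing $a$; applying $R^{-1}$ returns us to $\LL_k$ at a new height $b'$ with the same $a$. Because each lighthouse height-orbit collapses to a single point of the eave line, this cycle connects the \emph{entire} height-$b$ orbit to the entire height-$b'$ orbit. For part~(1) ($n$ even, all admissible $b$ odd) I take $s_1$ from Lemma~\ref{lemmaillumination}(1) with $\nu_k(s_1)=1$, so the single eave orbit reaches every height and all of the $N_k(b)$ fuse into one $\SLZ$ orbit. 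For parts~(2)–(3) ($n$ odd) I instead route every spin-$\epsilon$ representative to an \emph{odd} eave line $s_2/n$ (or $s_3/n$ when $n=d+2$) with $\nu_k=2$, taking $a=n-s_2$ (even, hence compatible with the even-$a$ orbits and with the all-$a$ odd-height orbits); one of the two eave orbits then links exactly the same-spin heights, giving one $\SLZ$ orbit for the $N^1_k(b)$ and one for the $N^0_k(b)$.

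The hard part will be the last matching: verifying, through the explicit rotation of Theorem~\ref{thmrotation} and the shear of Theorem~\ref{thmunipotent}, that on an odd eave line the two unipotent orbits correspond precisely to the two spin classes, and that a single such orbit sweeps out \emph{all} same-spin heights of $\LL_k$ rather than a proper subset. Tracking the coordinate offset that aligns ``spin $\epsilon$'' with ``parity of $a$'' — so that, e.g., the case $a$ even, $b$ even correctly lands in $N^1_k$ (spin $1$) and not in $N^0_k$ — is the delicate bookkeeping point. Finally, the two pairs $(d,k)=(7,3),(19,4)$ excluded by Lemma~\ref{lemmaillumination}(3) at $n=d+2$ fall outside this scheme and would have to be dispatched by a direct check of their (finitely many) lighthouse points.
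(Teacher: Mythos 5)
Your overall strategy is the paper's: show each $N_k(b)$, $N^\epsilon_k(b)$ is a single $U$-orbit on the lighthouse, push everything into the eave $\EE_k$ via $R$ (Theorem~\ref{thmrotation}), and use the distinguished eave lines of Lemma~\ref{lemmaillumination} (an even $s_1$ with $\nu_k(s_1)=1$, an odd $s_2$ or $s_3$ with $\nu_k=2$) to link the $U$-orbits at different heights. That is exactly the mechanism of the paper's proof.

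There is, however, a concrete misstep in your routing for part (2). You propose to handle both (2) and (3) by sending representatives to the \emph{odd} eave line $s_2/n$. But for even $b$ the set $N^1_k(b)$ contains only even $a$, so $R(N^1_k(b))$ consists of points $(\pm b/n, a/n)$ with $a$ even and never meets an odd-height line; the odd line therefore cannot connect the even-$b$ pieces of the spin-$1$ class to anything. You need the even line $s_1$ (with its single $U$-orbit) for part (2) as well as part (1) — this is what the paper does, reserving the odd line with $\nu_k=2$ for part (3) alone, where the two $U$-orbits on that line separate precisely by the parity of $b$ and one of them sweeps out all the $R(N^0_k(b))$. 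Relatedly, your prescription ``taking $a=n-s_2$ (even)'' is inconsistent with landing on the line $s_2/n$: since $R$ sends $(a/n,b/n)$ to $(\pm b/n, a/n)$, reaching the odd line requires $a=s_2$ odd, which is exactly why that line is the one adapted to $N^0_k$. Finally, you defer the exceptional pairs $(d,k)=(7,3),(19,4)$ at $n=d+2$ to ``a direct check''; this is legitimate in principle (the set is finite for these fixed $n=9,21$), but it is a nontrivial part of the argument — the paper dispatches them by explicit chains through lines of heights $1,7$ (resp.\ $5,11$) using $\nu_k(1)=4,\ \nu_k(7)=14$ (resp.\ $\nu_k(5)=4,\ \nu_k(11)=10$) — so it should be carried out rather than asserted.
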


\begin{proof}
For the lighthouse $\LL_k = (w_1=1,s_1=k,w_2=1,s_2=d-k)$, similarly to (\ref{unipotent2}) we have:
$$
\displaystyle S :\left(\frac an, \frac bn\right) \mapsto \left(\frac an + \frac bn ,\frac bn\right),
$$
and the number of $U$-orbits of points $\left(\frac an, \frac bn\right) \in \A_{d^2}[n] \cap \LL_k$, where $1 \le b \le kn-1$ and $\gcd(b,n)=1$:
$$
\gcd(b,2n) = \gcd(b,2).
$$
It implies that each of the sets $N_k(b), N^1_k(b)$ and $N^0_k(b)$ is itself a single $U$-orbit. Indeed:
(1) when $n$ is even, $b$ has to be odd since $\gcd(b,n)=1$ and then $\gcd(b,2)=1$;
(2) when $n>1$ and $b$ are both odd, $\gcd(b,2)=1$;
(3) when $n>1$ is odd and $b$ is even, $\gcd(b,2)=2$.

It remains to show that the union of $N_k(b)$ or $N^\epsilon_k(b)$ over all $b$ belongs to a single $\SLZ$ orbit. Rotation $R$ sends the lighthouse $\LL_k$ to $2k$ squares $R( \LL_k)$ of the eave cylinder $\LL_k$ that are adjacent to the non-cusp pole $(w_1=k,s_1=1,w_2=d-k,s_2=1,t_1=0,t_2=0,t_3=0)$. 
%may add image here
The images of the above subsets are the following $\SLZ$ subsets. For even $n$:
$$
R(N_k(b)) = \left\{  \left(\pm \frac bn, \frac an\right) \in \A_{d^2}[n] \cap \EE_k \ \middle\vert \ 0 \le a \le n \right\},
$$
and for odd $n>1$:
$$
R(N^1_k(b))= 
\begin{cases}
 \left\{  \left(\pm \frac bn, \frac an\right) \in \A_{d^2}[n] \cap \EE_k \ \middle\vert \   0 \le a \le n  \right\} & \mbox{ when } b\equiv 1 \mod 2  \\
 & \\
  \left\{  \left(\pm \frac bn, \frac an\right) \in \A_{d^2}[n]  \cap \EE_k \ \middle\vert \  0 \le a \le n, a \mbox{ is even} \right\} & \mbox{ when } b \equiv 0 \mod 2
\end{cases}
$$
$$
R(N^0_k(b)) = \left\{  \left(\pm \frac bn, \frac an\right) \in \A_{d^2}[n]  \cap \EE_k \ \middle\vert \   0 \le a \le n, a \mbox{ is odd} \right\} \mbox{ \ \ \ when $b \equiv 0 \mod 2$}.
$$
Lemma~\ref{lemmaillumination} (1) implies that for any $n>1$ there is an (necessarily even) integer $1 \le s_1 \le n-1$, such that $\gcd(s_1,n) = 1$ and all points:
$$
 \left(\pm \frac bn, \frac {s_1}n\right) \in \A_{d^2}[n] \cap \EE_k
$$
belong to the same $U$-orbit. In particular, that proves (a) when $n$ is even and (b) when $n$ is odd. It remains shows that $R(N^0_k(b))$ are all in one orbit when $n$ is odd.

If $n$ is odd and $(d,k) \ne (7,3)$ or $(19,4)$, Lemma~\ref{lemmaillumination} (2) and (3) imply that there exists an odd integer $1 \le s_2 \le n-1$, such that $\gcd(s_2,n) = 1$ and all points:
$$
 \left\{  \left(\pm \frac bn, \frac {s_2}n\right) \in \A_{d^2}[n] \cap \EE_k \ \middle\vert \   b \equiv 1 \mod 2  \right\}
$$
belongs to the same $U$-orbit.

It remains to analyze the cases $(d,k) = (7,3)$ and $(19,4)$. We start with $(d,k) = (7,3)$. Note that $n = 9$ and $\nu_k(b) = \gcd(9-b,12) \gcd(b,7)$. Then we have:
$$
\nu_k(1) = 4 \mbox{ and } \nu_k(7) = 14.
$$
Note that $\nu_k(1) = 4$ implies that all $(x, 1)$ with even $x$ fall into two unipotent orbits generated by $(2,1)$ and $(4,1)$. It suffices to show that $(2,1)  \in R(N^0_k(2))$ and $(4,1)  \in R(N^0_k(4))$ are in the same $\SLZ$ orbit. Note that $(2,1)$ is in the same unipotent orbit with $(18,1) \in R(N^0_k(18))$, which is in the same $\SLZ$ subset $R(N^0_k(18))$ with $(18,7)$. Next $(18,7)$ and $(4,7)$ are in the same unipotent orbit, since $\nu_k(7) = 14$. And finally $(4,7)$ and $(4,1)$ are in the same $\SLZ$ subset $R(N^0_k(4))$, which finishes the proof for $(d,k) = (7,3)$.

For $(d,k) = (19,4)$, note that $n=21$ and $\nu_k(b) = \gcd(21-b,60) \gcd(b,19)$. Then we have:
$$
\nu_k(5) =  4 \mbox{ and } \nu_k(11) = 10.
$$
Similarly all $(x, 5)$ with even $x$ fall into two unipotent orbits generated by $(2,5)$ and $(4,5)$. Note that $(2,5)$ is in the same unipotent orbit with $(14,5)$, since $\nu_k(5) =  4$. Points $(14,5)$ and $(14,11)$ are in the same $\SLZ$ subset $R(N^0_k(14))$. Next $(14,11)$ and $(4,11)$ are in the same unipotent orbit, since $\nu_k(11) = 10$. And finally $(4,11)$ and $(4,1)$ are in the same $\SLZ$ subset $R(N^0_k(4))$, which finishes the proof for $(d,k) = (19,4)$.
\end{proof}

%Between lighthouses
\begin{proposition}[\Rmnum{3}. Between lighthouses] \label{between}
Assume $d>2$ is prime and $n>1$. Let us call an integer $b$ admissible if $1\le b \le kn-1$ and $\gcd(b,n)=1$. Let $N_k(b), N^\epsilon_k(b) \subset \A_{d^2}[n]$ be as above. 
\begin{enumerate}
\item[(a)] When $n$ is even, the union of $N_k(b)$ over all admissible $b$ and $1\le k \le (d-1)/2$ belongs to a single $\SLZ$ orbit $\A_{d^2}[n]$. 

\item[(b)] When $n$ is odd, the union of all $N^1_k(b)$ over all admissible $b$ and $1\le k \le (d-1)/2$ belongs to a single $\SLZ$ orbit $\A^1_{d^2}[n]$.

\item[(c)] When $n$ is odd, the union of all $N^0_k(b)$ over all admissible $b$ and $1\le k \le (d-1)/2$ belongs to a single $\SLZ$ orbit $\A^0_{d^2}[n]$.
\end{enumerate}
\end{proposition}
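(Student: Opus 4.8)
The plan is to combine Propositions~\ref{into} and~\ref{within} with one further device: a mechanism, already visible in case (2) of the proof of Proposition~\ref{into}, that moves points of $\A_{d^2}[n]$ between \emph{distinct} eaves. By Proposition~\ref{within}, the points meeting a single lighthouse $\LL_k$ are organized into one $\SLZ$-subset $O_k := \bigcup_b N_k(b)$ when $n$ is even, and into two, $O_k^0 := \bigcup_b N_k^0(b)$ and $O_k^1 := \bigcup_b N_k^1(b)$, when $n$ is odd. By Proposition~\ref{into} every $\SLZ$ orbit in $\A_{d^2}[n]$ meets some lighthouse, so every orbit equals one of these $O_k$ or $O_k^\epsilon$. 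It therefore remains only to show that the $O_k$ (resp.\ the $O_k^\epsilon$) for different $k$ coincide.

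The connecting device is the rotation $R$ acting on cusp poles. As in Proposition~\ref{into}, a point of $\A_{d^2}[n]$ on the vertical segment directly above a cusp pole of $\EE_k$ can be pushed by a power of $S$ onto the point $\left(k(d-k),\tfrac bn\right)$ lying directly above a cusp pole, and applying $R$ sends that cusp pole to a cusp pole on the bottom boundary of another eave $\EE_{k'}$, carrying the point above it to a point above the new cusp pole. First I would use Theorem~\ref{thmrotation} to pass between each lighthouse $\LL_k$ and its eave $\EE_k$, transferring the whole question to the eaves; then I would record, for each eave $\EE_k$ and each of its $d$ cusp poles, the index $k'$ of the eave reached by $R$. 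This defines a graph $G$ on the index set $\{1,\dots,(d-1)/2\}$, and the task reduces to proving that $G$ is connected.

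To establish connectivity I would read off the target index $k'$ from the explicit gluing of the square-tiling produced in~\S\ref{sectiling}, with the full boundary identifications supplied by the pagoda description of Appendix~\ref{secpagoda}: the $R$-image of a cusp pole of $\EE_k$ is the non-separating-node square-tiled surface obtained by rotating the corresponding degeneration by $\pi/2$, and its cylinder invariants $(w_1',1,w_2',1)$ determine $k'$. Running through the $d$ cusp poles of each eave exhibits enough edges of $G$ to connect every index to that of the base eave $\EE_1$ (equivalently the base lighthouse $\LL_1$ of~\S\ref{secproofgen}).

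Finally I would track the orbit data through each jump. Since $R,S\in\SLZ$ and $\epsilon$ is an $\SLZ$-invariant (\S\ref{secspin}), the image of a point of $O_k^\epsilon$ under such a jump is again a primitive $n$-rational point of the correct spin lying in $\EE_{k'}$, hence — by Proposition~\ref{within} applied to $\LL_{k'}$ — an element of the $\SLZ$-subset $R(O_{k'}^\epsilon)\subset\EE_{k'}$; the two $\SLZ$-subsets therefore coincide, and no $N^0$-set is ever identified with an $N^1$-set. Connectivity of $G$ then forces all $O_k$ (resp.\ all $O_k^0$, and all $O_k^1$) to be a single $\SLZ$ orbit, which by Proposition~\ref{into} is all of $\A_{d^2}[n]$ when $n$ is even and $\A_{d^2}^\epsilon[n]$ when $n$ is odd. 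The one genuine difficulty is the connectivity of $G$: it is the single place where the coarse bookkeeping of lighthouses and eaves no longer suffices and the explicit boundary identifications of the tiling (\S\ref{sectiling}, Appendix~\ref{secpagoda}) must be invoked.
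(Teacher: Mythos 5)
Your overall strategy is the same as the paper's: reduce everything to the eaves via Theorem~\ref{thmrotation}, and then show that the rotation $R$ links the eaves to one another, with spin preserved because $R,S\in\SLZ$. But the one step you explicitly defer --- the connectivity of your graph $G$ --- is precisely the mathematical content of the proposition, and you do not supply an argument for it. Saying that ``running through the $d$ cusp poles of each eave exhibits enough edges of $G$'' is an assertion, not a proof; a priori the rotation images of the cusp poles of $\EE_1$ could all land back in $\EE_1$ or in a proper subset of the eaves, and nothing in your write-up rules this out.

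The paper closes this gap with a short, concrete computation that makes a general graph-connectivity argument unnecessary: it shows that $R(\EE_k)\cap\EE_1\ne\varnothing$ for \emph{every} $k$, i.e.\ the ``graph'' is a star centered at $\EE_1$. Specifically, for the cusp pole $X_r\in\EE_1$ at Euclidean coordinate $\left(r(d-1),1\right)$, the condition that a vertical line from it crosses exactly $k$ squares before closing up in $\EE_k$ translates into the congruence $1+rk\equiv d-k \pmod d$, i.e.\ $rk\equiv d-k-1\pmod d$, which is solvable in $r$ because $d$ is prime. This is the only place primality of $d$ enters Proposition~\ref{between}, and it is exactly the ingredient your proposal is missing. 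If you want to salvage your version, you should either prove connectivity of $G$ by an explicit computation of the $R$-images of cusp poles (which will essentially reproduce this congruence), or adopt the star-shaped argument directly; the appeal to the pagoda structure of Appendix~\ref{secpagoda} is not needed for this.
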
 

\begin{proof}
Note that in the course of the proof of Proposition~\ref{within} we showed that every eave cylinder $\EE_k$ contains a horizontal line $H_k \subset \EE_k$ that satisfies:
\begin{center}
\begin{tabular}{>{\centering\arraybackslash}p{0.7\textwidth}}
all points $H_k \cap \A_{d^2}[n]$ belong to the same $\SLZ$ orbit,
\end{tabular}
\end{center}
and a horizontal line $h_k \subset \EE_k$ that satisfies:
\begin{center}
\begin{tabular}{>{\centering\arraybackslash}p{0.7\textwidth}}
all points $h_k \cap \A_{d^2}[n]$ belong to two $\SLZ$ orbits \\
depending on the parity of their $x$-coordinate.
\end{tabular}
\end{center}
Therefore to show that each of the unions of $N_k(b), N^1_k(b)$ or $N^0_k(b)$ over all $k$ is in a single $\SLZ$ orbit it suffices to show that for any $k$:
\begin{equation} \label{eavesconnect}
R(\EE_k) \cap \EE_1 \ne \varnothing.
\end{equation}

\begin{comment10}
\begin{figure}[H]
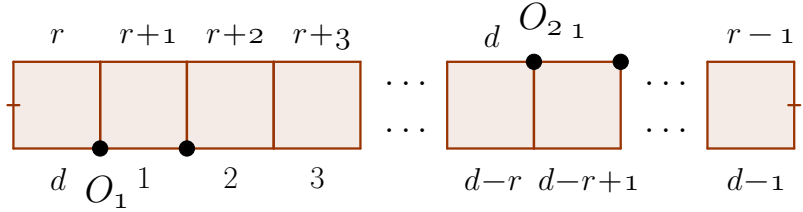
 
\centering
\includestandalone[width=0.7\textwidth]{tikzirredpole}
\vspace{-10pt}
\caption{A square-tiled surface with a non-separating node corresponding to the pole $\left( r(d-1), 1 \right) \in \EE_1$.}
\label{figirredpole}
\end{figure}
\end{comment10}

Fix any integer $1 < k \le (d-1)/2$, we will show that for some $1 < r < d$ the cusp pole $X_r \in \EE_1$ with cylinder coordinates $\left( r(d-1), 1 \right)$ (see Figure~\ref{figirredpole}) satisfies $R(X_r) \in \EE_k$. Note that a vertical line from point $O_1$ ends up in point $O_2$ after passing through exactly $k$ squares if and only if $1 + r k \equiv d-k \mod d$, which is equivalent to $rk \equiv d-k-1 \mod d$ that has a solution $r$ for any $k$ since $d$ is prime. Then $R(X_r) \in \EE_k$, which finishes the proof of (\ref{eavesconnect}).
\end{proof}

Assuming these propositions the proof of Theorem~\ref{thmillumination} is straightforward:

\begin{proof}[Proof of Theorem~\ref{thmillumination}] 
Pick any point $x \in \A_{d^2}[n]$. According to Proposition~\ref{into} it can be sent on a vertical edge in some lighthouse $\LL$, which belongs to one of the subsets $N_k(b), N^\epsilon_k(b)$. By Proposition~\ref{within} the union of all of the $N_k(b), N^\epsilon_k(b)$ in each lighthouse $\LL_k$ forms one or two $\SLZ$ subsets depending on parity of $n$. Finally, by Proposition~\ref{between} the union of all $N_k(b), N^\epsilon_k(b)$ for all lighthouses forms one or two $\SLZ$ subsets depending on parity of $n$. Since any point $x \in \A_{d^2}[n]$ can be sent into them, they generate the whole $\A_{d^2}[n]$, which implies the parity conjecture.

Computing the spin invariant one can show that $N_k(b), N^1_k(b)$ and $N^0_k(b)$ generate $\A_{d^2}[n]$,  $\A^1_{d^2}[n]$ and $\A^0_{d^2}[n]$ respectively.
\end{proof}

\section{Proof for $d=3$ and $5$} \label{secproof35}

In this section we will establish the illumination conjecture for $d=3,4$ and $5$ and use it together with Theorem~\ref{thmillumination} to prove the parity conjecture for $d=3$~and~$5$.

\begin{theorem} \label{thmillum2345}
\begin{enumerate}
\item[(i)] All of $X(2)$, $X(3)$ and $X(4)$ are illuminated by their cusps. 
\item[(ii)] The set of cusps of $X(5)$ illuminates all of $X(5)$ except for the non-cusp pole $P$ corresponding to $2E_0 = \C[i]/2\Z[i]$ joined with $E_0$ at a point.
\end{enumerate}
\end{theorem}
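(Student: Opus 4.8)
The plan is to reduce the statement to a finite computation on the explicit square-tilings of \S\ref{sectiling} and then carry it out case by case. Recall from \S\ref{secmodular} that $(X(d),q)$ is a lattice (Veech) surface with $\Aff^+(X(d),q)\cong\PSLZ$, so by the main result of \cite{LMW16} each cusp pole $y$ illuminates all but finitely many points; write $N(y)$ for the (finite) exceptional set. The set of points illuminated by no cusp is the intersection $N=\bigcap_{y\in P_{ns}(d)}N(y)$, which is therefore finite. Affine automorphisms preserve $|q|$ and carry geodesics to geodesics, and $P_{ns}(d)$ is $\PSLZ$-invariant (\S\ref{secmodular}); hence ``illuminated by a cusp'' is a $\PSLZ$-invariant property, so $N$ is a finite $\PSLZ$-invariant set.

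First I would show that $N$ contains no regular point, so that only the vertices of the square-tiling can fail to be illuminated. A general argument on Veech surfaces (\cite{LMW16}, \cite{HST08}) identifies the non-illuminated set with a set of singular/rational points; concretely, a non-vertex point $x$ is a cone point of angle $2\pi$ lying in the interior of some horizontal cylinder, and in a suitable rational direction it can be joined to a cusp pole on a cylinder boundary by a segment of sufficiently small slope (adjusting the horizontal offset by multiples of the circumference, using Theorem~\ref{thmdecomposition}). Such a segment stays inside the open cylinder and meets no interior singularity, illuminating $x$. This reduces both parts of the theorem to checking the finitely many vertices, and by invariance only one representative of each $\PSLZ$-orbit.

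For $X(2)$, $X(3)$ and $X(4)$ the tilings are small enough (Figures~\ref{figX(2)intro},~\ref{figX(3)intro},~\ref{figX(4)intro}) that I would enumerate the $\PSLZ$-orbits of vertices --- cusp poles, non-cusp poles, zeros, and regular vertices --- and exhibit, for a representative of each orbit, an explicit singularity-free geodesic emanating from a cusp. This gives $N=\varnothing$ and proves (i). For $X(5)$ the same enumeration, read off Figure~\ref{figX(5)intro}, shows that every vertex except the non-cusp pole $P$ (the separating-node surface $E_0\cup(\C/2\Z[i])$) is illuminated by a cusp, again by displaying geodesics.

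The remaining and genuinely hard point is (ii): to prove that $P$ is illuminated by no cusp. Since the surface is Veech, $P$ is illuminated by a cusp precisely when there is a saddle connection (a $|q|$-geodesic with no interior singularity) from some cusp pole to $P$. I would therefore list the finitely many directions in which $P$ admits a saddle connection and show that in each of them the geodesic leaving $P$ first meets a zero of $q$ (a black vertex) rather than a cusp. The organizing principle I expect to use is a parity obstruction: via the discriminant map $\delta:\A_{25}\to\bfP$ the vertices acquire classes in $\tfrac12\Z[i]/\Z[i]\cong(\ZZ2)^2$, the cusp poles and $P$ occupy prescribed classes, and the holonomy of any candidate segment from a cusp is forced into a coset that makes it pass through an intermediate lattice point carrying a zero. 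Verifying that this obstruction is never evaded --- across all directions and all twelve cusps of $X(5)$ --- is the main obstacle, and is where the detailed combinatorics of the tiling of $X(5)$ must be used.
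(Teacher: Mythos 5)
Your overall plan for part (i) --- reduce to a finite check on the explicit tilings and exhibit geodesics --- is in the spirit of the paper's proof in \S\ref{secproof35}, but the reduction step you rely on is flawed. You claim that any non-vertex point $x$ lies in the interior of a horizontal cylinder and can be joined, by a segment of small slope staying inside that cylinder, to a cusp pole on its boundary. This fails because not every cylinder has a cusp pole on its boundary: by Proposition~\ref{cylboundary}, the boundaries of \emph{body} cylinders carry only zeroes, non-cusp poles and regular points. For $X(4)$ and $X(5)$ several cylinders are of this type (e.g.\ $\CC_2$ in $X(4)$ and $\CC_2,\CC_3$ in $X(5)$), and the bulk of the paper's argument consists precisely of tracing light from the cusps of one cylinder into an adjacent one through specific saddle connections, and, for the cylinder of $X(5)$ containing $P$, of combining a directly illuminated subregion with the $\SLZ$-invariance of illumination and the fact that the unipotent subgroup fixes $P$ and hence preserves that cylinder. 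Your proposal skips exactly this work, so the claim ``$N$ contains no regular point'' is not established; you would also need to treat non-vertex points lying on the singular leaves (the edges), which are in the interior of no cylinder.

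For part (ii), the genuinely hard half --- that $P$ is \emph{not} illuminated --- is left as a sketch whose starting point is incorrect: a point of a Veech surface admits saddle connections in infinitely many directions, so ``list the finitely many directions in which $P$ admits a saddle connection'' cannot be carried out, and the proposed parity obstruction via $\delta$ would at best show that a candidate segment passes through an intermediate vertex of the tiling, which need not be a singularity. You are missing the paper's key observation: $P$ is the stable curve $E_0$ joined with $\C/2\Z[i]$, a wedge of two square tori, hence is fixed by the entire $\SLZ$-action. Consequently any singularity-free segment from a cusp pole to $P$ could be normalized by an element of $\SLZ$ to a \emph{vertical} such segment, and inspection of the tiling of $X(5)$ shows no vertical segment from $P$ reaches a cusp pole without first meeting a singularity. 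This one-line invariance argument replaces your entire proposed enumeration and is the step your proof cannot do without.
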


Clearly, all of $X(2) \cong \A_4$ is illuminated by its cusps. We continue with the next cases. The proof of Theorem~\ref{thmillum2345} will be given by analyzing the pictures of the square-tilings of $X(3), X(4)$ and $X(5)$.

\noindent
{\bf Square-tiling of $X(3) \cong \A_{9}$.} 
It is evident from Figure~\ref{figX(3)intro} that all of $\A_9$ is illuminated by the cusp poles (red points). Therefore Theorem~\ref{thmillumination} implies that $\left| \A_{9}[n] \big/ \SLZ \right|$ is 1 when $n$ is even and 2 when $n>1$ is odd. Note that $\A_{9}[1]$ is empty. We illustrate the case $\A_{9}[5]$, as an example of two $\SLZ$ orbits, in Figure~\ref{figX(3)orbits}.

\begin{comment11}
\begin{figure}[H] 
\centering
\includestandalone[width=0.8\textwidth]{tikzX3orbits}
\vspace{-10pt}
\caption{The $\SLZ$ orbits of $\A_9[5]$: $\A^0_9[5]$ (green points) and $\A^1_9[5]$ (blue points).}
\label{figX(3)orbits}
\end{figure}
\end{comment11}

\noindent
{\bf Square-tiling of $X(4) \cong \A_{16}$.} We presented the square-tiling of $\A_{16}$ in Figure~\ref{figX(4)intro}. Here we will establish the illumination conjecture for it. Note that the interiors of the cylinders $\CC_1, \CC_3, \CC_4$ (see Figure~\ref{figX(4)illum}) are illuminated since they contain cusp poles. The closures of these cylinders are also illuminated. 

\begin{comment11}
\begin{figure}[H]
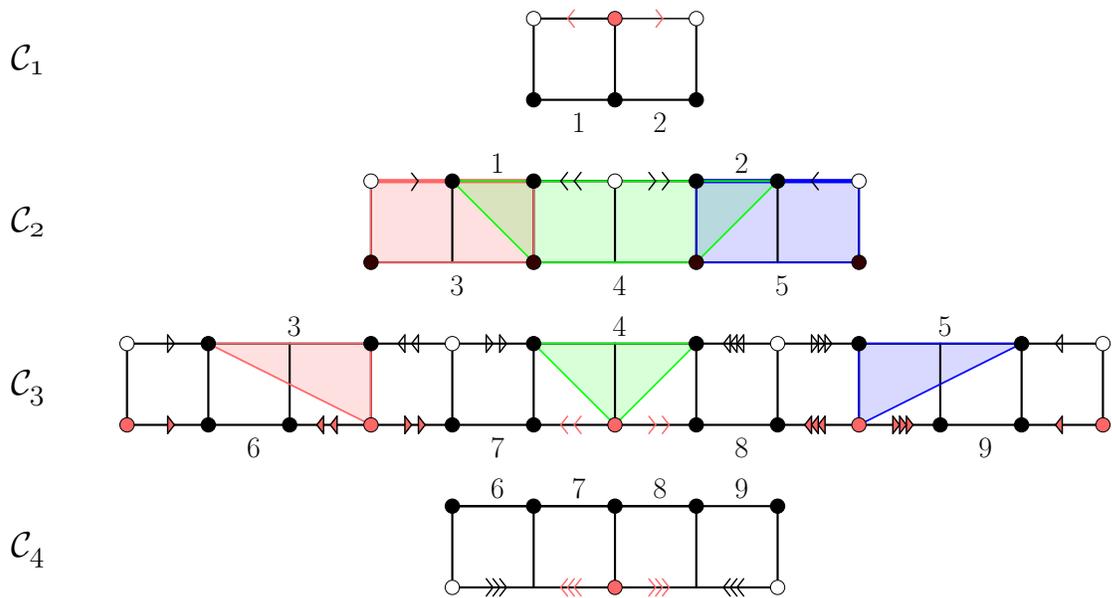
 
\centering
\includestandalone[width=\textwidth]{tikzX4illum}
\vspace{-10pt}
\caption{Illumination of $X(4)\cong \A_{16}$.}
\label{figX(4)illum}
\end{figure}
\end{comment11}

For the cylinder $\CC_2$ note that its red, green and blue regions are illuminated by the cusps of $\CC_3$ through the saddle connections 3, 4 and 5 respectively, which finishes the proof of the illumination conjecture for $\A_{16}$.

\noindent
{\bf Square-tiling of $X(5) \cong \A_{25}$.} Again using gluing instructions one obtains the square-tiling of $\A_{25}$ as in the figure below. Note that the labels are slightly different: we do not put labels on the saddle connections that contain poles (red and white points) as they are always identified to the adjacent ones via rotations by $\pi$; the saddle connection labeled with numbers are identified via parallel translations and the ones labeled with small letters are identified via rotations. 

Note that the interiors of the cylinders $\CC_1, \CC_4, \CC_5, \CC_7$ (see Figure~\ref{figX(5)illum}) are illuminated since they contain cusp poles.

\begin{comment11}
\begin{figure}[H]
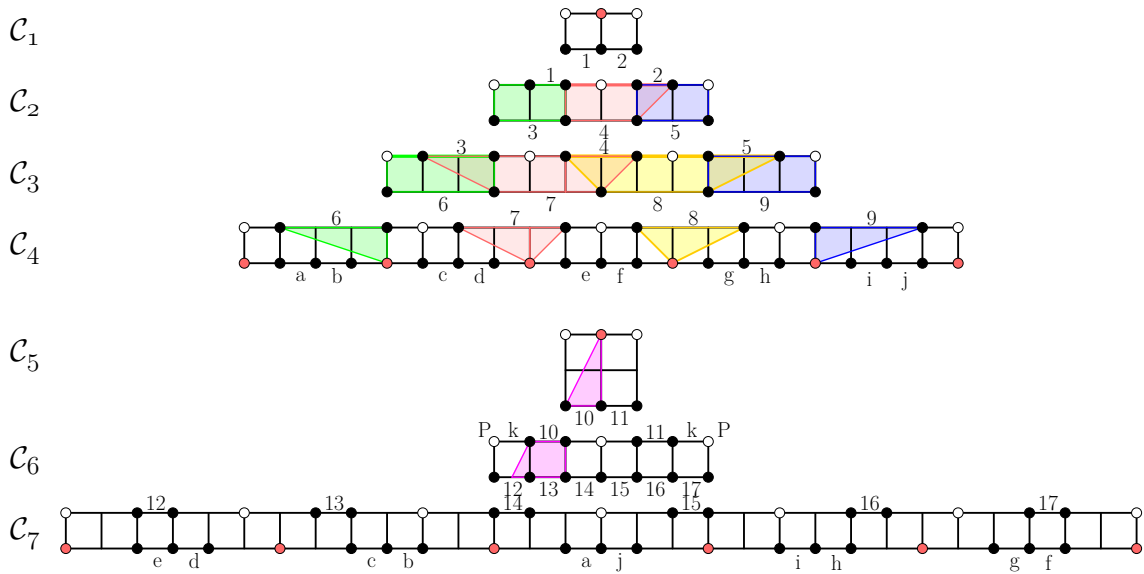
 
\centering
\includestandalone[width=\textwidth]{tikzX5illum}
\vspace{-10pt}
\caption{Illumination of $X(5)\cong \A_{25}$.}
\label{figX(5)illum}
\end{figure}
\end{comment11}

Now let us show that the cylinders $\CC_2$ and $\CC_3$ are illuminated. For $\CC_3$ note that its green, red, yellow and blue regions are illuminated by the cusps of $\CC_4$ through the saddle connections 6, 7, 8 and 9 respectively. For $\CC_2$ note that its green red and blue edges are illuminated by the cusps of $\CC_4$ through the saddle connections 6 and then 3, 7 and then 4, 9 and then 5 respectively. In particular, the saddle connections 1, 2, 3, 4, 5, 6, 7, 8 and 9 are illuminated.

Now we will show that the interior of $\CC_6$ is illuminated except may be for its non-cusp pole $P$. First note that the pink region is illuminated by the cusp of $\CC_5$. Secondly, note that the unipotent subgroup fixes $P$ and thus it fixes the top boundary of $\CC_6$. Therefore, the unipotent orbit of the pink square in $\CC_6$ contains the interior of $\CC_6$. Since illumination is $\SLZ$ invariant property, the interior of $\CC_6$ is illuminated.

Finally, we discuss the illumination of the remaining edges on the boundaries of the cylinders. Edges 10, 11, 12, 13, 14, 15, 16 and 17 are illuminated by the cusps of $\CC_5$ and $\CC_7$. To see the illumination of the edges a, b, c, d, e, f, g, h, i and j one might show that i and j are illuminated by the cusp of $\CC_1$ through the saddle connections 2, then 5 and then 9. Since illumination is an $\SLZ$ invariant property and since the unipotent orbit of the edge i consists of the edges a, c, e and g, and the unipotent orbit of the edge j consists of the edges b, d, f, and h, they are all illuminated.

Now we obtained that all of $\A_{25}$ is illuminated except for may be edge k. Note that P is a wedge sum of to square tori of area $4$ and $1$, thus it is fixed by the whole $\SLZ$. Again, since illumination is $\SLZ$ invariant property, sending k to an edge with a non-zero slope we obtain that all of $\A_{25}$ is illuminated except for may be the non-cusp pole P.

In fact it is possible to show that P is not illuminated by the cusp poles. If there was a line segment connecting P to a cusp pole, one could send it via a matrix from $\SLZ$ to a line segment that connects P to some cusp pole vertically, which is clearly impossible.

\noindent
{\bf Proof for $d=3$ and $5$.} Theorem~\ref{thmillumination} together with the illumination results for $\A_9$ and $\A_{25}$ imply the main result (Theorem~\ref{main}) for $d=3$ and $d=5$ and arbitrary $n$. \qed

%%%%%%%%%%%%%%%%%%%%%%%
%%%%%%%%%%%%%%%%%%%%%%%
%%%%%%%%%%%%%%%%%%%%%%%

\section{Proof for $d=4$} \label{secproof4}

In this section we will give proof of the main result for $d=4$. This proof is quite different in nature from the proofs of the other cases. We will show that there is a degree $3$ branched covering map $f: \A_{16} \to \A_{9}$ that respects the square-tilings. Using this map and the main result for $d=3$ (see \S\ref{secproof35}) we will reduce the problem to the study of the $\SLZ$ orbits of just three points in a single fiber of $f$.

\noindent
{\bf Covers and symmetric groups.} There is a well-known connection between elliptic covers and symmetric groups. We review this connection below.

Let $S_d$ be the symmetric group on $d$ elements. Let $X \in \M_2$ and $\pi: X \to E_0$ be a primitive degree $d$ cover of the square torus with two critical points. One can associate to it a pair of permutations $(s_h, s_v) \in S_d \times S_d$, where $s_h$ is the monodromy of the horizontal loop on $E_0$ and $s_v$ is the monodromy of the vertical loop on $E_0$, satisfying the following properties:
\begin{equation}
\begin{split}
\bullet & \quad s_h, s_v \mbox{ generate a transitive subgroup of } S_d; \mbox{ and} \\
\bullet & \quad [ s_h, s_v] \mbox{ has a cyclic type (2,2)}.
\end{split}
 \label{perm}
\end{equation}

Conversely, any pair $(s_h, s_v) \in S_d \times S_d$ satisfying the above conditions determines a primitive degree $d$ cover $\pi: X \to E_0$ branched over two points.

\noindent
{\bf Construction of the map $f$.} The existence of the covering map $f: \A_{16} \to \A_{9}$ relies on the surjective homomorphism $\phi: S_4 \to S_3$.

Consider Klein four-subgroup $K = \{ (12)(34), (13)(24), (14)(23) \}  \subset S_4$. The quotient $S_4 / K$ is isomorphic to $S_3$. We will denote the quotient homomorphism by $\phi: S_4 \to S_3$. 

Let $(X,\omega) \in \A_{16}$ be any non-vertex of the square-tiling on $\A_{16}$. It defines a unique normalized degree 4 cover $\pi: X \to E_0$. Let $\pi$ be branched over $\pm z$. The corresponding pair of permutations $(s_h, s_v) \in S_4 \times S_4$ that satisfies (\ref{perm}). Then the pair $(\phi(s_h),\phi(s_v)) \in S_3 \times S_3$ also satisfies (\ref{perm}). There is a unique normalized degree 3 cover $\pi': X' \to E_0$ branched over $\pm z$ corresponding to the pair of permutations $(\phi(s_h),\phi(s_v))$. We then define $f: \A_{16}\to  \A_{9}$ to be a unique branched cover such that for any $(X,\omega)$ with two simple zeroes:
$$
f(X,\omega) = (X',\omega'),
$$
where $\omega' = \pi'^*(dz)$.

The map $f$ respects the square-tiling, since both covers $\pi: X \to E_0$ and $\pi': X' \to E_0$ are normalized and branched over $\pm z$. The covering map $f: \A_{16}\to  \A_{9}$ on the level of the square-tilings of $\A_{16}$ and $\A_{9}$ is illustrated in Figure~\ref{figcover43}. The top and bottom cylinders of $\A_{16}$ are respectively a 1-fold and a 2-fold covers of the top cylinder of $\A_9$. Similarly, the other two cylinders of $\A_{16}$ are a 1-fold and a 2-fold covers of the bottom cylinder of $\A_9$. Each square of $\A_{9}$ is labelled by the same letter as its preimages under $f$. The orientation of the letters determines whether the squares of $\A_{16}$ are mapped to $\A_9$ by parallel translations or by $\pi$ rotations. 

\begin{comment12}
\begin{figure}
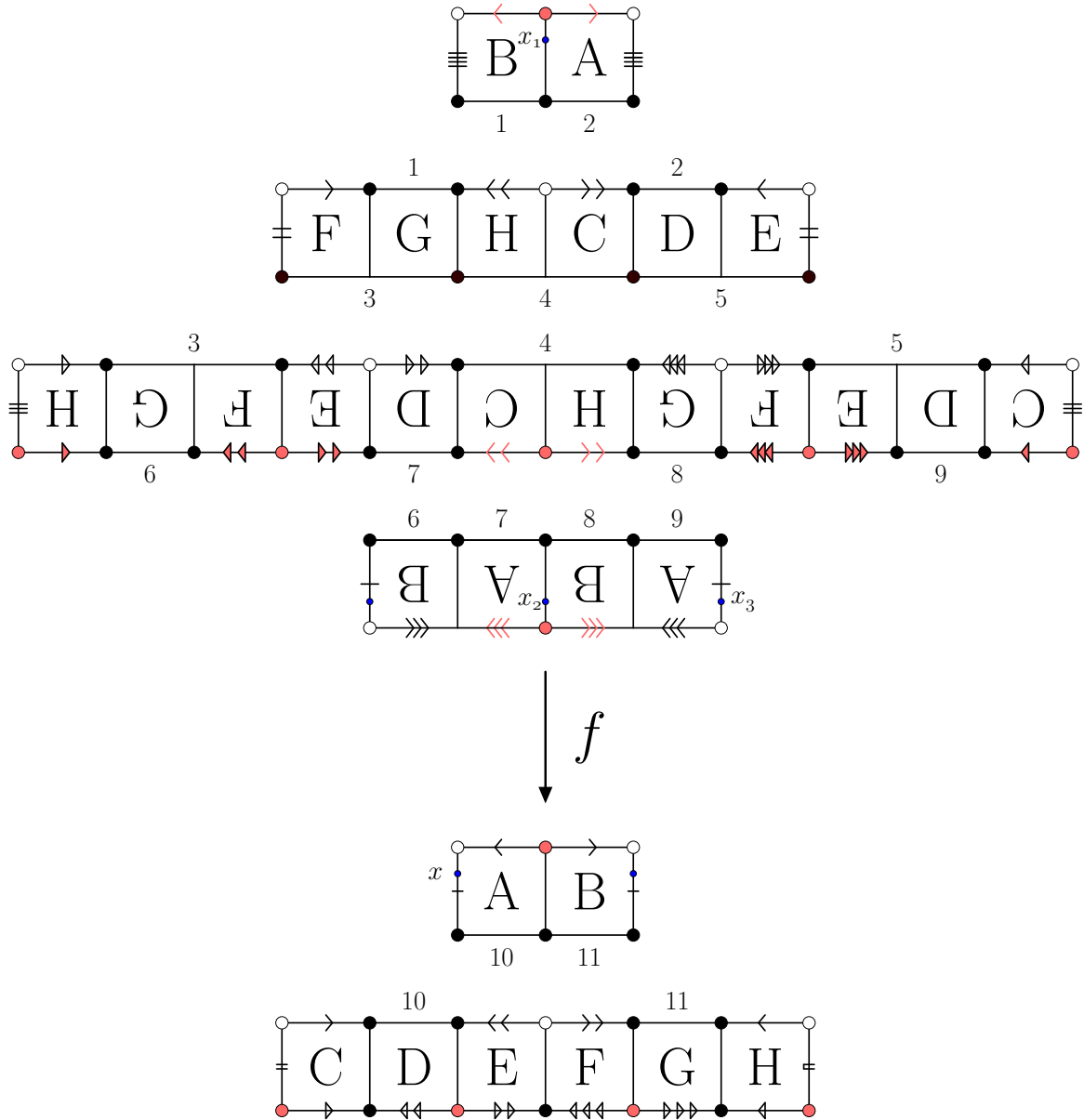

\centering
\includestandalone[width=\textwidth]{tikzcover43}
\vspace{-10pt}
\caption{The degree 3 branched covering map $f: X(4)\to X(3)$ sends squares to squares.}
\label{figcover43}
\end{figure}
\end{comment12}

\noindent
{$\bf 2+2 =_{3} 4$.} Another way of making this construction comes from an elementary fact that the set of 4 elements can be split into two subsets of 2 elements each in 3 different ways. Starting with a degree 4 cover over the torus with two simple ramifications, one can replace each fiber with the three way of splitting it into pairs. That produces a degree 3 cover over the torus with two simple ramifications. One can show that this construction can be completed to give the same covering map $f: \A_{16}\to  \A_{9}$ as above.

\noindent
{\bf Proof for $\bf d=4$.} The set $\A_{16}[1]$ consists of the non-singular vertices of the tiling of $X(4) \cong \A_{16}$. One can verify from the picture of the square-tiling (see Figure~\ref{figcover43}) that all points of $\A_{16}[1]$ lie in a single $\SLZ$ orbit. 

Let $n>1$ be any integer. For any integer $1 \le a \le n-1$, let $x_{a/n} \in \A_9$ be a point on the left edge of the square $A$ that is distance $a/n$ away from the white vertex of $A$ (see Figure~\ref{figcover43}). According to the proof of Theorem~\ref{main} for $d=3$, every $\SLZ$ orbit in $\A_{9}[n]$ contains a representative $x_{1/n}$ when $n$ is even, and a representative $x_{1/n}$ or $x_{2/n}$ when $n$ is odd. Therefore any point in $\A_{16}[n]$ can be sent by a suitable matrix from $\SLZ$ into $f^{-1}(x_{1/n})$ when $n$ is even, and into $f^{-1}(x_{1/n})\cup f^{-1}(x_{2/n})$ when $n$ is odd. 

Let $x = x_{a/n}$ for some $a =1$ or $2$, and $f^{-1}(x) = \{x_1, x_2, x_3\}$ (see Figure~\ref{figcover43}). It suffices to show that for any such $x$ the points $x_1, x_2, x_3$ belong to the same $\SLZ$ orbit.

Recall that $S= \begin{pmatrix} 1& -1 \\ 0&1 \end{pmatrix}$. Then:
$$
S^{2n}: x_2  \mapsto  x_3, \mbox{ when } a=1, \mbox{ and } S^{n}: x_2  \mapsto  x_3, \mbox{ when } a=2.
$$

The image of the top two squares $A$ and $B$ of $\A_{16}$ under the rotation by $\pm \pi/2$ are the two outermost squares $C$ and $H$ in the cylinder of length 12.
The images of their bottoms under $S^2$ are the bottoms of the innermost squares $C$ and $H$.
In its turn, the images of these squares under the rotation by $\pm \pi/2$ are the innermost squares $A$ and $B$ in the cylinder of length 4. Using this one can verify:
$$
R \circ S^2 \circ R: x_1 \mapsto x_2.
$$
Therefore all $x_1, x_2, x_3$ belong to the same $\SLZ$ orbit and $\A_{16}[n]$ consists of two $\SLZ$ orbits when $n>1$ is odd and a single orbit otherwise. \qed

Although this proof hints on the possibility of reducing the case of composite $d$ to prime $d$, unfortunately, the only surjective homomorphisms of symmetric groups $S_d \to S_k$ are given by sign homomorphisms $S_d \to S_2$ and the above homomorphism $S_4 \to S_3$. One can verify that a similar construction of the branched covering map for the sign homomorphism $S_d \to S_2$ gives the discriminant map $\delta: \A_{d^2} \to \A_{2^2} \cong \bfP$.

%%%%%%%%%%%%%%%%%%%%%%%
%%%%%%%%%%%%%%%%%%%%%%%
%%%%%%%%%%%%%%%%%%%%%%%

\newpage 

\appendix 

\section{Counts of elliptic covers} \label{sectioncounts}

In this appendix we review some counts related to the genus $2$ torus covers and give an upper bound on the number of irreducible components of $W_{d^2}[n]$ that does not depend on $n$. 

Recall that congruence subgroup is a subgroup of $\SLZ$ defined as follows:
$$
\displaystyle \Gamma(d) = \left\{  \begin{pmatrix} a&b \\ c&d \end{pmatrix}\in \SLZ \  \middle\vert \ a, d \equiv 1 \mbox{ and } b, c \equiv 0 \mod d
\right\}.
$$
The index of $\Gamma(d) /{\pm Id}$ in $\PSL(2,\Z) = \SLZ /{\pm Id}$ is:
\begin{equation} \label{psl}
\left| \PSL_2(\ZZ{d}) \right|\  =  \begin{cases}
6 \ \ \mbox{for} \ \ d = 2,\\
\\
\displaystyle \frac{d^3}{2} \cdot \prod_{\substack{p | d \\ p \text{ prime}}} (1 - \frac1{p^2}) \ \ \mbox{for} \ \ d \ge 3.
\end{cases}
\end{equation}
Also recall the formula of the index of $\Gamma(d)$ in $\SLZ$:
\begin{equation*}
\left| \SL_2(\ZZ{d}) \right|\ =
\displaystyle d^3 \cdot \prod_{\substack{p | d \\ p \text{ prime}}} (1 - \frac1{p^2}) \mbox{ for all } d.
\end{equation*}

\begin{table}
\centering
\begin{tabular}{ | C{1cm} | C{3cm} | C{8cm} | }
\hline
& & \\
1 & $\deg\delta$ & $\displaystyle \frac{(d-1)}6  \cdot \left| \PSL_2(\ZZ{d}) \right|$  \\ 
& & \\
\hline
& & \\
2 & $g(\A_{d^2})$ & $\displaystyle  \frac{d-6}{12d}  \cdot \left| \PSL_2(\ZZ{d}) \right| + 1$  \\  
& & \\
\hline
& & \\
3 & $\left| P_{ns}(d) \right|$ & $\displaystyle \frac1d  \cdot \left| \PSL_2(\ZZ{d}) \right|$ \\
& & \\
\hline
& & \\
4 & $\left| P_{s}(d) \right|$ & $\displaystyle \frac{5d-6}{12d}  \cdot \left| \PSL_2(\ZZ{d}) \right|$ \\
& & \\
\hline
& & \\
5 & $\left| \A_{d^2}[0] \right|$ & $\displaystyle \frac{3(d-2)}{4d}  \cdot \left| \PSL_2(\ZZ{d}) \right|$  \\
& & \\
\hline
& & \\
6 & \quad $\left| \A^0_{d^2}[0]\right|,$ \newline $d$ is odd  & $\displaystyle \frac{3(d-3)}{8d}  \cdot \left| \PSL_2(\ZZ{d}) \right|$ \\
& & \\
\hline
& & \\
7 & \quad $\left| \A^1_{d^2}[0]\right|,$ \newline $d$ is odd & $\displaystyle \frac{3(d-1)}{8d}  \cdot \left| \PSL_2(\ZZ{d}) \right|$ \\
& & \\
\hline
& & \\
8 & $\left| \A_{d^2}[1]\right|$ & $\displaystyle \frac{(d-2)(d-3)}{3d}  \cdot \left| \PSL_2(\ZZ{d}) \right|$ \\
& & \\
\hline
& & \\ 
9 & $\left| \A_{d^2}[n]\right|$ & $\displaystyle \frac{d-1}{3n} \cdot \left| \PSL_2(\ZZ{d}) \right| \cdot  \left| \SL_2(\ZZ{n}) \right|$ \\
& & \\
\hline
& & \\ 
10 & \quad $\left| \A^0_{d^2}[n]\right|,$ \newline $n$ is odd & $\displaystyle \frac{d-1}{12n} \cdot \left| \PSL_2(\ZZ{d}) \right| \cdot  \left| \SL_2(\ZZ{n}) \right|$ \\
& & \\
\hline
& & \\ 
11 & \quad $\left| \A^1_{d^2}[n]\right|,$ \newline $n$ is odd & $\displaystyle \frac{d-1}{4n} \cdot \left| \PSL_2(\ZZ{d}) \right| \cdot  \left| \SL_2(\ZZ{n}) \right|$ \\
& & \\
\hline
\end{tabular}
\caption{The counts related to elliptic covers of genus 2.}
\label{table}
\end{table}

The following theorem summarizes some results on elliptic covers in genus 2, in particular the ones related to the counts of square-tiled surfaces, which can be found in \cite{Kani06} and \cite{EMS03}:

\begin{theorem} \label{counts} The counts related to elliptic covers given in Table~\ref{table} hold for any integers $d>1$ and $n>1$.
\end{theorem}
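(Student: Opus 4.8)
The plan is to take three entries as known inputs and derive the remaining eight from the geometry of the discriminant map $\delta : \A_{d^2} \to \bfP$ developed in \S\ref{secabs}--\S\ref{secmodular}. The inputs are: the degree $N := \deg\delta = \tfrac{d-1}{6}\,|\PSL_2(\ZZ{d})|$ (row~1, from \cite{EMS03} and \cite{Kani06}); the number $|\A_{d^2}[0]|$ of primitive $d$-square-tiled surfaces with a single double zero (row~5); and the number $|\A_{d^2}[1]|$ of primitive $d$-square-tiled surfaces in $\Omega\M_2(1,1)$ with the two branch points collided (row~8). The latter two are Hurwitz counts of pairs $(s_h,s_v)\in S_d\times S_d$ generating a transitive group with a prescribed commutator cycle type, recorded in \cite{EMS03,Kani06}; every other row will then follow by elementary bookkeeping against the structural results of this paper.

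First I would dispatch the torsion counts (rows 9--11), reproducing the computation already inside the proof of Theorem~\ref{thmformula}. For $n>1$ the set $\bfP[n]$ of primitive $n$-torsion points of the pillowcase avoids the four branch vertices $Q_i$, so $\delta$ is unramified over $\bfP[n]$; since $\A_{d^2}[n]=\delta^{-1}(\bfP[n])$ by Theorem~\ref{thmlocating},
\[
|\A_{d^2}[n]| = N\cdot|\bfP[n]| = \frac{d-1}{6}\,|\PSL_2(\ZZ{d})|\cdot\frac{2}{n}\,|\SL_2(\ZZ{n})|,
\]
which is row~9. Rows 10 and 11 are then immediate from Proposition~\ref{ratioorbits} (the $3{:}1$ ratio of the two spins), which forces $|\A^0_{d^2}[n]|=\tfrac14|\A_{d^2}[n]|$ and $|\A^1_{d^2}[n]|=\tfrac34|\A_{d^2}[n]|$.

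Next come the pole and Euler-characteristic rows (2--4). Row~3 follows by identifying $P_{ns}(d)$ with the cusps of $X(d)\cong\A_{d^2}$ (Theorem~\ref{thmcusps}) and the classical cusp count for $\Gamma(d)$, in which every cusp has width $d$, giving $|P_{ns}(d)|=\tfrac1d|\PSL_2(\ZZ{d})|$. For row~4 I would use that the vertices of the square-tiling are exactly $\delta^{-1}(\{Q_0,Q_1,Q_2,Q_3\})$, with local degrees $2,3,1$ at points of $\A_{d^2}[1]$, $\A_{d^2}[0]$, and $P_s(d)\cup P_{ns}(d)$ respectively (Proposition~\ref{cornerfibers}, Theorem~\ref{thmvertices}). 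Summing local degrees over each of the four fibers gives
\[
4N = 2\,|\A_{d^2}[1]| + 3\,|\A_{d^2}[0]| + |P_s(d)| + |P_{ns}(d)|,
\]
which solves for $|P_s(d)|=\tfrac{5d-6}{12d}|\PSL_2(\ZZ{d})|$ once rows 1, 3, 5, 8 are in hand. Row~2 then follows from Riemann--Hurwitz for $\delta$, whose ramification lies only over the $Q_i$: since $e_p-1$ equals $1$ at each point of $\A_{d^2}[1]$ and $2$ at each point of $\A_{d^2}[0]$,
\[
2-2g(\A_{d^2}) = 2N - |\A_{d^2}[1]| - 2\,|\A_{d^2}[0]|,
\]
and substituting rows 1, 5, 8 yields $g(\A_{d^2})=\tfrac{d-6}{12d}|\PSL_2(\ZZ{d})|+1$.

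The remaining and most delicate entries are rows 6 and 7, the spin refinement of $|\A_{d^2}[0]|$ for odd $d$. Here the split is in the ratio $(d-3):(d-1)$, which is \emph{not} symmetric, so it cannot be produced by an orbit-symmetry argument of the kind used in Proposition~\ref{ratioorbits}; one must genuinely partition the double-zero covers by their integer Weierstrass point invariant (the spin $\epsilon$ of \S\ref{secspin}), i.e.\ by whether the branch point coincides with the distinguished Weierstrass image $P_0$ carrying three Weierstrass points in the profile $(3,1,1,1)$ of Proposition~\ref{WPprofile}. I expect this spin-refined Hurwitz enumeration, carried out for general odd $d$ in the permutation model with $[s_h,s_v]$ a single $3$-cycle and cross-checked against the two-component structure of $W_{d^2}$ in \cite{McM05a} and the counts of \cite{HL06,Kani06}, to be the main obstacle. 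The consistency relation $|\A^0_{d^2}[0]|+|\A^1_{d^2}[0]|=|\A_{d^2}[0]|$ reduces the task to pinning down just one of the two numbers.
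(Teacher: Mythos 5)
Your proposal is essentially sound and its numerics check out: the fiber-degree identity $4\deg\delta = 2\,|\A_{d^2}[1]| + 3\,|\A_{d^2}[0]| + |P_s(d)| + |P_{ns}(d)|$ and the Riemann--Hurwitz relation $2-2g = 2\deg\delta - |\A_{d^2}[1]| - 2\,|\A_{d^2}[0]|$ are both consistent with the tabulated formulas, so rows 2 and 4 do follow from rows 1, 3, 5, 8 as you claim. The paper's treatment is organized differently: it cites an external reference for \emph{every} row (Kani and Eskin--Masur--Schmoll for rows 1, 3, 4, 5, 9; Shimura's classical genus formula for row 2; Zmiaikou for row 8; Leli\`evre--Royer for rows 6--7) and records the two identities you use only as \emph{consistency checks} after the fact, together with a third one, the degree of the divisor of $q$, namely $|\A_{d^2}[0]| - |P_{ns}(d)| - |P_s(d)| = 4g-4$ (which is a consequence of your two identities, so nothing is lost). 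Your inversion of this logic --- taking rows 1, 5, 8 as the only Hurwitz-theoretic inputs and deriving rows 2, 3, 4 internally from Proposition~\ref{cornerfibers} and Theorem~\ref{thmcusps} --- is a legitimate and slightly more economical route; rows 9--11 you handle exactly as the paper does via Theorem~\ref{thmlocating}, Proposition~\ref{ratioorbits} and Theorem~\ref{thmformula}.

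The one genuine gap is rows 6 and 7. You correctly observe that the $(d-3):(d-1)$ split is asymmetric and cannot come from a symmetry argument like Proposition~\ref{ratioorbits}, but you then only sketch a plan for the spin-refined Hurwitz enumeration and flag it as ``the main obstacle'' without carrying it out. The paper does not carry it out either: it simply cites Theorem~1 of \cite{LR06}, where these two counts (the orbitwise counts in $\H(2)$ refined by the integer Weierstrass point invariant) are proved. You should do the same rather than attempt the enumeration from scratch; with that citation in place your argument covers all eleven rows.
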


Below we give more details on each count and references for proofs.

1. Here $\deg\delta$ is the degree of the discriminant map $\delta: \A_{d^2} \to \bfP$ (see \S\ref{secabs}).
The formula for $\deg\delta$ can be found in \cite{Kani06} (equation (31) and Corollary 30). It also follows from the count of primitive degree $d$ covers of the square torus branched over two given points in \cite{EMS03} (remark after Lemma 4.9). The number of such covers is $\deg \rho = \deg (\sigma \circ \delta) = 4\deg\delta$ (see diagram~(\ref{diag:rhodelta})).
 
2.  Here $g(\A_{d^2})$ denotes the genus of a Riemann surface $\A_{d^2}$. In Theorem~\ref{thmmodular} we established an isomorphism $\A_{d^2} \cong X(d)$. The genus of $X(d)$ is a classical result from the theory of modular curves and can be found in \cite{Shi71} (equation (1.6.4)).

3. In \S\ref{secmodular} we showed that all of the cusps of $Y(d) \subset X(d) \cong \A_{d^2}$ are simple poles of $q$. Recall that $P_{ns}(d)$ denotes the subset of simple poles of the quadratic differential $q$ on $\A_{d^2} \cong X(d)$ that are cusps of the modular curve $Y(d)$. These points correspond to the genus 2 d-square-tiled surfaces with a non-separating node (see \S\ref{secabs}). The formula for $\left| P_{ns}(d) \right|$ can be found in \cite{Kani06} (equation (27)). Note that the formula $\left| P_{ns}(d) \right|$ agrees with the classical formula for the number of cusps of $Y(d)$.

4. Recall that $P_{s}(d)$ denotes the subset of simple poles of the quadratic differential $q$ on $\A_{d^2}$ that are not cusps. These points of $\A_{d^2}$ correpond to the genus 2 d-square-tilied surfaces with a separating node (see \S\ref{secabs}). The formula for $\left| P_{s}(d) \right|$ can be found in \cite{Kani06} (equation (27)).

5. Here $\A_{d^2}[0]$ denotes the set of primitive $d$-square-tiled surfaces in $\Omega\M_2(2)$. This set is in bijection with the set of primitive degree $d$ covers of $E_0$ branched over the origin with a single ramification point of order 3 (see \S\ref{secbackground}). The formula for $\A_{d^2}[0]$ can be found in \cite{EMS03} (remark after Lemma 4.11). 

Recall from Theorem~\ref{thmvertices} that $\A_{d^2}[0] \subset \A_{d^2}$ is also the set of simple zeroes of $q$. It is known that for any Riemann surface $X$, the difference of the number of simple zeroes of a meromorphic quadratic differential on $X$ is equal to $4g-4$, where $g$ is the genus of $X$. Using the results of 2-5 one can verify that:
\begin{align*}
\left|\A_{d^2}[0]\right| - \left|P_{ns}(d)\right| - \left|P_{s}(d)\right|  = 4\cdot g(\A_{d^2})-4.
\end{align*}

6 and 7. In the case of odd $d>3$, $\A_{d^2}[0]$ consists of two $\SLZ$ orbits $\A^0_{d^2}[0]$ and $\A^1_{d^2}[0]$ distinguished by an analogue of the spin invariant. The formulas for $\A^0_{d^2}[0]$ and $\A^1_{d^2}[0]$ are given in \cite{LR06} (Theorem 1).

8. Here $\A_{d^2}[1]$ denotes the set of primitive $d$-square-tiled surfaces in $\Omega\M_2(1,1)$. This set is in bijection with the set of primitive degree $d$ covers of $E_0$ branched over the origin with two ramification points of order 2 (see \S\ref{secbackground}). The formula for $\A_{d^2}[1]$ can be found in \cite{Zm11} (equation (3.11)). 

Recall from Proposition~\ref{cornerfibers} that $\delta$ is has ramifications of order 3 at $\A_{d^2}[0]$ and ramifications of order 2 at $\A_{d^2}[1]$. Using the results of 1, 2, 5 and 8 one can verify that Riemann-Hurwitz formula for the map $\delta: \A_{d^2} \to \bfP \cong \P^1$:
$$2-2g(\A_{d^2}) = 2 \deg \delta - 2 \left| \A_{d^2}[0]\right|  - \left|\A_{d^2}[1]\right|. $$

9. Here $\A_{d^2}[n]$ denotes the set primitive $n$-rational points of the squares in the tiling of $\A_{d^2}$. This set is in bijection with the set of primitive degree $d$ covers of $E_0$ branched over the origin and a torsion point of order $n$. The formula for $\left| \A_{d^2}[n]\right|$ can be found in \cite{Kani06} (Theorem 3).

10 and 11. In the case of odd $n>1$, $\A_{d^2}[n]$ consists of two $\SLZ$ orbits $\A^0_{d^2}[n]$ and $\A^1_{d^2}[n]$ distinguished by the spin invariant $\epsilon$. These formulas are obtained in Theorem~\ref{thmformula}. For even $d$ they can also be found in \cite{KM17} (Theorem 1.1).

\begin{theorem} \label{thmsimplebound}
For any $d$ and $n>1$ we have:
$$\left| {\A_{d^2}[n]}\big/{\SLZ} \right| \le \frac{2(d-1)}3 \ \cdot \left| \PSL_2(\ZZ{d}) \right|.$$
\end{theorem}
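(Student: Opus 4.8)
The plan is to exploit the $\SLZ$-equivariant discriminant map and reduce the count of orbits upstairs to a count of orbits on the pillowcase, where everything is transparent. First I would observe that by Theorem~\ref{thmlocating} the map $\delta$ restricts to a surjection $\delta \colon \A_{d^2}[n] \to \bfP[n]$, and that this map is $\SLZ$-equivariant because $\delta(X,\omega)$ is read off from the branch-point difference $z_1-z_2$, which transforms linearly under the affine $\SLZ \subset \GL$ action. Since $n>1$, no point of $\bfP[n] = \iota\backslash\frac1n\Z[i]^*/2\Z[i]$ is a vertex $Q_i$ of the pillowcase (a vertex is $2$-torsion, which would force $\gcd(a,b,n)=n>1$), so $\delta$ is a local homeomorphism over $\bfP[n]$ and every fiber has exactly $\deg\delta = \frac{d-1}6\left|\PSL_2(\ZZ d)\right|$ points (Table~\ref{table}, row $1$).

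Next I would bound the number of $\SLZ$-orbits of the base $\bfP[n]$. The group $\SLZ$ acts transitively on the primitive $n$-torsion $E_0[n]^*$, because $\SLZ \to \SL_2(\ZZ n)$ is surjective and $\SL_2(\ZZ n)$ acts transitively on primitive vectors of $(\ZZ n)^2$. The forgetful map $\bfP[n] \to E_0[n]^*$ is $\SLZ$-equivariant and two-to-one, since $|\bfP[n]| = 2\,|E_0[n]^*|$ as established in the proof of Theorem~\ref{thmformula}; hence $\bfP[n]$ consists of at most two $\SLZ$-orbits. Concretely these are $\bfP[n]^0$ and $\bfP[n]^1$ when $n$ is odd, and a single orbit when $n$ is even.

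Finally I would invoke the elementary orbit-counting principle for an equivariant surjection of $G$-sets $f \colon A \to B$: if $B = \bigsqcup_{i} G\cdot b_i$, then the number of $G$-orbits in $A$ equals $\sum_i$ (number of $\Stab_G(b_i)$-orbits in $f^{-1}(b_i)$), and each summand is bounded by $|f^{-1}(b_i)|$. Applying this with $f=\delta$, with at most two base orbits, and with every fiber of size $\deg\delta$, gives
\[
\left| \A_{d^2}[n]\big/\SLZ \right| \;\le\; 2\deg\delta \;=\; \frac{d-1}{3}\cdot\left|\PSL_2(\ZZ d)\right| \;\le\; \frac{2(d-1)}{3}\cdot\left|\PSL_2(\ZZ d)\right|.
\]

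The argument is short because all the arithmetic content is already packaged in the formula for $\deg\delta$ and in Theorem~\ref{thmlocating}; the only points needing care are the unramifiedness of $\delta$ over $\bfP[n]$ and the fiberwise orbit-counting step, and I expect the latter — correctly matching $G$-orbits upstairs with $\Stab_G(b_i)$-orbits in a single fiber — to be the main, if mild, obstacle. I would also remark that this route in fact yields the sharper constant $\frac{d-1}{3}$, so the stated bound holds with room to spare.
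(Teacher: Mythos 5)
Your route is genuinely different from the paper's and, once one step is repaired, it actually yields the sharper constant $\tfrac{d-1}{3}$. The paper argues in one stroke: by transitivity of $\SLZ$ on $E_0[n]^*$ every orbit meets the set of covers branched over the two fixed points $0$ and $i/n$, and that set has exactly $\deg\rho = 4\deg\delta = \tfrac{2(d-1)}{3}\left|\PSL_2(\ZZ{d})\right|$ elements by the count of \cite{EMS03}. You instead fiber $\A_{d^2}[n]$ equivariantly over $\bfP[n]$ via $\delta$ and multiply the fiber size $\deg\delta$ by the number of base orbits. The equivariance of $\delta$, its unramifiedness over $\bfP[n]$ for $n>1$, and the fiberwise orbit-counting step are all fine.

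The gap is in your bound on the number of $\SLZ$-orbits of $\bfP[n]$. The ``forgetful map $\bfP[n]\to E_0[n]^*$'' is not well-defined: a class $\pm z \mod 2\Z[i]$ determines $z \mod \Z[i]$ only up to sign, so the natural target is $E_0[n]^*/\pm$, and the resulting map is generically four-to-one rather than two-to-one (consistent with $\left|\bfP[n]\right| = 2\left|E_0[n]^*\right|$ while $\left|E_0[n]^*/\pm\right|$ is about half of $\left|E_0[n]^*\right|$). The fact you need --- that $\bfP[n]$ consists of at most two $\SLZ$-orbits --- is nevertheless true and can be proved directly: identify $\bfP[n]$ with $\left\{ v\in(\ZZ{2n})^2 \mid \gcd(v,n)=1\right\}/\pm$; the orbits of $\SL_2(\ZZ{2n})$ on $(\ZZ{2n})^2$ are the level sets of $\gcd(v,2n)$, and $\gcd(v,n)=1$ forces $\gcd(v,2n)\in\{1,2\}$. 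Equivalently, this is exactly the content of Proposition~\ref{propproof2}, since $\A_4\cong\bfP$. With either repair your argument closes and gives $\left|\A_{d^2}[n]\big/\SLZ\right| \le 2\deg\delta = \tfrac{d-1}{3}\left|\PSL_2(\ZZ{d})\right|$, which implies the stated bound.
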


\begin{proof}
The group $\SLZ$ acts transitively on the set of primitive $n$-torsion points of $E_0$. Therefore every point $(X,\omega) \in \A_{d^2}[n]$ can taken via $\SLZ$ to an Abelian differential $(X',\omega')$, such that $X$ admits a primitive degree $d$ cover of $E_0$ simply branched over the origin and $z = \frac{i}n \mod \Z[i]$. Using the formula for the number of primitive degree $d$ covers of $E_0$ simply branched over two given points obtained in \cite{EMS03} (remark after Lemma 4.9), we have:
\[
\left| {\A_{d^2}[n]}\big/{\SLZ} \right| \le\  \frac{2(d-1)}3 \ \cdot \left| \PSL_2(\ZZ{d}) \right|.
\pushQED{\qed} 
\qedhere
\popQED
\]
\let\qed\relax
\end{proof}

\section{The pagoda structure of the modular curves} \label{secpagoda}
In this appendix we present the {\em pagoda structure} of the modular curve $X(d)$ that arises for prime $d$.
In particular we will give a simpler and more uniform geometric construction of the square-tiling of $X(d) \cong \A_{d^2}$ for any prime $d$ (compare to \S\ref{sectiling}). We will conclude by presenting pictures of the pagoda structures of $X(d)$ for $d=7,11,13$ and $17$ (see Figure~\ref{figX(7)}, \ref{figX(11)}, \ref{figX(13)} and \ref{figX(17)}).

Throughout this section we will assume $d$ is prime, however analogous results must hold for any $d>1$. We will follow the plan below:

\begin{enumerate}
\item Describe the pagoda structure of $X(d) \cong \A_{d^2}$.

\item Determine the types of singularities on the boundaries of the cylinders of $\A_{d^2}$.

\item Determine the identifications between these boundaries.
\end{enumerate}
The proofs of the main results of this appendix are postponed till the last subsection.

\noindent
{\bf Horizontal cylinders of $\A_{d^2}$.} We begin by reviewing the enumeration of the horizontal cylinders of $(\A_{d^2},q)$ (Theorem~\ref{thmdecomposition}) for prime $d$ (Remark~\ref{remarkordering}):

\begin{proposition} \label{prop:primedecomposition}
For any prime $d$ the absolute period leaf $(\A_{d^2}, q)$ naturally decomposes into a union of horizontal cylinders, whose boundaries are unions of saddle connections, and for which the following conditions hold:

(i) (enumeration) The set of horizontal cylinders $Cyl(\A_{d^2})$ is in bijection with the set of unordered pairs $ \left\{(w_1, s_1), (w_2, s_2)\right\}  \in \Sym^2\N^2,$ satisfying the following conditions:

\begin{itemize}
\item (area) $s_1 w_1 + s_2 w_2 = d$; and

\item (primitivity) $\gcd(s_1, s_2) = 1$.
\end{itemize}

(ii) (dimensions) The height of the cylinder $\CC =  \left\{(w_1, s_1), (w_2, s_2)\right\} $ is
$H_{\CC} = \min(s_1,s_2),$
its circumference is
$W_{\CC} = w_1 w_2 (w_1 + w_2)$.
\end{proposition}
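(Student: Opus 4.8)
The plan is to deduce Proposition~\ref{prop:primedecomposition} directly from Theorem~\ref{thmdecomposition} by specializing every ingredient to the prime case, the single substantive input being that primality of $d$ forces $\gcd(w_1,w_2)=1$. Concretely, I would first invoke Theorem~\ref{thmdecomposition}, which already gives the natural decomposition of $(\A_{d^2},q)$ into horizontal cylinders indexed by unordered pairs of triples $\{(w_1,s_1,T_1),(w_2,s_2,T_2)\}$ subject to the area, twist, and primitivity conditions, together with the height and circumference formulas. The claim that the cylinder boundaries are unions of saddle connections is already part of the horizontal-foliation description in \S\ref{sectiling}: each boundary is a union of singular leaves, i.e. saddle connections, so nothing new is required there.

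The heart of the argument is the following observation. Suppose $p=\gcd(w_1,w_2)>1$. Since $p\mid w_1$ and $p\mid w_2$, the area condition $s_1w_1+s_2w_2=d$ gives $p\mid d$, and as $d$ is prime this forces $p=d$. But then $w_1,w_2\ge d$, so $s_1w_1+s_2w_2\ge w_1+w_2\ge 2d>d$ (recall $s_i\ge 1$), contradicting the area condition. Hence $\gcd(w_1,w_2)=1$ for every cylinder when $d$ is prime.

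With coprimality of $w_1$ and $w_2$ in hand, the three remaining simplifications are mechanical. The twist condition $0\le T_1,T_2<\gcd(w_1,w_2)=1$ forces $T_1=T_2=0$, so each unordered pair of triples collapses to an unordered pair $\{(w_1,s_1),(w_2,s_2)\}\in\Sym^2\N^2$; this yields the enumeration in $(i)$. The primitivity condition $\gcd(T_1s_2-T_2s_1,w_1,w_2)=1$ becomes $\gcd(0,w_1,w_2)=\gcd(w_1,w_2)=1$, which now holds automatically, leaving only $\gcd(s_1,s_2)=1$. The height formula $H_{\CC}=\min(s_1,s_2)$ is unchanged. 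For the circumference, coprimality gives $\gcd(w_1,w_1+w_2)=\gcd(w_1,w_2)=1$ and likewise $\gcd(w_2,w_1+w_2)=1$, so $\gcd(w_1w_2,w_1+w_2)=1$; combining this with $\lcm(w_1,w_2)=w_1w_2$ yields
$$
W_{\CC}=\lcm(w_1,w_2,w_1+w_2)=\lcm(w_1w_2,\,w_1+w_2)=w_1w_2(w_1+w_2),
$$
which is the formula in $(ii)$.

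I do not anticipate a genuine obstacle here, since the statement is a clean specialization already foreshadowed in Remark~\ref{remarkordering}; the only point requiring any care is the coprimality deduction above, which is where primality of $d$ is used in an essential way. Everything else is a substitution of $\gcd(w_1,w_2)=1$ into the conditions and the circumference formula of Theorem~\ref{thmdecomposition}.
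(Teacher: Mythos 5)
Your proposal is correct and matches the paper's treatment: the proposition is stated there as a direct specialization of Theorem~\ref{thmdecomposition} via Remark~\ref{remarkordering}, whose content is exactly that primality of $d$ forces $\gcd(w_1,w_2)=1$, collapsing the twist data and simplifying the primitivity condition and the $\lcm$ formula just as you describe. Your explicit derivation of $\gcd(w_1,w_2)=1$ from the area condition is the one detail the paper leaves implicit, and it is correct.
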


The integers $w_1,s_1,w_2,s_2$ can ordered in such a way that $w_1 < w_2$ if $s_1=s_2=1$, and $s_1<s_2$ otherwise. In this case we will denote the corresponding horizontal cylinder by $(w_1,s_1,w_2,s_2)$.
We distinguish three types of horizontal cylinders $\CC = (w_1,s_1,w_2,s_2)$ of $\A_{d^2}$:

\begin{itemize}
\item (lighthouse) $w_1=w_2=1$;

\item (body) $s_1 < s_2, w_1 \ne w_2$; and

\item (eave) $s_1=s_2=1$.
\end{itemize}

\noindent
{\bf Pagoda structure.} When $d>2$ is prime, the modular curve $X(d)$ has a {\em pagoda structure}, a natural decomposition of $X(d)$ into topological disks, called {\em stories}. Each story is a union of a particular subset of the squares of the tiling of $\A_{d^2} \cong X(d)$. Each story consists of layers of horizontal cylinders that are stacked upon each other in descending order starting from the longest one (eave) at the bottom and with the most narrow one (lighthouse) at the top. The bottoms of the eaves, are linked among each other. All other edges are folded with the adjacent ones. Below we make this description more precise.

When $d>2$ is prime, the set of horizontal cylinders $Cyl(\A_{d^2})$ is a disjoint union of $\frac{d-1}2$ ordered subsets: 
$$\displaystyle S^{(i)} = \left\{ \CC^{(i)}_1, \CC^{(i)}_2, \ldots, \CC^{(i)}_{n_i} \right\}, \mbox{ for every } i = 1, \ldots, \frac{d-1}2,
$$ 
called {\em stories of the pagoda}, satisfying the following properties:
\begin{enumerate}
\item Each story $\mathcal S_i = \bigcup_{\CC \in S^{(i)}} \CC$ is homeomorphic to a disk.

\item The circumferences of the cylinders in each story $S^{(i)}$ are strictly decreasing:
$$W_{\CC^{(i)}_{k+1}} <  W_{ \CC^{(i)}_{k}}, \mbox{ for each } 1 \le k < n_i.$$

\item The heights of the cylinders in each story $S^{(i)}$ are non-decreasing:
$$H_{\CC^{(i)}_{k+1}} \ge H_{ \CC^{(i)}_{k}}, \mbox{ for each } 1 \le k < n_i.$$

\item Each story $S^{(i)}$ starts with the eave $\CC^{(i)}_1 =  (k_i, 1, d-k_i, 1)$, for some $1 \le k_i \le (d-1)/2$, and ends with the lighthouse $\CC^{(i)}_{n_i}=  (1, i, 1, d-i)$. All other cylinders $\CC^{(i)}_{j} \in S^{(i)}$, for $1 < j < n_i$ are body.

\item Every non-eave cylinder $\CC^{(i)}_{k}$ in the story $S^{(i)}$ is determined by the previous one using a simple operation analogous to Euclidean algorithm:
$$
\CC^{(i)}_{k-1} = \{ (w_1, s_1), (w_2, s_2)\}, \mbox{ where } w_1 < w_2\implies 
\CC^{(i)}_{k} = \left\{(w_1+w_2, s_1), (w_2, s_2-s_1)\right\}.
$$
Conversely, every non-lighthouse cylinder $\CC^{(i)}_{k}$ in the story $S^{(i)}$ is determined by the next one:
$$
\CC^{(i)}_{k+1} = \{ (w_1, s_1), (w_2, s_2)\}, \mbox{ where } s_1 < s_2 \implies 
\CC^{(i)}_{k} = \left\{(w_1+w_2, s_1), (w_2, s_2-s_1)\right\}.
$$

\item The cylinders $\CC^{(i)}_j$ and $\CC^{(i)}_{j+1}$, for $1 \le j < n_i$, are adjacent: the edges of the top boundary of $\CC^{(i)}_j \in S^{(i)}$ are only identified via folds with each other or by translation to the edges of the bottom boundary of $\CC^{(i)}_{j+1}$. Informally, each story looks like a pyramid. 

\item The edges of the top boundary of a lighthouse are only identified via a fold with each other.

\item The edges of the bottom boundaries of all eaves are only identified with each other via folds and translations.
\end{enumerate}

This section is dedicated to the proof of the following statement:

\begin{theorem} \label{thm:pagoda}
For every prime $d>2$, the square-tiling of the absolute period leaf $\A_{d^2}$ has a pagoda structure,~i.e. decomposes into $\frac{d-1}2$ stories $\mathcal S_i$, satisfying the properties 1-8 above.
\end{theorem}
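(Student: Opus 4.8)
The plan is to separate the statement into a \emph{combinatorial} backbone and its \emph{geometric} realization, and to prove each property in turn. The combinatorial object is the set $Cyl(\A_{d^2})$, which by Proposition~\ref{prop:primedecomposition} is identified with unordered pairs $\{(w_1,s_1),(w_2,s_2)\}\in\Sym^2\N^2$ satisfying $s_1w_1+s_2w_2=d$ and $\gcd(s_1,s_2)=1$. The first step is to show that the mediant-type operation of property~5 organizes this set into $\frac{d-1}{2}$ disjoint chains. Starting from a lighthouse $\LL_i=(1,i,1,d-i)$ (ordered by $s_1=i<s_2=d-i$, which holds since $i\le(d-1)/2$) and iterating the descending rule $\{(w_1,s_1),(w_2,s_2)\}\mapsto\{(w_1+w_2,s_1),(w_2,s_2-s_1)\}$ performs a slow (subtractive) Euclidean algorithm on the pair $(s_1,s_2)$ while taking mediants of the $w$'s. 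A direct computation shows the rule preserves both the area relation $s_1w_1+s_2w_2=d$ and primitivity $\gcd(s_1,s_2)=1$; since $\gcd(s_1,s_2)=1$, the $s$-coordinates reach $(1,1)$ after finitely many steps, so the chain terminates at an eave $(k_i,1,d-k_i,1)$. I would then check that sending a chain to its terminal lighthouse is a bijection onto the $\frac{d-1}{2}$ lighthouses and meets each eave exactly once, so the chains partition $Cyl(\A_{d^2})$; this gives property~4 and the existence of the decomposition into stories.

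The monotonicity statements (properties~2 and~3) are then immediate from the recursion. The circumference $W_\CC=w_1w_2(w_1+w_2)$ strictly increases under the descending rule, since the ratio of the new circumference to the old equals $(w_1+2w_2)/w_1>1$, giving the strict decrease toward the lighthouse; the height $H_\CC=\min(s_1,s_2)$ satisfies $\min(s_1,s_2-s_1)\le\min(s_1,s_2)$, giving the non-decrease toward the lighthouse. Property~5 is the definition of the chains, and its converse (ascending) form follows by inverting the elementary step.

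The geometric content is properties~1, 6, 7 and~8, and I would deduce these from the explicit gluing procedure of \S\ref{sectiling} together with the boundary descriptions of lighthouses and eaves in \S\ref{seclighteave}. The key lemma to establish is that one descending step of the recursion is exactly realized by the zipping degeneration: letting the height parameter $h$ of a generic $(X,\omega)$ in $\CC_{k+1}$ tend to its extreme contracts a single horizontal saddle connection, and the limiting stable form, re-split in the other direction, is the differential sitting on the top boundary of the cylinder $\CC_k$ produced by the mediant operation. Matching cylinder coordinates across this limit shows that the top boundary of $\CC_k$ is glued --- partly by translation to the shorter bottom boundary of $\CC_{k+1}$ and partly by $\pi$-rotations (folds) onto itself, forced because the poles of $q$ have cone angle $\pi$ (Theorem~\ref{thmvertices}) --- exactly as in property~6. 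Property~7 (the fold at the top of a lighthouse) and the eave-bottom analysis underlying property~8 are already recorded in the lighthouse and eave pictures of \S\ref{seclighteave}: the top of $\LL_i$ carries one cusp pole and one non-cusp pole joined by folded edges, while the bottom of every eave consists of cusp poles whose joining edges are the only identifications not internal to a single story, so they link the stories to one another.

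Finally, property~1 follows by topological bookkeeping once adjacency is known: each cylinder is an annulus, consecutive ones are glued along a common circular arc (the leftover length of the longer boundary folding onto itself), and the terminal lighthouse is capped by the fold at its top. A chain of annuli stacked in this ``pyramid'' fashion and capped by a disk is again a disk, whose single boundary circle is the folded bottom of the eave. I expect the main obstacle to be the geometric matching in the third paragraph: verifying rigorously, in cylinder coordinates and across the degenerating family, that the two zipping limits agree and that the leftover lengths fold correctly, so that the combinatorial recursion is faithfully mirrored by the edge identifications. The enumeration, the monotonicity, and the final disk argument are comparatively routine once this adjacency lemma is in place.
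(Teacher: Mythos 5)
Your plan follows the paper's own route: the same mediant/subtractive--Euclidean recursion on $\{(w_1,s_1),(w_2,s_2)\}$ organizes $Cyl(\A_{d^2})$ into $\frac{d-1}{2}$ chains running from lighthouse to eave (properties 4--5, with your area, primitivity and monotonicity computations agreeing with the paper's), and the geometric content is obtained exactly as in the paper by matching the two zipping limits of the 3-cylinder decomposition in cylinder coordinates (Propositions~\ref{cylboundary}, \ref{prop:gluingwithin} and~\ref{prop:gluingbetween}) and then closing each story into a disk (Corollary~\ref{cordisk}). The adjacency verification you single out as the main obstacle is precisely what the paper's explicit zipping-up/zipping-down coordinate formulas carry out, so your outline is correct and essentially coincides with the paper's argument.
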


\noindent
{\bf Example: pagoda structure of $\A_{25}$.} The square-tiling of the absolute period leaf $\A_{25} \cong X(5)$ was presented in the introduction and reproduced below (see Figure~\ref{figX(5)}). It has two stories $S^{(1)}$ and $S^{(2)}$. The story $S^{(1)}$ consists of four cylinders and $S^{(2)}$ consists of three cylinders:
\begin{align*}
&  (1, 1, 1, 4) \mbox{ with } W = 2 \mbox{ and } H=1  &  & (1, 2, 1, 3)   \mbox{ with } W = 2 \mbox{ and } H=2 \\
&  (2, 1, 1, 3)   \mbox{ with } W = 6 \mbox{ and } H=1 &  &  (1, 1, 2, 2)  \mbox{ with } W = 6 \mbox{ and } H=1\\
& (3, 1, 1, 2)   \mbox{ with } W = 12 \mbox{ and } H=1 & & (2, 1, 3, 1)  \mbox{ with } W = 30 \mbox{ and } H=1 \\
& (1, 1, 4, 1)   \mbox{ with } W = 20 \mbox{ and } H=1 &  &
\end{align*}

To see that $S^{(1)}$ and $S^{(2)}$ are homeomorphic to disks, compare Figure~\ref{figX(5)} to Figure~\ref{figonions}. The idea of this topological presentation of the pagoda structure was communicated to the author by Matt Bainbridge, who called this structure a ``belt of onions''. Both stories in this example are homeomorphic to disks. Two disks $S^{(1)}$ and $S^{(2)}$ are docked along their boundaries formed by the edges $a,b,c,d,e,f,g,h,i$ and $j$. This agrees with the fact that the modular curve $X(5)$ has genus 0.

\begin{commentB}
\begin{figure}[H]
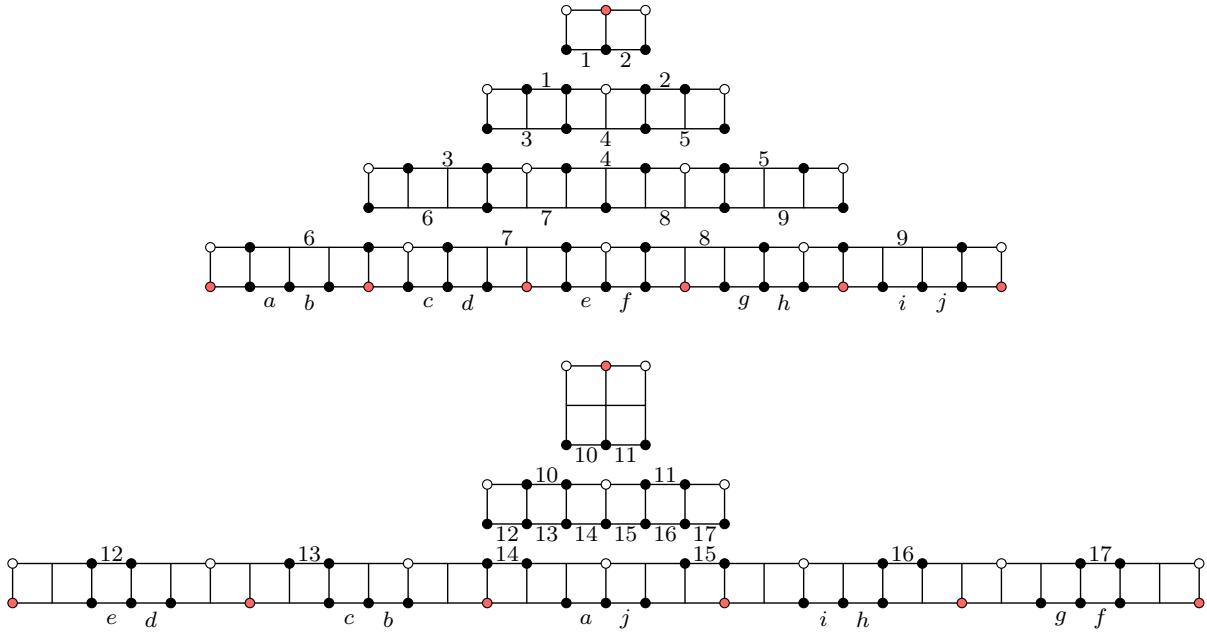
 
  \includestandalone[width=\textwidth]{tikzX5}
  \caption{The pagoda structure of $X(5) \cong \A_{25}$.}
  \label{figX(5)}
\end{figure}

\begin{figure}[H]
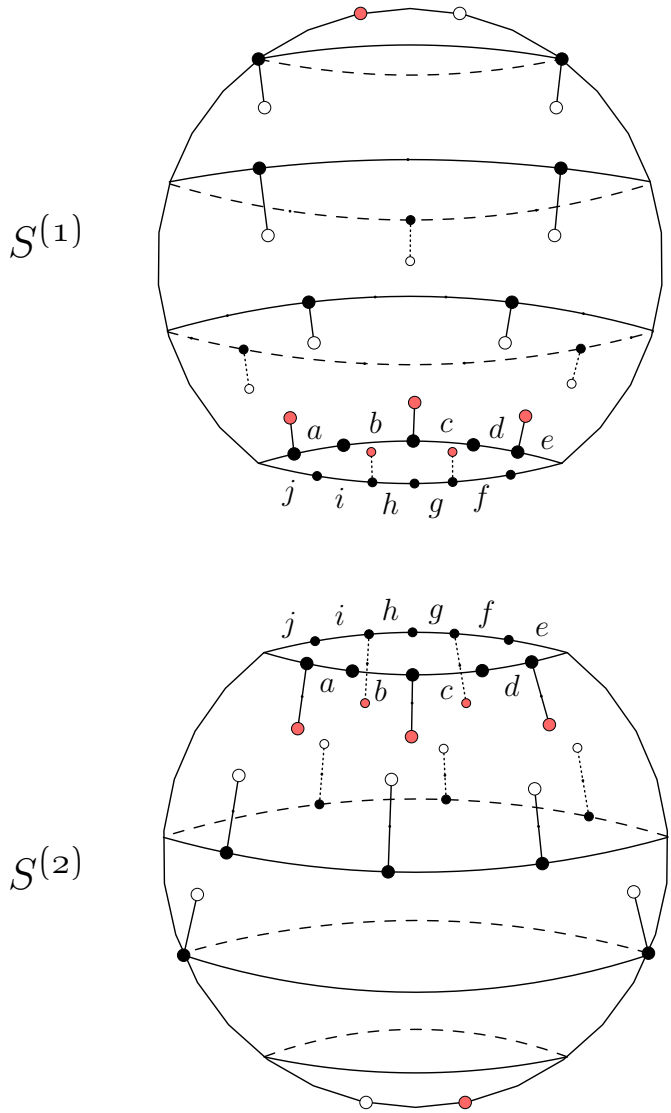
 
\centering
  \includestandalone[width=0.7\textwidth]{tikzonions}
  \caption{The onion structure of $X(5) \cong \A_{25}$.}
  \label{figonions}
\end{figure}
\end{commentB}

The presence of the pagoda structure for any prime $d$ will be evident from the description of the identifications between boundaries of the cylinders that we will carry out below. One can notice that in Figure~\ref{figX(5)intro} the identifications between the boundaries of the adjacent cylinders are given by simply labeling the horizontal segments that start and end in the zeroes of $q$ (black points) with numbers left-to-right. We will generalize this pattern for any prime $d$.

\noindent
{\bf Location of singularities.} We now turn to identifying the types of vertices of the square-tiling of $\A_{d^2}$ at the boundaries of horizontal cylinders, when $d$ is prime.
A cylinder $\mathcal C = (w_1, s_1, w_2, s_2)$ of $\A_{d^2}$ has two boundary components of length $W_{\mathcal C} = w_1 w_2 (w_1 + w_2)$. The {\em origin} of $\CC = (w_1, s_1, w_2, s_2)$ has the Euclidean coordinates $(0,0)$ (see~(\ref{eq:euclcoord})) and the cylinder coordinates $(w_1, s_1, w_2, s_2, t_1=0, t_2 =0, t_3 = 0, h_3=0)$ (see~(\ref{eq:cylcoord})). The set of points with the Euclidean coordinates $(x, 0)$ will be called {\em top} of $\CC$ and the set of points with the Euclidean coordinates $(x,\min(s_1,s_2))$ will be called {\em bottom}.

Recall from Theorem~\ref{thmquaddiff} and Theorem~\ref{thmcusps} that vertices of the square tiling of $\A_{d^2}$ can be zeroes, non-cusp poles, cusp poles or regular points of $q$.

\begin{proposition} \label{cylboundary} 
Let a vertex of the square on the boundary of a cylinder $\CC = (w_1, s_1,w_2, s_2) \subset \A_{d^2}$ have the Euclidean coordinates $(x,y)$, where $y=0$ (top) or $y=s_1$ (bottom). Then:

\begin{itemize}
\item {\bf Lighthouse}:

The boundaries of the lighthouse have length $2$. The top boundary has:
\begin{itemize}
\item a non-cusp pole at $x=0$; and

\item a cusp pole at $x=1$.
\end{itemize}

The bottom boundary has two zeroes at $x=0$ and $x=1$.

\item {\bf Body}:

The top boundary of a body cylinder has:
\begin{itemize}
\item zeroes at $x \equiv w_1 \mod w_1+w_2$ and at $x \equiv w_2 \mod w_1+w_2$;

\item non-cusp poles at $x \equiv 0 \mod w_1+w_2$; and

\item regular points everywhere else.
\end{itemize}

The bottom boundary of a body cylinder has:
\begin{itemize}
\item zeroes at the positions $x \equiv 0 \mod w_1$; and

\item regular points everywhere else.
\end{itemize}

\item {\bf Eave}:

The top boundary of an eave has the same structure as a top of a body cylinder. 

The bottom boundary of an eave cylinder has:
\begin{itemize}
\item cusp poles at  $x \equiv 0 \mod w_1 w_2$;

\item zeroes at $t \equiv 0 \mod w_1$, $t \not\equiv 0 \mod w_1 w_2$ and at $t \equiv 0 \mod w_2$, $t \not\equiv 0 \mod w_1 w_2$; and

\item regular points everywhere else.
\end{itemize}

\end{itemize}

The above cases are illustrated in Figure~\ref{figcylinderboundary}.
\end{proposition}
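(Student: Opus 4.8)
The plan is to identify, for each vertex on the top or bottom boundary of a cylinder $\CC=(w_1,s_1,w_2,s_2)$, the degenerate Abelian differential that it represents, and then to read off its type from Theorem~\ref{thmvertices} and Proposition~\ref{cornerfibers}: the vertex is a regular point, a simple zero, or a simple pole of $q$ according as the limiting surface is a primitive element of $\Omega\M_2(1,1)$, a primitive element of $\Omega\M_2(2)$, or a square-tiled surface with a node, with the node separating for a non-cusp pole in $P_{s}(d)$ and non-separating for a cusp pole in $P_{ns}(d)$. The starting observation is that every vertex of the tiling is a preimage of a corner $Q_i$ of the pillowcase under $\delta$, so at a vertex the two branch points $\pm z$ collide at a $2$-torsion point of $E_0$ and the two zeroes $z_1,z_2$ of $\omega$ acquire the same image in $E_0$. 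Consequently the type is governed entirely by which saddle connections of the three-cylinder model of Proposition~\ref{encoding} contract as we run to the boundary, exactly as in the three-case analysis (2a)--(2c) of \S\ref{secabs}.

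First I would fix coordinates. Writing a generic interior point of $\CC$ in cylinder coordinates $(w_1,s_1,w_2,s_2,t_1,t_2,t_3,h)$ with Euclidean coordinates $(x,h)$ as in (\ref{eq:euclcoord}), so that $t_i = x \% w_i$ and $h=h_3$, and recalling $h_1=s_1-h$, $h_2=s_2-h$, the top boundary $h=0$ is the limit in which the wide middle cylinder $C_3$ of circumference $w_1+w_2$ collapses, while the bottom boundary $h=\min(s_1,s_2)$ is the limit in which the short cylinder(s) of vanishing residual height collapse. The top boundary is then analyzed as a function of $t_3 = x \% (w_1+w_2)$: when $t_3\equiv 0$ the two saddle connections bounding $C_3$ align and contract together into a closed curve separating the $C_1$-torus from the $C_2$-torus (case (2b)), producing a separating node, i.e.\ a non-cusp pole; when $t_3\equiv w_1$ or $t_3\equiv w_2$ a single saddle connection contracts (case (2a)) and the two zeroes merge into a double zero, producing a point of $\A_{d^2}[0]$, i.e.\ a simple zero; for all other values of $x$ the two zeroes come to rest at distinct lattice points on a common horizontal leaf, giving a primitive surface in $\Omega\M_2(1,1)$, hence a point of $\A_{d^2}[1]$, i.e.\ a regular point. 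Since the top of an eave carries the same three-cylinder picture as the top of a body cylinder, both receive the identical description.

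The bottom boundary splits by type. For a body cylinder ($s_1<s_2$) only $C_1$ has vanishing residual height $h_1=0$, so a single saddle connection through $C_1$ contracts (case (2a)): it yields a double zero exactly when $t_1 = x \% w_1\equiv 0$ and leaves two distinct lattice-separated zeroes otherwise, so no closed curve pinches, no nodes occur, and the bottom carries only simple zeroes (at $x\equiv 0\bmod w_1$) and regular points. For an eave ($s_1=s_2=1$) both $C_1$ and $C_2$ collapse simultaneously, and the two contracting saddle connections join into a \emph{non-separating} closed curve (case (2c)) precisely when they align at $x\equiv 0 \bmod w_1w_2$, giving a cusp pole; at the remaining multiples of $w_1$ or of $w_2$ a single saddle connection contracts to a double zero, and elsewhere one gets regular points. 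The lighthouse $w_1=w_2=1$ is handled by direct inspection of the three-cylinder picture (cf.\ Figure~\ref{figlighthousevert}): here both short cylinders are genuine tori of positive height, so the collapse of $C_3$ on the top always pinches a closed curve, separating at $x=0$ and non-separating at $x=1$, while the bottom collapse produces two double zeroes.

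The main obstacle is the separating-versus-non-separating dichotomy for the nodal vertices, namely computing the homology class of the vanishing cycle: one must track precisely, through the identifications recorded by the twists $t_1,t_2,t_3$, how the horizontal prongs at $z_1$ and $z_2$ reconnect once the relevant saddle connections shrink, and verify that collapsing the wide cylinder $C_3$ pinches a separating curve whereas collapsing both narrow cylinders pinches a non-separating one. The second delicate point is that the lighthouse is a genuine exception to the body formula: because $w_1=w_2=1$ the classes $t_3\equiv w_1$ and $t_3\equiv w_2$ coincide, and rather than contracting a single saddle connection to a double zero the configuration contracts two parallel saddle connections forming a non-separating curve, so the vertex at $x=1$ is a cusp pole and not a zero. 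Once these homological computations are carried out, the residue conditions in the statement, together with the pictures of Figure~\ref{figcylinderboundary}, follow by bookkeeping.
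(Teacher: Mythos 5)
Your proposal is correct and follows essentially the same route as the paper: both analyze the vertical degenerations of the generic $3$-cylinder decomposition to the top ($h_3\to 0$, collapsing $C_3$) and bottom (collapsing the short cylinder(s)) boundaries, and classify the limiting vertex via the contraction trichotomy of \S\ref{secabs} together with Theorem~\ref{thmvertices} and Proposition~\ref{cornerfibers}. The only difference is presentational: the paper routes the bookkeeping through explicit cylinder coordinates of the limiting $2$-cylinder decompositions (formulas it also needs for Propositions~\ref{prop:gluingwithin} and~\ref{prop:gluingbetween}), whereas you read off the type directly from which saddle connections contract and whether the vanishing cycle is separating or non-separating — including the correct treatment of the lighthouse top at $x=1$ as the degenerate case $w_1=w_2$.
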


\begin{commentB}
\begin{figure}[H]
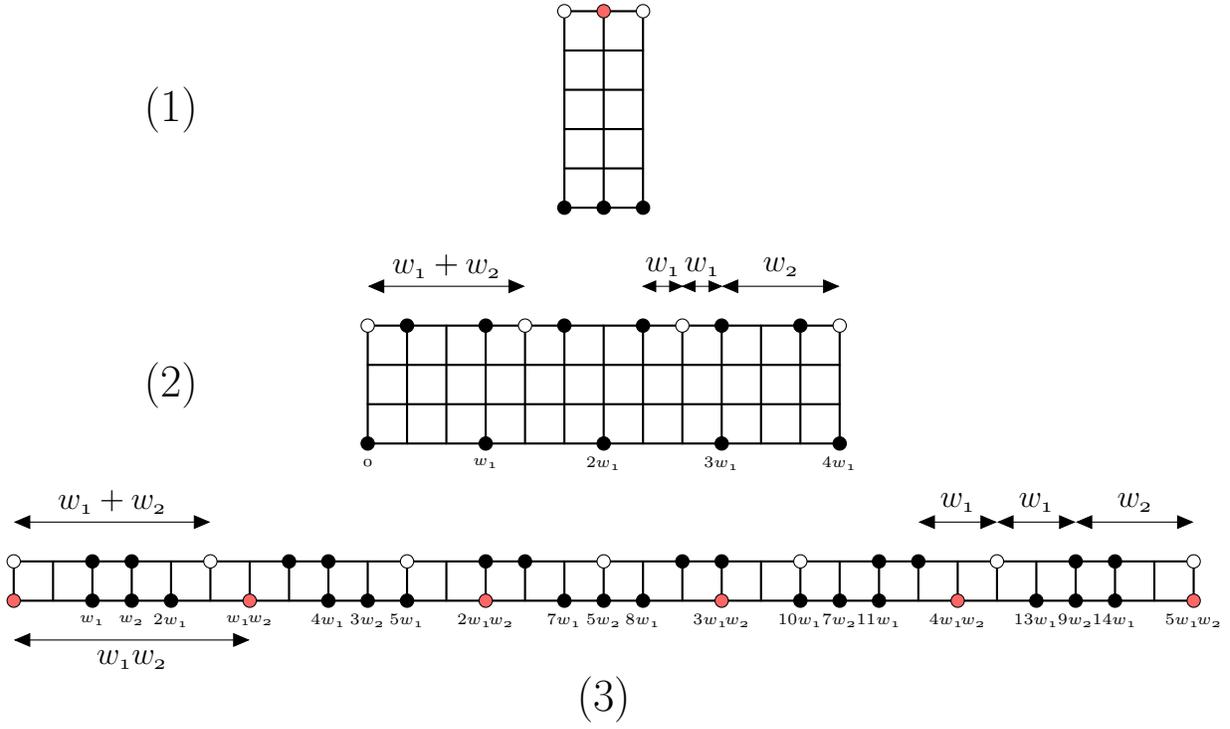
 
\centering
\includestandalone[width=\textwidth]{tikzcylinders}
\vspace{-20pt}
\caption{The singularities at the boundaries of
(1) a lighthouse,
% $(w_1, s_1, w_2, s_2) = (1, 5, 1, 6)$
(2) a body cylinder,
%$(w_1, s_1, w_2, s_2) = (3, 3, 1, 4) $
and (3) an eave.
%$(w_1, s_1, w_2, s_2) = (2, 1, 3, 1)$
The cusp are labeled red, the non-cusp poles -- white and zeroes -- black. In these particular examples: (1) $(w_1, s_1, w_2, s_2) = (1, 5, 1, 6)$, (2) $\{ (w_1, s_1), (w_2, s_2) \}= \{(3, 3), (1, 4)\} $, and (3) $(w_1, s_1, w_2, s_2) = (2, 1, 3, 1)$.
}
\label{figcylinderboundary}
\end{figure}
\end{commentB}

\noindent
{\bf Gluing instructions within a story.} We now give instructions of how to identify the edges on the boundaries of  the cylinders of $\A_{d^2}$ within each story,~i.e except for the bottom edges of the eaves. 

Note that every horizontal edge on a saddle connection that starts at a pole of $q$ is identified to another such edge by rotation by $\pi$ around that pole. Therefore it remains to only understand the identifications of the edges that belong to saddle connections between two zeroes of $q$. Proposition~\ref{cylboundary} implies that there are no poles on the bottom boundaries of non-eave cylinders. The following proposition gives gluing instructions between the bottom edges of non-eave cylinders and the top edges of non-lighthouse cylinders of $\A_{d^2}$:

\begin{proposition} \label{prop:gluingwithin}
Let $A$ be a point on the bottom of the cylinder $\CC = \{ (w_1, s_1), (w_2, s_2) \}$ with the Euclidean coordinates $(x, s_2)$, where:
\begin{align*}
& s_1 > s_2, \\
& x = q w_2 + r, \\
& q \in \Z, 0 \le q < w_1 (w_1+w_2), \\
& r \in \R,  0 < r < w_2. 
\end{align*}
Then point $A$ is identified with a point on the top of the cylinder $\CC' = \{ (w'_1, s'_1), (w'_2, s'_2) \}$ with the Euclidean coordinates  $(x', 0)$, where:
\begin{align*}
w'_1 & =w_1, \\
w'_2 & =w_1+w_2, \\ 
s'_1 & =s_1-s_2, \\ 
s'_2 & =s_2,  \\
x' & = q(2w_1+w_2)  + w_1 + r.
\end{align*}

Any other point on the top of a non-lighthouse cylinder belongs to a saddle connection that starts at a pole and is identified with its image under rotation by $\pi$ around that pole.
\end{proposition}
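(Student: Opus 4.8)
The plan is to compute both the bottom boundary of $\CC$ and the top boundary of $\CC'$ as zipping limits of generic three-cylinder surfaces, in the sense of the square-tiling construction of \S\ref{sectiling}, and then to match them. Throughout I use the flat model of a generic $(X,\omega)\in\CC^\circ$ furnished by Proposition~\ref{encoding}: three horizontal cylinders of circumferences $w_1,w_2,w_1+w_2$, heights $h_1=s_1-h$, $h_2=s_2-h$, $h_3=h$ and twists $t_1,t_2,t_3$, whose Euclidean coordinate $x$ recovers the twists through $t_1=x\%w_1$, $t_2=x\%w_2$, $t_3=x\%(w_1+w_2)$ by (\ref{eq:euclcoord}). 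Since $s_1>s_2$ the height of $\CC$ is $s_2$, so the bottom boundary is reached as $h\to s_2$, i.e. $h_2=s_2-h\to 0$: the cylinder of circumference $w_2$ collapses, the white zero is zipped down, and the two zeroes come to lie on a common horizontal leaf. By Proposition~\ref{cylboundary} this bottom circle is cut by its zeroes into $w_1(w_1+w_2)$ regular arcs of length $w_2$; writing $x=qw_2+r$ with $0<r<w_2$ indexes the arc by $q$ and the position inside it by $r$.

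First I would record the combinatorial skeleton of the target boundary. For $\CC'=\{(w_1,s_1-s_2),(w_1+w_2,s_2)\}$ we have $w'_1+w'_2=2w_1+w_2$, and Proposition~\ref{cylboundary} shows its top circle is cut by its non-cusp poles into $w_1(w_1+w_2)$ blocks of length $2w_1+w_2$, each block carrying a pole at offset $0$ and zeroes at offsets $w'_1=w_1$ and $w'_2=w_1+w_2$. Thus each block contains a single regular arc of length $w'_2-w'_1=w_2$ (between its two zeroes) flanked by two arcs of length $w_1$ that abut the pole. This already pins down the shape of the answer: the identification must carry the $q$-th length-$w_2$ arc of the bottom of $\CC$ onto the unique regular arc of the $q$-th block of the top of $\CC'$, whose left endpoint sits at offset $w'_1=w_1$, which is the source of both the period $2w_1+w_2$ and the shift $+w_1$ in $x'=q(2w_1+w_2)+w_1+r$. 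The two flanking arcs abut a pole and hence, exactly as for the lighthouses in \S\ref{seclighteave}, each is folded onto the adjacent one by the rotation by $\pi$ about that pole; this disposes of the last sentence of the statement.

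The heart of the argument is to verify this matching as an equality of zipping limits. I would re-read the same degenerate boundary surface as a zip-up limit coming from $\CC'$: re-opening the horizontal saddle connection in the opposite vertical direction separates the two zeroes the other way and exhibits the three-cylinder decomposition of $\CC'$, in which the two surviving cylinders of $\CC$ (of circumferences $w_1$ and $w_1+w_2$ and heights $s_1-s_2$ and $s_2$) reappear as two cylinders of $\CC'$, while the collapsed band re-opens as the long cylinder of circumference $2w_1+w_2$. This is nothing but the Euclidean step $\{(w_1,s_1),(w_2,s_2)\}\mapsto\{(w_1+w_2,s_2),(w_1,s_1-s_2)\}$ of the pagoda structure (property 5 of Theorem~\ref{thm:pagoda}), now realized at the level of flat surfaces. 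Converting the resulting $\CC'$-twists back through (\ref{eq:euclcoord}) produces the stated value of $x'$; concretely, I would compare the two limit surfaces by choosing a symplectic marking adapted to both decompositions, as in the proof of Proposition~\ref{encoding}, so that equality becomes a matching of absolute and relative period vectors.

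The step I expect to be the main obstacle is precisely this twist bookkeeping. One must carry the edge identifications of the collapsing band through the reassembly and check two things: that the induced offset between the origins of the two decompositions is $w_1$ modulo $2w_1+w_2$, and that the arc-coordinate is transported by the orientation-preserving isometry $r\mapsto r$ rather than a reflection $r\mapsto w_2-r$. Here the criterion of \S\ref{sectiling}---edges obtained by zipping in different directions are glued by translation, and those obtained by zipping in the same direction by a $\pi$-rotation---must be tracked carefully: the bottom of $\CC$ is zipped down while the top of $\CC'$ is zipped up, so the central regular arcs are glued by translation (consistent with $r\mapsto r$), whereas the arcs at poles, zipped in a common direction, are folded. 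Since the whole zipping is an interval-exchange (Rauzy--Veech) induction in disguise, the computation is elementary but delicate, and it is getting the parity of the fold right that demands the most care.
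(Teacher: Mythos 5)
Your strategy coincides with the paper's: realize the boundary point $(x,s_2)$ as the zip-down limit of a generic three-cylinder surface in $\CC$ and the point $(x',0)$ as the zip-up limit of one in $\CC'$, and identify the two points by showing these two degenerate surfaces are equal. Your structural bookkeeping is also correct: Proposition~\ref{cylboundary} does force the period $2w_1+w_2$ and the offset $w_1$ on the top of $\CC'$, the arcs abutting a simple pole are folded by the $\pi$-rotation about it (which disposes of the last sentence), and the central arcs, being produced by zips in opposite vertical directions, are glued by translation.

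The gap is that the step you defer as ``the main obstacle'' is the entire content of the proposition, and your proposal never executes it: the offset $w_1$, the absence of a reflection $r\mapsto w_2-r$, and even the fact that the $q$-th bottom arc of $\CC$ lands in the $q$-th block of $\CC'$ (rather than in some permuted block) are asserted from the combinatorial skeleton, which only constrains the answer up to a symmetry of that skeleton. The paper closes this by first deriving, in the subsection on degenerations of $3$-cylinder decompositions, closed-form twist coordinates for the two-cylinder limits: zipping down $\{(w_1,s_1),(w_2,s_2)\}$ yields a type-2 decomposition with circumferences $(w_1,\,w_1+w_2)$ and twists $(T_1,T_2,T_3)=(t_1,t_2,t_3)$, while zipping up $\CC'$ in the regime $w'_1<t'_3<w'_2$ yields a type-2 decomposition with $T'_1=t'_1$, $T'_2=t'_3-w'_1$, $T'_3=(2t'_3-t'_2-w'_1)\%w'_2$. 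With $x=qw_2+r$ and $x'=q(2w_1+w_2)+w_1+r$, one computes $t'_3=w_1+r$ (so one is in that regime), and then $T'_1=(qw_2+r)\%w_1=T_1$, $T'_2=r=t_2=T_2$, and $T'_3=(qw_2+r)\%(w_1+w_2)=T_3$; equality of all cylinder coordinates forces equality of the limit surfaces, hence the identification. Your alternative of matching period vectors through a symplectic marking adapted to both decompositions would also work, but until one of these computations is actually performed the proof is not complete.
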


This in particular implies that the bottom edges of every cylinder $\{ (w_1, s_1), (w_2, s_2)\}$ with $s_1 < s_2$ are only identified with the top edges of the cylinder $ \{(w_1+w_2, s_1), (w_2, s_2-s_1)\}$, and the top edges of every cylinder $\{ (w_1, s_1), (w_2, s_2)\}$ with $w_1 < w_2$ are only identified with each other or the top edges of the cylinder
$\{(w_1+w_2, s_1), (w_2, s_2-s_1)\}$. This, together with the observation that the top two edges of every lighthouse are identified with each other (see \S\ref{seclighteave}) and Theorem~\ref{thmdecomposition}, implies the properties 2-8 of the pagoda structure. 

Another implication of Proposition~\ref{prop:gluingwithin} is a simple way to describe the identifications of the horizontal edges on saddle connections between zeroes of $q$ by giving them the following labeling. Label all the edges on the bottom of a non-eave cylinder $\CC = \{ (w_1, s_1), (w_2, s_2) \}$ with $s_1 > s_2$ with numbers from $1$ to $w_1 w_2 (w_1+w_2)$ left-to-right starting from the vertex with the Euclidean coordinates $(0,s_2)$. Now similarly label all the edges on saddle connections that start and end at zeroes of $q$ and lie on the top of a non-lighthouse cylinder $\CC' = \{ (w_1, s_1-s_2), (w_1+w_2, s_2) \}$ with numbers from $1$ to $w_1 w_2 (w_1+w_2)$ left-to-right. The edges with matching labels are identified.

From this perspective it is easy to see that after folding the saddle connections which start at poles of $q$ the boundaries of the cylinders become circles formed by saddles connection between zeroes. The cylinders are then docked to each other along this circles. It follows that each story is homeomorphic to a sphere with boundaries formed by saddle connections on the eave. We next show that the boundary is in fact a single circle.

\noindent
{\bf Gluing instructions between stories.} We now give instructions of how to identify the remaining edges, the ones lying on the bottom boundaries of the eave cylinders of $\A_{d^2}$. As a consequence, we obtain that each story is homeomorphic to a disk.

\begin{proposition} \label{prop:gluingbetween}
Let $A$ be a non-singular point on the bottom of the eave cylinder $\CC = \{ (w_1, 1), (w_2, 1) \}$ with the cylinder coordinates $(w_1, 1, w_2, 1, t_1, t_2, t_3, 1)$, where $w_1 < w_2$ and $t_i \notin \Z$.
Then point $A$ is identified with a point on the bottom of the eave cylinder $\CC' = \{ (w'_1, s'_1), (w'_2, s'_2) \}$ with the cylinder coordinates $(w'_1, 1, w'_2, 1, t'_1, t'_2, t'_3, 1)$, where:
\begin{align*}
w'_1 & =w_1-t_1+t_2, \\
w'_2 & =w_2+t_1-t_2, \\ 
t'_1 & =(w_1-t_1)\%(w_1-t_1+t_2), \\ 
t'_2 & =(w_2-t_2)\%(w_2+t_1-t_2),  \\
t'_3 & = (t_3-t_1-t_2)\%d.
\end{align*}
\end{proposition}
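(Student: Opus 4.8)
The strategy is to exhibit one and the same non-generic Abelian differential on the bottom boundaries of both eaves $\CC$ and $\CC'$, and to read off the asserted coordinate change from its two flat presentations. A non-singular point $A$ on the bottom of an eave $\CC = (w_1,1,w_2,1)$ with cylinder coordinates $(w_1,1,w_2,1,t_1,t_2,t_3,1)$ is, by continuity of the coordinates, the limit as $h \to 1$ of the generic differentials $(X_h,\omega_h)$ with coordinates $(w_1,1,w_2,1,t_1,t_2,t_3,h)$. Since for an eave $s_1=s_2=1$ we have $h_1 = h_2 = 1-h \to 0$, the two narrow cylinders $C_1,C_2$ of the decomposition of Proposition~\ref{encoding} collapse, and the limit $(X,\omega)$ is a smooth differential in $\Omega\M_2(1,1)$ whose two simple zeroes lie on a common horizontal leaf, joined by two horizontal saddle connections. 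This is exactly the ``zip down the white singularity'' degeneration of \S\ref{sectiling}, and the whole proposition amounts to computing this limit explicitly and recognizing its second presentation.

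First I would fix flat coordinates. Using the Euclidean coordinates of (\ref{eq:euclcoord}), I would present the surviving wide cylinder $C_3$ as $\R/d\R \times [0,1]$; here I use that an eave satisfies the area condition $s_1 w_1 + s_2 w_2 = d$ with $s_1=s_2=1$, so $w_1 + w_2 = d$, i.e.\ $C_3$ has circumference exactly $d$ and its twist $t_3$ is defined modulo $d$. Collapsing $C_1$ and $C_2$ identifies the top edge of $C_3$ with itself, and tracking the positions of the two zeroes through the twists $t_1,t_2$ shows that this top edge breaks into two horizontal saddle connections. I would compute their lengths to be $w_1 - t_1 + t_2$ and $w_2 + t_1 - t_2$, which sum to $d$; these are precisely the circumferences $w_1'$ and $w_2'$ of the narrow cylinders of the eave $\CC'$ obtained by zipping the white singularity down from the other side.

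Next I would recover the twists. Re-cutting the collapsed surface so that its two saddle connections become the bases of new narrow cylinders $C_1',C_2'$, I would read the horizontal offsets of the two zeroes relative to the new base points: the reductions $t_1' = (w_1 - t_1)\%(w_1 - t_1 + t_2)$ and $t_2' = (w_2 - t_2)\%(w_2 + t_1 - t_2)$ record these offsets modulo the new circumferences, while the surviving cylinder $C_3$ acquires twist $t_3 - t_1 - t_2$, reduced modulo its circumference $d$, giving $t_3' = (t_3 - t_1 - t_2)\%d$. To pin down the point identification itself, rather than only the combinatorics relating $\CC$ and $\CC'$, I would track the real relative period $\int_{z_1}^{z_2}\omega$ along the saddle connection containing $A$; since this period is preserved under the re-cut, it identifies $A$ with the asserted point of $\CC'$.

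The routine part is checking that $w_1',w_2',t_1',t_2',t_3'$ close up into an admissible eave in the sense of Theorem~\ref{thmdecomposition}. The genuine obstacle is the twist bookkeeping of the previous paragraph: correctly accounting for the integer multiple of each circumference that must be subtracted (the $\%$ operations), and verifying that the winding of $C_3$ shifts by exactly $-(t_1+t_2) \bmod d$. I expect this to require an orientation-sensitive analysis of how the two collapsing cylinders reattach, matching the ``same direction of zipping, hence rotation by $\pi$'' rule of \S\ref{sectiling}; the sign and modular conventions there are the easiest place to make an error, so I would anchor the computation against the worked example of $\A_{25}$ before stating it in general.
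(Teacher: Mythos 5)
Your proposal is correct and follows essentially the same route as the paper: both degenerate the eave point to the 1-cylinder differential obtained by zipping down the singularity and read off the primed coordinates from the second way this differential arises as such a limit, the paper merely packaging the two presentations as the two splittings of the form into a connected sum of tori, with lattices $\Lambda_1 = w_1\Z\oplus(t_3-t_1+i)\Z$, $\Lambda_2 = w_2\Z\oplus(t_3-t_2+i)\Z$ and cross-cuts $I = t_3+i$, $I' = t_3-t_1-t_2+i$. One small imprecision to fix when you carry out the bookkeeping: the singular leaf of the collapsed surface consists of four horizontal saddle connections, of lengths $t_1$, $w_1-t_1$, $t_2$, $w_2-t_2$, not two; the quantities $w_1-t_1+t_2$ and $w_2+t_1-t_2$ are the lengths of the two closed loops in the alternative pairing of these four edges, which is precisely why they become the circumferences of the narrow cylinders of $\CC'$.
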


One can see that a point $(w_1, 1, w_2, 1, t_1, t_2, t_3, 1)$ is identified to a point on the same eave if and only if $t_1=t_2=t$ for some $0<t<w_1$. From Proposition~\ref{cylboundary} it follows that such point belongs to the horizontal saddle connection that starts at the pole $(w_1,1,w_2,1,0,0,(t_3-t)\%d)$. Folding the parts of the bottom boundary of an eave that belong to the saddle connections adjacent to simple poles of $q$ one obtains that the boundary of an eave is formed by the edges that are not identified to each other. From the observations above and at the end of the previous subsection we obtain:

\begin{corollary} \label{cordisk}
Every story ${\mathcal S}_i$ of the pagoda is homeomorphic to a disk.
\end{corollary}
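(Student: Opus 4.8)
The plan is to prove Corollary~\ref{cordisk} by a downward induction on the cylinders of a fixed story, capping off the surface one cylinder at a time starting from the lighthouse at the top. By Theorem~\ref{thm:pagoda} the cylinders of the story $\mathcal{S}_i$ form an ordered sequence $\CC_1, \CC_2, \ldots, \CC_{n_i}$ with the eave $\CC_1$ at the bottom and the lighthouse $\CC_{n_i}$ at the top, and by the adjacency property (property 6) consecutive cylinders are the only ones sharing boundary edges; the strictly decreasing circumferences (property 2) ensure that this incidence pattern is a path with no branching. For $1 \le k \le n_i$ I would set $U_k = \CC_k \cup \CC_{k+1} \cup \cdots \cup \CC_{n_i}$, carrying all the within-story identifications of Proposition~\ref{prop:gluingwithin} together with the top fold of the lighthouse, and establish by downward induction the claim that each $U_k$ is homeomorphic to a closed disk whose boundary is the bottom circle of $\CC_k$.

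For the base case $k = n_i$, recall from \S\ref{seclighteave} that the top boundary of the lighthouse consists of a cusp pole, a non-cusp pole, and two edges between them identified by a rotation by $\pi$; folding these two edges caps the top boundary circle of the annulus $\CC_{n_i}$, turning it into a disk whose remaining boundary is its bottom circle. For the inductive step I would glue $U_{k+1}$, a disk bounded by the bottom of $\CC_{k+1}$, onto the annulus $\CC_k$ along its top. Proposition~\ref{prop:gluingwithin} identifies the bottom edges of $\CC_{k+1}$ with a union of arcs of the top of $\CC_k$ by a single monotone, label-matching translation, while Proposition~\ref{cylboundary} locates the poles so that every remaining top edge of $\CC_k$ lies on a horizontal saddle connection emanating from a pole and is therefore folded against its neighbor. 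Since a simple pole has cone angle $\pi$, each such fold is the zipping-up of a slit, which caps the excess arcs without changing the homeomorphism type of a surface-with-boundary. Thus the whole top circle of $\CC_k$ is consumed, partly by the gluing to $U_{k+1}$ and partly by folding, so $U_k$ is again a disk bounded by the bottom circle of $\CC_k$.

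It then remains to analyze $U_1 = \mathcal{S}_i$, the stage at which $\CC_1$ is the eave. The argument above already shows $U_1$ is a disk bounded by the bottom circle of the eave; what is special about the eave is that its bottom carries additional within-story identifications. By the discussion following Proposition~\ref{prop:gluingbetween}, a point on the bottom of the eave is identified with another point of the same eave exactly when it lies on a horizontal saddle connection starting at one of the simple poles on that bottom (the case $t_1 = t_2$). Folding these pole-adjacent saddle connections, which are again cone-angle-$\pi$ slit-zippings, caps off those portions, and what survives is a boundary built from edges that are pairwise non-identified within the story. Tracing these edges shows that they close up into a single circle, so $\mathcal{S}_i$ is a disk with one boundary component. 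As a consistency check, these boundary circles are precisely the edges of the trivalent graph of the introduction, whose $\frac{d-1}{2}$ complementary regions are exactly the stories.

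The step I expect to be the main obstacle is making the folding operations rigorous: one must verify that identifying the two adjacent arcs at a simple pole genuinely caps the boundary rather than creating a handle or disconnecting it, and that the pole-adjacent arcs always pair off so that the gluing of Proposition~\ref{prop:gluingwithin} and the folds together consume the full top circle of each cylinder. The geometric content that makes this work is the cone-angle-$\pi$ description of the simple poles combined with the exact positions of poles and zeros from Proposition~\ref{cylboundary}. If one prefers to avoid tracking the folds by hand, an alternative is to compute $\chi(\mathcal{S}_i)$ directly from the square-tiling, by counting squares, edges and vertices, and to combine it with the genus-zero ``belt of onions'' picture to force $\chi(\mathcal{S}_i) = 1$ and hence a single boundary circle.
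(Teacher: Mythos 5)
Your proposal is correct and follows essentially the same route as the paper: the text preceding Corollary~\ref{cordisk} argues exactly that, after folding the pole-adjacent saddle connections, the cylinder boundaries become circles of zero-to-zero saddle connections along which consecutive cylinders are docked (via Proposition~\ref{prop:gluingwithin}), so each story is a genus-zero surface whose only unmatched boundary lies on the bottom of the eave, and the within-eave folds at the simple poles then leave a single boundary circle. Your downward induction on $U_k$ and the cone-angle-$\pi$ slit-zipping justification simply make this docking-and-capping argument explicit, and your Euler-characteristic alternative is a reasonable cross-check but not needed.
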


\begin{corollary} \label{corgraph}
For every prime $d$, the modular curve $X(d)$ carries an embedded trivalent graph, well-defined up to the action of $\Aut(X(d))$, whose complement is a union of $\frac{d-1}2$ disks.
\end{corollary}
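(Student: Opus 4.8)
The plan is to read the graph off directly from the pagoda structure of Theorem~\ref{thm:pagoda}. Recall that $X(d)\cong\A_{d^2}$ decomposes into $\frac{d-1}2$ stories $\mathcal S_1,\dots,\mathcal S_{(d-1)/2}$, each of which is a disk by Corollary~\ref{cordisk}, and that distinct stories meet only along the bottom boundaries of their eaves. I would define the graph $G\subset X(d)$ to be the union of the story boundaries $\bigcup_i\partial\mathcal S_i$: its vertices are the zeroes of $q$ lying on the eave bottoms and its edges are the horizontal saddle connections of $q$ joining them, where by the folding described in \S\ref{seclighteave} each simple pole of $q$ is an interior fold point of an edge rather than an endpoint (the two banks of a pole-adjacent saddle connection being identified by the rotation by $\pi$). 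With this definition $G$ is an embedded graph and $X(d)\setminus G=\bigsqcup_i\mathring{\mathcal S}_i$ is a disjoint union of $\frac{d-1}2$ open disks, which is exactly the complement statement of the corollary; this part follows formally from Corollary~\ref{cordisk}.

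It remains to verify trivalence, which I expect to be the main obstacle. The vertices of $G$ are simple zeroes of $q$ (Theorem~\ref{thmvertices}), and a simple zero has cone angle $3\pi$ and hence exactly three horizontal prongs. The content of trivalence is that, after all the identifications, each prong is an edge of $G$ running to a zero, so that exactly three edges meet at every genuine vertex (a valence-two zero, at which one prong points into the interior of a story, being regarded as an interior point of an edge rather than a true vertex). I would establish this by purely local bookkeeping: Proposition~\ref{cylboundary} records which boundary vertices of the eaves are zeroes, cusp poles, and non-cusp poles, while Propositions~\ref{prop:gluingwithin} and~\ref{prop:gluingbetween} record exactly how these boundaries are folded at the poles and glued to one another. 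Tracing the three prongs of each zero through these gluings should show that none is lost, and that it is precisely where three eave-bottom arcs are brought together by the inter-story identifications of Proposition~\ref{prop:gluingbetween} that a zero becomes trivalent. A useful consistency check is the Euler characteristic: if $G$ is trivalent with $V$ vertices then $2E=3V$, and with $F=\frac{d-1}2$ faces the relation $V-E+F=2-2g(\A_{d^2})$ forces $V=(d-1)-2\big(2-2g(\A_{d^2})\big)$, which can be matched against the number of zeroes on the eave bottoms read off from Proposition~\ref{cylboundary} together with the genus formula in line~2 of Table~\ref{table}. For the small primes $d=3,5$ the graph degenerates — it is cut out by the boundary of the single, respectively the two, onion-disks and reduces to a tree, respectively a single circle — and the statement should be read with the evident conventions; the genuinely trivalent picture emerges for $d\ge 7$.

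Finally, well-definedness up to $\Aut(X(d))$ is immediate from the canonicity already in hand. The graph $G$ was built only from the quadratic differential $q$, its horizontal foliation, and the resulting pagoda decomposition, all of which are intrinsic to $\A_{d^2}$. By Corollary~\ref{corcanonical} the isomorphism $\A_{d^2}\cong X(d)$ is canonical up to composition with elements of $\Aut(X(d))\cong\PSL_2(\ZZ{d})$, so transporting $G$ to $X(d)$ determines it uniquely up to this action. Thus the complement and canonicity assertions are formal consequences of Corollaries~\ref{cordisk} and~\ref{corcanonical}, and essentially all of the work lies in the trivalence count.
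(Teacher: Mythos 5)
Your construction is the same as the paper's: the graph is the union of the horizontal saddle connections on the eave bottoms joining simple zeroes of $q$, its complement is the union of the $\frac{d-1}{2}$ open stories (Corollary~\ref{cordisk}), and well-definedness up to $\Aut(X(d))$ comes from Corollary~\ref{corcanonical}. The paper's own proof consists of exactly these two citations and says nothing about trivalence, so your additional discussion --- the three horizontal prongs of a simple zero, the need to regard bivalent zeroes (those with one prong on a pole-adjacent folded saddle connection pointing into a story) as interior points of edges, the Euler-characteristic check, and the degeneration for $d=3,5$ --- supplies detail the paper omits rather than diverging from its argument.
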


\begin{proof}
Due to the Corollary~\ref{corcanonical} the isomorphism between $\A_{d^2}$ and $X(d)$ is well-defined up to the action of $\Aut(X(d))$. After the choice of isomorphism, the graph is defined by taking a union of all saddle connections on the bottoms of all eaves of $(\A_{d^2},q)$ that connect simple zeroes of $q$. By Corollary~\ref{cordisk} and  the complement is a union of $\frac{d-1}2$ disks.
\end{proof}

\noindent
{\bf Degenerations of 3-cylinder decompostions.} Below we present proofs of Proposition~\ref{cylboundary}, Proposition~\ref{prop:gluingwithin} and Proposition~\ref{prop:gluingbetween}. The proofs are based on analyzing the degenerations of the 3-cylinder decomposition of a generic $(X,\omega) \in \CC \subset \A_{d^2}$ as we approach a boundary of a cylinder $\CC$ in vertical directions. 

Recall from \S\ref{sectiling} that points in the interiors of the horizontal cylinders of $\A_{d^2}$ correspond to Abelian differentials that admit a 3-cylinder decomposition (see Figure~\ref{fig3cyl}). We start by reviewing the 3-cylinder decomposition and defining the 2-cylinder decompositions of Abelian differentials corresponding to the points on the boundaries of the horizontal cylinders of $\A_{d^2}$, except for the bottom boundaries of the eaves.

A generic Abelian differential $(X,\omega)$ in $\A_{d^2}$ admits a decomposition into $3$ horizontal cylinders $C_1$, $C_2$ and $C_3$ of circumferences $w_1$, $w_2$ and $w_1+w_2$ and heights $h_1,h_2$ and $h_3$ respectively. There is a unique way to represent this decomposition as a polygon as follows. Each cylinder is represented by a parallelogram, whose vertices are singularities and edges are saddle connections, with non-horizontal edges identified by parallel translations. First, the parallelogram $C_3$ must be on top of parallelograms $C_1$ and $C_2$. Second, order the cylinders $C_1$ and $C_2$, such that $w_1 \le w_2$ and if $w_1 =w_2$ then $s_1 < s_2$, and put $C_1$ on the left from $C_2$. It remains to define the non-horizontal sides of the parallelograms and identifications of their horizontal sides. 

Let us denote the top boundary of a cylinder $C$ by $C^+$ and the bottom boundary by $C^-$. The boundaries of $C_i$ are closed curves, they are oriented such that their periods are positive real numbers. They also satisfy:
$$
C_3^+ = C_1^- \cup C_2^- \mbox{ and } C_3^- = C_1^+ \cup C_2^+.
$$
For $i=1$ or $2$, there is a unique saddle connection $c_i$ that starts at the left end of $C_i^+$, ends at the left end of $C_i^-$ and satisfies:
$$0 \le \Real\left( \int_{c_i} \omega\right) < w_i.$$  
For $C_3$, there is a unique saddle connection $c_3$ that starts at the left end of $C_1^+ \subset C_3^-$, ends at the left end of $C_i^- \subset C_3^+$ and satisfies:
$$0 \le \Real\left( \int_{c_3} \omega\right) < w_1+w_2.$$  
For $i = 1,2$ or $3$, we define the non-horizontal sides of the parallelogram $C_i$ to be given by the vector $\int_{c_i} \omega \in \C$. Then the twist parameters $t_1, t_2, t_3$, satisfying:
\begin{equation} \label{twists}
0 \le t_1 < w_1, 0 \le t_2 < w_2, 0 \le t_3 < w_1+w_2,
\end{equation}
are simply given by $\Real\left( \int_{c_i} \omega\right)$.

Similarly, any Abelian differential $(X,\omega)$ in $\A_{d^2}$ with two simple zeroes that admits a decomposition into $2$ cylinders $C_1$ and $C_2$ can be uniquely represented as a polygon in the plane as follows. There are two types of 2-cylinder decompositions we have to distinguish: we will call them a {\em 2-cylinder decomposition of type 1} (see Figure~\ref{fig2cyl1}) and a {\em 2-cylinder decomposition of type 2} (see Figure~\ref{fig2cyl2}). For $i=1$ or $2$, denote the circumferences of $C_1, C_2$ by $W^{(i)}_1, W^{(i)}_2$ and the heights by $H^{(i)}_1, H^{(i)}_2$. We order the cylinders such that $W^{(i)}_1 \le W^{(i)}_2$ and if $W^{(i)}_1 = W^{(i)}_2$ then $H^{(i)}_1 < H^{(i)}_2$. The twist parameters $T^{(1)}_1, T^{(1)}_2, T^{(1)}_3$ of the type 1 decomposition satisfy:
\begin{align*} 
0 \le \ &\ T^{(1)}_1 < W^{(1)}_1 \\
0 \le \ &\ T^{(1)}_2 < W^{(1)}_2 \\
0 \le \ &\ T^{(1)}_3 \le W^{(1)}_1, \hspace{34pt}
\end{align*}
and the twist parameters $T^{(2)}_1, T^{(2)}_2, T^{(2)}_3$ of the type 2 decomposition satisfy:
\begin{align*}
0 \le \ &\ T^{(2)}_1 < W^{(2)}_1 \\
0 \le \ &\ T^{(2)}_2 \le W^{(2)}_2-W^{(2)}_1 \\
0 \le \ &\ T^{(2)}_3 < W^{(2)}_2.
\end{align*}
The vector $(W_1, H_1, W_2, H_2, T^{(i)}_1, T^{(i)}_2, T^{(i)}_3)$ is called the {\em cylinder coordinates} of the 2-cylinder decomposition of type $i$, for $i=1$ or $2$. Note that in certain cases of equalities in the inequalities above, the 2-cylinder decomposition is not an Abelian differential anymore, but a square-tiled surface with a node.

\begin{commentB}
\begin{figure}[!htb]
   \begin{minipage}{0.48\textwidth}
     \centering
     \includestandalone[width=0.9\linewidth]{tikz2cyl1}
      \vspace{-23pt}
     \caption{Twist parameters of the 2-cylinder decomposition of type 1.}
     \label{fig2cyl1}
   \end{minipage}\hfill
   \begin {minipage}{0.48\textwidth}
     \centering
     \includestandalone[width=0.9\linewidth]{tikz2cyl2}
     \vspace{-30pt}
     \caption{Twist parameters of the 2-cylinder decomposition of type 2.}
     \label{fig2cyl2}
   \end{minipage}
\end{figure}
\end{commentB}

Now take any generic $(X,\omega) \in \CC = \{(w_1, s_1), (w_2, s_2)\} \subset \A_{d^2}$, where $w_1 < w_2$ with non-integer twist parameters $t_1, t_2, t_3 \notin \Z$, satisfying (\ref{twists}). We will describe how coordinates change after we zip up or zip down a singularity. For zipping down we will assume $s_1\ne s_2$, the case $s_1=s_2=1$ will be treated later. There are three cases to consider:
(1) $0 < t_3 < w_1$ (see Figure~\ref{figtobdry1}.1); (2) $w_1 < t_3 < w_2$ (see Figure~\ref{figtobdry2}.1); and (3) $w_2 < t_3 < w_1+w_2$ (see Figure~\ref{figtobdry3}.1). For the Abelian differentials obtained by moving the white singularity upwards see Figure~\ref{figtobdry1}.2, Figure~\ref{figtobdry2}.2 and Figure~\ref{figtobdry3}.2. For the Abelian differentials obtained by moving it downwards see Figure~\ref{figtobdry1}.3, Figure~\ref{figtobdry2}.3 and Figure~\ref{figtobdry3}.3.

\begin{commentB}
\begin{figure}[H]
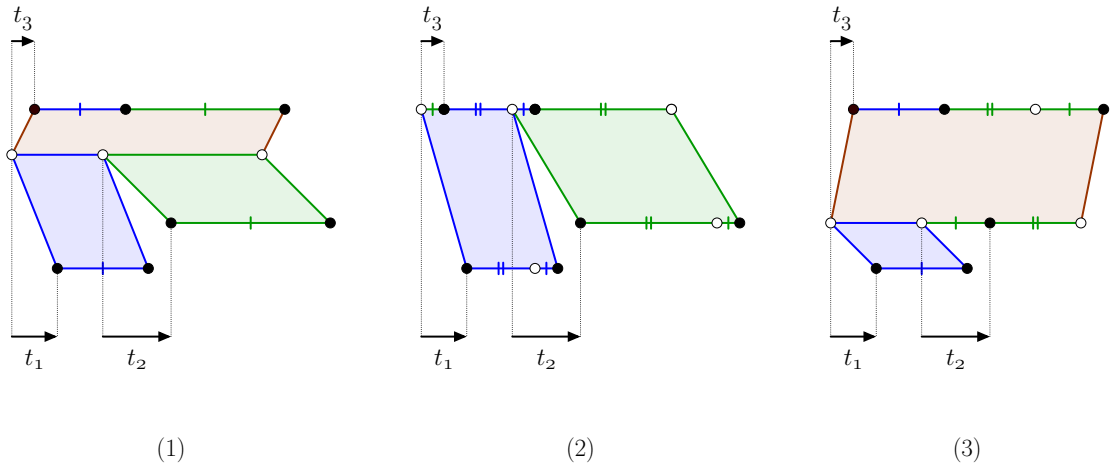
 
\centering
  \includestandalone[width=\textwidth]{tikztobdry1}
  \caption{(1) An Abelian differential $(X,\omega)\in \A_{d^2}$ with $0 < t_3 < w_1$. (2) Zipping up and (3) zipping down of its white singularity.}
  \label{figtobdry1}
\end{figure}

\begin{figure}[H] 
\centering
  \includestandalone[width=\textwidth]{tikztobdry2}
  \caption{(1) An Abelian differential $(X,\omega)\in \A_{d^2}$ with $w_1 < t_3 < w_2$. (2) Zipping up and (3) zipping down of its white singularity.}
  \label{figtobdry2}
\end{figure}

\begin{figure}[H]
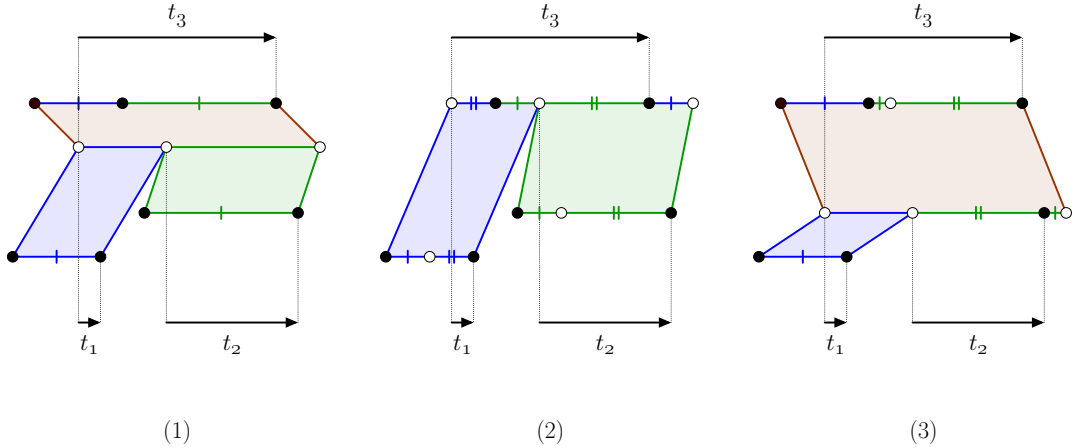
 
\centering
  \includestandalone[width=\textwidth]{tikztobdry3}
  \caption{(1) An Abelian differential $(X,\omega)\in \A_{d^2}$ with $w_2 < t_3 < w_1 + w_2$. (2) Zipping up and (3) zipping down of its white singularity.}
  \label{figtobdry3}
\end{figure}
\end{commentB}

We first find expression of the cylinder coordinates of zipping down. Let $(X,\omega)$ be a generic  $\in \CC = \{(w_1, s_1), (w_2, s_2)\} \subset \A_{d^2}$, where $s_1 < s_2$ with half-integer twist parameters $t_1, t_2, t_3 \in \R \setminus \Z$, satisfying (\ref{twists}). Moving the singularity downwards in all three cases we obtain 2-cylinder decompositions with cylinder coordinates satisfying (see Figures~\ref{figtobdry1result}.2, \ref{figtobdry2result}.2 and \ref{figtobdry3result}.2):
\begin{align*}
& W^{(2)}_1  = w_1 & & H^{(2)}_1  = s_2-s_1 & & 0 < T^{(2)}_1  = t_1 < w_1 & \\ 
& W^{(2)}_2  = w_1+w_2 & & H^{(2)}_2  = s_2 & & 0 < T^{(2)}_2  = t_2 < w_2 &  \\
& & & & &  0 < T^{(2)}_3  = t_3 < w_1+w_2.
\end{align*}

Now we find expression of the cylinder coordinates of zipping up. Let $(X,\omega)$ be a generic  $\in \CC = \{(w_1, s_1), (w_2, s_2)\} \subset \A_{d^2}$, where $w_1 < w_2$ with half-integer twist parameters $t_1, t_2, t_3 \in \R \setminus \Z$, satisfying (\ref{twists}). Moving the singularity upwards we obtain a 2-cylinder decomposition with cylinder coordinates satisfying, in the case (1) $0 < t_3 < w_1$ (see Figure~\ref{figtobdry1result}.1):
\begin{align*}
& W^{(1)}_1 = w_1 & & H^{(1)}_1 = s_1 & & 0 < T^{(1)}_1  = t_1 < w_1 & \\ 
& W^{(1)}_2 = w_2 & & H^{(1)}_2 = s_2 & & 0 < T^{(1)}_2  = (w_2 - t_2 + t_3) \% w_2 < w_2 \hspace{27pt} &  \\
& & & & & 0 < T^{(1)}_3  = t_3 < w_1,
\end{align*}
in the case (2) $w_1 < t_3 < w_2$ (see Figure~\ref{figtobdry2result}.1):
\begin{align*}
& W^{(2)}_1 = w_1 & & H^{(2)}_1 = s_1 & & 0 < T^{(2)}_1  = t_1 < w_1 & \\
& W^{(2)}_2 = w_2 & & H^{(2)}_2 = s_2 & & 0 < T^{(2)}_2  = t_3 - w_1 < w_2-w_1 & \\
& & & & & 0 < T^{(2)}_3  = (2t_3-t_2-w_1) \% w_2 < w_2,  \hspace{15pt} \
\end{align*}
and in the case (3) $w_2 < t_3 < w_1+w_2$ (see Figure~\ref{figtobdry3result}.1):
\begin{align*}
& W^{(1)}_1 = w_1 & & H^{(1)}_1 = s_1 & & 0 < T^{(1)}_1  = (w_1 + w_2 + t_1 - t_3) \% w_1 < w_1 & \\
& W^{(1)}_2 = w_2 & & H^{(1)}_2 = s_2 & & 0 < T^{(1)}_2  =  (t_3 - t_1 - w_1) \% w_2 < w_2 &  \\
& & & & & 0 < T^{(1)}_3  = w_1 + w_2  - t_3 < w_1.  
\end{align*}

\begin{commentB}
\begin{figure}[H]
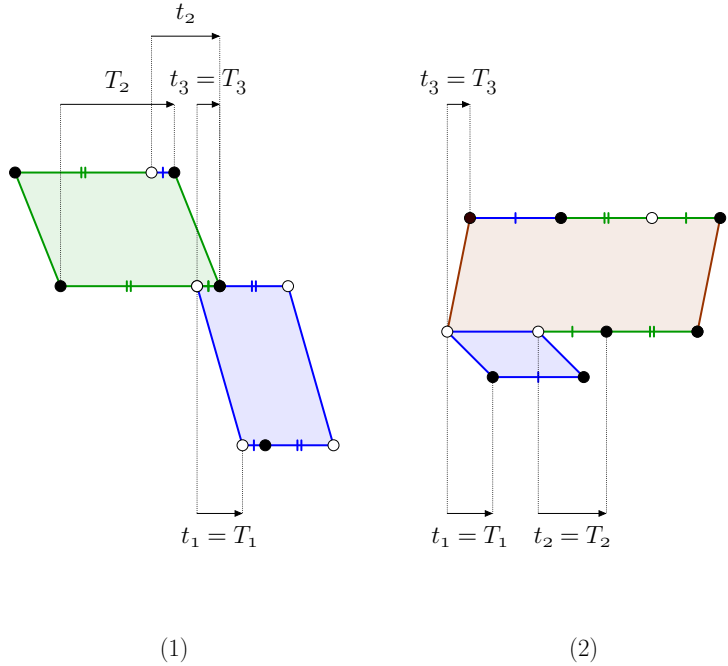
 
\centering
  \includestandalone[width=\textwidth]{tikztobdry1result}
  \caption{The twist coordinates of the 2-cylinder decompositions of Abelian differentials in (1) Figure~\ref{figtobdry1}.2 and (2) Figure~\ref{figtobdry1}.3.}
  \label{figtobdry1result}
\end{figure}

\begin{figure}[H] 
\centering
  \includestandalone[width=\textwidth]{tikztobdry2result}
  \caption{The twist coordinates of the 2-cylinder decompositions of Abelian differentials in (1) Figure~\ref{figtobdry2}.2 and (2) Figure~\ref{figtobdry2}.3.}
  \label{figtobdry2result}
\end{figure}

\begin{figure}[H]
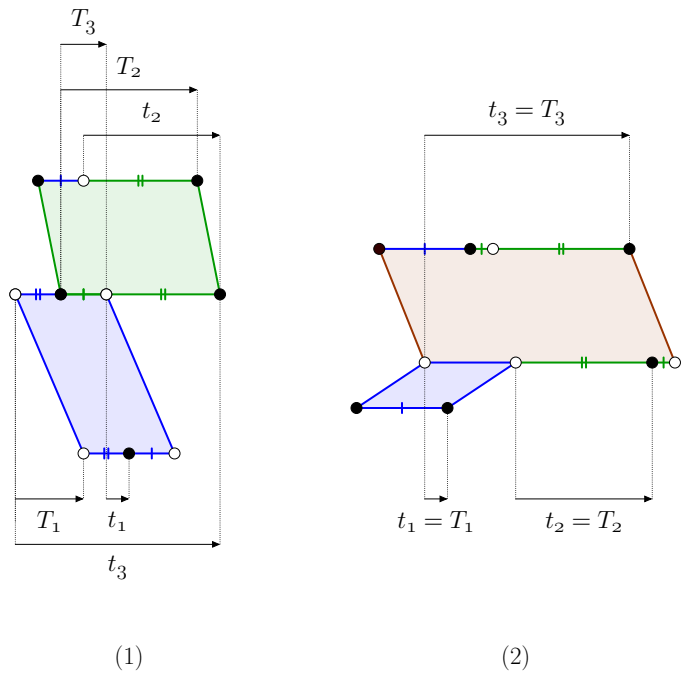
 
\centering
  \includestandalone[width=\textwidth]{tikztobdry3result}
  \caption{The twist coordinates of the 2-cylinder decompositions of Abelian differentials in (1) Figure~\ref{figtobdry3}.2 and (2) Figure~\ref{figtobdry3}.3.}
  \label{figtobdry3result}
\end{figure}
\end{commentB}

From this one can easily verify Proposition~\ref{cylboundary}, Proposition~\ref{prop:gluingwithin}:
\begin{proof}[Proof of Proposition~\ref{cylboundary}]
A vertex of the square-tiling of $\A_{d^2}$ that corresponds to the 2-cylinder decomposition of type 1 is:
\begin{itemize}
\item a non-cusp pole of $q$, whenever $T^{(1)}_3 = 0$;

\item a zero of $q$, whenever $T^{(1)}_3 = W^{(1)}_1$; and

\item a regular point of $q$ otherwise.
\end{itemize}

Similarly, a vertex of the square-tiling that corresponds to the 2-cylinder decomposition of type 2 is:
\begin{itemize}
\item a zero of $q$, whenever $T^{(2)}_2 = 0$ or $W^{(2)}_2-W^{(2)}_1$; and

\item a regular point of $q$ otherwise.
\end{itemize}

For zipping up we have $W^{(1)}_1 = w_1$, $W^{(2)}_2-W^{(2)}_1=w_2$, $T^{(1)}_3 = t_3$ or $w_1 + w_2 - t_3$ and $T^{(2)}_2 = t_3 - w_1$. It follows that: a vertex with Euclidean coordinates $(x,0)$ is a non-cusp pole of $q$, whenever $x \equiv t_3 \equiv 0 \mod w_1+w_2$, a zero of $q$, whenever $x \equiv t_3 \equiv w_1 \mbox{ or } w_2 \mod w_1+w_2$, and a regular point of $q$ otherwise; and a vertex with Euclidean coordinates $(x,0)$ is a non-cusp pole of $q$.
 
For zipping down we have $T^{(2)}_2 = t_2$. Recall that $s_2 < s_1$ in this case. It follows that a vertex with the Euclidean coordinates $(x, s_2)$ is a zero of $q$, whenever$x \equiv t_2 \equiv 0 \mod w_2$, and a regular point of $q$ otherwise.
\end{proof}

\begin{proof}[Proof of Proposition~\ref{prop:gluingwithin}]
Assume that points $(x, s_2)$ and $(x', 0)$ are obtained by vertical degenerations of 3-cylinder decompositions with twists $t_1,t_2,t_3$ and $t'_1,t'_2,t'_3$ respectively. 
Note that $$t'_3 = x' \% (w'_1 + w'_2) = (q(2w_1+w_2)  + w_1 + r) \% (2w_1 +w_2) = w_1 + r,$$
and hence it satisfies $w'_1 < t'_3 < w'_2$. Therefore both $(x, s_2)$ and $(x', 0)$ have 2-cylinder decompositions of type 2.

According to the formulas above the twist coordinates $T_1, T_2, T_3$ of the 2-cylinder decomposition of $(x, s_2)$ are:
\begin{align*}
T_1 & = t_1 = (q w_2 + r) \% w_1\\
T_2 & = t_2 = (q w_2 + r) \% w_2 = r \% w_2 = r \\
T_3 & = t_3 = (q w_2 + r) \% (w_1+w_2).
\end{align*}
Similarly, for the twist coordinates $T'_1, T'_2, T'_3$ of the 2-cylinder decomposition of $(x', s_2)$, we obtain:
\begin{align*}
T'_1 & = t'_1 = (q(2w_1+w_2)  + w_1 + r) \% w'_1 = (q w_2 + r) \% w_1 = T_1 \\
T'_2 & = t'_3 - w'_1 = w_1 + r - w_1 = r = T_2 \\
T'_3 & = (2t'_3-t'_2-w'_1) \% w'_2 = (2w_1 + 2r - (q(2w_1+w_2)  + w_1 + r)  - w_1) \% (w_1+w_2) = \\ 
& = (q w_2 + r) \% (w_1+w_2) = T_3.
\end{align*}
Therefore the points $(x, s_2)$ and $(x', 0)$ are identified.
\end{proof}

Now we prove Proposition~\ref{prop:gluingbetween} by investigating the zipping down process for the case $s_1=s_2=1$.

\begin{proof}[Proof of Proposition~\ref{prop:gluingbetween}]
Consider a generic Abelian differnetial $(X,\omega)\in \A_{d^2}$ with the cylinder coordinates $(w_1,1,w_2,1,t_1,t_2,t_3,1)$ with $t_i \notin \Z$ (see Figure~\ref{figtobdryeave}.1). Zipping down the white singularity produces a 1-cylinder Abelian differential (see Figure~\ref{figtobdryeave}.2).

\begin{commentB}
\begin{figure}[H]
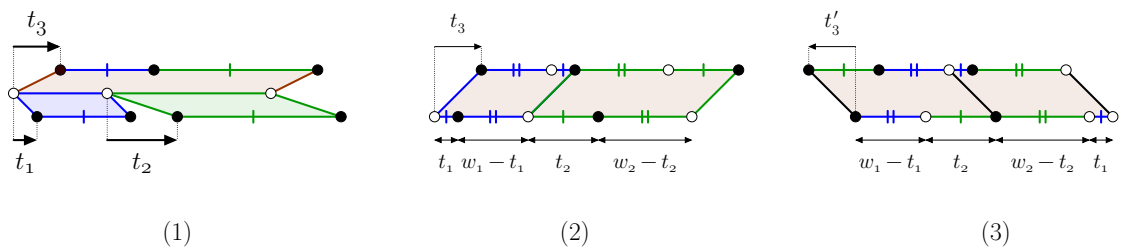
 
\centering
  \includestandalone[width=\textwidth]{tikztobdryeave}
  \caption{(1) An Abelian differential $(X,\omega)\in \A_{d^2}$ with $s_1=s_2=1$, (2) zipping down of its white singularity, and (3) its second splitting into two tori.}
  \label{figtobdryeave}
\end{figure}
\end{commentB}

An Abelian differential obtained in this way comes with a choice of a {\em splitting} into two tori $\C/\Lambda_1 \underset{I}{\#} \C/\Lambda_2$, where $\Lambda_1 = w_1\Z \oplus (t_3-t_1+i)\Z$, $\Lambda_2=w_2\Z \oplus (t_3-t_2+i)\Z$ and $I=t_3+i$. For background on splitting, or connected sum of 1-forms see, for example, Section 7 of \cite{McM07a}. There is another splitting for the same 1-cylinder Abelian differential (see Figure~\ref{figtobdryeave}.3) into two tori $\C/\Lambda'_1 \underset{I'}{\#} \C/\Lambda'_2$, where $\Lambda'_1 = (w_1-t_1+t_2)\Z \oplus (t_3-t_1+i)\Z$, $\Lambda'_2=(w_2+t_1-t_2)\Z \oplus (t_3-t_2+i)\Z$ and $I'=t_3-t_1-t_2+i$. This splitting can be obtained from zipping down a generic $(X',\omega')\in \A_{d^2}$ given by the cylinder coordinates $(w'_1,1,w'_2,1,t'_1,t'_2,t'_3,1)$, where:
\begin{align*}
w'_1 &=w_1-t_1+t_2 \\
w'_2 &=w_2+t_1-t_2 \\
t'_1 &= (-t_2) \% (w_1-t_1+t_2) = (w_1-t_1)\% (w_1-t_1+t_2) \\
t'_2 &= (-t_1) \% (w_2+t_1-t_2) = (w_2-t_2)\% (w_2+t_1-t_2) \\
t'_3 &=t_3-t_1-t_2.
\end{align*}
Therefore the two points $(w_1,1,w_2,1,t_1,t_2,t_3,1)$ and $(w'_1,1,w'_2,1,t'_1,t'_2,t'_3,1)$ are identified since they represent the same 1-cylinder Abelian differential.
\end{proof}

\noindent
{\bf Pictures of the pagoda structures of $X(d)$.} We conclude by presenting pictures of the pagoda structures of $X(d)$ for $d=7,11,13$ and $17$ (see Figure~\ref{figX(7)}, \ref{figX(11)}, \ref{figX(13)} and \ref{figX(17)}). When viewed on a computer, the pictures can be zoomed in to see the structures of the boundaries: the red points are the cusps of $X(d)$, the white points are the remaining simple poles of $q$ and the black points are the simple zeroes of $q$. The identifications of the boundaries of the strips within each story of the pagoda should be clear from the pictures and the gluing instructions (see Propositions~\ref{prop:gluingwithin} and \ref{prop:gluingbetween}) described in this section.

\begin{commentB}
\begin{figure}[H]
    \centering
    \includegraphics[width=\textwidth]{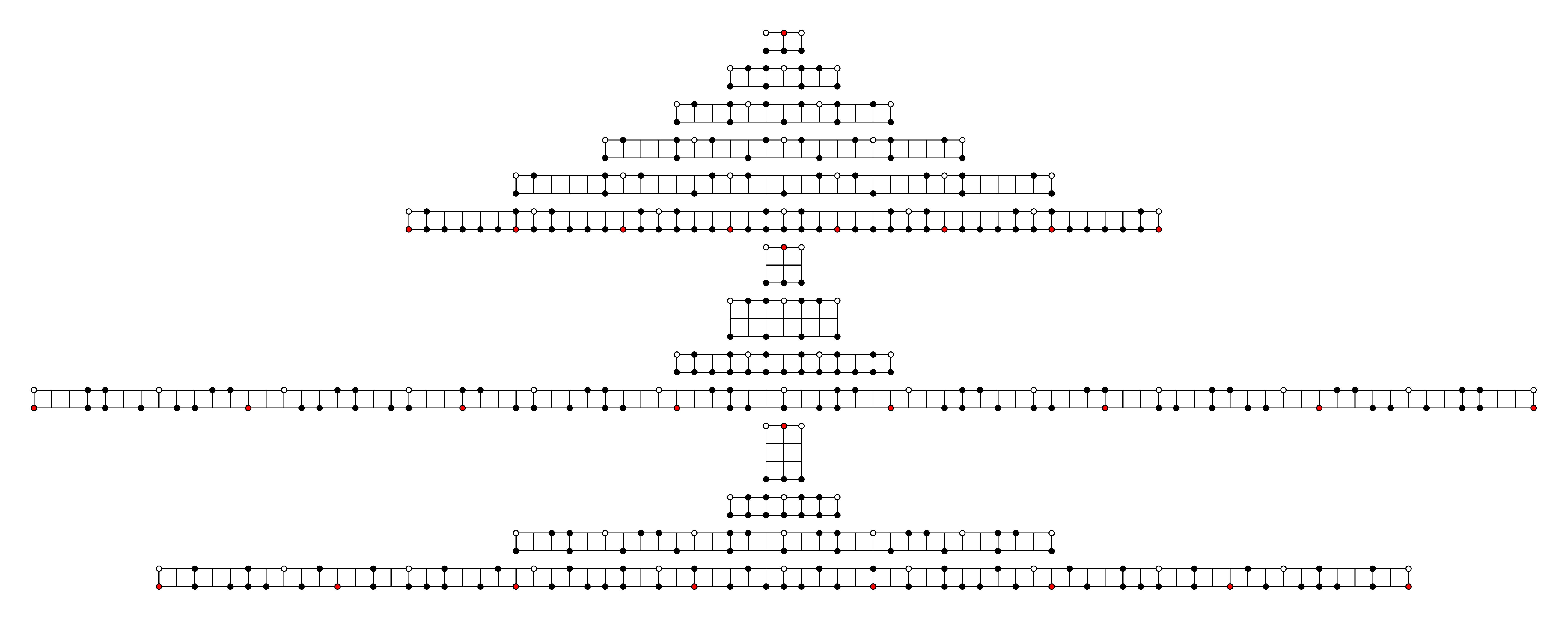}
    \caption{The pagoda structure of X(7).}
     \label{figX(7)}
\end{figure}

\begin{figure}[H]
    \centering
    \includegraphics[width=\textwidth]{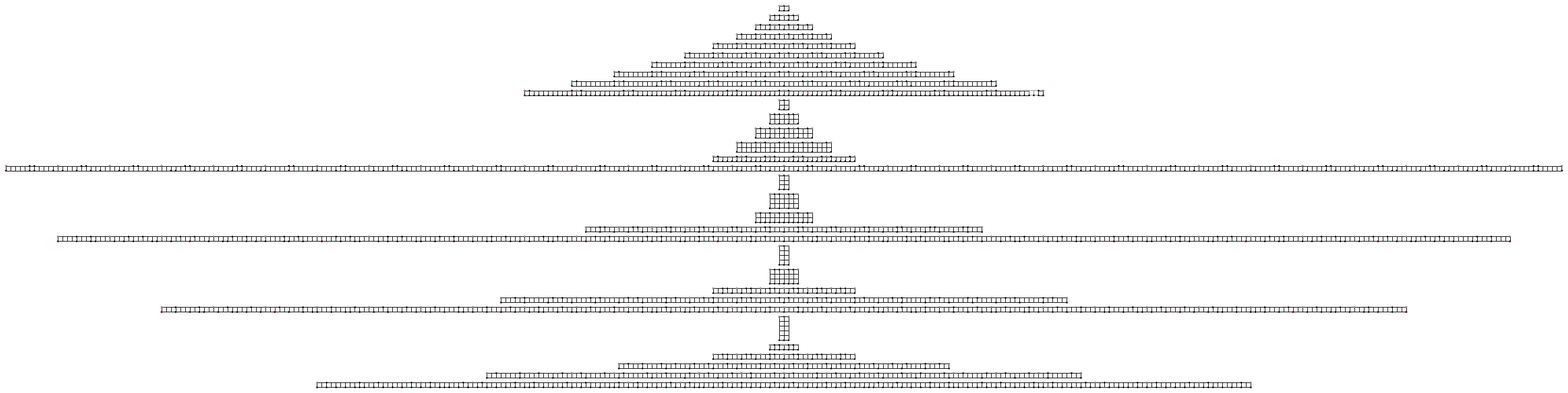}
    \caption{The pagoda structure of X(11).}
     \label{figX(11)}
\end{figure}

\begin{figure}[H]
    \centering
    \includegraphics[width=\textwidth]{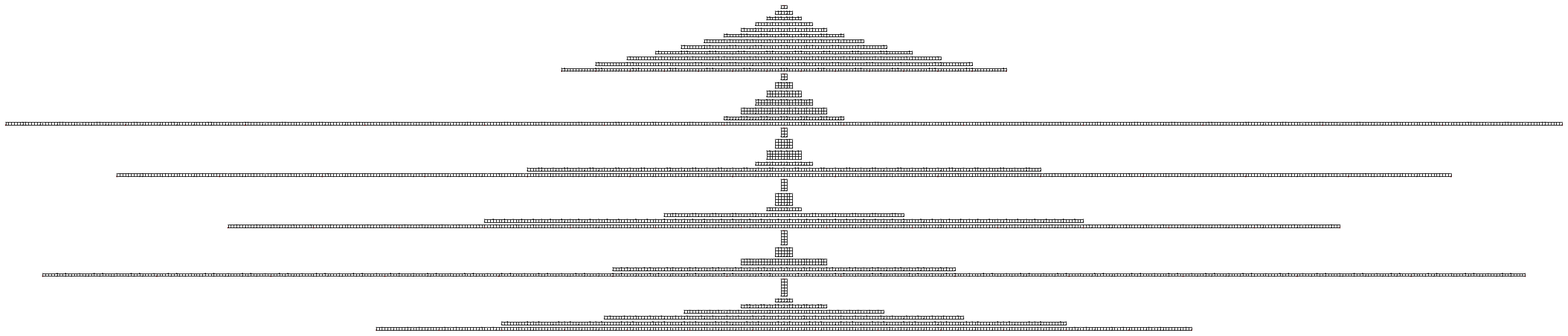}
    \caption{The pagoda structure of X(13).}
     \label{figX(13)}
\end{figure}

\begin{figure}[H]
    \centering
    \includegraphics[width=\textwidth]{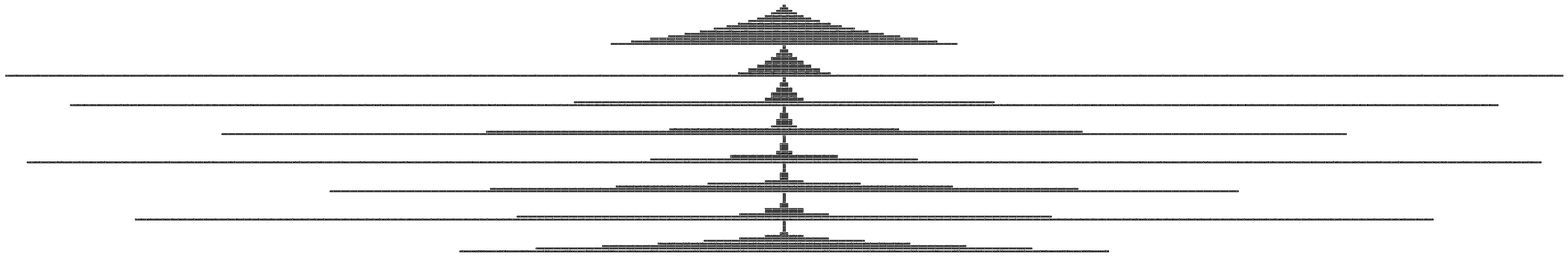}
    \caption{The pagoda structure of X(17).}
     \label{figX(17)}
\end{figure}
\end{commentB}

\end{document}

%%%%%%%%%%%%%%%%%%%%%%%%%%%%%%%%%%%%%%%%%%%%%%%%%%%%%%%%%%%%%%%%%%%%%%%%%%%%%%%%%%%%%%%%%%%%%%%%%%%%%%%%%%
%%%%%%%%%%%%%%%%%%%%%%%%%%%%%%%%%%%%%%%%%%%%%%%%%%%%%%%%%%%%%%%%%%%%%%%%%%%%%%  %%%%%%%%%%%%%%%%%%%%%%%%%%%%%%%%%%%%%%%%%%%%%%%%%%%	%%%%%%%%%%%%%%%%%%%%%%%%%%%%%%%%%%%%%%%%%%%%%%%%%		      %%%%%%%%%%%%%%%%%%%%%%%%%%%%%%%%%%%%%%%%%%%%%			            %%%%%%%%%%%%%%%%%%%%%%%%%%%%%%%%%%%%%%%%%			*	*	  	 %%%%%%%%%%%%%%%%%%%%%%%%%%%%%%%%%%%%%%%%%		   __              %%%%%%%%%%%%%%%%%%%%%%%%%%%%%%%%%%%%%%%%%%%%%		      %%%%%%%%%%%%%%%%%%%%%%%%%%%%%%%%%%%%%%%%%%%%%%%%%	%%%%%%%%%%%%%%%%%%%%%%%%%%%%%%%%%%%%%%%%%%%%%%%%%%%%  %%%%%%%%%%%%%%%%%%%%%%%%%%%%%%%%%%%%%%%%%%%%%%%%%%%%%%%%%%%%%%%%%%%%%%%%%%%%%%%%%%%%%%%%%%%%%%%%%%%%%%%%%%%%%%%%%%%%%%%%%%%%%%%%%%%%%%%%%%%%%%%%%%%%%%%%%%%%%%%%%%%%%%%%%%%%%%%%%%%%%%%%%%